\numberwithin{equation}{section}
\definecolor{myred}{rgb}{0.75,0,0}
\definecolor{mygreen}{rgb}{0,0.5,0}
\definecolor{myblue}{rgb}{0,0,0.65}
    \def\CM{{\mathbb{C}}}
    \def\FM{{\mathbb{F}}}
  \def\gg{{\mathfrak g}}  
  \def\hg{{\mathfrak h}}  \def\HM{{\mathbb{H}}}
    \def\RM{{\mathbb{R}}}
  \def\tg{{\mathfrak t}}
    \def\ZM{{\mathbb{Z}}}
    \def\BC{{\mathcal{B}}}
    \def\CC{{\mathcal{C}}}
    \def\DC{{\mathcal{D}}}
  \def\eb{{\mathbf e}}  
  \def\fb{{\mathbf f}}  \def\FC{{\mathcal{F}}}
  \def\gb{{\mathbf g}}  \def\GC{{\mathcal{G}}}
\def\HB{{\mathbf H}}  \def\hb{{\mathbf h}}  \def\HC{{\mathcal{H}}}
    \def\OC{{\mathcal{O}}}
    \def\SC{{\mathcal{S}}}
  \def\ub{{\mathbf u}}  
  \def\vb{{\mathbf v}}  
  \def\xb{{\mathbf x}}  \def\XC{{\mathcal{X}}}
  \def\yb{{\mathbf y}}  
  \def\zb{{\mathbf z}}  
\def\FS{{\EuScript F}}
\def\GS{{\EuScript G}}
\def\a{\alpha}
\def\b{\beta}
\def\g{\gamma}
\def\G{\Gamma}
\newcommand{\oG}{\overline{\Gamma}}
\def\d{\delta}
\def\e{\varepsilon}
\def\l{\lambda}
\def\o{\omega}
\def\w{\omega}
\newcommand{\nc}{\newcommand} \newcommand{\renc}{\renewcommand}
\newcommand{\rdots}{\mathinner{ \mkern1mu\raise1pt\hbox{.}
    \mkern2mu\raise4pt\hbox{.}
    \mkern2mu\raise7pt\vbox{\kern7pt\hbox{.}}\mkern1mu}}
\def\un{\underline}
\def\to{\rightarrow}
\def\longto{\longrightarrow}
\nc{\triright}{\stackrel{[1]}{\to}}
\nc{\longtriright}{\stackrel{[1]}{\longto}}
\nc{\Br}{\mathcal{B}}
\nc{\HotRR}{{}_R\mathcal{K}_R}
\nc{\HotR}{\mathcal{K}_R}
\nc{\excise}[1]{}
\nc{\defect}{\text{df}}
\nc{\h}[1]{\underline{H}_{#1}}
\nc{\Ga}{\mathbb{G}_a} 
\nc{\Gm}{\mathbb{G}_m} 
\nc{\Perv}{{\mathbf{P}}}
\nc{\IH}{{\mathrm{IH}}}
\nc{\ic}{\mathbf{IC}}
\nc{\gl}{{\mathfrak{gl}}}
\renc{\sl}{{\mathfrak{sl}}}
\renc{\sp}{{\mathfrak{sp}}}
\nc{\HBM}{H^{BM}}
 \DeclareMathOperator{\Hom}{Hom}
 \DeclareMathOperator{\ch}{ch}
\DeclareMathOperator{\End}{End} 
\DeclareMathOperator{\grk}{grk}
\newtheorem{thm}{Theorem}[section]
\newtheorem{lem}[thm]{Lemma}
\newtheorem{lemma}[thm]{Lemma}
\newtheorem{prop}[thm]{Proposition}
\newtheorem{cor}[thm]{Corollary}
\newtheorem{claim}[thm]{Claim}  
\newtheorem{conj}[thm]{Conjecture}
\newtheorem{assumption}[thm]{Assumption}
\theoremstyle{definition}
\newtheorem{defn}[thm]{Definition}
\newtheorem{constr}[thm]{Construction}
\newtheorem{notation}[thm]{Notation}
\newtheorem{ex}[thm]{Example}
\theoremstyle{remark}
\newtheorem{remark}[thm]{Remark}
\newcommand{\into}{\hookrightarrow}
\def\pt{{\mathrm{pt}}}
\def\op{{\mathrm{op}}}
\def\Mod{\mathrm{Mod}}
\def\Bim{\mathrm{Bim}}
\def\R{{\mathbbm R}}
\def\Z{{\mathbbm Z}}
\def\1{\mathbbm{1}}
\newcommand{\ubr}[2]{\underbrace{#1}_{#2}}
\newcommand{\Zvv}{\ZM[v^{\pm 1}]}
\newcommand{\Nvv}{\ZM_{\ge 0}[v,v^{-1}]}
\newcommand{\ot}{\otimes}
\newcommand{\LL}{LL}
\newcommand{\oLL}{\overline{LL}}
\newcommand{\LLL}{\mathbb{LL}}
\newcommand{\LLspangen}{<\LL>^{\textrm{any}}}
\newcommand{\BSBim}{\mathbb{BS}\textrm{Bim}}
\newcommand{\SBim}{\mathbb{S}\textrm{Bim}}
\newcommand{\StdBim}{Std\textrm{Bim}}
\newcommand{\pa}{\partial}
\newcommand{\co}{\colon}
\newcommand{\define}{\stackrel{\mbox{\scriptsize{def}}}{=}}
\newcommand{\sumset}{\stackrel{\scriptstyle{\oplus}}{\scriptstyle{\subset}}}
\newcommand{\std}{\textrm{std}}
\newcommand{\DStd}{\DC^\std}
\newcommand{\FStd}{\FC^\std}
\newcommand{\Kar}{\textrm{Kar}}
\newcommand{\StdF}{\mathcal{S}td}
\newcommand{\ig}[2]
{\vcenter{\xy (0,0)*{\includegraphics[scale=#1]{fig/#2}} \endxy}}
\newcommand{\igc}[2]
{\begin{center} \includegraphics[scale=#1]{fig/#2} \end{center}}
\newcommand{\ige}[2]
{\includegraphics[scale=#1]{fig/#2}}
\newcommand{\simto}{\stackrel{\sim}{\longrightarrow}}
\begin{document}

\title{Soergel calculus}

\author{Ben Elias} \address{Massachusetts Institute of Technology,
  Boston, USA}
\email{belias@mit.edu}
\author{Geordie Williamson} \address{Max-Planck-Institut f\"ur
  Mathematik, Bonn, Germany}
\email{geordie@mpim-bonn.mpg.de}

\maketitle

\begin{center}
  \emph{\small{ To Mikhail Khovanov and Rapha\"el Rouquier, who taught us generators and relations.}}
\end{center}

\begin{abstract}
The monoidal category of Soergel bimodules is an
  incarnation of the Hecke category, a fundamental object in
  representation theory. We present
  this category by generators and relations, using the language of
  planar diagrammatics. We show that Libedinsky's light leaves give a
  basis for morphism spaces and give
  a new proof of Soergel's classification of the indecomposable
  Soergel bimodules.
\end{abstract}


\section{Introduction}
\label{sec-intro}

Let us recall the history of the Hecke algebra from the perspectives of algebraization and categorification.

\subsection{The Hecke algebra by generators and relations}

Let $G$ be a split finite reductive group over a finite field $\mathbb{F}_q$, and let $B \subset G$ be a Borel subgroup.  A fundamental object in representation theory is the Hecke algebra
\[
\mathrm{Fun}_{B \times B}(G, \mathbb{C})
\]
of $B$-biinvariant complex valued functions on $G$, with multiplication given by convolution. This algebra first emerged when studying the irreducible complex characters of $G$, but has gone on to play an essential role in many (at times unexpected) branches of representation theory.

Iwahori \cite{Iwahori} made the crucial observation that the Hecke algebra admits a description which is ``independent" of the size $q$ of the base field and only depends on the Weyl group.
Fix a maximal split torus $T \subset B$ and let $(W,S)$ denote the Weyl group and its simple reflections. Using the Bruhat decomposition
\[
G = \bigsqcup_{w \in W} BwB
\]
it follows that the Hecke algebra has a basis given by indicator functions of the subsets $BwB \subset G$. Now let $\HB$ be the free $\Zvv$-module with basis $\{ T_w \; | \; w \in W \}$. There is a unique algebra structure on $\HB$ determined by
\[
T_wT_s = \begin{cases} T_{ws} & \text{if $ws > w$,} \\ (v^{-2}-1)T_w + v^{-2}T_{ws} & \text{if $ws < w$}. \end{cases}
\]
Writing $\HB_{\FM_q} \define \HB \otimes \CM$ for the specialization of $\HM$ at $v^{-1} \mapsto \sqrt{|\FM_q|} \in \CM$, we have an isomorphism of algebras
\[
\HB_{\FM_q} \stackrel{\sim}{\longto} \mathrm{Fun}_{B \times B}(G, \mathbb{C})
\]
sending $T_w$ to the indicator function of $BwB \subset G$. (One could define $\HB$ over $\ZM[q^{\pm 1}]$. The introduction of a square root of $q$ is a notational convenience which becomes important later.)
Furthermore, the algebra $\HB$ is generated by the elements $\{T_s \; | \; s \in S \}$, modulo the quadratic relation \[ (T_s + 1)(T_s - v^{-2}) = 0\] and the braid relations
\[ \ubr{T_sT_t \dots }{m_{st}} = \ubr{T_tT_s \dots}{m_{st} }
\] where $m_{st}$ denotes the order of $st$ in $W$.

This presentation of the Hecke algebra has paved the way for an algebraic study of its representation theory. An immediate consequence is that the Hecke algebra may be defined for any
Coxeter system, whether or not it arises as the Weyl group of a reductive algebraic group (or suitable generalizations such as an affine or Kac-Moody groups). Thus was the Hecke algebra
freed from its concrete realization as a convolution algebra.

\subsection{The Hecke category}

Beginning with the seminal work \cite{KaL1} of Kazhdan and Lusztig it was realised that the Hecke algebra admits a categorification, which has come to be known as the Hecke category. 
 According to Grothendieck's function-sheaf dictionary, the algebra of $B$-biinvariant functions on $G$ should be categorified by some version of $B$-biequivariant sheaves on $G$. For concreteness, we now suppose that $G$ is a complex reductive group with Borel subgroup $B$ and maximal torus $T \subset G$. The Hecke category (in its simplest geometric incarnation) is the additive subcategory of semi-simple complexes
\[
\HC \subset D^b_{B \times B}(G, \CM)
\]
in the equivariant derived category of $B$-biequivariant sheaves on $G$. In other words, the objects of $\HC$ are direct sums of shifts of various $\ic_w \define \ic(\overline{BwB})$, the equivariant intersection cohomology complexes of $B \times B$-orbits. There is a monoidal structure $*$ on $D^b_{B \times B}(G,\CM)$ given by convolution, and it preserves $\HC$, thanks to the Decomposition Theorem and the compactness of $G/B$. Therefore the split Grothendieck group $[\HC]$ of $\HC$ has a $\Zvv$-algebra structure (the $\ZM[v^{\pm 1}]$ structure is given by $v[\FC] = [\FC[1]]$). The key result is an isomorphism of $\Zvv$-algebras
\[
\HB \stackrel{\sim}{\longto} [\HC]
\]
which sends the Kazhdan-Lusztig basis element $\underline{H}_w \in \HB$ to $[\ic_w]$. The Kazhdan-Lusztig basis also has a purely algebraic definition, and it was quickly realised that this definition mimics the defining properties of an intersection cohomology complex.

\subsection{Soergel bimodules}

Just as Iwahori gave an intrinsic construction of the Hecke algebra so too would Soergel give one of the Hecke category.

Let $\hg$ denote the Lie algebra of $T$ and $R$ the regular functions on $\hg$, graded so that $\hg^*$ is in degree two. We have a canonical identification of equivariant cohomology groups
(the ``Borel isomorphism'') \[ R = H^{\bullet}_T(\pt) = H^{\bullet}_B(\pt). \] In particular, the hypercohomology of any object in $ D^b_{B \times B}(G, \CM)$ is naturally a graded module
over $H^{\bullet}_{B \times B}(\pt) = R \otimes_{\CM} R$. Because $R$ is commutative, we may regard any $R \otimes R$-module as an $R$-bimodule. Hence hypercohomology can be see as a functor to 
$R-\Bim$, the category of graded $R$-bimodules.

Soergel's first key observation is that hypercohomology \[ \HM^{\bullet}_{B \times B} : \HC \to R-\Bim \] is fully-faithful and monoidal (that is $\HM^{\bullet}_{B \times B}(\FS * \GS) \cong \HM^{\bullet}_{B \times B}(\FS) \otimes_R \HM^{\bullet}_{B \times B}(\GS)$ for $\FS, \GS \in \HC$). It follows that the Hecke category is
equivalent to its essential image. His second key observation is that the Decomposition Theorem gives an alternative description of the intersection cohomology complexes. That is,
$\ic_w$ is the unique summand of $\ic_s * \ic_t * \cdots * \ic_u$ (for a reduced expression $st\ldots u$ of $w$) which does not appear in the analogous convolution for any shorter reduced
expression. This description arises from the Bott-Samelson resolution of a Schubert variety, and so we call it the ``Bott-Samelson description" of an intersection cohomology complex.

Note that $W$ acts on $\hg$ and hence on $R$ via graded algebra automorphisms. It is easy to calculate that
\[
\HM^{\bullet}_{B \times B} (\ic_s) = B_s \define R \otimes_{R^s} R(1)
\]
where $R^s \subset R$ denotes the subalgebra of $s$ invariants in $R$, and $(1)$ denotes the grading shift. From this, Soergel obtained the following elementary description of $\HC$: it is equivalent to the smallest full additive monoidal Karoubian graded subcategory of $R-\Bim$ containing $B_s$ for all $s \in S$. This category is by definition the category $\SBim$ of \emph{Soergel bimodules}. By the above discussion, hypercohomology yields an equivalence of graded monoidal categories:
\[
\HC \stackrel{\sim}{\longrightarrow} \SBim.
\]


In this setting, $\SBim$ is just another incarnation of the Hecke category. However, as Soergel pointed out, this algebraic description allows one to define the Hecke category for arbitrary Coxeter systems, for which there is no suitable geometric context. In \cite{Soe3} Soergel imitates the above definition of $\SBim$ starting with an appropriate (``reflection faithful'') representation $\hg$ of $W$, which plays the role of the representation of $W$ on the Lie algebra $\hg$ of $T$. With $\SBim$ defined as above, Soergel then constructs an isomorphism of $\Zvv$-algebras
\[
\HB \stackrel{\sim}{\longto} [\SBim].
\]
In analogy to the Bott-Samelson description of intersection cohomology complexes in $\HC$, Soergel proves that the indecomposable bimodules $\{B_w\}$ in $\SBim$ are in bijection with $W$, and that $B_w$ is the unique summand of $B_s \ot B_t \ot \cdots \ot B_u$ (for a reduced expression of $w$) which does not appear for a shorter expression. These results are known as Soergel's Categorification Theorem.

For a Weyl group, one can prove Soergel's Categorification Theorem easily by transferring known facts about $\HC$ to $\SBim$ using hypercohomology. Soergel's proof for the general case is much trickier, but relies only on commutative algebra. Soergel proves his results for reflection faithful representations of a Coxeter system over an infinite field of characteristic $\ne 2$.

Soergel's theory (or the Bott-Samelson description of $\HC$) states that the objects of $\SBim$ are generated by the objects $B_s$. Moreover, there are isomorphisms between objects in $\SBim$ which lift the quadratic and braid relations of the Hecke algebra. (One should not categorify the Iwahori presentation given above, but a presentation using the Kazhdan-Lusztig generators which we will describe below.)  Heuristically speaking, this is the categorical analogue of Iwahori's algebraization of $\HB$, on the level of objects. However, in $\HC$ or $\SBim$ there is a whole new layer of structure, with no analogue in the Hecke algebra: the composition of morphisms.

\subsection{Soergel bimodules by generators and relations}

A \emph{Bott-Samelson bimodule} is a bimodule of the form $B_{\un{w}} \define B_s \ot_R B_t \ot_R \cdots \ot_R B_u$ for an expression $\un{w}=st\dots u$. They form a full
monoidal subcategory of $R-\Bim$, which we denote $\BSBim$. By definition, any Soergel bimodule is a direct sum of shifts of summands of bimodules in $\BSBim$. Said another way, the
category of Soergel bimodules $\SBim$ is the Karoubi envelope of (the additive, graded envelope of) $\BSBim$. The upshot is that in order to describe the category of Soergel bimodules it
is enough to describe the monoidal category of Bott-Samelson bimodules. This is an easier problem because (in contrast to Soergel bimodules) one has concrete combinatorial realizations of
the objects in $\BSBim$.

In this paper we describe the monoidal category of Bott-Samelson bimodules by
generators and relations. (There is one caveat, involving standard parabolic subgroups of type $H_3$ which shall be discussed in section \ref{threecolorintro} below.) Such a description has already been given by Libedinsky
\cite{LibRA} in the right-angled case (i.e. when $m_{st} \in \{2,\infty\}$), in type $A$ by the first author and Khovanov \cite{EKh}, and in dihedral type by the first author in
\cite{EDihedral}. This is the next step in the algebraization of $\HC$, freeing the category of Soergel bimodules from its realization as a full subcategory of a bimodule category. Said
another way, in this paper we give a $2$-presentation of $\BSBim$, in analogy to the $1$-presentation of $\HB$ given by Iwahori. Our presentation will use the technology of planar
diagrammatics.

We assign a color to each element of $S$, which allows us to encode a Bott-Samelson bimodule as a sequence $\un{w}$ of colored dots ordered on
a line. A morphism between Bott-Samelson bimodules will be given by a linear combination of isotopy classes of decorated graphs embedded in the planar strip $\RM \times [0,1]$. The edges
in these graphs will be colored, and may run into the bottom boundary $\R \times \{0\}$ or the top boundary $\R \times \{1\}$, yielding a sequence of colored dots on each boundary. A
morphism from $B_{\un{w}}$ to $B_{\un{y}}$ will have bottom boundary $\un{w}$ and top boundary $\un{y}$. For example, the following planar diagram
\igc{1}{morphismexample}
represents a map from $rgrgrgbr$ to $grbgrbr$.

For the moment, let us ignore the notion of isotopy classes of graphs, and consider instead diagrams which can be constructed from horizontal and vertical concatenation of the following generators. Here is a list of generating morphisms, their degrees and the maps of Soergel bimodules which they represent:
\[
\begin{array}{cccc}
\begin{array}{c}\tikz[scale=0.5]{\draw[dashed] (3,0) circle (1cm);
\draw[color=blue] (3,-1) to (3,0);
\node[circle,fill,draw,inner sep=0mm,minimum size=1mm,color=blue] at (3,0) {};}\end{array}& \text{deg 1} & B_{\color{blue}s} \to R  & f \otimes g \mapsto fg 
\\
\begin{array}{c}\tikz[scale=0.5]{\draw[dashed] (3,0) circle (1cm);
\draw[color=blue] (3,1) to (3,0);
\node[circle,fill,draw,inner sep=0mm,minimum size=1mm,color=blue] at (3,0) {};}\end{array}
& \text{deg 1}&
R \to B_{\color{blue}s}  & 1 \mapsto \frac{1}{2}(\alpha_s \otimes 1 + 1
  \otimes \alpha_s)
\\
\begin{array}{c}\tikz[scale=0.5]{\draw[dashed] (0,0) circle (1cm);
\draw[color=blue] (-30:1cm) -- (0,0) -- (90:1cm);
\draw[color=blue] (-150:1cm) -- (0,0);}\end{array}
&  \text{deg -1} & 
B_{\color{blue}s} B_{\color{blue}s} \to B_{\color{blue}s}  & 1 \otimes g \otimes 1 \mapsto \partial_s g
  \otimes 1
\\

\begin{array}{c}\tikz[scale=0.5]{\draw[dashed] (0,0) circle (1cm);
\draw[color=blue] (30:1cm) -- (0,0) -- (-90:1cm);
\draw[color=blue] (150:1cm) -- (0,0);}\end{array}
&  \text{deg -1} & B_{\color{blue}s} \to B_{\color{blue}s} B_{\color{blue}s}  & 1 \otimes 1 \mapsto 1 \otimes 1
  \otimes 1 \\
\begin{array}{c}\tikz[scale=0.5]{\draw[dashed] (0,0) circle (1cm);
\node at (0,0) {$f$};}\end{array}
&  \text{deg f} &
R \to R  & 1 \mapsto f
\\
\begin{array}{c}\tikz[scale=0.5]{\draw[dashed] (0,0) circle (1cm);
\draw[color=blue] (0,0) -- (22.5:1cm);
\draw[color=blue] (0,0) -- (67.5:1cm);
\draw[color=blue] (0,0) -- (112.5:1cm);
\draw[color=blue] (0,0) -- (157.5:1cm);
\draw[color=blue] (0,0) -- (-22.5:1cm);
\draw[color=blue] (0,0) -- (-67.5:1cm);
\draw[color=blue] (0,0) -- (-112.5:1cm);
\draw[color=blue] (0,0) -- (-157.5:1cm);
\draw[color=red] (0,0) -- (0:1cm);
\draw[color=red] (0,0) -- (45:1cm);
\draw[color=red] (0,0) -- (90:1cm);
\draw[color=red] (0,0) -- (135:1cm);
\draw[color=red] (0,0) -- (180:1cm);
\draw[color=red] (0,0) -- (-45:1cm);
\draw[color=red] (0,0) -- (-90:1cm);
\draw[color=red] (0,0) -- (-135:1cm);
}\end{array}
&  \text{deg 0}&  \ubr{B_{\color{blue}s}B_{\color{red}t} \dots }{m_{st}}  \to \ubr{B_{\color{red}t} B_{\color{blue}s} \dots }{m_{st}}    & 
\end{array}
\]
In the above, $\alpha_s$ denotes a fixed choice of equation for the hyperplane fixed by $s$, and $\pa_s$ denotes the Demazure operator $\pa_s(f) \define (f - sf)/\a_s$. We refer to the first two morphisms as \emph{dots}, the second two morphisms as \emph{trivalent vertices}, and the final morphism as the \emph{$2m_{st}$-valent vertex}.

We have not given a formula for the $2m_{st}$-valent vertex, as it is both difficult and unenlightening to write down explicitly in general. It can be described conceptually as follows. Let $B_{s,t}$ denote the indecomposable Soergel bimodule indexed by the longest element of the (finite) rank two parabolic subgroup generated by $s$ and $t$.  The bimodules $B_{\color{blue}s}B_{\color{red}t} \dots $ and $B_{\color{red}t} B_{\color{blue}s} \dots $ each contain $B_{s,t}$ as a summand with multiplicity one. The $2m_{st}$-valent vertex is the projection and inclusion of this common summand. (This determines the morphism up to a scalar, and there is a simple way to fix this choice of scalar.)

It is a result due to Libedinsky \cite{Lib} that these morphisms generate all morphisms between Bott-Samelson bimodules. In this paper we determine the relations which they satisfy. Let us
call a subset $J \subset S$ \emph{finitary} if the corresponding parabolic subgroup is finite. In the Iwahori presentation of $\HB$ for a Coxeter system, there is a generator
$T_s$ for each $s \in S$, i.e. for each finitary subset of rank $1$; there is a quadratic relation for each finitary subset of rank $1$, and a braid relation for each finitary subset of
rank $2$. In our $2$-presentation of $\BSBim$, the generating objects $B_s$ are associated to $s \in S$ (i.e. finitary subsets of rank $1$), the generating morphisms are associated to
finitary subsets of size $\le 2$, and the relations are associated to finitary subsets of of size $\le 3$.

Because the simple reflections are encoded by colors, we refer to a relation as
a \emph{one}, \emph{two} or \emph{three color relation}, depending on the size of the subset of $S$ involved. As one might guess, the relations become more complicated as the number of
colors increases. Here is a description of the relations:

\subsubsection{One color relations:} 

It was pointed out in \cite{EKh} that most of the one color relations can be concisely encoded in the statement that $B_s$ is a Frobenius object in the
category of $R$-bimodules. The trivalent vertices give the multiplication and comultiplication, whilst the dots provide the unit and counit. The Frobenius biadjunction of $B_s$ with itself arises from certain cups and caps, which are constructed from trivalent vertices and dots. The axioms governing Frobenius objects guarantees that any diagram involving one color is isotopy invariant. The one-color relations not involving polynomials are then the following:
\begin{gather*} \ig{1}{assoc1} \qquad \qquad \ig{1}{unit} \qquad \qquad \ig{1}{needle} \end{gather*}
In addition to the Frobenius relations, one has the following relations governing the interaction of colors and polynomials:
\begin{gather*}
\begin{array}{c}
\tikz[scale=0.9]{
\draw[dashed,gray] (0,0) circle (1cm);
\draw[color=red] (0,0.5) -- (0,-0.5);
\node[draw,fill,circle,inner sep=0mm,minimum size=1mm,color=red] at (0,0.5) {};
\node[draw,fill,circle,inner sep=0mm,minimum size=1mm,color=red] at (0,-0.5) {};
} \end{array}
=
\begin{array}{c}
\tikz[scale=0.9]{
\draw[dashed,gray] (0,0) circle (1cm);
\node at (0,0) {$\alpha_s$};
} \end{array}, \\
\begin{array}{c}
\tikz[scale=0.9]{
\draw[dashed,gray] (0,0) circle (1cm);
\draw[color=red] (0,-1) to (0,1);
\node at (-0.5,0) {$f$};
} \end{array}
=
\begin{array}{c}
\tikz[scale=0.9]{
\draw[dashed,gray] (0,0) circle (1cm);
\draw[color=red] (0,-1) to (0,1);
\node at (0.5,0) {${\color{red}s}(f)$};
} \end{array}
+
\begin{array}{c}
\tikz[scale=0.9]{
\draw[dashed,gray] (0,0) circle (1cm);
\draw[color=red] (0,-1) to (0,-0.5);
\draw[color=red] (0,1) to (0,0.5);
\node[color=red,draw,fill,circle,inner sep=0mm,minimum size=1mm] at (0,-0.5) {};
\node[color=red,draw,fill,circle,inner sep=0mm,minimum size=1mm] at
(0,0.5) {};
\node at (0,0) {$\partial_{\color{red}s} f$};
} \end{array}.
\end{gather*}

\subsubsection{Two color relations:} Two color graphs and Soergel bimodules for the dihedral group are explored in detail in \cite{EDihedral}. Essentially, morphisms between Bott-Samelson bimodules in rank 2 are governed by the Temperley-Lieb algebra at a root of unity, a fact related to the (quantum) geometric Satake equivalence for $\mathfrak{sl}_2$.

The first important two color relation is the cyclicity of the $2m_{st}$-valent vertex. (There is a subtlety here if the Cartan matrix is not symmetric, which we ignore in the introduction.) This allows us to consider all morphisms as isotopy classes of diagrams. The second relation is the so-called \emph{two color associativity} (here $m = m_{{\color{red}s}{\color{blue}t}}$):
\[
\includegraphics[width=10cm]{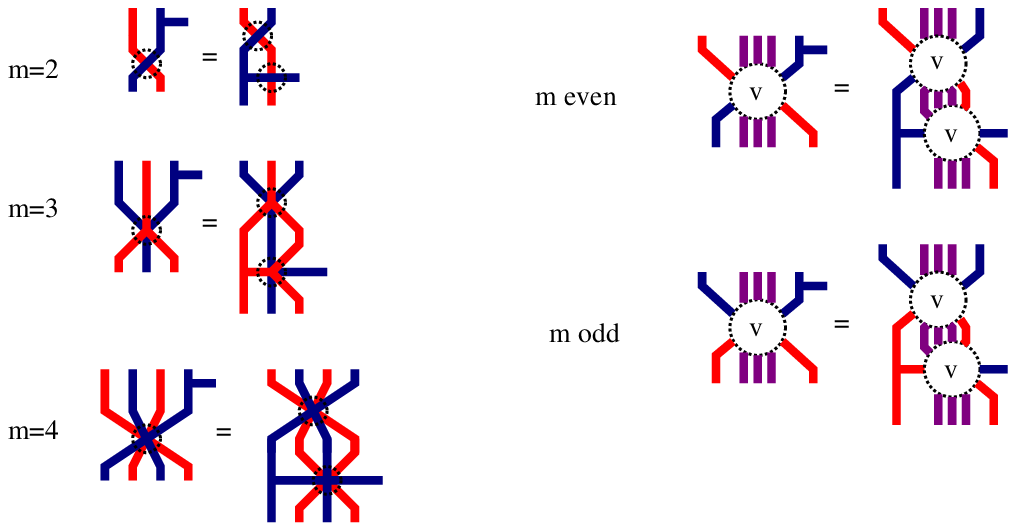}
\]
It allows one to ``pull'' a trivalent vertex through a $2m_{st}$-valent vertex.

The third relation allows one to expand the composition of a dot and a $2m_{st}$-valent vertex into a linear combination of diagrams in which the $2m_{st}$-valent vertex does not occur. This relation is best understood using Jones-Wenzl projectors, as explained in \cite{EDihedral}, and is difficult to state without developing this machinery. (For example, for a Weyl group of type $G_2$, 42 terms occur). Here we give examples for finite parabolic subgroups of types $A_1 \times A_1$, $A_2$ and $B_2$, i.e. the cases $m_{st} = 2,3,4$:
\[
\includegraphics[width=10cm]{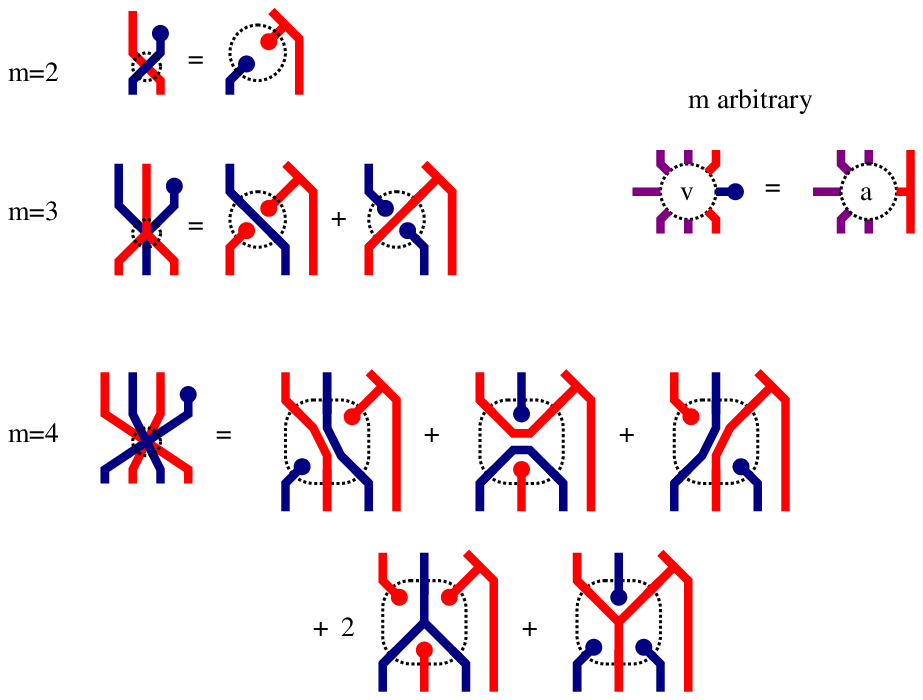}
\]
We hope that the reader has not missed the appearance of a 2 in the relation for $B_2$. In general, these coefficients are polynomials in the entries of the Cartan matrix of the corresponding root system.

Together, the two-color relations imply that the composition of two $2m_{st}$-valent vertices is an idempotent endomorphism (corresponding to the projection to $B_{s,t}$ inside the Bott-Samelson bimodule):
\[
\includegraphics[width=10cm]{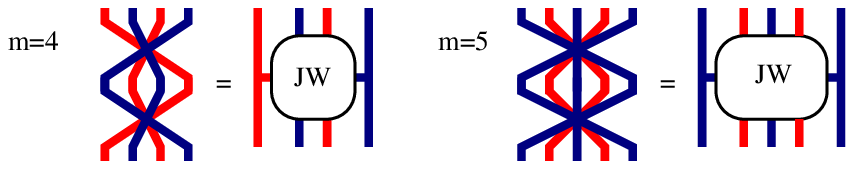}
\]

\subsubsection{Three color or ``Zamolodchikov'' relations:} \label{threecolorintro} There is one relation for each finite parabolic subgroup of rank 3, generalizing the so-called
Zamolodchikov tetrahedron equation. We feel this interesting topic deserves some introduction of its own. In braided monoidal categories a
fundamental role is played by the Yang-Baxter equation or braid relation, which guarantees that one obtains an action of the braid group on the tensor powers of any object. In the setting
of braided monoidal 2-categories, the role of the Yang-Baxter equation is played by the Yang-Baxter isomorphism, and the consistency relation between these isomorphisms is known as the
\emph{Zamolodchikov tetrahedron equation.}\footnote{There are also higher Zamolodchikov relations governing braided monoidal $n$-categories. These will not be considered in this paper.} Instead of describing this theory in its original context, we give a description using the combinatorics of Coxeter groups.

Consider a Coxeter system $W$ of type $A_3$ with simple reflections $s,t,u$ such that $s$ and $u$ commute. The vertices of the following graph encode the reduced expressions for the
longest element $w_0$ of $W$, and the edges indicate the application of a braid relation (the dashed lines correspond to the ``boring'' braid relations $su \leftrightarrow us$):
\igc{1}{121321graph} In the setting of braided monoidal 2-categories each vertex encodes a 1-morphism, and each edge gives a 2-morphism between these 1-morphisms (or rather, each edge
gives a pair of inverse 2-isomorphisms), so that a path gives a 2-morphism by composition. It is an easy consequence of the axioms for braided monoidal
2-categories that the two dashed squares commute. The Zamolodchikov tetrahedron equation is the requirement that the 2-morphism obtained by following a non-trivial loop around this graph
is the identity 2-morphism.

For any element $w \in W$ of type $A_n$, one can draw a ``reduced expression graph" as above, and obtain a 2-morphism for any path in this graph. The Zamolodchikov relation is already
sufficient to imply that any non-trivial loop yields the identity morphism, which explains its great importance. In fact, for any element of any Coxeter group, the loops in its reduced
expression graph are (in a suitable sense) generated by the loops in the reduced expression graph of the longest element in any finite parabolic subgroup of rank 3. Thus, in addition to the $A_3$ Zamolodchikov relation discussed above, there are Zamolodchikov-style relations in type $B_3$, $H_3$, and $A_1 \times I_2(m)$ for $m < \infty$ (though in type $A_1 \times I_2(m)$ things are not very exciting, which is why we ignored $A_1 \times A_2$ and $A_1 \times A_1 \times A_1$ in the type $A_n$ discussion earlier).

Let us explain how analogous relations arise for morphisms between Bott-Samelson bimodules. A vertex $\un{w}$ of a reduced expression graph is associated with a
Bott-Samelson bimodule $B_{\un{w}}$, and edges give morphisms ($2m_{st}$-valent vertices) between these bimodules. Unlike the Yang-Baxter situation, the edges are not isomorphisms (unless
$m_{st}=2$), but are only projections to a common summand, so that one should not expect a loop to be equal to the identity. For general reasons (which will be discussed later in this
paper), two paths with the same start and endpoint will be equal ``modulo lower terms," i.e. modulo morphisms which factor through $B_{\un{y}}$ for a sequence $\un{y}$ strictly shorter than $\un{w}$.

What is miraculous (and currently lacking a satisfying explanation) is that, for $A_3$ and $B_3$ (and more trivially, for $A_1 \times I_2(m)$), one can choose an ``orientation" on the
reduced expression graph of the longest element, such that the two paths from source to sink yield morphisms in $\BSBim$ which are equal on the nose! For example, we have placed the orientation on the non-dashed edges in the $A_3$ graph above. Tracing out the two morphisms from the sink to source in the above graph yields the $A_3$
Zamolodchikov relation: 

\igc{1}{A3alt}

Entirely analogously one has the following Zamolodchikov relation for $B3$:
\begin{gather*} \ig{1.5}{B3} \end{gather*}

For the $A_3$ graph, there are many orientations which have a unique source (up to dashed edges) and a unique sink, but the orientation chosen above (resp. the reverse orientation) is
special. For any other choice of source and sink, the morphisms attached to the two oriented paths from source to sink need not be equal in $\BSBim$, their difference being a nontrivial
sum of lower terms. That there is a ``canonical" (and an ``anti-canonical") choice of orientation on a reduced expression graph for any element of any type $A$ Coxeter group is an old
result of Manin-Schectman \cite{MS}, and the implications of this for morphisms between Bott-Samelson bimodules have been explored in \cite{EInd}. However, the relationship between
Manin-Schectman theory and Soergel bimodules is not understood.

Let us quickly mention the $A_1 \times I_2(m)$ Zamolodchikov relations. The reduced expression graph of the longest element only has one choice of orientation (with its reverse), and it
yields the following equality of morphisms in $\BSBim$:
\[
 \ig{1}{A1I2m}
\]
For the $H_3$ graph, on the other hand, computer calculations have verified that there is no suitable choice of source and sink for the reduced expression graph of the longest element. In other words, two distinct paths will always differ by a nontrivial sum of lower terms. There is some relation of the form
\[
\ig{1}{H3left}
- 
\ig{1}{H3right}
= 
\text{lower terms}
\]
however, despite considerable effort, we have not been able to compute the lower terms which appear. The question of what these lower terms are could in principle be decided by computer, however the computation is impossible with our current algorithms and technology. This is the caveat mentioned earlier: we do not have a completely explicit presentation of the category $\BSBim$ when $W$ contains a parabolic subgroup of type $H_3$, knowing this Zamolodchikov relation only in the rough form above.


It is surprising that the analogues of the Zamolodchikov relation hold in all finite rank 3 groups except type $H_3$. We do not know a good reason why this is the case. We also do not have
a good conceptual understanding of why certain paths in reduced expression graphs lead to relations which hold in Soergel bimodules, and others do not.

\subsection{Consequences for the structure of Soergel bimodules}

Let $\DC$ denote the diagrammatic category defined by generators and relations in the previous section. In order to prove that this category is equivalent to $\BSBim$, we construct a basis
for morphisms in $\DC$, which is sent to a basis for morphisms in $\BSBim$. In \cite{Lib}, Libedinsky constructed a combinatorial basis for the morphism space between two Bott-Samelson
bimodules, which he called the \emph{light leaves} basis. His construction involves the technique of localization, which we will also explore diagrammatically in this paper.

Let $\un{x}$ be an expression and $\eb$ a subexpression expressing $w$. That is if $\un{x} = s_1 \dots s_m$ then $\eb$ is a sequence $\eb_1 \dots \eb_m$ of 0's and 1's such that $w = s_1^{\eb_1} \dots s_m^{\eb_m}$.
To this pair Libedinsky assigns a morphism $\LL_{\un{x},\eb} : B_{\un{x}} \to B_{\un{w}}$ where $\un{w}$ is a reduced expression for $w$. Libedinsky's definition of $\LL_{\un{x}, \eb}$ is inductive and at each step may involve a choice of reduced expression and as well as a sequence of braid relations to reach such an expression. So, though beautiful, it is highly non-canonical. We  translate Libedinsky's construction into our diagrammatic language, in which case a light leaf morphism can be thought of as a kind of rhombus (see section \ref{sec-LLL})
\[
\begin{array}{c}
\begin{tikzpicture}[scale=0.7]
\draw (-0.3,-2) -- (-2,-2) -- (-1,0) -- (-0.3,0);
\draw (0.3,-2) -- (2,-2) -- (1,0) -- (0.3,0);
\node (top) at (0,0) {$\un{w}$};
\node (mid) at (0,-1) {$\LL_{\un{x}, \eb}$};
\node (bot) at (0,-2) {$\un{x}$};
\end{tikzpicture}
\end{array}
\]
where the upper expression is always reduced.

Now let $\un{x}$ and $\un{y}$ be fixed expressions. 
Following Libedinsky, we introduce the \emph{double leaves} morphism
\[
\LLL_{w,\eb,\fb} \define 
\begin{array}{c}
\begin{tikzpicture}[scale=0.7]
\draw (-0.3,-2) -- (-2,-2) -- (-1,0) -- (-0.3,0);
\draw (0.3,-2) -- (2,-2) -- (1,0) -- (0.3,0);
\draw (-0.3,2) -- (-2,2) -- (-1,0) -- (-0.3,0);
\draw (0.3,2) -- (2,2) -- (1,0) -- (0.3,0);
\node (top) at (0,2) {$\un{y}$};
\node (mid) at (0,1) {$\oLL_{\un{y}, \fb}$};
\node (top) at (0,0) {$\un{w}$};
\node (mid) at (0,-1) {$\LL_{\un{x}, \eb}$};
\node (bot) at (0,-2) {$\un{x}$};
\end{tikzpicture}
\end{array}
\]
where $\oLL_{\un{y}, \fb}$ denotes the vertical flip of a light leaf morphism, and $\eb$ (resp. $\fb$) is a subexpression of $\un{x}$ (resp. $\un{y}$) which expresses $w$. Hence $\LLL_{w,\eb,\fb}$ is a morphism $B_{\un{x}} \to B_{\un{y}}$. Consider the set
\[
\LLL_{\un{x}, \un{y}} \define \bigcup_{w \in W} \{ \LLL_{w,\eb,\fb} \; | \; \eb \in M(\un{x}, w), \fb \in M(\un{y}, w)\}
\]
where, $M(\un{x}, w)$ denotes the set of subexpressions of $\un{x}$ expressing $w$. Our main theorem is then the following:

\begin{thm} $\LLL_{\un{x},\un{y}}$ is an $R$-basis for $\Hom_{\DC}(B_{\un{x}}, B_{\un{y}})$. \end{thm}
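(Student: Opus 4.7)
The plan is to prove the theorem in two stages. The first is to establish a functor $\FC : \DC \to \BSBim$ sending each diagrammatic generator to the bimodule map described in the introduction; this requires checking that every relation of $\DC$ holds in $\BSBim$. The second is to prove spanning: every morphism in $\DC$ is an $R$-linear combination of the $\LLL_{w,\eb,\fb}$. Linear independence then follows for free, because $\FC$ sends the diagrammatic double leaves to Libedinsky's double leaves in $\BSBim$, and these form an $R$-basis of $\Hom_{\BSBim}(B_{\un x}, B_{\un y})$ by \cite{Lib}; any $R$-linear relation among the $\LLL_{w,\eb,\fb}$ in $\DC$ would descend to a nontrivial relation in $\BSBim$.

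To construct $\FC$, one verifies relations case by case. The one-color (Frobenius and polynomial-slide) relations are direct calculations in $R \otimes_{R^s} R$. The two-color relations reduce to the dihedral theory of \cite{EDihedral}, where the Temperley-Lieb and Jones-Wenzl apparatus makes both the dot-removal and the double $2m_{st}$-valent idempotent identities tractable. The Zamolodchikov relations in types $A_3$, $B_3$, and $A_1 \times I_2(m)$ must be checked inside the common summand $B_{w_0}$ of the two rank-three Bott-Samelson bimodules appearing on either side; since both sides agree modulo lower terms by general projection arguments and the common summand appears with multiplicity one, the two sides differ by a scalar that can be pinned down by evaluation at a single convenient vector. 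The $H_3$ caveat is handled by \emph{defining} the unknown lower terms in $\DC$ to equal those forced by the corresponding identity in $\BSBim$.

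The main obstacle is spanning. Introduce the filtration $F^{\le w} \Hom_\DC(B_{\un x}, B_{\un y})$ spanned by morphisms that factor through some $B_{\un z}$ whose underlying element has length at most $\ell(w)$. The key technical step is the \emph{light leaves lemma}: for every $w \in W$ and every reduced expression $\un w$ of $w$, the set $\{ \LL_{\un x, \eb} \mid \eb \in M(\un x, w) \}$ spans $\Hom_\DC(B_{\un x}, B_{\un w})$ modulo $F^{< w}$. Granted the lemma and its vertical flip, any $\phi \in F^{\le w}$ can be written, modulo $F^{< w}$, as an $R$-linear combination of compositions $\oLL_{\un y, \fb} \circ \LL_{\un x, \eb}$ by inserting the identity of $B_{\un w}$ in a middle slice and applying the lemma to each half; induction on $\ell(w)$ then exhausts the entire Hom space.

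The hardest piece is the light leaves lemma itself, which I would prove by induction on the length of $\un x$. The inductive step strips off the last simple reflection and analyzes, in each of Libedinsky's four cases (U0, U1, D0, D1, according to whether the new reflection raises or lowers the length of the partial subexpression and according to the value of $\eb_m$), how an arbitrary morphism decomposes after isolating the new generator at the bottom-right of the diagram. Each case is handled by local diagrammatic manipulations: the Frobenius relations collapse incidental dot and trivalent-vertex combinations, while the two-color relations and the three-color Zamolodchikov relations guarantee that the outcome is independent, modulo $F^{<w}$, of the intermediate choices of reduced expressions and braid paths made during Libedinsky's inductive construction. Making this path-independence argument work uniformly across types, and carefully tracking the $F^{<w}$ error terms accumulated at every braid move, is where I expect the bulk of the technical effort to lie.
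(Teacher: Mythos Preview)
Your overall architecture---reduce to a ``light leaves lemma'' (spanning modulo lower terms), then bootstrap to double leaves by induction over a Bruhat-type filtration---matches the paper's. But two points deserve comment.

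\textbf{Linear independence.} You deduce it by pushing through $\FC$ and invoking Libedinsky's basis in $\BSBim$. The paper instead proves linear independence \emph{internally} in $\DC$, by localizing to $\Kar(\DC_Q)\cong\DStd_Q$ and computing that the matrix of restrictions $(p^{\eb}_{\fb})$ is upper-triangular in the path dominance order with invertible diagonal (a product of roots). Your route is fine over a Soergel realization, but note the logical direction: the paper uses the double-leaves basis to \emph{prove} that $\FC$ is an equivalence, whereas your argument presupposes enough of Soergel's theory (the Hom-rank formula) to know Libedinsky's set is a basis. So you lose exactly the generality the paper is after---realizations where $\SBim$ is not known to behave well but $\DC$ still does.

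\textbf{Spanning.} Here there is a real gap. You propose to prove the light leaves lemma by induction on $\ell(\un x)$. The base case $\un x=\emptyset$ then requires that every closed Soergel graph reduces to a polynomial, and more generally that every morphism $R\to B_{\un w}$ lies in $I_{\un w}$ when $w\neq e$. The paper explicitly flags this as a statement they do \emph{not} know how to prove directly outside type $A$ and dihedral type. Their fix is to induct not on $\ell(\un x)$ but on the \emph{maxwidth} $M$ of the diagram: the base case $M=0$ is then genuinely trivial (the graph is a single polynomial box), and closed diagrams with vertices are absorbed into the inductive step. The induction interleaves two statements, $(X_M)$ (spanning by the full set of light-leaves constructions with arbitrary choices) and $(L_M)$ (spanning by one fixed choice), and the heart of the argument is a sequence of claims showing that placing each generator---negative trivalent, negative dot, $2m$-valent vertex, positive trivalent, positive dot---below a light leaves map of width $M$ stays in the span of $X_{\un x}$ modulo $I_{\un w}$. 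Your four-case sketch (U0/U1/D0/D1) roughly corresponds to pieces of this, but without the maxwidth framework you have no leverage on the base case, and the ``path-independence modulo $F^{<w}$'' step you allude to is exactly Lemma~\ref{rexmovesmlt} (two rex moves differ by a strictly negative-positive map), which again is proved and exploited inside the maxwidth induction rather than separately.
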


The proof of this theorem is pure diagrammatic algebra. Unfortunately it is quite convoluted. It has the following consequences:
\begin{enumerate}
\item Under very general assumptions one has a functor $\DC \to \SBim$ (we defined this functor earlier under the assumption that $2$ was invertible).  If the category of Soergel bimodules is well-behaved (i.e. if the Soergel Categorification Theorem holds) then this functor is an equivalence of monoidal categories.
\item Soergel bimodules play an important role in modular representation theory \cite{Soergel, FiebigJAMS}.
Here it is desirable to have versions over fields of positive characteristic and local rings. One can define the diagrammatic category over very general rings. For example, if $W$ is crystallographic then one can define the diagrammatic category over $\ZM$. Over a complete local ring the indecomposable objects in the diagrammatic category are parametrized (up to shift) by $W$, and the split Grothendieck group always categorifies the Hecke algebra. This gives a new proof of Soergel's Categorification Theorem whenever Soergel bimodules are well-behaved, and suggests that the diagrammatic category is the correct replacement when they are not.
\item The category of Bott-Samelson bimodules is a cellular category, with cells parametrized by $w \in W$, and with cellular basis given by a fixed choice of light leaves morphisms. In particular, the endomorphism ring of any Bott-Samelson bimodule is a cellular algebra.
\end{enumerate}



\subsection{Organization of the paper}

This paper contains three parts. 

\begin{description}
\item[Part \ref{pt:background}] The first two sections give background on the Hecke algebra and Soergel bimodules.
\begin{description}
  \item[Section \ref{sec-hecke}] We recall the Hecke algebra, Kazhdan-Lusztig basis and Deodhar's defect formula.
  \item[Section \ref{sec-sbim}] We define standard and Soergel bimodules, state Soergel's categorification theorem and discuss realizations and localization.
\end{description}
\item[Part \ref{pt:diagrammatics}] In the next two sections, we define the diagrammatic categories.
 \begin{description}
 \item[Section \ref{sec-diagstd}] We define the diagrammatic presentation of standard modules.
 \item[Section \ref{sec-diagsbim}] We recall results of \cite{EDihedral} and define the diagrammatic category of Soergel bimodules.
  \end{description}
\item[Part \ref{pt:proofs}] In the last two sections we study the diagrammatic categories and prove the equivalence to Soergel bimodules.
\begin{description}
  \item[Section \ref{sec-LLL}] We define Libedinsky's light leaves and double leaves morphisms in the diagrammatic setting. We state the theorem that double leaves span, and deduce the main theorems of the paper.
\item[Section \ref{sec-LLLproof}] We prove that double leaves span.
  \end{description}

\end{description}

\subsection{Acknowledgements} Both authors would like to thank Mikhail Khovanov for encouraging and supporting their collaboration. The second author would like to thank Nicolas Libedinsky and Rapha\"el Rouquier for emphasising the importance of generators and relations, and Jean Michel for help speeding up his programs.

\part{Background on the Hecke algebra and Soergel bimodules}

\label{pt:background}

\section{The Hecke Algebra}
\label{sec-hecke}

Background on this section can be found in \cite{Humphreys}.

\subsection{Basic definitions}
\label{subsec-basicdefinition}

Let $(W,S)$ be a Coxeter system and let $e \in W$ denote the identity. That is, $W$ is the group generated by $S$ subject to the relations:
\begin{equation} s^2=e   
\qquad \text{for all $s \in S$,}\end{equation}
\begin{equation} \ubr{sts\ldots}{m_{st}} = \ubr{tst\ldots}{m_{st}}
\label{braidreln} 
\qquad \text{for all $s \ne t \in S$.}
\end{equation}
The numbers $m_{st} = m_{ts}$ associated to each pair of simple reflections determine the group $W$, and must satisfy $m_{st} \in \{2,3,4,\ldots\}$ or $m_{st}=\infty$. When $m_{st}=\infty$, the
so-called \emph{braid relation} \eqref{braidreln} is omitted. The group $W$ is equipped with a Bruhat order $\le$ and a length function $\ell : W \to \ZM_{\ge 0}$.

For any subset $J \subset S$ the corresponding \emph{parabolic subgroup} $W_J$ is the subgroup generated by $s \in J$. Then $(W_J, J)$ is a Coxeter system with presentation induced
from that of $(W,S)$. The \emph{rank} of $W_J$ is the size of $J$. In particular, the parabolic subgroup of a single vertex is isomorphic to $S_2$, and the parabolic subgroup of a pair of
vertices is a finite or infinite dihedral group. We call $J$ \emph{finitary} if $W_J$ is finite, in which case it has a longest element $w_J$.

The Hecke algebra $\HB$ of $W$ is the free $\Zvv$-algebra generated by symbols $T_s$ for $s \in S$, modulo the following relations:

\begin{equation} T_s^2= (v^{-2}-1) T_s  + v^{-2} \label{rank1heckestd} \qquad \text{for all $s \in S$,}\end{equation}
\begin{equation} \ubr{T_sT_tT_s\ldots}{m_{st}} = \ubr{T_tT_sT_t\ldots}{m_{st}} \label{rank2heckestd} 
\qquad \text{for all $s \ne t \in S$.}
\end{equation}
It will be useful to work with a different normalization. If we set $H_s \define vT_s$ then these relations become:
\begin{equation} H_s^2= (v^{-1}-v)H_s + 1 \label{rank1heckestd'} \qquad \text{for all $s \in S$,}\end{equation}
\begin{equation} \ubr{H_sH_tH_s\ldots}{m_{st}} = \ubr{H_tH_sH_t\ldots}{m_{st}} \label{rank2heckestd'} 
\qquad \text{for all $s \ne t \in S$.}
\end{equation}

\begin{notation} 
We will use an underlined roman letter $\un{w} = (s_1,s_2, \dots, s_m)$ to denote a finite sequence of elements of $S$. Omission of the underline will denote the product $w=s_1s_2 \dots s_m$ in $W$. The \emph{length} of $\un{w}$ is $\ell(\un{w}) = m$. Note that $\ell(\un{w}) \ge \ell(w)$ with equality holding if and only if $\un{w}$ is a reduced expression for $w$. We will often abuse notation and write $\un{w} = s_1 s_2  \dots s_m$. The underline reminds us that the sequence of simple reflections, and not just their product in $W$, is important. Given $\un{w} = s_1 s_2  \dots s_m$ we write $H_{\un{w}} = H_{s_1}H_{s_2} \dots H_{s_d}$. Certainly it is possible that $H_{\un{w}} \ne H_{\un{x}}$ even though $x = w$. Later in this paper, similar notation will apply to other iterated products or tensor products.
\end{notation}

\begin{notation} The phrase ``reduced expression" dominates this paper to such an extent we have decided to shorten it to \emph{rex}. The plural of rex is rexes. \end{notation}

Given any two rexes $\un{w}$ and $\un{w}'$ for $w \in W$, it is possible to pass from $\un{w}$ to $\un{w}'$ using only braid relations. It follows from \eqref{rank2heckestd'} that the elements $H_{\un{w}}$ and $H_{\un{w}'}$ are equal, and are denoted $H_w$. We write $H_e \define 1$ for the identity of $\HB$. These elements $\{H_w\}_{w
\in W}$ form the \emph{standard basis} of $\HB$ as a $\Zvv$-module.

\subsection{The Kazhdan-Lusztig basis}
\label{subsec-KLbasis}

The Hecke algebra is equipped with a $\Z$-linear \emph{bar involution}, denoted $h \mapsto \overline{h}$ and
uniquely specified as an algebra homomorphism by $v \mapsto v^{-1}$ and $H_s \mapsto H_s^{-1}$. A simple
calculation shows that $H_s + v = H_s^{-1}+v^{-1}$, so that the element $\un{H}_s = H_s+v$ is bar-invariant.

\begin{thm}(Kazhdan-Lusztig \cite{KaL1}) There exists a unique basis $\{\un{H}_w\}_{w \in W}$ of $\HB$ as a $\Zvv$-module, called the \emph{Kazhdan-Lusztig basis}, which satisfies:
\begin{itemize}
\item $\overline{\un{H}_w}=\un{H}_w$;
\item $\un{H}_w = H_w + \sum_{x < w} h_{x,w} H_x$ where $h_{x,w} \in v\ZM[v]$.\end{itemize}
\end{thm}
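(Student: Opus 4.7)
The plan is to prove existence by induction on $\ell(w)$ and then uniqueness by downward induction on the Bruhat order. For the base case of existence, $\underline{H}_e = H_e = 1$ is evidently bar-invariant with no lower terms, so it satisfies both conditions vacuously. For $\ell(w) > 0$, I would choose a simple reflection $s$ with $sw < w$ and set $w' = sw$, so that $\ell(w') < \ell(w)$ and $\underline{H}_{w'}$ has already been constructed. The key candidate is the product $\underline{H}_s \underline{H}_{w'}$, which is automatically bar-invariant because both factors are.

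The heart of the argument is the explicit computation of $\underline{H}_s \underline{H}_{w'}$ in the standard basis using the multiplication rules
\[
H_s H_y = \begin{cases} H_{sy} & \text{if } sy > y, \\ H_{sy} + (v^{-1}-v) H_y & \text{if } sy < y, \end{cases}
\]
combined with $\underline{H}_{w'} = H_{w'} + \sum_{y < w'} h_{y,w'} H_y$. The coefficient of $H_w$ works out to $1$ and all other terms $H_x$ satisfy $x < w$. Terms of the form $h_{y,w'} v H_y$ and $h_{y,w'} H_{sy}$ already lie in $v\mathbb{Z}[v]$, so the only obstruction to the $v\mathbb{Z}[v]$ condition comes from the summands $h_{y,w'} v^{-1} H_y$ for $y < w'$ with $sy < y$. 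Writing $v^{-1} h_{y,w'} = \mu(y,w') + v(\text{element of } \mathbb{Z}[v])$, where $\mu(y,w')$ is the coefficient of $v$ in $h_{y,w'}$, isolates the defect as the integer $\mu(y,w')$.

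I would then define
\[
\underline{H}_w \;=\; \underline{H}_s \underline{H}_{w'} \;-\; \sum_{\substack{y < w' \\ sy < y}} \mu(y,w') \underline{H}_y.
\]
This is bar-invariant since each $\mu(y,w') \in \mathbb{Z}$ is bar-invariant and each $\underline{H}_y$ is bar-invariant by induction. The inductive expansion $\underline{H}_y = H_y + \sum_{z < y} h_{z,y} H_z$ with $h_{z,y} \in v\mathbb{Z}[v]$ ensures that the subtraction kills precisely the offending constant term in the coefficient of $H_y$, while contributing only $v\mathbb{Z}[v]$-multiples to coefficients $H_z$ for $z < y$. Verifying coefficient by coefficient that the result lies in $v\mathbb{Z}[v]$ is the main bookkeeping step and the principal technical obstacle.

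For uniqueness, suppose $C$ and $C'$ both satisfy the two conditions and set $D = C - C' = \sum_{x < w} d_x H_x$ with each $d_x \in v\mathbb{Z}[v]$. Since both $C$ and $C'$ are bar-invariant, so is $D$. Suppose for contradiction that some $d_x \neq 0$ and choose such an $x$ of maximal length. The formula $\overline{H_y} = H_y + (\text{terms indexed by } z < y)$ shows that the $H_x$-coefficient of $\overline{D}$ is $\overline{d_x}$ plus contributions coming only from $d_y$ with $y > x$, all of which vanish by maximality. Hence $\overline{d_x} = d_x$, but then $d_x \in v\mathbb{Z}[v] \cap v^{-1}\mathbb{Z}[v^{-1}] = \{0\}$, contradicting $d_x \neq 0$. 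Therefore $D = 0$, proving uniqueness.
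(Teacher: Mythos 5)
Your argument is correct and is essentially the original Kazhdan--Lusztig proof: construct $\un{H}_w$ by multiplying $\un{H}_s\un{H}_{sw}$ and recursively subtracting integer multiples $\mu(y,sw)\un{H}_y$ to kill the constant terms coming from $v^{-1}h_{y,sw}$, then prove uniqueness by the standard argument that a bar-invariant element with coefficients in $v\ZM[v]\cap v^{-1}\ZM[v^{-1}]=\{0\}$ must vanish. The paper itself does not reprove this theorem --- it simply cites \cite{KaL1} --- so there is no in-paper proof to compare against, but your outline matches the argument in that reference (modulo the routine bookkeeping you flag, which does go through: the correction term $-\mu(x,w')\un{H}_x$ exactly removes the constant part of $v^{-1}h_{x,w'}$, and the contributions $-\mu(y,w')h_{x,y}$ for $y>x$ are already in $v\ZM[v]$ by the inductive hypothesis).
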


The polynomials $h_{x,w} \in \ZM[v]$ are \emph{Kazhdan-Lusztig polynomials}. 

\begin{remark} In this paper we follow the normalization of Soergel \cite{Soe4} rather than the original normalization of \cite{KaL1}. In particular, the Kazhdan-Lusztig polynomials $P_{x,w}$ in \cite{KaL1} are related to the Kazhdan-Lusztig polynomials above by the formula
\[
h_{x,w} = v^{2(\ell(w) - \ell(x))}P_{x,w}(v^{-2}).
\]
\end{remark}

Given $\un{w} = s_1s_2 \dots s_m$ we set $\un{H}_{\un{w}} \define \un{H}_{s_1} \dots \un{H}_{s_m}$. Note that $\un{H}_{\un{w}} \ne \un{H}_w$ in general, but equality does hold when $\ell(\un{w}) \le 2$ and $s_1 \ne s_2$.

An element $w \in W$ is called \emph{(rationally) smooth} if $\un{H}_w = \sum_{y \le w} v^{\ell(w) -\ell(y)}H_y$. The longest element of any finite parabolic subgroup is smooth. Any
element of a rank 2 parabolic subgroup is smooth.

We pause to present three conjectures of Kazhdan and Lusztig:

\begin{conj} The polynomials $h_{x,y}$ have positive coefficients, i.e. they lie in $\ZM_{\ge 0}[v]$. \end{conj}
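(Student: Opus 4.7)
The plan is to prove positivity by realizing each $h_{x,w}$ as a graded multiplicity inside the categorification of $\HB$ by Soergel bimodules. By Soergel's Categorification Theorem, recalled in the introduction, the indecomposables $\{B_w\}_{w \in W}$ form a distinguished basis of $[\SBim]$ under the isomorphism $[\SBim] \simto \HB$. The first step is to identify $[B_w]$ with the Kazhdan--Lusztig element $\un{H}_w$, which decomposes into two sub-claims: bar-invariance $\overline{[B_w]} = [B_w]$, equivalent to a self-duality of $B_w$ under the contravariant involution on $\SBim$ that swaps the two $R$-actions; and the triangular expansion $[B_w] = H_w + \sum_{x<w} h_{x,w} H_x$ with $h_{x,w} \in v\ZM[v]$.

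Granting this identification, positivity follows by an elementary argument. Every Bott--Samelson bimodule admits a $\Delta$-filtration whose subquotients are shifts of the standard bimodules $\Delta_x$ indexed by subexpressions. By a Krull--Schmidt argument using a suitable idempotent, the indecomposable summand $B_w$ inherits such a filtration. Under $[\SBim] \simto \HB$, the standard basis element $H_x$ corresponds to $[\Delta_x]$, so $h_{x,w} = \sum_k m_k v^k$ where $m_k$ is the non-negative integer counting occurrences of $\Delta_x(k)$ in the filtration. Positivity is thus immediate once the identification is in hand.

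The main obstacle, and the heart of the conjecture, is proving $[B_w] = \un{H}_w$ in the general Coxeter setting. For a Weyl group this falls out of the geometric incarnation $\HC \subset D^b_{B \times B}(G,\CM)$ and the Decomposition Theorem, since purity of intersection cohomology forces the required triangularity with coefficients in $v\ZM[v]$. For arbitrary Coxeter systems no geometric model exists, and the standard approach is Hodge-theoretic: one inductively establishes hard Lefschetz and Hodge--Riemann bilinear relations for an intersection form on $B_w$, which simultaneously pin down indecomposability, self-duality, and the triangularity bound. I expect the diagrammatic framework of the present paper to be essential here, since the double leaves basis, the cellular structure on $\End(B_{\un{w}})$, and the ability to work over arbitrary complete local rings all provide the algebraic substrate on which to define Lefschetz operators and intersection forms without recourse to geometry.
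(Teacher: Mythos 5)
The statement you are addressing is labelled a \emph{conjecture} in the paper, and the paper does not prove it: it records the Kazhdan--Lusztig positivity conjectures, notes in Section~\ref{subsec-KLbasis} that they follow from Soergel's conjecture, and in Section~\ref{subsec-SoergelCatfn} cites the external work \cite{EWHodge} where Soergel's conjecture is established for realizations over $\RM$. So there is no ``paper's own proof'' to compare against; your proposal should be read as a sketch of the reduction and of the strategy of \cite{EWHodge}, not as something the present paper carries out.

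As such a sketch it is essentially correct, with two points worth tightening. First, the assertion that $B_w$ inherits a $\Delta$-filtration ``by a Krull--Schmidt argument using a suitable idempotent'' understates the issue: as the paper itself emphasizes in Section~\ref{subsec-soergelandstandard}, the existence of a standard filtration on a \emph{summand} of a Bott--Samelson bimodule, with subquotients ordered compatibly with Bruhat order and with well-defined graded multiplicities, is a genuine theorem of Soergel's (using Hom-freeness and the standard pairing), not a formal consequence of Krull--Schmidt. Once that is in place, your conclusion that $h_{x,w}$ is a multiplicity count in $\ZM_{\geq 0}[v]$ is exactly the standard argument. Second, your final paragraph overstates the role of the diagrammatic framework: the proof in \cite{EWHodge} is carried out directly in the bimodule category for a realization over $\RM$, using Soergel's results on Hom spaces and standard filtrations, and does not rely on the double-leaves basis or the cellular structure developed here. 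The diagrammatic category is the natural replacement when Soergel's bimodule theory is unavailable (e.g.\ small characteristic, complete local rings), but it is not the substrate on which the Hodge-theoretic argument is built.
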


\begin{conj} The structure coefficients of the Kazhdan-Lusztig basis $\HB$ are positive. In other words, $\un{H}_u \un{H}_v = \sum c^w_{u,v} \un{H}_w$ for some $c^w_{u,v} \in \Nvv$. \end{conj}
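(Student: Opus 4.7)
The plan is to exploit the categorification of $\HB$ by the diagrammatic Hecke category $\DC$ developed in this paper (or, when Soergel's Categorification Theorem is available, the category $\SBim$ itself). Once the categorification is in place, positivity of structure constants reduces to the tautological fact that multiplicities of indecomposable objects in a direct sum decomposition are non-negative integers.

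First, I would invoke the categorification theorem, which asserts that the split Grothendieck group of $\DC$ is isomorphic to $\HB$ as a $\Zvv$-algebra, with the class of the self-dual indecomposable $B_w$ mapping to the Kazhdan-Lusztig basis element $\un{H}_w$. Under this isomorphism, the grading shift $(1)$ corresponds to multiplication by $v$. Consequently, any decomposition in $\DC$ of the form
\begin{equation*}
B_u \otimes B_v \;\cong\; \bigoplus_{w \in W}\bigoplus_{k \in \Z} B_w(k)^{\oplus n^w_{u,v,k}}
\end{equation*}
translates, upon passing to the Grothendieck group, into the identity
\begin{equation*}
\un{H}_u \, \un{H}_v \;=\; \sum_{w \in W} c^w_{u,v}\, \un{H}_w, \qquad c^w_{u,v} \;=\; \sum_{k \in \Z} n^w_{u,v,k}\, v^{-k} \;\in\; \Nvv.
\end{equation*}
Since each multiplicity $n^w_{u,v,k}$ is a non-negative integer, positivity of $c^w_{u,v}$ is immediate.

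The main obstacle is precisely the identification $[B_w] = \un{H}_w$, i.e.\ the so-called Soergel conjecture. Without it, one still obtains a categorification of $\HB$ by $\DC$, but the indecomposables need not categorify the Kazhdan-Lusztig basis, and the positivity argument above collapses. In characteristic zero the strategy is to equip the morphism spaces of $\DC$ (in particular, the intersection forms on $\Hom$ spaces between Bott-Samelson objects) with Lefschetz operators satisfying hard Lefschetz and the Hodge-Riemann bilinear relations, and to perform a joint induction on the Bruhat order. Cellularity of $\DC$, obtained from the light leaves basis proven in the body of this paper, provides the algebraic scaffolding in which these Hodge-theoretic properties can be formulated and propagated through the induction. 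As a bonus, the same input yields the first conjecture (positivity of $h_{x,w}$), since $h_{x,w}$ emerges as the graded dimension of an intersection form subspace and is therefore manifestly an element of $\Nvv$.
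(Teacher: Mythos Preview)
Your proposal is correct and matches the paper's own treatment. Note that the paper states this as a \emph{conjecture} and does not give a self-contained proof; it simply observes (in \S\ref{subsec-SoergelCatfn}) that the positivity conjectures are immediate consequences of Soergel's conjecture $\ch(B_w) = \un{H}_w$, and points to the authors' separate paper \cite{EWHodge} for the Hodge-theoretic proof of the latter over $\RM$---precisely the strategy you outline.
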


These first two conjectures are commonly referred to as the \emph{Kazhdan-Lusztig positivity conjectures}. The following conjecture is the \emph{Kazhdan-Lusztig conjecture}. It and its many generalizations account to a large extent for the interest in Kazhdan-Lusztig polynomials. (For a precise statement of the Kazhdan-Lusztig conjecture see \cite{KaL1}.)

\begin{conj} (The Kazhdan-Lusztig Conjecture) If $W$ is the Weyl group of a complex semi-simple Lie algebra, then the Kazhdan-Lusztig polynomials give the multiplicities of simple modules in Verma modules in the principal block of category $\OC$.\end{conj}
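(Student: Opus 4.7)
The plan is to translate the multiplicity question in category $\OC$ into a geometric question about perverse sheaves on the flag variety $G/B$, where the Kazhdan-Lusztig polynomials are visibly computed by local intersection cohomology. First I would invoke the Beilinson--Bernstein localization theorem, which gives an equivalence between the principal block $\OC_0$ and the category of regular holonomic $\DC$-modules on $G/B$ that are quasi-equivariant for $B$ (equivalently, constructible with respect to the Bruhat stratification $G/B = \bigsqcup_{w \in W} BwB/B$). Under this equivalence, Verma modules $M(x \cdot 0)$ correspond to the standard $\DC$-modules supported on Schubert cells, and simple quotients $L(y \cdot 0)$ correspond to their simple subquotients, which are the minimal extensions (``intermediate extensions'') to the Schubert variety $\overline{BwB/B}$.

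Next I would apply the Riemann--Hilbert correspondence, which promotes the above equivalence to one between this $\DC$-module category and the category $\Perv_{(B)}(G/B,\CM)$ of $B$-constructible perverse sheaves on $G/B$. Under de Rham, standard $\DC$-modules become standard perverse extensions $\Delta_x = j_{x!}\underline{\CM}_{BxB/B}[\ell(x)]$ and simple $\DC$-modules become intersection cohomology sheaves $\ic_y$. Consequently, the multiplicity $[M(x \cdot 0) : L(y \cdot 0)]$ is identified with the multiplicity $[\Delta_x : \ic_y]$ in the Grothendieck group of $\Perv_{(B)}(G/B)$.

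The remaining step is to identify this geometric multiplicity with the Kazhdan-Lusztig polynomial $h_{x,y}$ specialized at $v=1$. Here I would use the transition matrix between the standard basis $\{[\Delta_x]\}$ and the IC basis $\{[\ic_y]\}$ of the Grothendieck group: the entries are computed, by the definition of intermediate extension and standard spectral sequences, as alternating sums of stalk dimensions of $\ic_y$ along Schubert cells. Kazhdan--Lusztig's own geometric interpretation (proved by Kazhdan--Lusztig for Weyl groups using the purity and pointwise purity of IC sheaves on Schubert varieties, established via the Deligne--Gabber decomposition theorem) states precisely that the Poincar\'e polynomials of these stalks are the $P_{x,y}$, which via the normalization recalled in the paper recovers $h_{x,y}$.

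The main obstacle is the final identification of stalk Poincar\'e polynomials with the combinatorially defined $h_{x,y}$: this requires knowing that the IC sheaves on Schubert varieties are pointwise pure, which is a nontrivial consequence of either the decomposition theorem applied to the Bott--Samelson resolution, or equivalently (and this is the route that fits best with the present paper's perspective) Soergel's Hom-formula together with the hard Lefschetz property for Soergel bimodules. Even granting Beilinson--Bernstein localization, which itself is a deep theorem requiring the exactness of global sections on dominant twisted $\DC$-modules, the bridge from geometry back to combinatorics is where all the substantive content lives.
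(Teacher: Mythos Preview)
Your outline is the classical Beilinson--Bernstein/Brylinski--Kashiwara proof, and it is essentially correct as a strategy. But note that the paper does not itself prove this statement: it is recorded there as a \emph{conjecture}, and the only argument the paper offers is the remark that all three conjectures are consequences of Soergel's conjecture $\ch(B_w)=\un{H}_w$, together with citations --- Soergel's earlier work \cite{Soe1} for the implication ``Soergel's conjecture $\Rightarrow$ Kazhdan--Lusztig conjecture'', and the authors' companion paper \cite{EWHodge} for Soergel's conjecture itself.

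So your route and the paper's route are genuinely different. You go through geometry: localization to $\DC$-modules on $G/B$, Riemann--Hilbert to perverse sheaves, and then pointwise purity of $\ic$ sheaves on Schubert varieties (via the decomposition theorem applied to Bott--Samelson resolutions) to identify stalk Poincar\'e polynomials with the $h_{x,y}$. The paper's route is entirely algebraic: one shows $\ch(B_w)=\un{H}_w$ by establishing Hodge-theoretic properties (hard Lefschetz, Hodge--Riemann) for Soergel bimodules over $\RM$, with no flag variety, no $\DC$-modules, and no decomposition theorem in sight; the link back to category $\OC$ is then Soergel's original functor $\VM$ from \cite{Soe1}. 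What the algebraic route buys is that it works uniformly for all Coxeter groups (yielding the positivity conjectures even when there is no geometry), and it avoids the heavy machinery of \'etale cohomology or mixed Hodge modules. What your geometric route buys is conceptual transparency in the Weyl group case and the fact that it was available two decades earlier. You actually allude to the Soergel alternative in your final paragraph, so you are aware of both; just be clear that the paper is firmly on that side and does not reproduce any of the localization argument you sketch.
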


These three conjectures are themselves implications of the Soergel conjecture. We will discuss this in section \ref{subsec-SoergelCatfn}.

\subsection{The presentation in the Kazhdan-Lusztig generators}
\label{subsec-KLpresentation}

As the elements $H_s$ generate $\HB$, so too do the elements $\un{H}_s$. The corresponding relations are slightly more complicated:
\begin{equation} \un{H}_s^2 = \un{H}_s (v+v^{-1}), \label{rank1hecke} \end{equation}
\begin{equation} \ubr{\un{H}_s\un{H}_t\un{H}_s\ldots}{k+1} = \sum_d  c_{k,d} \un{H}_{\ubr{sts\ldots}{d+1}} \ \mathrm{ for }\ 1 \le k+1 \le m_{st}.  \label{rank2hecke} \end{equation}
The first relation corresponds to \eqref{rank1heckestd}.
The second relation expresses a product of generators in terms of the Kazhdan-Lusztig basis within a given dihedral group. The coefficients $c_{k,d}$ appearing are decomposition
numbers for $\mathfrak{sl}_2$ tensor products. If we let $V_i$ denote the $i+1$-dimensional irreducible of $\mathfrak{sl}_2$, then $V_1^{\ot k} \bigoplus \cong V_d^{\oplus c_{k,d}}$. Roughly
speaking, this is because the Temperley-Lieb algebra at a root of unity ``categorifies" the alternating product $\un{H}_s \un{H}_t \un{H}_s \ldots$ in the dihedral Hecke algebra. More details can be
found in \cite{EDihedral}.

Relation \eqref{rank2hecke} holds even when $m_{st}=\infty$, and for any $m$ can be viewed as a definition or explicit construction of each $\un{H}_w$. When $m=\infty$, this relation does not
impose any new algebraic relations on products of $\un{H}_s$ and $\un{H}_t$. When $m < \infty$, there is one new relation on products of $\un{H}_s$ and $\un{H}_t$ coming from the fact that $\un{H}_{\ubr{sts\ldots}{m}} = \un{H}_{\ubr{tst\ldots}{m}}$.

\begin{ex} \label{rank2heckeEx} The first few examples are: \[\begin{array}{lc} m_{st}=2: & \un{H}_s\un{H}_t=\un{H}_t\un{H}_s \\ m_{st}=3: & \un{H}_s\un{H}_t\un{H}_s-\un{H}_s=\un{H}_t\un{H}_s\un{H}_t-\un{H}_t\\ m_{st}=4: & \un{H}_s\un{H}_t\un{H}_s\un{H}_t-2\un{H}_s\un{H}_t = \un{H}_t\un{H}_s\un{H}_t\un{H}_s - 2 \un{H}_t \un{H}_s \\ m_{st}=5: & \un{H}_s\un{H}_t\un{H}_s\un{H}_t\un{H}_s - 3
\un{H}_s\un{H}_t\un{H}_s + \un{H}_s = \un{H}_t\un{H}_s\un{H}_t\un{H}_s\un{H}_t - 3 \un{H}_t \un{H}_s\un{H}_t + \un{H}_t. \end{array} \] \end{ex}

\subsection{The standard trace and the defect formula}
\label{subsec-defectformula}

A \emph{trace} on $\HB$ is a $\Zvv$-linear map $\e \co \HB \to \Zvv$ satisfying $\e(hh') = \e(h'h)$ for all $h,h' \in \HB$. A straightforward calculation shows that the map $\e(\sum c_w
H_w) = c_e$ is a trace, called the \emph{standard trace}. There is a nice combinatorial formula for the standard trace of a product $\un{H}_{\un{w}}$, known
as the \emph{defect formula}, which we now discuss.

A \emph{subsequence} of $\un{w} = s_1s_2\dots s_m$ is a sequence $\pi_1 \pi_2 \dots \pi_m$ such that $\pi_i \in \{ e, s_i \}$ for all $1 \le i \le m$. Instead of working with subsequences,
we work with the equivalent datum of a sequence $\eb = \eb_1 \eb_2 \dots \eb_m$ of 1's and 0's giving the indicator function of a subsequence, which we refer to as a \emph{01-sequence}.

We can also think of $\eb$ as a roadmap for a gentle stroll through the Bruhat graph (with much pausing to admire the scenery). This \emph{Bruhat stroll} is the sequence $x_0=e, x_1,
\dots, x_m$ defined by \[ x_i \define s_1^{\eb_1} s_2^{\eb_2} \dots s_i^{\eb_i} \] for $0 \le i \le m$. We call $x_m$ the \emph{end-point} of the Bruhat stroll, and denote it by $\un{w}^{\eb}$. Alternatively, we will say that a subsequence $\eb$ of $\un{w}$ \emph{expresses} the end-point $\un{w}^{\eb}$.
The Bruhat stroll allows us to decorate each index of $\eb$ with an additional token, either U(p) or D(own). If $\eb_i=1$ so that $x_i = x_{i-1}s_i$, then we assign U to $i$ if $x_i >
x_{i-1}$ (so that we moved up in the Bruhat order at time $i$) or D if $x_i < x_{i-1}$ (so that we moved down at time $i$). If $\eb_i=0$ so that $x_i = x_{i-1}$, we glance longingly in the
direction of $x_{i-1}s_i$ but remain unmoved: we assign U or D to the index $i$ according to whether $x_{i-1}s_i > x_{i-1}$ or $x_{i-1}s_i < x_{i-1}$. The \emph{defect} of a 01-sequence
$\eb$, denoted $d(\eb)$, is defined to be the number of U0's minus the number of D0's. It measures the defect between where we longed to go and where we actually went.

\begin{ex} \label{ex:defects}
Here are some examples of subexpressions, end-points and defects:
  \begin{itemize}
  \item Suppose that $\un{w}=sss$. There are four subsequences with end-point $e$: $(U1,D1,U0)$ and $(U0,U1,D1)$ with defect $1$, $(U1,D0,D1)$ with defect $-1$, and $(U0,U0,U0)$
with defect $3$. There are four subsequences with end-point $s$: $(U1,D1,U1)$ and $(U0,U1,D0)$ with defect $1$, $(U1,D0,D0)$ with defect $-2$, and $(U0,U0,U1)$ with defect $2$.
  \item Suppose that $\un{w} = sts$ and that $m_{st}  = 3$. There are unique subexpressions with endpoints $sts, ts, st$ and $t$ with defects $0$, $1$, $1$ and $2$ respectively. There are two subexpressions $(U1, U0, D0)$ and $(U0, U0, U1)$ with end-point $s$ of defects $0$ and $2$ respectively, and two subexpressions $(U1, U0, D1)$ and $(U0, U0, U0)$ with end-point $e$ with defects $1$ and $3$ respectively.
  \end{itemize}
\end{ex}

The defect is useful because of the following lemma of Deodhar \cite{DD}:

\begin{lem} \label{lem:defects}
For any expression $\un{w}$ we have
\[
\un{H}_{\un{w}} = \sum v^{d(\eb)}H_{\un{w}^{\eb}}
\]
where the sum runs over all 01-sequences of length $\ell(\un{w})$.
\end{lem}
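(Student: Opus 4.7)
The plan is to prove the formula by induction on $\ell(\un{w}) = m$. The base case $m=0$ is trivial: $\un{H}_{\un{w}}$ is the empty product $1 = H_e$, and the only 01-sequence is empty, with defect $0$ and endpoint $e$.

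For the inductive step, write $\un{w} = \un{w}' s$ where $\un{w}'$ has length $m-1$. By induction,
\[
\un{H}_{\un{w}'} \;=\; \sum_{\eb'} v^{d(\eb')}\,H_{(\un{w}')^{\eb'}},
\]
the sum over all 01-sequences $\eb'$ of length $m-1$. Multiplying on the right by $\un{H}_s = H_s + v$ and using the standard basis multiplication rule (which follows from \eqref{rank1heckestd'}):
\[
H_x H_s \;=\; \begin{cases} H_{xs} & \text{if } xs>x,\\ H_{xs} + (v^{-1}-v)H_x & \text{if } xs<x,\end{cases}
\]
I obtain
\[
H_x \,\un{H}_s \;=\; H_x H_s + v H_x \;=\; \begin{cases} H_{xs} + v\,H_x & \text{if } xs>x,\\ H_{xs} + v^{-1}H_x & \text{if } xs<x.\end{cases}
\]

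Now I match this expansion against the sum over 01-sequences $\eb$ of length $m$. Every such $\eb$ arises uniquely as $\eb = \eb'\eb_m$ for some $\eb'$ of length $m-1$ and $\eb_m \in \{0,1\}$. Setting $x = (\un{w}')^{\eb'}$, the endpoint $\un{w}^{\eb}$ equals $xs$ if $\eb_m = 1$ and $x$ if $\eb_m = 0$. The defect behaves as follows: appending a $1$ (in either the U1 or D1 case) leaves the defect unchanged; appending a $0$ increases the defect by $1$ in the U0 case and decreases it by $1$ in the D0 case. Thus the contribution from extending a fixed $\eb'$ is exactly $v^{d(\eb')}(H_{xs} + v H_x)$ when $xs > x$ and $v^{d(\eb')}(H_{xs} + v^{-1} H_x)$ when $xs < x$, matching the expansion of $H_x\,\un{H}_s$ computed above. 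Summing over $\eb'$ yields the claimed formula for $\un{H}_{\un{w}}$.

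There is no real obstacle here: the proof is a straightforward induction whose whole content is the bookkeeping match between the two right multiplication cases for $H_x$ and the two (U vs D) rules for assigning defects upon extending a subexpression. The only point requiring slight care is to verify that the algebraic identity $H_x H_s + v H_x = H_{xs} + v^{\pm 1}H_x$ (with sign depending on whether $xs > x$ or $xs < x$) aligns correctly with the $\pm 1$ contribution to the defect from a $0$ appended to the subexpression.
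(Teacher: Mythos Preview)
Your proof is correct and follows essentially the same approach as the paper: both proceed by induction on the length of $\un{w}$, reducing to the identity $H_x\un{H}_s = H_{xs} + v^{\pm 1}H_x$ (with sign according to whether $xs > x$ or $xs < x$) and matching this with the defect bookkeeping for the appended symbol. Your version is simply more detailed in spelling out the base case and the $\eb'\eb_m$ correspondence.
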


\begin{proof}(Sketch) It is an straightforward consequence of \eqref{rank1heckestd'} that in $\HB$ we have the relation
\[
H_x\un{H}_s = \begin{cases} H_{xs} + vH_{x} & \text{if $xs > x$,} \\
H_{xs} + v^{-1}H_x & \text{if $xs < x$.} \end{cases}
\]
We conclude that if the lemma is true for $\un{w} = \un{x}$ it is also true for $\un{w} = \un{x}s$. The result now follows by induction.
\end{proof}

We now come to the defect formula for the trace:

\begin{cor}\label{cor:defects}
For any expression $\un{w}$ we have
\[ \e(\un{H}_{\un{w}}) = \sum v^{d(\eb)} \]
where the sum is over all 01-sequences expressing the identity element.\end{cor}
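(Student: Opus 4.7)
The plan is to deduce the corollary as an essentially immediate consequence of Lemma \ref{lem:defects} combined with the definition of the standard trace. The standard trace $\e$ is defined to be $\Zvv$-linear and to pick out the coefficient of $H_e$ when an element of $\HB$ is expanded in the standard basis $\{H_w\}_{w\in W}$.

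First I would apply $\e$ to both sides of the identity
\[
\un{H}_{\un{w}} = \sum_{\eb} v^{d(\eb)}H_{\un{w}^{\eb}}
\]
supplied by Lemma \ref{lem:defects}, where $\eb$ runs over all $01$-sequences of length $\ell(\un{w})$. By $\Zvv$-linearity of $\e$, the right-hand side becomes
\[
\sum_{\eb} v^{d(\eb)} \e(H_{\un{w}^{\eb}}).
\]
Since $\e(H_w)$ equals $1$ if $w=e$ and $0$ otherwise, the only surviving terms are those indexed by $01$-sequences $\eb$ with end-point $\un{w}^{\eb}=e$, i.e.\ the subexpressions of $\un{w}$ expressing the identity. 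This gives exactly the claimed formula.

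There is no real obstacle here: the only content lies in Lemma \ref{lem:defects}, and the corollary is the specialization of that identity under the trace. The one sanity check worth performing is consistency with the examples in Example \ref{ex:defects}: for $\un{w}=sss$, the subexpressions with end-point $e$ carry defects $1, 1, -1, 3$, yielding $\e(\un{H}_s^3) = 2v + v^{-1} + v^3$, which agrees with the direct computation using $\un{H}_s^2 = (v+v^{-1})\un{H}_s$.
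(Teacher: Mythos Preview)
Your proof is correct and is exactly the intended argument: the paper states the corollary with no proof, as it follows immediately from Lemma~\ref{lem:defects} by applying the standard trace and using $\e(H_w)=\delta_{w,e}$. Your sanity check against Example~\ref{ex:defects} is also accurate.
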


\begin{ex} We continue Example \ref{ex:defects} and check Lemma \ref{lem:defects}:
  \begin{itemize}
  \item Using relation \eqref{rank1hecke} we see that
\[
\un{H}_s^3=(v+v^{-1})^2\un{H}_s = (v^{-2} + 2 + v^2)H_s + (v^{-1}+2v+v^3)H_e.
\]
  \item Using direct calculation, or smoothness and \eqref{rank2hecke}, or Lemma \ref{lem:defects} we obtain
\[
\un{H}_s\un{H}_t\un{H}_s = H_{sts} + vH_{ts} + vH_{st} + v^2H_t + (1+v^2)H_s + (v+v^3)H_e.
\]
  \end{itemize}
\end{ex}

It will be important later that the set of subsequences $\eb$ of a fixed expression $\un{w}$ is equipped with a partial order, the \emph{path dominance order}. Let $\eb$ and $\fb$ be two 01-sequences and let their corresponding Bruhat strolls be $x_0, x_1, \dots, x_m$ and $y_0, y_1, \dots, y_m$. We say that $\eb \ge \fb$ if $x_i \ge y_i$ for all $0 \le i \le m$. Clearly if $\eb \ge \fb$ then the end-point of $\eb$ is greater than the end-point of $\fb$. The path dominance order restricts to a partial order on the set of subsequences with fixed end-point.

Let $\w$ be the $\Z$-linear antiinvolution for which $\w(\un{H}_s)=\un{H}_s$ and $\w(v)=v^{-1}$. The standard trace gives rise to the \emph{standard pairing} $\HC \times \HC \to \Zvv$,
defined by $(a,b) = \e(b\w(a))$. This pairing is \emph{semilinear} over $\Zvv$; that is $(v^{-1}a,b)=(a,vb)=v(a,b)$ for all $a, b \in \HB$. Under this pairing, $b_s$ is self-biadjoint, i.e. \[(b_s x,y) = (x,b_s y),\qquad \qquad (x b_s,y) = (x,y b_s). \]

\begin{remark} \label{rmk:traces pairings} The formula $(a,b) = \e(b\w(a))$ can be used both ways, to define a pairing from a trace or vice versa. One can see that the $\Zvv$-module of all
semilinear pairings with self-biadjoint $b_s$ is isomorphic to the module of all $\Zvv$-linear traces. Any such pairing is determined by the values $\e(b_\xb)=(1,b_\xb)$ over all sequences
$\xb$. \end{remark}

\section{Soergel Bimodules}
\label{sec-sbim}

\subsection{Realizations of Coxeter systems} \label{subsec-realization}

For both Soergel's construction of Soergel bimodules, and for our construction of a diagrammatic category by generators and relations, the starting point will be the data of a realization
of a Coxeter system.

\begin{defn} \label{defn:realization} Let $\Bbbk$ be a commutative ring. A \emph{realization} of $(W,S)$ over $\Bbbk$ is a free, finite rank $\Bbbk$-module $\hg$, together with subsets $\{ \a_s^\vee \; | \; s \in S\} \subset \hg$ and $\{ \a_s \; | \; s \in S\} \subset \hg^* = \Hom_{\Bbbk}(\hg,\Bbbk)$, satisfying:
\begin{enumerate}
\item $\langle \alpha_s^\vee, \alpha_s \rangle = 2$ for all $s \in S$;
\item the assignment $s(v) \define v - \langle v, \alpha_s\rangle \alpha_s^{\vee}$ for all $v \in \hg$ yields a representation of $W$;
\item the technical condition in \eqref{eq:technical-condition} is satisfied. (Its description requires some background.)
\end{enumerate}
We will often refer to $\hg$ as a realization, however the choice of $\{ \alpha_s^\vee \}$ and $\{\alpha_s \}$ is always implicit. \end{defn}

Given a realization over $\Bbbk$ and a homomorphism $\Bbbk \to \Bbbk'$ we obtain a realization $\Bbbk' \otimes_{\Bbbk} \hg$ over $\Bbbk'$ by base change. We call a realization
\emph{faithful} if the action of $W$ on $\hg$ (and hence the contragredient action on $\hg^*$) is faithful. Base change does not preserve faithfulness in general. For us, the ability to
perform base change is the more important property, so we must allow realizations which are not faithful. For instance, any realization of the dihedral group with $m_{st}=m<\infty$ is also
a realization of the dihedral group with $m_{st}=2m, 3m, \ldots$, and is also a realization of the infinite dihedral group.

We call a realization \emph{symmetric} if $\langle \alpha_s^\vee, \alpha_t \rangle = \langle \alpha_t^\vee, \alpha_s \rangle$ for all $s, t \in S$.

\begin{ex} \label{ex:realizations} Some examples of realizations that we have in mind are the following:
  \begin{enumerate}
  \item Let $(W,S)$ be any Coxeter system of finite rank. Let $\Bbbk = \RM$ and $\hg = \bigoplus_{s \in S} \RM \alpha_s^{\vee}$. Define elements $\{ \alpha_s \} \subset \hg^*$ by
\begin{equation} \label{eq:justcos}
\langle \alpha_t^\vee, \alpha_s \rangle = -2\cos(\pi/m_{st})
\end{equation}
(by convention $m_{ss} =1$ and $\pi/\infty = 0$). Then $\hg$ is a symmetric realization of $(W,S)$, called the \emph{geometric representation} (see \cite[\S 5.3]{Humphreys}). Note that the subset $\{ \alpha_s \} \subset  \hg^*$ is linearly independent if and only if $W$ is finite.

\item
More generally, given a real vector space $\hg$ with subsets $\{ \alpha_s^{\vee} \} \subset \hg$ and $\{ \alpha_s \} \subset \hg^*$ satisfying \eqref{eq:justcos} then $\hg$ is a realization of $(W,S)$. In  \cite[\S 2]{Soe3} Soergel builds his theory of Soergel bimodules for arbitrary Coxeter systems on a realization for which both $\{ \alpha_s^{\vee} \} \subset \hg$ and $\{ \alpha_s \} \subset \hg^*$ are linearly independent, and such that $\hg$ has minimal dimension with this property. To construct such a representation, Soergel mimics the construction of the action of an affine Weyl group on the Cartan subalgebra of an affine Kac-Moody group.

\item Let $(X, R, X^\vee, R^\vee)$ be a (reduced) root datum (see \cite[\S 7.4]{SprLAG} for notation) and let $\Delta \subset R$ be a set of simple roots. Let $(W,S)$ be the corresponding Weyl group and simple reflections. Then the triple $\hg = X$, $\{\alpha \; | \; \alpha \in \Delta \} \subset \hg$ and $\{\alpha^\vee \; | \; \alpha \in \Delta \} \subset \hg^* = X^\vee$ gives a faithful realization of $(W,S)$ over $\ZM$. We obtain a (potentially non-faithful) realization of $(W,S)$ over any $\Bbbk$ by extension of scalars.

\item
More generally, if $A$ is a generalized Cartan matrix and $\tg$ denotes the Cartan subalgebra of the corresponding Kac-Moody Lie algebra $\gg(A)$ (see \cite[Chapters 1 and 3]{KacBook}) then any choice of $\ZM$-lattices $\hg \subset \tg$ such that $\hg$ contains the root lattice and its dual lattice $\hg^* \subset \tg^*$ contains the coroot lattice yields a realization of the Weyl group $(W,S)$ of $\gg(A)$. In this way one obtains realizations over $\ZM$ (and hence over any $\Bbbk$) of any Coxeter system for which $m_{st} \in \{ 2,3,4,6, \infty\}$ for all $s \ne t \in S$. (Such Coxeter systems are called \emph{crystallographic}.)

\item Let $(W,S)$ be a Coxeter system for  which $m_{st} \in \{ 2,3,5,\infty\}$ and let $\Bbbk = \ZM[\phi]$ where $\phi = (1 + \sqrt{5})/2$ denotes the golden ratio. Let $\hg = \bigoplus_{s \in S} \Bbbk \alpha_s^{\vee}$ and define $\alpha_s \in \hg^*$ via
\[
\langle \alpha_s^\vee, \alpha_t \rangle = \begin{cases} 2 & \text{if $s = t$,} \\ 0 & \text{if $m_{st} =2 ,$} \\ -1 & \text{if $m_{st} =3 ,$} \\ -\phi & \text{if $m_{st} =5,$} \\ -2 & \text{if $m_{st} =\infty.$} \end{cases}
\]
Using that $-2\cos(\pi/5) = -\phi$ it follows from the example of the geometric realization that $(\hg, \{ \alpha_s^{\vee} \}, \{ \alpha_s \})$ is a (symmetric) realization of $(W,S)$ over $\Bbbk$. In particular, the finite reflection groups of types $H_3$ and $H_4$ have symmetric realizations over (any extension of) $\Bbbk$.

\item Let $(W,S)$ be the affine Weyl group of type $\widetilde{A_n}$ for $n \ge 2$. Let $\Bbbk = \ZM[q,q^{-1}]$ and $\hg = \bigoplus_{s \in S} \ZM[q,q^{-1}] \alpha_s^{\vee}$, and let the values of
$\langle \alpha_s^\vee, \alpha_t \rangle$ be encoded (as will be described soon) in the following matrix: \[A = \left( \begin{array}{ccccc} 2 & -1 & 0 & 0 & -q^{-1} \\ -1 & 2 & -1 & 0 & 0
\\ 0 & -1 & 2 & -1 & 0 \\ 0 & 0 & -1 & 2 & -q \\ -q & 0 & 0 & -q^{-1} & 2 \end{array} \right).\] 
(More precisely, this is the example when $n = 4$.)
This gives a realization of $W$. Specializing $q$ to an element of $\CM \setminus \RM$, one
obtains a realization of $W$ over $\CM$ which can not be obtained by extension of scalars from a realization defined over $\RM$.
\end{enumerate}
\end{ex}

Given a realization $(\hg, \{\alpha_s^{\vee} \}, \{\alpha_s\})$ of $(W,S)$ over $\Bbbk$ we can consider its \emph{Cartan matrix} $(\langle \alpha_s^\vee, \alpha_t \rangle ) _{s,t \in S}$.
Clearly a realization is symmetric if and only if its Cartan matrix is. Conversely, given a matrix $(a_{st})_{s,t \in S}$ such that $a_{ss}=2$, one can construct the free $\Bbbk$-module
$\hg = \bigoplus_{s \in S} \Bbbk \alpha_s^{\vee}$, and define $\alpha_s \in \hg^*$ by $\langle \alpha_s^\vee, \alpha_t \rangle = a_{st}$. When this yields a realization of $(W,S)$ we call
the matrix $(a_{st})$ a \emph{Cartan matrix} for $(W,S)$ over $\Bbbk$. Any realization for which $\{\a_s^\vee\}$ is a basis for $\hg$ can be reconstructed from its Cartan matrix; we call
such realizations \emph{minimal}.

\begin{ex} In Example \ref{ex:realizations} the realizations discussed in (1), (5) and (6) are minimal. The example in (3) is minimal if and only if the root system is simply connected and
of adjoint type (so that root lattice coincides with $\hg$). Examples (2) and (4) are not minimal in general. \end{ex}

\begin{remark} We expect that there is a rich Koszul duality theory for categories obtained from Soergel bimodules for arbitrary Coxeter systems (generalizing Soergel's description
\cite{Soe1} of the algebra of category $\OC$ in the case of Weyl groups). Here one expects Koszul duality to exchange $\hg$ and $\hg^*$, roots and coroots. In this setting it seems natural
to require both $\{ \alpha_s^{\vee} \} \subset \hg$ and $\{ \alpha_s \} \subset \hg^*$ to be linearly independent. This explains in part why we do not assume that our realizations are
minimal. \end{remark}

It is natural to ask under which conditions a matrix $(a_{st})_{s,t \in S}$ with $a_{ss} = 2$ is a Cartan matrix of $(W,S)$. A thorough discussion of this can be found in the appendix to
\cite{EDihedral}. We provide a short discussion here.

\begin{defn} \label{def:2qnum} Define the \emph{2-colored quantum numbers} $[k]_x$ and $[k]_y$ inside the ring $\ZM[x,y]$ inductively. One has $[0]_x = [0]_y = 0$, $[1]_x = [1]_y = 1$, and $[2]_x = x$, $[2]_y=y$. The other 2-colored quantum numbers are defined by the rules

\begin{subequations}
\begin{equation} [2]_x [k]_y = [k+1]_x + [k-1]_x, \end{equation}
\begin{equation} [2]_y [k]_x = [k+1]_y + [k-1]_y. \end{equation}
\end{subequations}

When $k$ is odd, $[k]_x = [k]_y$ and we shorten the notation to $[k]$. \end{defn}

Fix a pair $s,t \in S$ and let $x=a_{st}$ and $y=a_{ts}$. The condition that $(st)$ has order exactly $k$ is an algebraic condition on $x$ and $y$. For instance, when $\a_s$ and $\a_t$ are
linearly independent, the action of $(st)$ on their span has order $k$ when $[2k+1]=1$ and $[2k]_x = [2k]_y = 0$. In fact, this implies further that $2[k]_x = 2[k]_y = [2]_x [k]_y = [2]_y
[k]_x = 0$. This suggests that one should set \begin{equation} \label{eq:technical-condition} [m_{st}]_x = [m_{st}]_y = 0. \end{equation} This is the technical condition mentioned in
Definition \ref{defn:realization}. While \eqref{eq:technical-condition} is sufficient to imply that $(st)$ has order dividing $k$ on the span of the roots, it is independent of the
condition that $W$ acts on $\hg$. The reason that \eqref{eq:technical-condition} is required is to ensure that 2-colored Jones-Wenzl projectors are rotation-invariant, as discussed in
section \ref{subsec-JW}.

If either $x$ or $y$ is a non-zero-divisor, then \eqref{eq:technical-condition} is equivalent to the statement that $xy$ satisfies the minimal polynomial of the algebraic integer $4
\cos^2(\frac{\pi}{m})$. If this algebraic integer does not exist in $\Bbbk$ then $(W,S)$ does not admit a realization over $\Bbbk$. Any Coxeter system (of finite rank) admits a realization
over some ring of integers. Finally, we introduce one other technical condition.

\begin{defn} We call a realization \emph{balanced} if for every $s,t \in S$ one has $[m_{st}-1]_x = [m_{st}-1]_y = 1$. We refine this notion by calling the realization \emph{even-balanced}
(resp. \emph{odd-balanced}) if this property holds when $m_{st}$ is even (resp. odd). The opposite of even-balanced is \emph{even-unbalanced}. \end{defn}

The familiar Cartan matrices of Weyl groups are balanced. However, the Cartan matrix of type $A_2$ is not balanced when viewed as a realization of $G_2$. The exotic Cartan matrices for
type $\tilde{A}_n$, $n \ge 3$ given in Example (6) above are not balanced, except when $q=1$. Being balanced is equivalent to the existence of an unambiguous notion of positive roots in
$\hg^*$; when the realization is symmetric, being balanced is similar to the condition that simple roots form an obtuse angle. Faithful realizations are almost always even-balanced; any
minimal even-unbalanced realization over a domain is not faithful. Once again, a thorough discussion of these technicalities can be found in \cite{EDihedral}.

Fix a realization $(\hg, \{ \alpha_s^\vee \}, \{ \alpha_s \})$ of $(W,S)$ and let \[ R = \bigoplus_{m \ge 0} S^m(\hg^*) \] denote the symmetric algebra on $\hg^*$, which we view as a
graded $\Bbbk$-algebra with $\deg \hg^* = 2$. Then $W$ acts on $\hg^*$ via the contragredient representation ($s(\gamma) = \gamma - \langle \alpha_s^\vee, \gamma \rangle \alpha$ for all
$\g \in \hg^*$) and this extends to an action of $W$ on the algebra $R$ by graded automorphisms. We think of $R$ as the polynomial functions on $\hg$.

We let $R-\Mod$ and $R-\Bim$ denote the category of graded $R$-modules and bimodules respectively. We view $R-\Mod$ and $R-\Bim$ as graded categories; that is, as categories enriched in
graded $\Bbbk$-modules. We denote the grading shift by $(1)$: if $M = \bigoplus M^i$ is a graded (bi)module then $M(1)^i = M^{i+1}$. Degree $0$ maps of (graded) $R$-(bi)modules will be
denoted by $\Hom_0(M,N)$.

\subsection{Technicalities}

It is important to remember the key dichotomy in this paper: we will discuss two separate categories ``at once." Fix a realization of $(W,S)$, and consider the ring $R$ defined above. In
this chapter we will introduce Soergel's monoidal category $\SBim$, which is a full subcategory of $R-\Bim$. In chapter \ref{sec-diagsbim} we will define a diagrammatic category $\DC$ by
generators and relations, whose morphism spaces will be enriched in $R-\Bim$. One will need to make some assumptions on the realization in order for either category to ``behave well" (i.e.
in order for the Soergel Categorification Theorem to hold, and in order for double leaves to form a basis for Hom spaces; see the introduction). Whenever $\SBim$ behaves well, one can
construct an equivalence from $\DC$ to $\SBim$. However there are certain situations (for example when the characteristic of $\Bbbk$ is small, or when working over a complete local ring)
where the diagrammatic theory continues behaving well, but the bimodule theory either breaks down or has not yet been developed. In these cases, the diagrammatic theory seems to provide a
natural replacement for Soergel bimodules.\footnote{Another natural replacement is Fiebig's theory of sheaves on moment graphs.} This is one of the advantages of
the diagrammatic approach.

In this section we will discuss the technical assumptions one must make on the realization in order for the diagrammatics to behave well, and the further assumptions needed for Soergel
bimodules to behave well. The novice reader should ignore this section, and should stick with the geometric realization defined in part (1) of Example \ref{ex:realizations}. This section
may be overly pedantic, however, in view of current and future applications we make an effort to state all results in a natural level of generality.

Note that the very existence of a realization is already an assumption on the base ring $\Bbbk$: namely, that it contains certain algebraic integers.

\begin{assumption}(Demazure Surjectivity) \label{ass:Demazure Surjectivity} The map $\a_s \co \hg \to \Bbbk$ is surjective, for all $s \in S$. Evaluation at $\a_s^\vee$ gives a surjective
map $\hg^* \to \Bbbk$, for all $s \in S$. \end{assumption}

Whenever Demazure Surjectivity holds, there is some $\d \in \hg^*$ for which $\langle \a_s^\vee,\d \rangle = 1$. Moreover, $\a_s \ne 0$, so that $s(\d) = \d - \a_s \ne \d$.

If $2$ is invertible in $\Bbbk$ then Demazure Surjectivity holds, because $a_{ss}=2$. If $m_{st}$ is odd then both $\a_s$ and $\a_t$ (and $\a_s^\vee$ and $\a_t^\vee$) are surjective,
because the algebraic integer $4 \cos^2(\frac{\pi}{m_{st}})$ is invertible in any ring which contains it (see \cite{EDihedral}). Even when the ideal in $\Bbbk$ generated by $\langle
\a_t^\vee, \a_s \rangle$ as $t$ varies (for fixed $s$) is not the unit ideal, it is still possible that $\a_s$ is surjective when the realization is not minimal. Finally, the Demazure Surjectivity property is preserved by base change.

We will assume Demazure Surjectivity henceforth (with the exception of some remarks). In addition to standard ring-theoretic assumptions, this will be the only special assumption we need
to make in order for $\DC$ to be well-behaved.

Our arguments in Section \ref{subsec-SThmDiag} classifying the indecomposable objects in $\Kar(\DC)$ require that $\Bbbk$ is a complete local ring. This assumption is needed for either
category to satisfy the Krull-Schmidt theorem, and for idempotent lifting arguments to work. If $\Bbbk$ is not a complete local ring (for example $\ZM$) we have no idea how many
indecomposable Soergel bimodules there are, nor whether the Krull-Schmidt theorem holds. This is a typical situation in representation theory: one has a generic category (for example
representations of a finite group) defined over (some finite extension of) $\ZM$, but it is only after completing at a prime that one obtains a category in which it makes sense to discuss
indecomposable objects, do homological algebra etc. Moreover, the behavior at different primes can be vastly different.

We assume that $\Bbbk$ is a domain. In particular, it has no non-trivial idempotents, so that any graded $R$-(bi)module with $\End_0(M) = \Bbbk$ is indecomposable.

\begin{defn} Following Soergel \cite{Soe3}, we say that a realization $\hg$ over a field $\Bbbk$ is \emph{reflection faithful} if $\hg$ is a faithful representation of $W$, and if there is
a bijection between the set of reflections (i.e. the conjugates in $W$ of $S$) and the codimension one subspaces of $\hg$ that are fixed by some element of $W$. \end{defn}

This is a fairly serious assumption on a realization. For instance, no infinite Coxeter group admits a faithful representation over $\overline{\FM}_p$. Soergel constructs a reflection
faithful representation of any Coxeter group over $\RM$, using the approach mentioned in part (2) of Example \ref{ex:realizations}.

Soergel's theory gives techniques to study $\SBim$ defined for a reflection faithful representation over an infinite field $\Bbbk$ of characteristic $\ne 2$. Libedinsky \cite{Lib} has
shown that his results extend to the geometric realization as well. It seems plausible that many of Soergel's techniques could be adapted to other complete local rings and faithful
realizations over them, but the true generality of his results is unknown. We say that a realization is a \emph{Soergel realization} if it is faithful and Soergel's techniques can be
applied (i.e. if we can quote the Soergel Categorification Theorem).

Finally, while it will not affect the truth or falsehood of the Soergel Categorification Theorem, the assumption that the realization is balanced will drastically simplify both the study
of bimodules and the study of diagrammatics. We do not take this assumption in general.

\subsection{Demazure operators}
\label{subsec-polys}

Fix $s \in S$. We will extend the map $\langle \a_s^\vee, \cdot \rangle \co \hg^* \to \Bbbk$ to the \emph{Demazure operator} $\pa_s \co R \to R^s(-2)$, by the formula 
\[
\pa_s(f) = \frac{f - s(f)}{\a_s}.
\]
The numerator and denominator are both $s$-antiinvariant, so that the fraction, assuming it is well-defined, should lie in the subring $R^s$ of $s$-invariants.
Clearly this map makes sense for $f \in \hg^*$, and agrees with $\langle \a_s^\vee, \cdot \rangle$. Let us demonstrate that it makes sense in general.

Suppose that $\d$ is an element of $\hg^*$ such that $\langle \a_s^\vee, \d \rangle = 1$, guaranteed to exist by the assumption of Demazure Surjectivity. 

\begin{claim} Any element $f \in R$ can be written uniquely as $f = g\d + h$ for $g,h \in R^s$. \label{claim:unique decomposition}\end{claim}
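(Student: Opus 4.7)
The plan is to explicitly construct the decomposition via the Demazure operator, and to derive uniqueness from the fact that $\alpha_s$ is a non-zero-divisor in $R$.

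First I would show that $\alpha_s$ divides $f - s(f)$ in $R$ for any $f \in R$. The reason is that for any $\gamma \in \hg^*$ we have $\gamma - s(\gamma) = \langle \alpha_s^\vee, \gamma\rangle \alpha_s \in \alpha_s R$, so the automorphism $s$ of $R$ acts trivially modulo the ideal $\alpha_s R$; hence $f - s(f) \in \alpha_s R$ for any $f \in R$. Since $\Bbbk$ is a domain, $R = S(\hg^*)$ is a domain, and Demazure Surjectivity ensures $\alpha_s \ne 0$, so $\alpha_s$ is a non-zero-divisor. This makes $\partial_s(f) := (f - s(f))/\alpha_s$ a well-defined element of $R$.

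For existence, set $g := \partial_s(f)$ and $h := f - g\delta$. To see that $g \in R^s$, apply $s$ to the equality $\alpha_s g = f - s(f)$: using $s(\alpha_s) = -\alpha_s$, we get $-\alpha_s s(g) = -(f - s(f)) = -\alpha_s g$, so $s(g) = g$ since $\alpha_s$ is a non-zero-divisor. To see $h \in R^s$, use $s(\delta) = \delta - \alpha_s$:
\[
s(h) = s(f) - g(\delta - \alpha_s) = (f - \alpha_s g) - g\delta + g\alpha_s = f - g\delta = h.
\]

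For uniqueness, suppose $g_1 \delta + h_1 = g_2 \delta + h_2$ with $g_i, h_i \in R^s$. Then $(g_1 - g_2)\delta = h_2 - h_1 \in R^s$. Applying $s$ and subtracting gives $(g_1 - g_2)(\delta - s(\delta)) = 0$, i.e. $(g_1 - g_2)\alpha_s = 0$. Since $\alpha_s$ is a non-zero-divisor, $g_1 = g_2$ and hence $h_1 = h_2$.

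The only subtle step is the divisibility $\alpha_s \mid (f - s(f))$, and this is where Demazure Surjectivity (ensuring $\alpha_s \ne 0$ so that division is unambiguous) enters the picture. Everything else is a formal manipulation with the defining identities $s(\alpha_s) = -\alpha_s$ and $s(\delta) = \delta - \alpha_s$.
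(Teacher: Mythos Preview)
Your proof is correct, and your uniqueness argument is essentially identical to the paper's. The existence arguments differ, however. You first establish the divisibility $\alpha_s \mid (f - s(f))$ directly (since $s$ acts trivially on $\hg^*$ modulo $\alpha_s$), then set $g = (f - s(f))/\alpha_s$ and $h = f - g\delta$ and verify both are $s$-invariant. The paper instead argues structurally: it observes that the elements $a_{st}\delta - \alpha_t$ are $s$-invariant, so every polynomial can be rewritten as a polynomial in $\delta$ with coefficients in $R^s$, and then uses the identity $\delta^2 = \delta(\delta + s(\delta)) - \delta s(\delta)$ (both bracketed terms being $s$-invariant) to inductively reduce to degree $\le 1$ in $\delta$. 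Your approach is shorter and gives the explicit formula for $g$ immediately; the paper's approach has the side benefit of exhibiting $R = R^s \oplus R^s\delta$ as generated by $\{1,\delta\}$ before the Demazure operator is even defined, which is the logical order the paper wants (it defines $\partial_s(f) := g$ only after this claim). Either route is fine.
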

	
\begin{proof} If $f$ is of this form then $f - s(f) = g(\d - s(\d)) = g \a_s$. The uniqueness of such an expression is now clear: if $g
\d + h = g' \d + h' = f$ then $f-s(f) = g \a_s = g' \a_s$. Since $\Bbbk$ is a domain $R$ is also, and therefore $g=g'$, so that $h=h'$. 

Clearly $a_{st} \d - \a_t \in \hg^*$ is $s$-invariant, lying in the kernel of $\langle \a_s^\vee, \cdot \rangle$. In particular, this implies that any polynomial in $R$ can
be expressed as a polynomial in $\d$ with coefficients in $R^s$. Moreover, $\d^2 = \d(\d + s(\d)) - \d s(\d)$, where both $\d + s(\d)$ and $\d s(\d)$ are $s$-invariant. Therefore any
polynomial in $\d$ can be written as $\d g + h$ for $g,h \in R^s$. \end{proof}

Thus defining $\pa_s(f) \define g$ makes sense, and agrees with the above formula. A similar argument shows that this alternative definition of $\pa_s$ is independent of
the choice of $\d$. The Demazure operator $\pa_s$ is a map $R^s$-bimodules, whose kernel is precisely $R^s$. It is easy to show that $\pa_s$ satisfies the \emph{twisted Leibniz rule}
\[\pa_s(fg) = a\pa_s(g) + \pa_s(a)(sg) \qquad \text{for all $f, g \in R$.} \] Unsurprisingly, Demazure Surjectivity implies that $\pa_s$ is surjective, for all $s \in S$.

Claim \ref{claim:unique decomposition} implies that $R$ is free of rank 2 over $R^s$, generated by $1$ and $\d$. In fact, the sets $\{1,\d\}$ and $\{-s(\d),1\}$ give dual bases of $R$ as
an $R^s$-module under the pairing $(g,h)=\pa_s(gh)$. This gives $R$ the structure of a graded Frobenius algebra over $R^s$. The element $\Delta_s = \d \ot 1 - 1 \ot s(\d) \in R \ot_{R^s}
R$ is independent of the choice of $\d$ with $\pa_s(\d)=1$, and satisfies $g \Delta_s = \Delta_s g$ for any $g \in R$.

\begin{remark} There is a unique choice $\d = \frac{\a_s}{2}$ such that the basis $\{1,\d\}$ is self-dual. It only exists when $2$ is invertible in $\Bbbk$. This was the choice used in the introduction. \end{remark}

\begin{remark} We have taken the assumption of Demazure Surjectivity precisely in order that $R$ would be a Frobenius extension of $R^s$, with trace map $\pa_s$. Without this assumption
the situation is less tractable. When the image of $\pa_s \co \hg^* \to \Bbbk$ is a non-trivial principal ideal, it will be true that $R$ is a Frobenius extension of $R^s$, but with
a rescaled trace map. If this image is a non-principal ideal, then $R$ is not even free over $R^s$. Other scenarios which our assumption forbids are $\pa_s = 0$ or $\a_s = 0$, as either
would imply $R=R^s$ (this is only a possibility in characteristic 2). \end{remark}

\begin{remark} If the realization is odd-balanced, then Demazure operators associated to $s \in S$ satisfy the braid relations. Otherwise, they do not. See \cite{EDihedral} for more
details. \end{remark}

\subsection{Soergel bimodules and standard bimodules}
\label{subsec-soergelandstandard}
In this section we give an introduction to Soergel bimodules and standard bimodules, following Soergel's ``classical'' treatment of the subject \cite{Soe3}.

For $s \in S$, let $B_s$ denote the $R$-bimodule $R \ot_{R^s} R (1)$, given by restriction followed by induction and a grading shift. Henceforth, $\ot$ will denote the tensor product over $R$, while $\ot_s$ will denote the tensor product over $R^s$.

Given a sequence $\un{w} = s_1s_2 \dots s_d$ the corresponding \emph{Bott-Samelson bimodule} is the tensor product
\[
B_{\un{w}} \define B_{s_1} \ot B_{s_2} \ot \ldots \ot B_{s_d}
\]
viewed as an $R$-bimodule under left and right multiplication. The Bott-Samelson bimodule $B_{\un{w}}$ is isomorphic to $R \ot_{s_1} R \ot_{s_2} R \ot \cdots \ot_{s_d} R (d)$. We let $\BSBim$ denote the full monoidal subcategory of $R-\Bim$ whose objects are Bott-Samelson bimodules (where as before, morphism spaces are graded $\Bbbk$-modules). Finally, we let $\SBim$ denote the idempotent closure or Karoubi envelope of (the additive, graded closure of) $\BSBim$, known as the category of \emph{Soergel bimodules}. That is, the indecomposable Soergel bimodules are the indecomposable direct summands of shifts of Bott-Samelson bimodules. Note that $\SBim$ is additive but not abelian.

There are also a number of other bimodules which play an important role in the theory. They are \emph{not} Soergel bimodules in general, because they do not appear as summands in Bott-Samelson
bimodules, only as submodules and quotients. These are the \emph{standard bimodules}. For $w \in W$, let $R_w$ denote the $R$-bimodule which is isomorphic to $R$ as a $\Bbbk$-module, and
where the left action of $f \in R$ is multiplication by $f$, while the right multiplication is multiplication by $w(f)$. It is clear that $R_w \ot R_v \cong R_{wv}$. We refer to the
additive monoidal category consisting of all direct sums of grading shifts of $R_w$ as $\StdBim$. This monoidal category is generated by $R_s$ for $s \in S$. A prototypical
object is $R_{\un{w}} \define R_{s_1} \ot R_{s_2} \ot \cdots R_{s_d}$. Unlike for Bott-Samelson bimodules, one has $R_{\un{w}} \cong  R_{\un{w}'}$ if $w = w'$.

It is useful to picture tensor products of bimodules $B_s$ and $R_w$, for example $B_s \ot B_t \ot R_w \ot B_s$, as being separators or dividers between regions, with regions corresponding
to the $\ot$ signs as well as to the left and right sides. A standard tensor in such a bimodule consists of a polynomial in each region. The bimodule encodes certain rules about how
polynomials may slide across the dividers. For instance, an element of $B_s \ot B_t$ consists of (a linear combination of) a choice of 3 polynomials (left, middle, and right), such that an
$s$-invariant polynomial may slide across the first divider, and a $t$-invariant polynomial across the second. An element of $R_w \ot R_v$ consists of 3 polynomials, and any polynomial may
be slid across any divider, at the cost of applying the appropriate element of $W$ to it. When we write $R_w$ in this way, right multiplication is the usual \emph{untwisted} multiplication
on the right slot; it is the left slot which is identified as a $\Bbbk$-module with $R$ in the definition of the previous paragraph. We call the element $1 \ot 1 \ot 1 \ot \cdots \ot 1$ of
such a bimodule a \emph{1-tensor}. Clearly the 1-tensor is the unique element of minimal degree, up to a scalar.

Let us assume for the rest of this discussion that the realization is faithful. We have
\[
\Hom(R_w,R_v) = \begin{cases} R & \text{if $w = v$} \\ 0 & \text{otherwise.} \end{cases}
\]
In other words, $\StdBim$ is isomorphic to the graded 2-groupoid for $W$ over $R$ (see \cite{EWFenn} for terminology). Therefore, any map between standard bimodules is determined by the image of the 1-tensor.  In particular, a degree $0$ map in $\StdBim$ between $R_\xb$ and $R_\yb$ will send the 1-tensor to a scalar multiple of the 1-tensor.

We write $\StdBim_0$ for the ungraded monoidal category whose objects are standard bimodules and whose morphisms are degree $0$ maps. This is equivalent as a monoidal category to the
2-groupoid for $W$ over $\Bbbk$, and can be defined without any restrictions on $\Bbbk$. Presenting this 2-groupoid as a monoidal category by generators and relations is surprisingly
interesting (see \cite{EWFenn} and the next chapter).

We note that all tensor products of Soergel bimodules and standard bimodules are free as left (or right) $R$-modules, and thus the Hom spaces between them are $R$-torsion-free. We will soon show that, under certain assumptions on the realization, they are free as $R$-modules.

Now we discuss the maps between Soergel bimodules and standard bimodules. There is an injection of bimodules $R(-1) \to B_s$ arising from the Frobenius algebra structure, sending $1
\mapsto \Delta_s$ (defined in the previous section). The cokernel of this map is naturally isomorphic to $R_s(1)$, via the map $B_s \to R_s(1)$ sending $f \ot g \mapsto fs(g)$. Conversely,
there is a surjection $B_s \to R(1)$ sending $f \ot g \mapsto fg$. The kernel of this map is naturally isomorphic to $R_s(-1)$ via the map sending $1 \mapsto \d \ot 1 - 1 \ot \d$.

\begin{remark} Just as $\Delta_s$ has a canonical description, so too does this element $\d \ot 1 - 1 \ot \d$. The trace map $\pa_s \co R \to R^s$ induces an $R$-bilinear pairing $(f,g)
\mapsto \pa_s(fg)$, but it also induces a twisted-bilinear pairing $(f,g) \mapsto \pa_s(f s(g))$. The bases $\{1,\d\}$ and $\{-\d,1\}$ of $R$ over $R^s$ are dual for the twisted pairing,
and the element $\d \ot 1 - 1 \ot \d$ is independent of the choice of dual bases. The reader can ponder the notion of a twisted Frobenius extension. \end{remark}

All four of these maps have graded degree $1$. We encode them in two short exact sequences: \begin{equation} 0 \to R(-1) \to B_s \to R_s(1) \to 0 \label{ses1} \end{equation}
\begin{equation} 0 \to R_s(-1) \to B_s \to R(1) \to 0 \label{ses2} \end{equation} Thus $B_s$ is filtered by $R$ and $R_s$, though in no particular order, and the grading shifts which
appear depend on the chosen order. This implies that every Bott-Samelson bimodule has a filtration whose subquotients are standard bimodules (with shifts). Because each bimodule $R_w$ is
indecomposable, any direct summand of a Bott-Samelson bimodule (and hence any Soergel bimodule) also has such a filtration. However, the order in which standard modules appear in such a
filtration need not respect the Bruhat order.

Now assume that $\hg$ is a Soergel realization. It is a deeper fact that any Soergel bimodule has a filtration in which all successive subquotients are standard modules, occurring in an
order refining the Bruhat order (resp. the reversed Bruhat order). Such filtrations are called \emph{standard filtrations}, and the
graded multiplicities of the standard modules appearing do not depend on the choice of filtration. One defines the \emph{character} of a Soergel
bimodule $B$ as the element $\ch(B)$ of the Hecke algebra counting these graded multiplicities. (For the precise definition of the character see \cite[\S 5]{Soe3}).

Soergel went on to prove that the space of homomorphisms between any two Soergel bimodules is free as a left or right $R$-module, with graded rank given by evaluating the standard pairing
on the characters of each bimodule. Soergel used this formula to classify, in a non-constructive way, all the indecomposable Soergel bimodules. From the above discussion it is clear that
if $\un{w}$ is a rex, then $B_{\un{w}}$ has $R_w$ in its standard filtration with multiplicity $1$, and all other standard modules appearing are isomorphic to $R_y$
for $y<w$. The following theorem is due to Soergel \cite[Satz 6.14]{Soe3}:

\begin{thm} Let $\hg$ be a Soergel realization of $W$. For all $w \in W$ there exists a unique (up to isomorphism) bimodule $B_w$ which occurs as a direct summand of $B_{\un{w}}$ for any reduced expression $\un{w}$ for $w$. The bimodule $B_w$ is uniquely determined as a summand of $B_{\un{w}}$ by the fact that it is indecomposable and (some shift of) $R_w$ occurs in its standard filtration. The set $\{ B_w \; | \; w \in W \}$ constitutes a complete set of non-isomorphic indecomposable Soergel bimodules, up to isomorphism and grading shift.\label{SoergelThm1} \end{thm}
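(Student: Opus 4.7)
\medskip

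\noindent\textbf{Proof proposal.} The plan is to proceed by induction on $\ell(w)$, building $B_w$ as a distinguished indecomposable summand of $B_{\un{w}}$ via the Krull--Schmidt theorem, and then using Soergel's Hom formula to pin down its isomorphism class intrinsically. Since $\hg$ is a Soergel realization, I may freely use (as stated in the excerpt) that every Soergel bimodule admits a standard filtration refining the Bruhat order, whose graded multiplicities $h_y(B) \in \Zvv$ are independent of the chosen filtration, and that $\grk_R \Hom(B,B')$ is computed by the standard pairing of characters.

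\medskip

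\noindent\emph{Step 1: Existence.} The base case $w=e$ is handled by $B_e = R$, which is indecomposable since $\End_0(R) = \Bbbk$ is local. For the inductive step, fix a rex $\un{w} = s_1 s_2 \cdots s_d$ for $w$ and consider $B_{\un{w}}$. Iterated application of the short exact sequences \eqref{ses1} and \eqref{ses2} for each tensor factor $B_{s_i}$ produces a standard filtration of $B_{\un{w}}$ whose subquotients are indexed by 01-sequences $\eb$ as in Deodhar's formula (Lemma \ref{lem:defects}); the subquotient indexed by $\eb$ is $R_{\un{w}^{\eb}}(d(\eb))$. In particular, $R_w$ appears exactly once, with multiplicity $1$ in degree $0$, corresponding to the unique fully-selected sequence, and every other $R_y$ appearing has $y < w$. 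Since $\Bbbk$ is a complete local ring, Krull--Schmidt holds in $\SBim$, so I may decompose $B_{\un{w}} \cong \bigoplus_i B^{(i)}$ into indecomposables. Because the character is additive, exactly one summand contains $R_w$ in its standard filtration (with multiplicity one in degree zero); call it $B_w$. Each of the remaining summands has only $R_y$ with $y<w$ in its filtration, and so by the inductive hypothesis is isomorphic to some $B_y(k)$ for $y<w$. Consequently $B_w$ is not isomorphic to any previously constructed $B_y$.

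\medskip

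\noindent\emph{Step 2: Uniqueness and independence of the rex.} The crucial lemma is that, up to isomorphism, $B_w$ is characterised intrinsically as an indecomposable Soergel bimodule whose standard filtration contains $R_w$ but no $R_y$ with $y > w$. Given two such indecomposables $B$ and $B'$, a character computation combined with Soergel's Hom formula shows that $\grk_R \Hom(B, B')$ has a unique minimal-degree term equal to $1$; picking a representative yields a degree $0$ map $\varphi : B \to B'$ which induces an isomorphism on the associated-graded piece $R_w$ of the standard filtration. Similarly one obtains $\psi : B' \to B$ whose associated-graded restriction to $R_w$ is an isomorphism. Hence $\psi \varphi \in \End_0(B)$ acts nontrivially on the top standard piece and so is not nilpotent; since $B$ is indecomposable over a complete local ring, $\End(B)$ is local, and any non-nilpotent endomorphism is a unit. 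It follows that $\varphi$ is an isomorphism. Applied to the summand of $B_{\un{w}'}$ containing $R_w$ for any other rex $\un{w}'$ of $w$, this gives $B_w \cong B_w'$, establishing independence of the rex and the uniqueness characterisation in the theorem.

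\medskip

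\noindent\emph{Step 3: Exhaustiveness.} Any indecomposable Soergel bimodule $B$ is, by definition, a summand of $B_{\un{x}}(k)$ for some expression $\un{x}$ and shift $k$. Its standard filtration is nonzero, so there is a Bruhat-maximal element $w \in W$ such that $R_w$ appears; combining the Deodhar structure of $B_{\un{x}}$ with maximality, the multiplicity of $R_w$ in $B$ must be a single shift of $1$. The intrinsic characterisation of Step 2 then forces $B \cong B_w(k')$ for some $k'$, completing the classification.

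\medskip

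The main obstacle is Step 2, where the uniqueness argument requires both Soergel's Hom formula (a substantive result in itself, but available to us as part of the Soergel realization hypothesis) and the Krull--Schmidt/locality of endomorphism rings supplied by the complete local hypothesis on $\Bbbk$. Steps 1 and 3 are comparatively formal once the filtration machinery and Krull--Schmidt are in hand.
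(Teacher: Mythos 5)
Your proposal reconstructs Soergel's classical argument---the one the paper attributes to \cite[Satz 6.14]{Soe3}---using standard filtrations, Krull--Schmidt, and Soergel's Hom formula. The paper does not re-prove the theorem at this point; it cites Soergel, remarks that ``the difficulty in the theorem is to show that any summand of any Bott-Samelson bimodule is isomorphic to one of the bimodules $B_x$ up to a shift,'' and supplies a \emph{different} proof much later (Theorem \ref{SoergelReproof}) entirely inside the diagrammatic category, using the double leaves cellular basis and localization in place of the Hom formula and standard filtrations. Your route is therefore the classical one of the cited reference, not the new diagrammatic one the paper develops.

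Within that route there is a genuine gap, and it sits precisely where the paper flags the difficulty. In Step 3 you assert that ``combining the Deodhar structure of $B_{\un{x}}$ with maximality, the multiplicity of $R_w$ in $B$ must be a single shift of $1$.'' This does not follow. Deodhar's formula (Lemma \ref{lem:defects}) controls the multiplicity of $R_w$ in the \emph{whole} bimodule $B_{\un{x}}$, which is an arbitrary element of $\Nvv$ when $\un{x}$ is not reduced, and Bruhat-maximality of $w$ in the support of the summand $B$ places no constraint on how much of that multiplicity concentrates inside $B$. Showing the multiplicity space has rank one in a single degree is the real content of the exhaustiveness argument: one must combine the Hom formula with the locality of $\End_0(B)$---roughly, the image of $\End(B)$ in the endomorphisms of the top graded piece must be surjective, so a multiplicity space of rank $\ge 2$ would produce a nontrivial idempotent. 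A related softness appears in Step 2: the assertion that $\grk_R\Hom(B,B')$ has ``a unique minimal-degree term equal to $1$,'' and that this produces a degree-$0$ map restricting to an isomorphism on the $R_w$ subquotient, implicitly uses both that the lower multiplicity polynomials lie in $v\ZM[v]$ and that restriction to the top standard piece is surjective on Hom---substantive inputs from Soergel's filtration theory rather than consequences of additivity and Krull--Schmidt. These points are fillable with the machinery you invoke, but they are the non-formal heart of the theorem and the proposal should not treat them as near-formalities.
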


The statement that $B_{\un{w}}$ contains a unique summand such that (some shift of) $R_w$ occurs in its standard filtration follows immediately from the indecomposability of $R_w$. The fact that such summands for different rexes are isomorphic is not difficult to prove (using the Krull-Schmidt property and an idempotent lifting argument). The difficulty in the theorem is to show that any summand of any Bott-Samelson bimodule is isomorphic to one of the bimodules $B_x$ up to a shift.

The theorem implies that $B_w$ is the only summand of $B_{\un{w}}$ which is not a summand of (some shift of) $B_{\un{y}}$ for any shorter sequence $\un{y}$. In principle one can
``construct" $B_w$ by finding all the summands of $B_{\un{w}}$ which occur as shifts of summands of lower terms, removing them, and seeing what remains. This amounts to a calculation of
all idempotents in $\End_0(B_{\un{w}})$, which is a difficult and subtle question.

Because of the implicit definition of the indecomposable bimodule $B_w$, its intrinsic properties often depend on the characteristic of $\Bbbk$ and the realization $\hg$ of $(W,S)$.
For example, it may happen that $B_{\un{w}}$ admits a non-trivial decomposition in characteristic $0$, with a nontrivial summand $B_w$, while for a certain finite characteristics $B_{\un{w}}$ is indecomposable, meaning that $B_w = B_{\un{w}}$. In this paper we determine the algebra of endomorphisms of $B_{\un{w}}$, but are only able to make very basic statements about the representation theory of this algebra.

\subsection{Categorification}
\label{subsec-SoergelCatfn}

We denote the split Grothendieck group of an additive category by $[\CC]$. That is, $[\CC]$ is the abelian group generated by symbols $[M]$ for all objects $M \in \CC$ subject
to the relations $[M] = [M'] + [M'']$ whenever $M \cong M' \oplus M''$ in $\CC$. When $\CC$ is monoidal, $[\CC]$ has the structure of a ring via $[M][M'] = [M\otimes M']$. If in addition
$\CC$ is graded with grading shift functor $M \mapsto M(1)$, then $[\CC]$ has the structure of a $\Zvv$-algebra via $v[M] \define [M(1)]$.

The following is \emph{Soergel's categorification theorem} \cite[Satz 1.10 and Satz 5.15]{Soe3}:

\begin{thm} Let $\hg$ be a Soergel realization of $W$. There is a unique isomorphism of $\Zvv$-algebras:
\begin{align*}
\e: \HB &\simto [\SBim] \\
\un{H}_s &\mapsto [B_s].
\end{align*}
Given Soergel bimodules $B$ and $B'$ the graded rank
of $\Hom(B,B')$ as a free left (or right) $R$-module is given by $(\e^{-1}[B],\e^{-1}[B'])$, where $(-,-)$ denotes the standard pairing on $\HB$.
\label{SoergelThm2}
\end{thm}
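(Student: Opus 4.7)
The plan is to construct an inverse $\ch \co [\SBim] \to \HB$ to $\e$ via characters of standard filtrations, and then establish the Hom formula by a separate argument. For a Soergel bimodule $B$, Soergel's theory (which is already input to Theorem \ref{SoergelThm1}) guarantees a standard filtration whose graded multiplicities $[B : R_w] \in \Zvv$ are independent of the chosen filtration. Setting
\[
\ch(B) \define \sum_{w \in W} [B : R_w]\, H_w
\]
gives an assignment that is additive on direct sums, hence induces a $\Zvv$-linear map $\ch \co [\SBim] \to \HB$. A direct computation using the two short exact sequences for $B_s$ displayed in Section \ref{subsec-soergelandstandard} yields $\ch(B_s) = \un{H}_s$.

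To see that $\ch$ is a ring homomorphism, it suffices to verify $\ch(B \otimes B_s) = \ch(B) \cdot \un{H}_s$ for all $B$ and $s$. Given a standard filtration of $B$, tensoring with $B_s$ over $R$ produces a filtration of $B \otimes B_s$ whose subquotients are $R_w(k) \otimes_R B_s$. The identity $R_w \otimes_R R_s \cong R_{ws}$ combined with the two short exact sequences for $B_s$ allows each such subquotient to be refined into a standard filtration (the choice of SES being determined by whether $ws > w$ or $ws < w$, so that Bruhat order is respected). The resulting contributions to the character assemble into $H_w \cdot \un{H}_s$; this may also be read as a categorical lift of Deodhar's formula (Lemma \ref{lem:defects}).

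By Theorem \ref{SoergelThm1}, the indecomposables in $\SBim$ (up to grading shift) are the $\{B_w\}_{w \in W}$, and $B_w$ has $R_w$ in its standard filtration with multiplicity $1$ while all other subquotients take the form $R_y(k)$ with $y < w$. Hence
\[
\ch(B_w) = H_w + \sum_{y < w} h_{y,w} H_y, \qquad h_{y,w} \in \Zvv,
\]
which is upper-triangular with $1$'s on the diagonal relative to the standard basis. Therefore $\{\ch(B_w)\}_{w \in W}$ is a $\Zvv$-basis of $\HB$, so $\ch$ is a $\Zvv$-module isomorphism, and by the previous paragraph an algebra isomorphism. Setting $\e \define \ch^{-1}$ yields the desired map; uniqueness of $\e$ is immediate since $\HB$ is generated as a $\Zvv$-algebra by $\{\un{H}_s\}_{s \in S}$.

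For the Hom formula, the plan is to combine a standard filtration of $B$ with a \emph{costandard filtration} of $B'$ (a standard filtration in the opposite Bruhat order, whose existence follows from the same package). Using the elementary computation
\[
\Hom(R_w(k), R_v(l)) = \begin{cases} R(l - k) & \text{if } w = v, \\ 0 & \text{otherwise,} \end{cases}
\]
the pairing $(\ch(B), \ch(B'))$ is seen to record exactly the graded rank of the matching contributions. The main obstacle is to prove that the double filtration does not contribute extra terms, equivalently, the vanishing of $\Ext^1$ between the standard and costandard subquotients. Soergel handles this via localization at the fraction field $Q$ of $R$: over $Q$, every Soergel bimodule decomposes as a direct sum of (shifts of) standard bimodules, making the Hom formula transparent generically. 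Combined with the fact that $\Hom(B, B')$ is torsion-free (and a posteriori free) as a one-sided $R$-module, one lifts the generic computation back to $R$, giving $\grk \Hom(B, B') = (\ch(B), \ch(B'))$.
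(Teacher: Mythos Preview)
Your treatment of the isomorphism $\e$ is correct but proceeds in the opposite direction from the paper's sketch. The paper first establishes that $\e$ is a well-defined homomorphism by verifying directly that the defining relations \eqref{rank1hecke} and \eqref{rank2hecke} of $\HB$ hold in $[\SBim]$ (via the decomposition $B_s \ot B_s \cong B_s(1) \oplus B_s(-1)$ and its rank-2 analogues), and only afterwards invokes the classification of indecomposables to deduce bijectivity, noting that Soergel's character map $\ch$ furnishes the inverse. You instead build $\ch$ first, check multiplicativity via standard filtrations, prove it is an isomorphism by upper-triangularity, and then set $\e = \ch^{-1}$. Both routes are sound; the paper's has the advantage of not needing the well-definedness of $\ch$ (independence of filtration multiplicities) as input, while yours makes the connection to Deodhar's formula more explicit.

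Your sketch of the Hom formula, however, has a genuine gap in the final step. You correctly identify that the crux is an $\Ext^1$-type vanishing between standard and costandard subquotients, but the claim that localization at $Q$ ``lifts the generic computation back to $R$'' does not work: over $Q$ the grading collapses (one has $Q \cong Q(2)$), so localization recovers at most the ungraded $Q$-dimension of $\Hom(B, B')$, never its graded rank as a free $R$-module. Torsion-freeness alone cannot bridge this. Soergel's actual argument is more delicate: it proceeds by induction using the costandard objects $\nabla_w$, showing that $\Hom(B, \nabla_w)$ is free of the predicted graded rank and that the relevant long exact sequences degenerate. The paper itself does not reproduce this argument, treating the Hom formula as part of the package cited from \cite{Soe3}; but if you are going to sketch it, the localization shortcut is not the right mechanism.
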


The uniqueness of $\e$ is immediate, because $\{\un{H}_s\}_{s \in S}$ generates $\HB$. To see that $\e$ is a homomorphism it is enough to check the relations \eqref{rank1hecke} and
\eqref{rank2hecke}. Using the Frobenius algebra structure and the isomorphism $R \cong R^s \oplus R^s(-2)$ of $R^s$-bimodules, one can easily check that \begin{equation} B_s \ot B_s \cong
B_s(1) \oplus B_s(-1). \label{rank1BS} \end{equation} This isomorphism categorifies equation \eqref{rank1hecke}. Under certain assumptions on the realization, the categorification of
equation \eqref{rank2hecke} comes from an explicit description of $B_w$ as the image of a certain idempotent, for every $w$ contained in a standard rank 2 parabolic subgroup. More details
can be found in \cite[\S 4]{Soe3} or \cite{EDihedral}. In this paper we will give an alternate proof of the categorification of \eqref{rank2hecke} in more generality. It follows that $\e$
is a homomorphism of $\Zvv$-algebras.

Once one knows that $\e$ is a homomorphism, the statement that it is an isomorphism follows from the  classification of indecomposable Soergel bimodules and the fact that their characters are upper triangular. More precisely, if we fix a rex $\un{w}$ for every $w \in W$ then the set $\{ H_{\un{w}} \; | \; w \in W\}$ is easily seen to be upper triangular in the standard basis of the Hecke algebra with respect to the Bruhat order, and hence is a basis. On the other hand, Theorem \ref{SoergelThm1} shows that $\{ [B_{\un{w}}] \; | \; w \in W \}$ is upper triangular in the basis $\{ [B_w] \; | \; w \in W \}$ for $[\SBim]$. As $\un{H}_{\un{w}}$ is mapped to $[B_{\un{w}}]$ it follows that $\e$ is an isomorphism.

In fact, Soergel shows that the character map
\[
\ch : [\SBim] \to \HB
\]
 discussed in previous section provides an inverse to $\e$. The character map is rather subtle. In general it is not known how to describe the element $\ch(B_w)$ in the Hecke algebra. However, if $\Bbbk$ is a field of characteristic zero Soergel proposed the following conjecture, which came to be known as \emph{Soergel's conjecture}:
 
\begin{conj} If $\Bbbk$ is a field of characteristic $0$ then $\ch(B_w) = \un{H}_w$. \end{conj}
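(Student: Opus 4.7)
The plan is to establish Soergel's conjecture by simultaneously proving a Hodge-theoretic package for Soergel bimodules, modeled after the classical Hodge theory of smooth projective varieties. The starting observation is that $\ch(B_w)$ is always bar-invariant (because $B_w$ is self-dual in $\SBim$ in a suitable sense, and bar-invariance of characters follows from Soergel's Hom formula in Theorem~\ref{SoergelThm2}) and has the form $\ch(B_w) = \un{H}_w + \sum_{x<w} m_{x,w}\un{H}_x$ for some bar-invariant $m_{x,w} \in \ZM[v,v^{-1}]$. Thus the conjecture is equivalent to positivity together with a minimality statement: the goal is to show $m_{x,w} = 0$, which (by bar-invariance and Soergel's categorification theorem) is equivalent to showing that the Kazhdan-Lusztig positivity conjectures hold, and then leveraging positivity to rule out lower-order correction terms.

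The heart of the proof is a simultaneous induction on $\ell(w)$ establishing the following statements for a fixed element $\rho \in \hg^*$ inside a suitable positive cone (thought of as an ``ample class''):
\begin{enumerate}
\item[(hL)] \textbf{Hard Lefschetz:} for each indecomposable $B_w$ and each $k \geq 0$, multiplication by $\rho^k$ induces an isomorphism between the $(-k)$-th and $k$-th graded pieces of a suitable ``stalk at a point'' of $B_w$ (obtained by tensoring over $R$ on both sides with $\Bbbk$, modulo appropriate modifications).
\item[(HR)] \textbf{Hodge--Riemann bilinear relations:} the natural bilinear form coming from the Frobenius structure restricted to the primitive subspace with respect to the Lefschetz operator is definite of a prescribed sign depending on degree.
\end{enumerate}
These statements are propagated in lockstep: one proves them for $B_w$ (with $w$ of length $n$), and then uses them together with the Soergel-Hom formula (to bound multiplicities of indecomposable summands in $B_w \otimes B_s$) to derive the analogous statements for Bott--Samelson bimodules $B_{\un{x}}$ of length $n+1$, and then for the new indecomposable summands $B_{xs}$ appearing in $B_{\un{x}}$. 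The induction starts trivially at $w = e$, and the ``Weak Lefschetz'' step relating $B_w$ to $B_w \otimes B_s$ is the key technical bridge; it uses the short exact sequences \eqref{ses1} and \eqref{ses2} expressing $B_s$ as an extension of $R$ by $R_s$, and requires a careful analysis of how the Lefschetz operator changes under the one-color trivalent vertices and dots.

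The payoff is positivity of signatures, hence positivity of multiplicities: the Hodge--Riemann sign forces the multiplicity of any $B_x(k)$ in $B_w \otimes B_s$ to be the non-negative integer predicted by $\un{H}_w \cdot \un{H}_s = \sum c \cdot \un{H}_x v^k$ in $\HB$. This inductively forces $\ch(B_w) = \un{H}_w$, since any lower-order term $m_{x,w}\un{H}_x$ in $\ch(B_w)$ with $m_{x,w} \neq 0$ would contradict the minimal character achieved by the indecomposable summand of a Bott--Samelson bimodule whose character is forced to be $\un{H}_w$ modulo strictly positive contributions from shifts of lower $\un{H}_y$. The main obstacle, by a wide margin, is the simultaneous induction for (hL) and (HR): the inductive step from (HR) in degree $n$ to (hL) in degree $n{+}1$ is genuinely subtle because the naive ``Lefschetz on a tensor product'' argument fails for Soergel bimodules (there is no global smooth projective variety), and instead one must work with the ``local'' Lefschetz operator $\rho\otimes 1 + 1\otimes \rho$ on $B\otimes B_s$ and carefully identify primitive subspaces via a Lefschetz-type argument using the Frobenius trace $\pa_s$.
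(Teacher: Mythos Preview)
The paper does not prove this statement: it is stated as a \emph{conjecture} in \S\ref{subsec-SoergelCatfn}, and immediately after the statement the paper remarks that ``Soergel's conjecture has recently been proved by the authors in \cite{EWHodge} for realizations (not necessarily reflection faithful) defined over $\RM$, and satisfying a certain positivity property.'' So there is no ``paper's own proof'' to compare against here.

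Your outline is, however, an accurate high-level sketch of the argument carried out in \cite{EWHodge}: the simultaneous induction on $\ell(w)$ establishing hard Lefschetz and the Hodge--Riemann bilinear relations for the indecomposable Soergel bimodules, with the Weak Lefschetz step passing through $B_w \otimes B_s$, is exactly the architecture of that paper. Two points worth flagging. First, the hypothesis ``characteristic $0$'' in the conjecture is not quite what the proof delivers: as the paper explicitly notes in the remarks following the conjecture, the argument in \cite{EWHodge} requires the realization to be defined over $\RM$ (so that one can speak of positivity and signatures), and indeed the conjecture as literally stated is false for certain complex realizations with no real form. Your mention of ``a suitable positive cone'' suggests you are aware of this, but it should be made explicit. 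Second, your description of how positivity of signatures forces $m_{x,w}=0$ is somewhat compressed; in \cite{EWHodge} the key mechanism is that the Hodge--Riemann relations on $B_{\un{w}}$ for a rex $\un{w}$ force the local intersection form (governing which summands split off) to be definite, and definiteness of this form is what prevents ``extra'' copies of $B_x$ from appearing --- this is the step that genuinely uses the sign information, not just non-degeneracy.
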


It is immediate that Soergel's conjecture implies the Kazhdan-Lusztig positivity conjectures. Earlier work of Soergel \cite{Soe1} showed that his conjecture also implies the
Kazhdan-Lusztig conjecture. Soergel's conjecture has recently been proved by the authors in \cite{EWHodge} for realizations (not necessarily reflection faithful) defined over $\RM$, and satisfying a certain positivity property.

\begin{remark} This statement of Soergel's conjecture is known to fail when $\Bbbk$ is a field of finite characteristic $p$. The image of $[B_w]$ in $\HB$ is known as the $p$-\emph{canonical basis}, and its computation is an interesting open problem (see \cite{Wpcan}).
\end{remark}

\begin{remark} Soergel's conjecture is false for certain complex realizations of the affine Weyl group of type $A$ which do not admit any real form (as in part (6) of Example
\ref{ex:realizations}, when $q$ is a root of unity). See \cite{EQAGS} for details. Hence the assumption that the underlying representation be defined over $\RM$ is essential for Soergel's
conjecture to hold. (The authors' proof of Soergel's conjecture uses positivity considerations in a crucial way!) \end{remark}

The fact that $\un{H}_s$ is self-biadjoint in the standard pairing is categorified by the fact that the functor of tensoring with $B_s$ is its own left and right adjoint. Just as in Remark \ref{rmk:traces pairings}, any Hom space between Bott-Samelson bimodules can be understood after adjunction in terms of $\Hom(R,B_{\un{w}})$. Soergel's Categorification Theorem
combined with Corollary \ref{cor:defects} implies that this space is free as a left $R$-module with a homogeneous basis in bijection with the subsequences of $\un{w}$ with end-point $e$.
Following the work of Libedinsky \cite{Lib}, we will show that this basis can be constructed in a natural way.

\begin{remark} Suppose that $(W_1, S_1)$ and $(W_2, S_2)$ are Coxeter systems and $S_1 \subset S_2$ is an inclusion realizing $W_1$ as a standard parabolic subgroup of $W_2$. Suppose also
that we have a $W_1$-equivariant inclusion of realizations $\hg_1 \subset \hg_2$. There is an obvious functor $\iota \co \BSBim_{S_1} \to \BSBim_{S_2}$ sending $B^1_s \mapsto B^2_s$.
Soergel's Categorification Theorem implies that this functor is ``fully-faithful after base change.'' That is, if $R_1$ and $R_2$ denote the polynomial functions on $\hg_1$ and $\hg_2$
respectively then \[ R_2 \otimes_{R_1} \Hom(B^1_{\un{w}},B^1_{\un{w}'}) = \Hom(B^2_{\un{w}}, B^2_{\un{w}'}). \] Said another way, homomorphisms from $B_{\un{w}}$ to $B_{\un{w}'}$ only
depend on the simple reflections which occur in $\un{w}$ and $\un{w}'$ (up to the action of polynomials). The (diagrammatic) proof that these functors are fully faithful after base change
will be clear later in this paper. As a result, one may define the category of Soergel bimodules for $(W, S)$ even when $S$ is infinite, either by working with an infinite dimensional
realization of $W$, or by taking a limit over the categories associated to finite subsets of $S$.\label{includingindexsets} \end{remark}

\subsection{Localization}

As $\Bbbk$ is a domain, so is $R$. Let $Q$ denote the fraction field of $R$, which is an (ungraded) field living over $\Bbbk$. One can extend any $R$-module to a $Q$-module in the usual
way. We have already shown in general that any tensor product of standard and Soergel bimodules is free as a left (or right) $R$-module, and that morphism spaces are torsion-free.
Therefore, extension of scalars from $R$ to $Q$ on the left (or right) will yield a free $Q$-module, and will act faithfully on morphisms.

\begin{lem}\label{localizationononeside} For any $\un{w} = s_1s_2 \dots s_d$ one has an isomorphism of right $Q$-modules
\[
B_{\un{w}} \otimes_R Q = Q \otimes_{Q^{s_1}} Q \otimes_{Q^{s_2}}  \dots \otimes_{Q^{s_d}} Q.
\]
The analogous statement can be made for standard bimodules, or tensor products of standard and Bott-Samelson bimodules.
\end{lem}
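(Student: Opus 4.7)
The plan is induction on $d$, with the heart of the argument being the case $d=1$: the assertion that the natural map $B_s \otimes_R Q = R \otimes_{R^s} Q \to Q \otimes_{Q^s} Q$, induced by the inclusions $R \hookrightarrow Q$ and $R^s \hookrightarrow Q^s$, is an isomorphism of right $Q$-modules.

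For the base case, I would exhibit both sides as free right $Q$-modules of rank $2$ with the common basis $\{1 \otimes 1,\ \delta \otimes 1\}$, where $\delta \in \hg^*$ satisfies $\pa_s(\delta) = 1$ (which exists by Demazure Surjectivity). Freeness on the source side is immediate from Claim \ref{claim:unique decomposition}, which gives $R = R^s \oplus R^s \delta$. For the target, the key step---and what I expect to be the main obstacle---is verifying that $Q^s$ coincides with the fraction field of $R^s$, so that $Q$ inherits an analogous decomposition $Q = Q^s \oplus Q^s \delta$. To see this, any $s$-invariant element $q = a/b \in Q$ can be rewritten with $s$-invariant denominator as $q = as(b)/(bs(b))$; then $bs(b) \in R^s$ and the numerator $as(b) = q \cdot bs(b)$ lies in $R \cap Q^s = R^s$, exhibiting $q$ as a quotient of elements of $R^s$. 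Once freeness holds on both sides, the natural map identifies the bases, giving the desired isomorphism.

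For the inductive step, write $\un{w} = \un{w}' s_d$ with $\un{w}' = s_1 \dots s_{d-1}$ and factor $B_{\un{w}} = B_{\un{w}'} \otimes_R B_{s_d}$. Tensoring with $Q$ on the right and applying the base case to the rightmost factor gives
\[
B_{\un{w}} \otimes_R Q \;\cong\; B_{\un{w}'} \otimes_R (Q \otimes_{Q^{s_d}} Q) \;\cong\; (B_{\un{w}'} \otimes_R Q) \otimes_{Q^{s_d}} Q,
\]
where the second isomorphism is associativity of tensor products (using that $Q \otimes_{Q^{s_d}} Q$ carries a natural $R$-module structure on the left via $R \hookrightarrow Q$). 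Applying the inductive hypothesis to the first factor finishes the induction.

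For the analogous statement, standard bimodules $R_w$ are easier to handle directly: $R_w$ is free of rank one on each side as an $R$-module, so $R_w \otimes_R Q \cong Q$ as a right $Q$-module, and the twisted left action matches that of the corresponding $Q$-bimodule $Q_w$. Mixed tensor products of standard and Bott-Samelson bimodules can then be localized factor by factor using the same inductive scheme; the only technical content, in each case, is the base case identification $R \otimes_{R^s} Q = Q \otimes_{Q^s} Q$ explained above.
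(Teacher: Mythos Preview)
Your proof is correct and follows essentially the same approach as the paper: both reduce by induction to the case $d=1$ and both rely on the observation that any $f \in R$ divides the $s$-invariant element $fs(f)$, so that denominators in $Q$ can be made $s$-invariant. The paper uses this trick slightly more directly---it simply checks that the natural inclusion $R \otimes_{R^s} Q \hookrightarrow Q \otimes_{Q^s} Q$ is surjective by writing $\tfrac{1}{f} \otimes 1 = s(f) \otimes \tfrac{1}{fs(f)}$---whereas you route through the identification $Q^s = \mathrm{Frac}(R^s)$ to exhibit matching bases on both sides; but the content is the same.
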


\begin{proof} We show that $B_s \otimes_R Q \cong R \otimes_{R^s} Q$ is isomorphic to $Q \ot_{Q^s} Q$, which implies the lemma by a simple induction. One has an obvious inclusion $i : R \otimes_{R^s} Q \into Q \ot_{Q^s} Q$ and it is enough to show that this is surjective. However, any $0 \ne f \in R$ divides the $s$-invariant polynomial $fs(f)$ and in $Q \ot_{Q^s} Q$ we have
\[
s(f) \otimes \frac{1}{fs(f)} = \frac{1}{f} \otimes 1.
\]
It follows that $i$ is surjective, and the lemma follows. The analogous proof for standard bimodules is even simpler. \end{proof}

In particular, base change from $R$ to $Q$ on the right will send a Bott-Samelson bimodule to a $Q$-bimodule, not just an $(R,Q)$-bimodule, and the base change functor is monoidal. Let
$\BSBim_Q$ denote the essential image of $\BSBim$ inside the category of $Q$-bimodules. By the above lemma, it is the full subcategory of $Q$-bimodules generated by the bimodules
$B_{s,Q}\define Q \ot_{Q^s} Q$. Let $\SBim_Q$ denote its Karoubi envelope. We define $\StdBim_Q$ similarly.

As $Q$-bimodules we have a splitting \begin{equation} \label{eq:split} B_{s,Q} \cong Q \oplus Q_s. \end{equation} One way to observe this is that the sequences \eqref{ses1} and
\eqref{ses2} are split. In fact, they ``split each other:" the composition $R \to B_s \to R$ is multiplication by $\a_s$, which becomes invertible in $Q$. Therefore, any Soergel bimodule
over $Q$ splits into standard bimodules. When the realization is faithful one has \[ \Hom(Q_w,Q_v) = \begin{cases} Q & \text{if $w = v$} \\ 0 & \text{otherwise,} \end{cases} \] so that
standard bimodules over $Q$ are indecomposable and pairwise non-isomorphic. It follows that $\SBim_Q$ is equivalent to the (additive closure of the) 2-groupoid of $W$ over $Q$.

We conclude $\SBim_Q$ has a drastically different behaviour to $\SBim$ --- $\SBim$ is far from being a 2-groupoid! In the following, we will refer to the faithful monoidal functor
\begin{gather*}
\SBim \to \SBim_Q
\end{gather*}
as \emph{localization}. It plays an important role in what follows, essentially because $\SBim_Q$ is such a simple category. We already know that Hom spaces between various $Q_w$ are either rank 1 or 0. Because the passage to the localization is injective, and because morphisms between various $Q_w$ are so easy,
we can study morphisms in $\SBim$ based on what their restrictions are to simple summands in $\SBim_Q$ after localization. Given an explicit morphism $f$ in $\BSBim$, we will be able to
write down explicitly its restriction to each standard summand of the source and target in $\SBim_Q$, and in doing so, will be able to conclude whether or not $f=0$.

\begin{remark} One may use the homogeneous fraction field $Q'$ instead of the full fraction field, and the results in this paper pertaining to localization will be (essentially) unchanged.
The homogeneous fraction field is graded; however, $Q' \cong Q'(2)$ so that the grading does not yield any interesting invariants (even the parity is irrelevant, as investigation reveals).
Though more difficult, all the localization results in this paper should hold if one only inverts the simple roots $\a_s$ and their $W$-conjugates. One advantage to using
$Q$ over these alternatives is that $Q$ is a field. \end{remark}

\part{Diagrammatics}

\label{pt:diagrammatics}

\section{Diagrammatics for Standard Bimodules}
\label{sec-diagstd}

\subsection{Diagrammatic definition}
\label{subsec-StdDiagDef}

We assume that the reader is familiar with diagrammatics for monoidal categories with biadjunctions. An introductions to the topic may be found in \cite{Lauda}.

Fix $(W,S)$. In the paper \cite{EWFenn}, we give a diagrammatic monoidal presentation for the 2-groupoid of $W$ over $\Bbbk$. As we have seen, this is equivalent to the category $\StdBim_0$. We recall that definition, beefing it up to add polynomials and obtain the 2-groupoid of $W$ over $R$, which is equivalent to $\StdBim$. We introduce it in a way which will aid our definition of Soergel diagrammatics in the next chapter.

\begin{defn} \label{defn: graph}(For the purposes of this paper) a \emph{graph with boundary} for $(W,S)$ is an \emph{isotopy class} of a graph with boundary, properly embedded in the planar strip $\R
\times [0,1]$. In other words, all vertices of the graph lie on the interior of the strip, and edges may terminate either at a vertex or on the boundary of the strip. We call the place
where an edge meets the boundary a \emph{boundary point}; boundary points are not vertices. Edges may also form closed loops. We also allow decorations to float in regions cut out by the graph;
these are thought of as 0-valent vertices with labels (we will always clarify in subdefinitions what labels are allowed). Note that a connected component either contains a vertex, is a
loop, is an arc between two boundary points, or is a decoration.
The number of vertices and the number of components are required to be finite. Every edge is ``colored" (i.e. labelled) by
an element of $S$. Isotopy is allowed to move boundary points within the boundary of the strip. The boundary points on $\R \times \{0\}$ (resp. $\{1\}$) give a finite sequence of colored
points, known as the \emph{bottom boundary} (resp. \emph{top}) of the graph. We will often abuse notation and refer to a representative of the isotopy class as a graph. \end{defn}

We fix a realization of $(W,S)$, yielding the polynomial ring $R$.

\begin{defn} A \emph{standard graph} is a graph with boundary for which: the only decorations are boxes labelled by homogeneous $f \in R$; for every vertex there is a pair $s,t \in S$
such that the vertex is $2m_{st}$-valent and its edges alternate in color between $s$ and $t$. The \emph{degree} of the graph is the sum of the degrees of every box. \end{defn}

We draw standard graphs with dashed edges, to distinguish them from the Soergel graphs to be defined later. Here is an example where $S=\{r,b,g\}$ (for \emph{r}ed, \emph{b}lue and \emph{g}reen) and $m_{r,g}=5$, $m_{r,b}=3$, $m_{b,g}=2$:

\igc{1}{stddiagexample}

\begin{defn} Let $\DStd$ denote the $\Bbbk$-linear additive graded monoidal category defined as follows. Objects are (direct sums of grading shifts of) sequences $\xb$ of elements of
$S$, with monoidal structure given by concatenation. The empty sequence $\emptyset=\1$ is the monoidal identity. We may draw these objects as colored points on a line, assigning one
color to each index in $S$. The space $\Hom_{\DStd}(\xb,\yb)$ will be the free $\Bbbk$-module generated by standard graphs with bottom boundary $\xb$ and top boundary $\yb$, modulo the
relations below. Hom spaces will be graded by the degree of the graphs, and all the relations below are homogeneous.

The first (unwritten) relation states that boxes add and multiply just as homogeneous polynomials in $R$ do. Thus we could have chosen our generating decorations to merely be boxes labelled by generators of $R$. The other tacit relation is that cups and caps form a collection of biadjoint pairs, and that all maps are cyclic with respect to these
biadjunctions, having the appropriate rotational symmetry. This is implied by the definition of a graph, because an isotopy class of diagram unambiguously represents a morphism:

\igc{1}{biadjoint}

The following relation describes how to slide boxes through edges.

\begin{equation} \label{slidepolythrustd} \ig{1}{slidepolythrustd} \end{equation}

The following relations hold for any $s \in S$. As usual, empty space here denotes the identity endomorphism of $\1$.

\begin{equation} \label{circlestd} \ig{1}{circlestd} \end{equation}

\begin{equation} \label{capcupstd} \ig{1}{capcupstd} \end{equation}

These two relations imply that cups and caps form inverse isomorphisms from $ss$ to $\1$. The following relation, which states that the $2m$-valent vertex gives an isomorphism from
$\ubr{sts\ldots}{m}$ to $\ubr{tst\ldots}{m}$, holds for any $s,t \in S$ with $m_{s,t} < \infty$.

\begin{equation} \label{2misomstd} \ig{1}{2misomstd} \end{equation}

The remaining relations come from triples $s,t,u \in S$ such that the corresponding parabolic subgroup is finite. By the classification of finite Coxeter groups the finite rank three parabolic subgroups can only be one of the following types:

\begin{eqnarray} (A_1 \times I_2(m)) && \ig{1}{A1I2mstd} \label{A1I2mstd} \end{eqnarray}

A specific example, when $m=2$, is:

\begin{eqnarray} (A_1 \times A_1 \times A_1) && \ig{1}{A1A1A1std} \label{A1A1A1std} \end{eqnarray}
	
\begin{eqnarray} (A_3) && \ig{1}{A3std} \label{A3std} \end{eqnarray}

\begin{eqnarray} (B_3) && \ig{1}{B3std} \label{B3std} \end{eqnarray}

\begin{eqnarray} (H_3) && \ig{1}{H3leftdashed} = \ig{1}{H3rightdashed} \label{H3std} \end{eqnarray}
This concludes the definition of $\DStd$. \end{defn}

\begin{defn} If we do not allow any grading shifts, and only allow $\Bbbk$-linear combinations of standard graphs without polynomials, we get a monoidal category $\DStd_0$ consisting entirely of degree $0$ maps. \end{defn}

\begin{remark} When working with $\DStd_0$, the base ring $\Bbbk$ is entirely arbitrary. When defining $\DStd$, as for $\StdBim$, one needs a realization of $W$ to obtain the ring $R$ but
no additional restrictions on $R$ are necessary. The main theorem \ref{FStdEquiv} below holds more generally for any (graded) ring $R$ with a faithful $W$-action.\end{remark}

Now we wish to show that $\DStd \cong \StdBim$, and that $\DStd_0 \cong \StdBim_0$.

\begin{defn} We define a functor $\FStd \co \DStd \to \StdBim$. Given an expression ${\un{x}} = st\dots u$, $\FStd$ sends the corresponding object to $R_s \ot R_t \ot \cdots \ot R_u$. The box containing $f$ is sent to multiplication by $f$. The cups, caps, and $2m$-valent vertices are all sent to the isomorphisms of $R$-bimodules which send 1-tensors to 1-tensors. This restricts to a functor
$\FStd_0 \co \DStd_0 \to \StdBim_0$. \end{defn}

\begin{claim} The functors $\FStd$ and $\FStd_0$ are well-defined. \end{claim}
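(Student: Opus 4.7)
The plan is to verify, relation by relation, that each defining relation of $\DStd$ is satisfied by the images of the generators in $\StdBim$. The uniform observation underlying the whole check is that every generator maps to one of two simple types of morphism: boxes go to multiplication by polynomials, while cups, caps, and $2m$-valent vertices all go to the canonical ``1-tensor to 1-tensor'' isomorphism between standard bimodules $R_{\un{w}}$ and $R_{\un{w}'}$ whenever $\un{w}$ and $\un{w}'$ represent the same element of $W$. Consequently, both sides of each relation with source $\un{x}$ and target $\un{y}$ land in $\Hom_{\StdBim}(R_{\un{x}},R_{\un{y}})$ as morphisms of a very restricted form, and the verification reduces to comparing their effect on the 1-tensor $1\otimes\cdots\otimes 1$.

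For the relations not involving three colors, the verification is routine. The polynomial sliding relation \eqref{slidepolythrustd} is precisely the defining identity $f\otimes 1 = 1\otimes s(f)$ in $R_s$. The biadjunction and zigzag relations \eqref{capcupstd}, together with the circle relation \eqref{circlestd}, follow from the canonical identification $R_s\otimes_R R_s \cong R_{s^2} = R$, under which the cup corresponds to $1\mapsto 1\otimes 1$ and the cap to $1\otimes 1\mapsto 1$. The $2m$-valent vertex relation \eqref{2misomstd} holds because the two alternating tensor products of length $m_{st}$ are both canonically identified with $R_{\ubr{sts\cdots}{m_{st}}}$ via the 1-tensor, so the forward and backward $2m$-valent vertices realize mutually inverse isomorphisms.

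The three-color relations \eqref{A1I2mstd}--\eqref{H3std} require no additional conceptual input. Both sides of each equality are constructed from $2m$-valent vertices (tensored with identities and possibly composed with cups and caps), so by the preceding observation each side is \emph{a} 1-tensor-to-1-tensor map between the source and target standard bimodules, which correspond to different reduced expressions for the longest element of a rank-three finite parabolic subgroup. Since a composition of 1-tensor morphisms is again a 1-tensor morphism and any two such morphisms between a fixed pair $R_{\un{x}}, R_{\un{y}}$ coincide, the two sides agree on the nose in $\StdBim$. Finally, $\FStd_0$ is the degree-zero restriction of $\FStd$, so its well-definedness follows immediately. The only potential obstacle is the combinatorial bookkeeping for the three-color Zamolodchikov-type relations; but once the rigidity of 1-tensor morphisms between standard bimodules is observed, even this step is automatic.
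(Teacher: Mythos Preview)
Your proof is correct and follows essentially the same approach as the paper. The key observation in both is that every generator (other than boxes) maps to the canonical ``1-tensor to 1-tensor'' isomorphism, and since each $R_{\un{x}}$ is generated as a bimodule by its 1-tensor, any two morphisms agreeing on the 1-tensor coincide; the paper simply states this in one sentence rather than unpacking each class of relation separately.
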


\begin{proof} For $\FStd_0$, we need only check the relations minus (\ref{slidepolythrustd}). However, all relations hold immediately because both sides are bona fide maps in
$\StdBim_0$, and both sides send the 1-tensor to the 1-tensor. In addition, it is clear that equation (\ref{slidepolythrustd}) holds for standard bimodules, so $\FStd$ is well-defined.
\end{proof}

$\FStd$ is obviously essentially surjective. Suppose that the realization is faithful. Whenever a Hom space in $\StdBim$ is non-zero, it is generated by the isomorphism which sends the
1-tensor to the 1-tensor. This is clearly in the image of $\FStd$, so $\FStd$ is full.

\begin{thm} The functors $\FStd_0$ and $\FStd$ are equivalences, so long as the realization is faithful. \label{FStdEquiv} \end{thm}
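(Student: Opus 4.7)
My plan is to reduce $\FStd$ to $\FStd_0$ and then invoke the companion paper \cite{EWFenn} for the latter. Essential surjectivity is immediate: every $R_w \in \StdBim$ is the image under $\FStd$ of any expression for $w$, so the only remaining content is full faithfulness. I would fix sequences $\un{x}, \un{y}$ and prove directly that $\FStd$ induces a bijection
\[ \Hom_{\DStd}(\un{x}, \un{y}) \xrightarrow{\sim} \Hom_{\StdBim}(R_{\un{x}}, R_{\un{y}}). \]

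First I would dispose of $\FStd_0$ by quoting the main result of \cite{EWFenn}: the relations imposed on $\DStd_0$ (cup/cap, $2m$-valent isomorphism, and the rank-three equalities \eqref{A1I2mstd}--\eqref{H3std}) are exactly what is needed to present the $W$-2-groupoid over $\Bbbk$. Under the faithfulness hypothesis, this 2-groupoid coincides with $\StdBim_0$, so $\FStd_0$ is an equivalence; in particular $\Hom_{\DStd_0}(\un{x}, \un{y})$ is free of rank one over $\Bbbk$ when $x = y$ in $W$, generated by a distinguished polynomial-free diagram $D_{\un{x}, \un{y}}$ mapping to the 1-tensor isomorphism, and is zero otherwise.

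To promote this to $\FStd$, I would use the box-sliding relation \eqref{slidepolythrustd}: iterated application transports any polynomial box to the leftmost region of the ambient strip, applying the appropriate simple reflection at each strand it crosses, and boxes in the same region multiply. Since cups, caps, and $2m$-valent vertices all have degree zero, every morphism in $\Hom_{\DStd}(\un{x}, \un{y})$ can then be rewritten as $f \cdot D$, where $f \in R$ occupies a single leftmost box and $D$ is a polynomial-free diagram in $\Hom_{\DStd_0}(\un{x}, \un{y})$. Combined with the previous step, this yields a surjective $\Bbbk$-linear map
\[ \alpha \co R \twoheadrightarrow \Hom_{\DStd}(\un{x}, \un{y}), \qquad f \longmapsto f \cdot D_{\un{x}, \un{y}}, \]
when $x = y$, and shows $\Hom_{\DStd}(\un{x},\un{y}) = 0$ when $x \ne y$. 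The composite $\FStd \circ \alpha$ is the map $R \to \Hom_{\StdBim}(R_{\un{x}}, R_{\un{y}}) = R$ sending $f$ to multiplication by $f$, which is tautologically an isomorphism; hence $\alpha$ is also injective and $\FStd$ restricts to a bijection on this Hom space, proving full faithfulness.

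The only serious obstacle is the first step, which requires showing that \eqref{A1I2mstd}--\eqref{H3std} together with isotopy and the simpler rank-one and rank-two relations suffice to identify any two standard graphs inducing the same 1-tensor map. This is the crux of \cite{EWFenn}, proved there by analysing reduced-expression graphs in finite rank-three Coxeter groups. Given that input, the argument above is essentially formal, being driven entirely by \eqref{slidepolythrustd} and the indecomposability of $R_w$.
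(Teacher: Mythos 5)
Your proposal is correct and is essentially the route the paper takes: both defer the hard topological content to \cite{EWFenn}, where the rank-three relations are shown to suffice to collapse polynomial-free standard diagrams. The only difference is cosmetic --- the paper sketches a reduction via biadjunction to diagrams with empty boundary and then cites \cite{EWFenn} for the topology (using Fenn's theory and $\pi_2$ of a space built from the Coxeter complex), whereas you quote the $\FStd_0$ equivalence directly and then lift to $\FStd$ by sliding boxes leftward via \eqref{slidepolythrustd}; both repackage the same appeal to \cite{EWFenn}, and your lifting argument is sound.
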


This theorem is the main result of \cite{EWFenn}. Essentially, one needs to show that the space of morphisms from ${\un{x}}$ to ${\un{y}}$ is one-dimensional if $x=y$ (it is easy to see
that it is $0$-dimensional otherwise). Using biadjunction and various isomorphisms in both categories, this reduces to the fact that standard diagrams with empty boundary are polynomial
multiples of the empty diagram. The proof is actually topological in nature. Using Fenn's theory of diagrams \cite{Fenn} we are able to relate standard diagrams with empty boundary to
elements of the second homotopy group of a space related to the Coxeter complex.

We will discuss one important feature of the story in the next section.

\subsection{Rex moves and rex graphs}
\label{subsec-RexMoves}

The terminology of this section is ad hoc, and non-standard. This is due to ignorance, not malice.

For each $w \in W$, let $\G_w$ denote the set of all reduced expressions for $w$. This can be given the structure of a connected graph, the \emph{rex graph},  where two reduced expressions are connected by an edge if they differ by a single application of a braid relation \eqref{braidreln}. The edge itself can be labelled by the pair $\{s,t\}$ corresponding to the braid
relation; in other words, the edges are labelled by finitary rank 2 subsets of $S$. We shall only distinguish here between two different labellings: \emph{distant edges} for which
$m_{s,t}=2$, and those for which $m_{s,t}>2$. Let $\oG_w$ denote the graph obtained from $\G_w$ by contracting the distant edges.

We can associate a vertex ${\un{x}}$ of $\G_w$ with the standard bimodule $R_{\un{x}}$, isomorphic to $R_w$. We think of a path as a \emph{rex move}, a sequence of braid relations traversing the rex graph to a new reduced expression. To an
edge labelled $\{s,t\}$ from ${\un{x}}$ to ${\un{y}}$, we can associate the $2m_{s,t}$-valent vertex, which is a morphism from $R_{\un{x}}$ to $R_{\un{y}}$. Therefore we can associate a morphism in $\DStd_0$
to each path in $\G_w$, and we also call this morphism a rex move. We can never construct a morphism with caps or cups in this fashion, since those involve non-reduced expressions. In
fact, $\DStd$ essentially encodes a study of the graph of all expressions, reduced and non-reduced, for $w$; this point is made clear in \cite{EWFenn}, and may become more obvious after
the remainder of this section.

The relations of $\DStd$ which do not involve cups and caps, namely \eqref{2misomstd} through \eqref{H3std}, all come from loops (i.e. paths from a vertex to itself) in $\G_w$. That is,
each side of the relation comes from a path from $\un{x}$ to $\un{x}'$ in $\G_w$, and the relation states that the corresponding morphisms are equal. Equivalently, once \eqref{2misomstd}
is known, the relation states that the loop $\un{x} \to \un{x}' \to \un{x}$ is equal to the trivial loop at $\un{x}$.

For each edge there is a \emph{boring} loop which follows the edge and then follows it in reverse, which corresponds to \eqref{2misomstd}. In addition, each finitary rank 3 subset of $S$
gives rise to a kind of loop, which first appears in the longest elements of these parabolic subgroups.

\begin{ex} We give two examples of type $A_1 \times I_2(m)$. The labeling of indices in $S$ should be obvious. Type $A_1 \times A_1 \times A_1$: \igc{1}{135graph} Type $A_1 \times
A_2$: \igc{1}{1214graph} \end{ex}

\begin{ex} Here is $\G_{w_0}$ for type $A_3$. The orientations on the arrows will be explained at the end of this section. \igc{1}{121321graph} \end{ex}

\begin{ex} Here is $\oG_{w_0}$ for type $B_3$. This can be deduced from \eqref{B3std}, and we let the avid reader do the same for $H_3$. The red and blue edges correspond to different
parabolic subsets. \igc{1}{B3rexgraph} \end{ex}

There is one additional kind of loop, which arises when two braid relations can be applied to disjoint parts of a rex. This is called a \emph{disjoint square}. In $\DStd$, this corresponds
to the fact that distant pictures commute in a diagrammatic category. Disjoint squares need not involve disjoint colors.

\begin{ex} A disjoint square.  \igc{1}{disjointsquare} \end{ex}

It is known \cite[Chapter 2, \S 5]{Ronan} that these loops generate the set of all loops in $\G_w$, in a sense which the reader can intuit. This corresponds topologically to the fact that
$\pi_2$ of the truncated dual Coxeter complex is generated by the finitary rank 3 subsets. In other words, the relations \eqref{2misomstd} through \eqref{H3std} are sufficient to imply
that any two paths from $\un{x}$ to $\un{x}'$ will induce the same morphism in $\DStd$. The theory of reduced expressions is enough to say that diagrams in $\DStd$ without cups and caps form a one-dimensional space. The proof in \cite{EWFenn} deals with non-reduced expressions as well.

Let us prepare the reader for the following chapter. Note that the only loops in $\oG_w$ (in addition to the boring loops and disjoint squares) come from \emph{connected} finitary rank 3
subsets of $S$, which are of the type $A_3$, $B_3$, or $H_3$. The equality of the two sides in \eqref{A3std} is often called the \emph{Zamolodzhikov tetrahedron equation}, and so we refer to the
$B3$ and $H3$ equations as Zamolodzhikov equations as well.

The story will be complicated much further when we start working with Bott-Samelson bimodules instead of standard bimodules. For two rexes ${\un{x}}$ and ${\un{y}}$ connected by an edge
labelled by the subset $J=\{s,t\}$, there will be a corresponding map from $B_{\un{x}}$ to $B_{\un{y}}$ which projects from $\ubr{B_s \ot B_t \ot \cdots}{m} \to B_{w_J}$ and then includes
$B_{w_J} \to \ubr{B_t \ot B_s \ot \cdots}{m}$. To any path in $\G_w$ we can still associate a morphism in $\BSBim$, but two paths will not always agree. There is no \eqref{2misomstd}; the
boring loop is projection to a summand, not the identity map. It is true, at least, that following the boring loop twice is the same as following it once. After localizing, however, rex
moves in $\BSBim$ restricted to the unique ``top" summand $Q_w$ will give exactly the rex moves in $\DStd$. In particular, this implies that two paths give rex moves in $\BSBim$ which
agree modulo ``lower terms", where lower terms are maps which, when localized, give the zero map on $Q_w$. For any ${\un{x}},{\un{y}} \in \G_w$, the space of maps from $B_{\un{x}}$ to
$B_{\un{y}}$ modulo lower terms is free of rank 1 over $R$, as we shall see.

\section{Diagrammatics for Soergel Bimodules} \label{sec-diagsbim}

In this chapter we define a diagrammatic category $\DC$ by generators and relations. We provide a functor $\FC$ from this category to Bott-Samelson bimodules. The main result of this paper
is that $\FC$ is an equivalence, and the proof will be given in later chapters.

Fix a realization $\hg$ of $(W,S)$ over $\Bbbk$, as in section \ref{subsec-realization}, with $S$ finite (though see Remark \ref{SCnotfinite}). As before, we let $R = \Bbbk[\hg]$ denote
the coordinate ring of the reflection representation, $\a_s$ the simple root associated to $s \in S$, $\pa_s$ the Demazure operator. We let $Q$ denote the field of fractions of $R$.

For reasons of simplicity, we will assume the realization is balanced in the first pass. Later, in section \ref{nonbalanced}, we treat the unbalanced case. For reasons to become clear in
that section, we must always assume our realization is even-balanced.

\subsection{Generators and relations}

\begin{defn} A \emph{Soergel graph} is a type of graph with boundary (see Definition \ref{defn: graph}). The only decorations are boxes labelled by homogeneous $f \in R$. The vertices in this graph are of 3 types (see Figure \ref{thegenerators}):

\begin{itemize} \item Univalent vertices (dots). These have degree $+1$. \item Trivalent vertices, where all three adjoining edges have the same color. These have degree $-1$. \item
$2m$-valent vertices, where the adjoining edges alternate in color between two elements $s \ne t \in S$, and $m_{st}=m<\infty$. These have degree $0$. \end{itemize}

The \emph{degree} of a Soergel graph is the sum of the degree of each vertex, and the degree of each box. \end{defn}

\begin{figure}
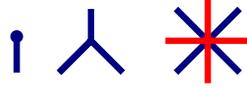
 \label{thegenerators} \caption{The vertices in a Soergel graph} $\ig{1}{SoergGens}$ \end{figure}

When there is no ambiguity we refer to a Soergel graph merely as a ``graph," even though it is an isotopy class of embedded graph. We may also wish to discuss Soergel graphs on the planar disc, on an annulus, etc.

%
%

A \emph{boundary dot} is a connected component of a graph consisting of a dot connected to the boundary by a single edge. A Soergel graph that contains no dots or trivalent vertices is a standard graph, as in the previous chapter. However, we draw the edges as solid lines, not dashed lines.

\begin{defn} Let $\DC$ (or if there is ambiguity, $\DC_{S}$) denote the $\Bbbk$-linear monoidal category defined as follows. Objects are sequences $\un{w}$, sometimes denoted $B_{\un{w}}$, with monoidal structure given by concatenation. The space $\Hom_{\DC}(\un{w},\un{y})$ is the free $\Bbbk$-module generated by Soergel graphs with
bottom boundary $\un{w}$ and top boundary $\un{y}$, modulo the relations below. Hom spaces will be graded by the degree of the graphs, and all the relations below are homogeneous.

\emph{The polynomial relations:}
\begin{gather}
\label{alphais}
\begin{array}{c}
\tikz[scale=0.9]{
\draw[dashed,gray] (0,0) circle (1cm);
\draw[color=red] (0,0.5) -- (0,-0.5);
\node[draw,fill,circle,inner sep=0mm,minimum size=1mm,color=red] at (0,0.5) {};
\node[draw,fill,circle,inner sep=0mm,minimum size=1mm,color=red] at (0,-0.5) {};
} \end{array}
=
\begin{array}{c}
\tikz[scale=0.9]{
\draw[dashed,gray] (0,0) circle (1cm);
\node at (0,0) {$\alpha_s$};
} \end{array}, \\
\label{dotforcegeneral} 
\begin{array}{c}
\tikz[scale=0.9]{
\draw[dashed,gray] (0,0) circle (1cm);
\draw[color=red] (0,-1) to (0,1);
\node at (-0.5,0) {$f$};
} \end{array}
=
\begin{array}{c}
\tikz[scale=0.9]{
\draw[dashed,gray] (0,0) circle (1cm);
\draw[color=red] (0,-1) to (0,1);
\node at (0.5,0) {${\color{red}s}(f)$};
} \end{array}
+
\begin{array}{c}
\tikz[scale=0.9]{
\draw[dashed,gray] (0,0) circle (1cm);
\draw[color=red] (0,-1) to (0,-0.5);
\draw[color=red] (0,1) to (0,0.5);
\node[color=red,draw,fill,circle,inner sep=0mm,minimum size=1mm] at (0,-0.5) {};
\node[color=red,draw,fill,circle,inner sep=0mm,minimum size=1mm] at
(0,0.5) {};
\node at (0,0) {$\partial_{\color{red}s} f$};
} \end{array}.
\end{gather}

\emph{The one color relations:}
\begin{equation} \label{assoc1} \ig{1}{assoc1} \end{equation}	
\begin{equation} \label{unit} \ig{1}{unit} \end{equation}
\begin{equation} \label{needle} \ig{1}{needle} \end{equation}

\emph{The two color relations:} In order to simplify this presentation greatly, we will assume that our realization is balanced. For discussion of the general case, see Section \ref{nonbalanced}.

The color scheme depends slightly on the parity of $m=m_{st}<\infty$. We give one example of each relation for each
parity; the reader can guess the general form.

\begin{equation} \label{assoc2} \ig{1}{assoc2} \end{equation}
\begin{equation} \label{dot2m} \ig{1}{dot2m} \end{equation}

In equation \eqref{dot2m} above, the \emph{Jones-Wenzl morphism} $JW_{m-1}$ is a $\Bbbk$-linear combination of graphs constructed only out of dots and trivalent vertices. It will be
defined and discussed in the next section.

\emph{The three color relations:} It will be clear from the graphs which colors represent which indices. These relations are identical to those defined for $\DStd$ earlier, with the exception of
$H_3$.

For a triplet of colors forming a sub-Coxeter system of type $A_1 \times I_2(m)$ for $m<\infty$, we have

\begin{eqnarray} (A_1 \times I_2(m)) && \ig{1}{A1I2m} \label{A1I2m} \end{eqnarray}

A specific example, when $m=2$, is the case $A_1 \times A_1 \times A_1$:

\begin{eqnarray} (A_1 \times A_1 \times A_1) && \ig{1}{A1A1A1} \label{A1A1A1} \end{eqnarray}

The last three relations are for types $A_3$, $B_3$, and $H_3$ respectively, and are known as the \emph{Zamolodzhikov relations}. Unfortunately, $H_3$ is not complete.

\begin{eqnarray} (A_3) && \ig{1}{A3} \label{A3} \end{eqnarray}

\begin{eqnarray} (B_3) && \ig{1}{B3} \label{B3} \end{eqnarray}

\begin{eqnarray} (H_3) && 
\ig{1}{H3left}
- 
\ig{1}{H3right}
= 
\text{lower terms}
\label{H3} \end{eqnarray}

This concludes the definition of $\DC$. \end{defn}

\begin{remark} \label{incomplete} What it means to be a ``lower term" in \eqref{H3} was discussed in Section \ref{subsec-RexMoves}. Exactly what the lower terms are is currently unknown.
There is a correct answer, which is whatever makes the functor $\FC$ well-defined. We discuss how this could be addressed computationally below. \end{remark}


Note that (graded) Hom spaces are enriched in graded $R$-bimodules, since one can put a polynomial in a box and place it in the leftmost or rightmost region. We will see much later that all
hom spaces are free when considered as right or left $R$-modules, however this is far from clear at this stage. Because of \eqref{dotforcegeneral}, any diagram is equal to a linear combination of diagrams where polynomials only appear in the left-hand region. It is also an easy consequence of \eqref{dotforcegeneral} that, for $f \in R^s$, we have the \emph{polynomial sliding relation}:
\begin{equation} \label{dotslidegeneral} \ig{1}{dotslidegeneral} \end{equation}

Fix $k \ge 0$. Relations \eqref{unit} and \eqref{assoc1} imply that any two non-empty trees of a single color connecting $k$ points on the boundary are equal.

Let us assume Demazure Surjectivity, so that there exists some $\d$ with $\pa_s(\d)=1$. It is not difficult to use the one color relations to prove the decomposition \begin{equation}
\label{BsBsDecomp} B_s \ot B_s \cong B_s(1) \oplus B_s(-1). \end{equation} One simply needs to find two inclusion maps $i_1,i_2 \co B_s \to B_s \ot B_s$ of degree $-1,+1$ respectively, and
two projection maps $p_1,p_2$ of degree $+1,-1$ respectively, such that the usual relations are satisfied: $p_1 i_1 = p_2 i_2 = \1_{B_s}$, $p_1 i_2 = p_2 i_1 = 0$, and $\1_{B_s \ot B_s} =
i_1 p_1 + i_2 p_2$. One can choose these maps as follows. \igc{1}{iiidemp1}
This splitting is not canonical. In fact, the space of maps in degree $-1$ is one-dimensional, so that $i_1$ and $p_2$ are canonical. However, there may be many choices of $\d$.

\begin{remark} In fact, the choice of splitting is even more general than a choice of $\d$. For instance one may have
\begin{equation} \ig{1}{iiidemp2} \label{iiidemp2} \end{equation}
so long as $f + g = -\a_s$. Using \eqref{dotforcegeneral}, the example above has $f = \d - \a_s$ and $g= -\d$. However, this more general splitting exists even when $\hg$ does not satisfy Demazure Surjectivity: for instance, one could let $f = -\a_s$ and $g=0$. \end{remark}

\begin{remark} Using \eqref{dotforcegeneral}, one can replace two facing dots (a ``broken edge") with a sum of two diagrams having a complete edge, one with $\d$ on the left and the other
with $-s(\d)$ on the right. We call this procedure \emph{fusing} two dots. However, in the absence of Demazure Surjectivity, one can only fuse two dots up to a scalar, and the double
leaves ``basis" we define in the next chapter will no longer span all morphisms. Thus Demazure Surjectivity is still very important for proper behavior of the diagrammatic category. \end{remark}

See Remark \ref{twocolorrelncatfd} for the categorification of \eqref{rank2hecke}.

\begin{remark} \label{SCnotfinite} This remark is a diagrammatic analogue of Remark \ref{includingindexsets}. Given an inclusion of Coxeter systems $S_1 \subset S_2$, there is natural
functor $\iota \colon \DC_1 \to \DC_2$ which sends a Soergel graph for $S_1$ to itself, viewed as a Soergel graph for $S_2$ (reinterpreting boxes via the map $R_1 \to R_2$). As in the
bimodule case, this functor is not full for the trivial reason that $R_2$ is bigger than $R_1$, and thus $\DC_{S_2}$ has more boxes. However, it will follow from the main theorem of this
paper that $\iota$ is fully faithful after base change on the left from $R_{1}$ to $R_{2}$.

Suppose that a color $s$ does not appear on the boundary of a graph. The fact that $\iota$ is full after base change implies that we may manipulate the graph using our relations so that it
is in the span of graphs with polynomials where the color $s$ does not appear at all. This ``color removal" operation can be performed simultaneously for any number of colors. The proofs
used in type $A$ in \cite{EKh} actually provided a direct graphical algorithm for removing extraneous colors from a graph (in certain cases). Such an algorithm in general type would be
interesting, and remains an open problem. Nonetheless, we prove indirectly that extraneous colors can be removed by constructing a basis without them.

One can use these inclusions to define $\DC$ for Coxeter systems where $S$ is infinite, as a limit over the finite subsets of $S$. Any diagram contains finitely many colors and thus lies
inside a finite subcategory. The ring $R$ will no longer be Noetherian, but aside from that, Hom spaces will have all the nice finiteness properties (as free $R$-modules) that they have
when $S$ is finite. \end{remark}

\begin{remark} \label{generatorsinorder} As a monoidal category, $\DC$ can be defined with generators and relations. The generating morphisms are the vertices above, and cups and caps of
each color. However, relation \eqref{unit} implies that cups and caps can be constructed out of dots and trivalent vertices. Therefore, a full set of generators for $\DC$ is as listed in the introduction. 
When constructing a functor to $\BSBim$, we will also need to check the unwritten isotopy relations. This is standard in diagrammatic categories; see, for instance, \cite{Lauda}. \end{remark}

\subsection{Jones-Wenzl morphisms} \label{subsec-JW}

Presenting the Bott-Samelson category in rank $2$ is the topic of \cite{EDihedral}. The calculations and proofs are too long to duplicate or fully discuss in this paper. If the reader is
willing to accept the Jones-Wenzl morphisms as black boxes\footnote{no pun intended!}, satisfying the properties stated in this section and the next, then the reader need not consult
\cite{EDihedral}. We assume the reader is familiar with quantum numbers and Temperley-Lieb algebras; background can be found in \cite{EDihedral}.

Fix $m<\infty$. Consider the Temperley-Lieb algebra $TL$ over $\ZM[z]$ with $m-1$ strands, where the circle is evaluates to $-z$. In any $\Bbbk[z]$-algebra we will use quantum
numbers to indicate the images of certain polynomials in $z$, where $[2]$ is the image of $z$. For instance, $[3]$ is the image of $z^2-1$. When we say that $q$ is specialized to a
primitive $2m$-th root of unity in a $\ZM[z]$-algebra, we mean that $m$ satisfies the minimal polynomial of $[2]$ at this root of unity, which equates to the statements that $[m]=0$, $[k]\ne 0$ for $0< k < m$, and $[m-1]=1$.

After inverting some quantum numbers, Temperley-Lieb algebras contain elements known as \emph{Jones-Wenzl projectors}, and the Jones-Wenzl projector on $m-1$ strands $JW_{m-1}$ is known to
be negligible and rotation-invariant when $q$ is specialized to a primitive $2m$-th root of unity. The Jones-Wenzl projector $JW_m$ can be defined whenever certain quantum binomial
coefficients are invertible. Using diagrammatics for the Temperley-Lieb algebra, the Jones-Wenzl projector can be described as a linear combination of crossingless matchings.

\begin{ex} $\ig{1}{JWinTL}$ \end{ex}

Any crossingless matching will divide the planar strip into $2(m-1)$ regions, which can be colored (say, red and blue) so that colors always alternate across a strand. This gives rise to
the \emph{two-colored Temperley-Lieb algebra}, which is an endomorphism algebra appearing inside a 2-category with 2 objects, red and blue. In this algebra, the value of a blue circle
surrounded by red and a red circle surrounded by blue can be different scalars; so we define this algebra over the ring $\ZM[x,y]$. Jones-Wenzl projectors still exist, but each comes in
two flavors with different coefficients, depending on whether blue or red appears on the far right. The other flavor is obtained by switching the colors and switching $x$ and $y$.

\begin{ex} $\ig{1}{JWin2TL}$ \end{ex}

Choose two indices $s,t \in S$. Given a 2-colored crossingless matching, we obtain a Soergel graph on the planar disc as follows: deformation retract each region into a tree composed out
of trivalent and univalent vertices, and color these trees appropriately. The result will always be a Soergel graph of degree $+2$. In order for this map to be well-defined, not just for
crossingless matchings but also for diagrams with embedded circles, one must specialize $\ZM[x,y]$ to $\Bbbk$ under the map sending $x \mapsto a_{st}$ and $y \mapsto a_{ts}$. The reader
should convince themselves that this makes sense, using \eqref{dotforcegeneral}, \eqref{alphais}, and \eqref{needle} until the scalar $\pa_s(\a_t) = a_{st}$ appears. Associated to the
Jones-Wenzl projector we have a linear combination of Soergel graphs, which we call the \emph{Jones-Wenzl morphism}. It comes in two color-flavors, as before.

\begin{ex} $\ig{1}{JWinSoergAlt}$ \end{ex}

The coefficients in these Jones-Wenzl projectors are rational functions in the 2-colored quantum numbers, as in Definition \ref{def:2qnum}. When $[m]_x=[m]_y=0$, one can show that
$JW_{m-1}$ is well-defined within $\ZM[x,y]$ (i.e. no denominators are necessary). If either $m$ is odd or if $m$ is even and $[m-1]=1$, $JW_{m-1}$ will be rotation-invariant (under
color-preserving rotations). If $m$ is even and $[m-1]=1$, it is also the case that rotating the left-blue-aligned Jones-Wenzl morphism by one strand yields the left-red-aligned
Jones-Wenzl morphism. If $m$ is odd, this holds if and only if $[m-1]_x = [m-1]_y = 1$; this is the feature behind the definition of a balanced realization. Using the examples above, the
reader should convince themselves of these rotational facts when $m=4$ (since $a_{st} a_{ts} = 2$) or when $m=3$ and $a_{st}=a_{ts}= \pm 1$. More discussion of the non-balanced case is
found in Section \ref{nonbalanced}.

The Jones-Wenzl Soergel graph above is not technically a morphism in $\DC$, because it lives on the planar disc, but it can be plugged into another diagram to produce a planar strip graph.
This is done in relation \eqref{dot2m}, where the rotation-invariant $JW_{m-1}$ is used. An implication of \eqref{dot2m} and \eqref{assoc2} is the following relation.
\begin{equation} \ig{1}{twocoloridemp} \label{twocoloridemp} \end{equation}
Note that here and henceforth, we will only use $JW_{m-1}$ when $m_{s,t}=m$, which we abbreviate as $JW$.

On the RHS of \eqref{twocoloridemp}, we took the Jones-Wenzl in the two-colored Temperley-Lieb algebra, transformed it into a degree $2$ Soergel graph on the disc, and then transformed it
again into a degree $0$ endomorphism of a color-alternating Bott-Samelson in $\DC$, by placing trivalent vertices on each side. This general procedure can be applied to any two-colored
crossingless matching, giving a map from the two-colored Temperley-Lieb algebra to a certain endomorphism ring in $\DC$. This map is a homomorphism. because $JW_{m-1}$ is idempotent in
$TL$, the RHS of \eqref{twocoloridemp} is also idempotent. The $2m$-valent vertex is thus ``half an idempotent."

The defining properties of the Jones-Wenzl projectors in $TL$ correspond to the following two properties in Soergel graphs. The first is that the coefficient of a certain graph is $1$, as
can be seen from the examples above. This is the graph which, when trivalent vertices are attached to make a degree $0$ morphism, becomes the identity map. The second property is called
\emph{death by pitchfork}:

\begin{equation} \ig{1}{JWpitch} \label{JWpitch} \end{equation}

Now let us note some properties of any $(m-1,m-1)$ crossingless matching, transformed into a Soergel graph. When viewed as a degree $+2$ graph on the disc, there will be a boundary dot on
at least one of any $m-1$ adjacent strands. When viewed as a degree $+0$ endomorphism of a length $m$ color-alternating object $B_s \ot B_t \ot \ldots$ in $\DC$, it can be given a
\emph{negative-positive decomposition}. That is, we say the initial \emph{width} of the map is $m$, the number of strands on the boundary. Reading from bottom to top, we may first apply
all negative maps (i.e. bottom boundary dots, merging trivalents, and caps) which are the maps which decrease the width. Then we can apply all positive maps (i.e. cups, splitting
trivalents, top boundary dots), increasing the width back up to $m$. The morphism will factor through a length $k$ color-alternating object $B_s \ot B_t \ot \ldots$ for some $k \le m$
which is the minimal width reached. Whenever the crossingless matching is not the identity map, we have $k < m$. These properties will be used in Chapter \ref{sec-LLLproof}.

\begin{remark} \label{twocolorrelncatfd} Consider the usual Temperley-Lieb category over a $\ZM[z]$-algebra where $[m]=0$ and $[k] \ne 0$ for $0 < k < m$. Then the Temperley-Lieb algebra
with $k$ strands is isomorphic to (an integral form of) the endomorphism ring of the $U_q(\mathfrak{sl}_2)$-representation $V_1^{\ot k}$ (at generic $q$), for any $0 \le k<m$. The direct
sum decomposition \[V_1^{\ot m-1} \cong V_d^{\oplus c_{m-1,d}} \] implies the existence of certain idempotents (and isomorphisms between their images within an isotypic component) which
produce this splitting. A similar statement can be made for the two-colored Temperley-Lieb category. We can transform these maps into degree $0$ morphisms in $\DC$, to obtain a direct sum
decomposition for any color-alternating $B_s \ot B_t \ot \ldots$ of length $\le m$. This can be used to categorify relation \eqref{rank2hecke}. See \cite{EDihedral} for more details.

However, we cannot assume in general that $[k] \ne 0$ for all $k < m$. For instance, this is false when we take a realization of a finite dihedral group, and view it as a non-faithful
realization of a larger dihedral group. Nonetheless, we will soon categorify the relation of Example \ref{rank2heckeEx}. \end{remark}

\subsection{The functor to bimodules} \label{subsec-functor}

We now fix a realization where Demazure Surjectivity holds. This ensures that $\Delta_s \in R \ot_{R^s} R$ is well-defined (see section \ref{subsec-polys}).

\begin{defn} Let $\FC$ be the $\Bbbk$-linear monoidal functor from $\DC$ to $\BSBim$ defined as follows. The object ${\un{x}}$ is sent to $B_{{\un{x}}}$. The images of the dots and trivalent vertices were given in the introduction, and correspond to the four structure maps of a Frobenius extension. In the introduction, one of the dots was sent to $1 \mapsto \frac{1}{2} (\a_s \ot 1 + 1 \ot \a_s)$, but this should be $1 \mapsto \Delta_s$ in general; whenever $\frac{1}{2}$ exists, these two expressions are equal. The bimodule image of the $2m$-valent vertex is given explicitly in \cite{EDihedral}, though in a convoluted and not particularly useful form. \end{defn}

\begin{claim} This definition gives a well-defined functor. \end{claim}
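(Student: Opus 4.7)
The plan is to verify that $\FC$ sends every defining relation of $\DC$ to an equality in $\BSBim$, organized by the number of colors involved, plus the unwritten isotopy relations coming from biadjunction. The Demazure Surjectivity assumption guarantees that the elements $\Delta_s = \d \otimes 1 - 1 \otimes s(\d)$ and their duals give $R$ the structure of a graded Frobenius extension of $R^s$. The images of the two dots and two trivalent vertices of color $s$ are precisely the unit, counit, multiplication, and comultiplication of this Frobenius structure on $B_s$. Consequently the one-color relations \eqref{assoc1}, \eqref{unit}, \eqref{needle}, as well as the cyclicity/biadjunction relations implicit in the definition of a Soergel graph, all reduce to standard Frobenius algebra axioms, which I would verify on 1-tensors. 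The polynomial relations \eqref{alphais} and \eqref{dotforcegeneral} are immediate: \eqref{alphais} is the computation $\pa_s(\alpha_s)\cdot \Delta_s$ collapsing to $\alpha_s$, while \eqref{dotforcegeneral} is exactly the twisted Leibniz identity $f = s(f) + \alpha_s \pa_s(f)$ composed with the splitting of $B_s$ via $\d$.

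Next I would treat the two-color relations \eqref{assoc2} and \eqref{dot2m}. Here the main work has already been carried out in \cite{EDihedral}, which gives a presentation of the rank-two Bott-Samelson category in terms of Jones-Wenzl morphisms. The crucial point is that the $2m$-valent vertex is defined (up to scalar) as inclusion and projection through the common summand $B_{w_{st}}$ of $B_{\un{w}}$ for two reduced expressions of $w_{st}$, and that $JW_{m-1}$ is well-defined and rotation invariant over $\Bbbk$ precisely because the technical condition $[m_{st}]_x = [m_{st}]_y = 0$ holds in the balanced case. With this rank-two presentation invoked as a black box, \eqref{assoc2} and \eqref{dot2m} hold in $\BSBim$.

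The three-color Zamolodchikov-type relations are the conceptually subtle part. For $A_1 \times I_2(m)$, the color not involved in the dihedral pair commutes entirely with the other two, and the relation reduces to a tensor product of a two-color identity with a one-color one, hence follows from the previous step. For $A_3$ and $B_3$, I would verify \eqref{A3} and \eqref{B3} using localization: since $\BSBim$ embeds faithfully into $\SBim_Q$ on the level of morphism spaces, it suffices to compare the two sides on each standard summand $Q_x$ of the localized source. On each such summand the restriction is a morphism in $\StdBim_Q$, whose space is one-dimensional, and the computation reduces to verifying a corresponding identity in $\DStd$, i.e. to a path in the reduced expression graph $\G_{w_0}$ of the rank-three longest element. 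Combined with Soergel's hom formula for the graded rank of $\Hom(B_{\un{w}}, B_{\un{y}})$ and the standard filtration (which forces a difference of morphisms vanishing after localization to be zero in the appropriate degree), this reduces \eqref{A3} and \eqref{B3} to the corresponding Zamolodchikov relations in $\DStd$, which were verified in \cite{EWFenn}. The $H_3$ relation \eqref{H3} is handled tautologically: its right-hand side is declared to be a sum of ``lower terms,'' i.e. morphisms factoring through $B_{\un{y}}$ with $\ell(\un{y}) < \ell(w_0^{H_3})$. The existence of such lower terms in $\BSBim$ making the equation hold follows from the observation that both sides of the left-hand difference, after localization, coincide on the top standard summand $Q_{w_0}$ (both represent the same rex move up to an $H_3$ boring loop, which restricts to the identity on $Q_{w_0}$), so the difference must indeed lie in the span of morphisms factoring through shorter expressions.

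The main obstacle will be organizing the $A_3$ and $B_3$ verifications. A direct brute force computation on 1-tensors is enormous; the localization plus standard-filtration argument is the right strategy, but requires carefully checking that the two sides differ only in the ``top'' standard summand and that the difference does not acquire lower-order contributions that cannot be matched. For $H_3$, no computation is needed beyond the existence statement, so the functoriality question is effectively vacuous there, at the cost of leaving $\DC$ not fully explicit in that case, as flagged in Remark \ref{incomplete}.
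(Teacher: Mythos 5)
Your overall framework matches the paper's: organize the check by the number of colors, handle the one- and two-color relations via the Frobenius structure on $B_s$ and the dihedral presentation in \cite{EDihedral}, and use localization as the main tool for the rank-three relations. But there is a genuine gap in your treatment of $A_3$ and $B_3$. You assert that localization "reduces \eqref{A3} and \eqref{B3} to the corresponding Zamolodchikov relations in $\DStd$." This is not correct. After localizing, each side becomes a $2^{d'} \times 2^{d}$ matrix of scalars in $Q$, indexed by pairs of subexpressions $(\eb,\fb)$ of source and target, and equality in $\BSBim$ requires \emph{all} entries to agree. The $\DStd$ Zamolodchikov relation controls only the single entry with $\eb=\fb=(1,\dots,1)$, i.e.\ the top standard summand $Q_{w_0}$; agreement there shows the difference of the two sides is a sum of lower terms, but does not show it vanishes. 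Your patch via Soergel's hom formula --- that a degree-$0$ morphism vanishing on the top summand is forced to be zero --- also fails, since degree-$0$ lower terms do exist (take a double leaf $\LLL_{y,\eb,\fb}$ with $d(\eb)+d(\fb)=0$ but $y<w_0$).

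The decisive counter-evidence is $H_3$ itself: if your reduction to $\DStd$ were valid, the same argument would yield the naive $H_3$ Zamolodchikov equation with no lower-term correction, which the paper shows by computer to be false. The paper explicitly calls the exact vanishing for $A_3$ and $B_3$ ``miraculous'' and offers no conceptual explanation; the $A_3$ verification was done by direct calculation in \cite{EKh}, and the $B_3$ verification by computer, using precisely the localization framework you describe but computing the \emph{entire} matrix rather than only the top entry. (A minor further inaccuracy: the $A_1 \times I_2(m)$ relation is not a literal tensor product of a two-color identity with a one-color one, since the $A_1$-colored strand weaves through the dihedral pair; but the paper agrees it is essentially trivial and parallel to the $m=2,3$ cases of \cite{EKh}.)
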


\begin{proof} Previous papers have done most of the work for this claim. The polynomial relations are obvious. Any other relation involving only a subset of colors can be checked in the
category $\DC_{S'}$, where $S'$ is the appropriate subset. Most of the relations (including the isotopy relations) involve at most 2 colors, and the dihedral case was checked in
\cite{EDihedral}. It remains to check the relations arising from rank 3 parabolics. The case of $A_3$ and $A_1 \times I_2(m)$ for $m=2,3$ was done already in \cite{EKh}, where general type
$A$ was completed. The check for $A_1 \times I_2(m)$ for other $m < \infty$ exactly parallels the proof for $m=2,3$ in \cite{EKh}, and is essentially trivial.

The case of $B_3$ was checked by computer. The case of $H_3$ can be checked by computer, once the appropriate relation is found. The computer check used a localization technique, which we
will discuss in the section after next. \end{proof}

\subsection{Localization}

\begin{defn} Let $\DC_Q$ denote the localization of $\DC$ at $Q$, which is to say that we allow boxes labelled by $f \in Q$ in the leftmost region, and require that they multiply as
in $Q$. Let $\FC_Q$ denote the functor $\DC_Q \to \BSBim_Q$ which extends $\FC$ under base change. \end{defn}

Because of arguments akin to Lemma \ref{localizationononeside}, this is the same as the category which allows boxes labelled by $f \in Q$ in any region, and allows
$f \in Q^s$ to slide across a line colored $s$.

The bimodule $B_s$ is equipped with short exact sequences \eqref{ses1} and \eqref{ses2} which split after localization to $Q$. Each short exact sequence ``splits" the other, in the sense
that following one map from \eqref{ses1} and then one map from \eqref{ses2} gives an endomorphism of $Q$ or of $Q_s$ which is multiplication by $\a_s$, and is therefore invertible. Two of
the maps we have already seen: they are the bottom and top boundary dot. We seek a calculus which mixes Soergel bimodules and standard bimodules, and which contains the other two maps. We
might draw them as follows.

\begin{equation} \ig{1}{bivalentgenerators} \label{bivalentgenerators} \end{equation}

\begin{defn} Let $\Kar(\DC_Q)$ (temporarily abusive notation) denote the \emph{non-cyclic(!)} biadjoint monoidal category generated on top of $\DC_Q$ as follows. Add new objects $Q_s$ for
each $s \in S$, called \emph{reflection indices}, whose identity morphisms we draw as dashed lines of the same color as $s$. Thus an object of $\Kar(\DC_Q)$ will have objects which are
sequences of normal indices and reflection indices. We typically disambiguate by writing $B_{\un{x},Q}$ for a sequence of normal indices and $Q_{\un{x}}$ for a sequence of reflection
indices. Allow an additional kind of vertex of degree $+1$, a bivalent vertex with one solid $s$ edge and one dashed $s$ edge. We also allow dashed cups and caps. Finally, impose these new
relations:

\begin{equation} \ig{1}{newidemprel1} \label{newidemprel1} \end{equation}
\begin{equation} \ig{1}{newidemprel2} \label{newidemprel2} \end{equation}
\begin{equation} \ig{1}{biadjointdashed} \label{biadjointdashed} \end{equation}
\begin{equation} \ig{1}{rotatebivalent} \label{rotatebivalent} \end{equation}
	
\end{defn}

The reader can check that these relations are consistent with \eqref{bivalentgenerators}. Note that we can define this category over $R$ instead of $Q$ if desired, or even in the absence
of Demazure Surjectivity. The remainder of this section will investigate $\Kar(\DC_Q)$ further, and prove the following theorem.

\begin{thm} Assume Demazure Surjectivity. This diagrammatic category is equivalent to the Karoubi envelope of $\DC_Q$. It is also equivalent to $\DStd_Q$, and to $\StdBim_Q$. \label{localdiagthm} \end{thm}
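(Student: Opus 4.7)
The plan is to build a monoidal functor $\mathcal{G} \co \Kar(\DC_Q) \to \StdBim_Q$ extending the localized functor $\FC_Q$, show it is an equivalence, and then deduce the equivalence with $\DStd_Q$ by invoking Theorem \ref{FStdEquiv} applied over $Q$ (where the $W$-action remains faithful once it is faithful on $R$, since $Q$ is a localization of $R$). The identification of $\Kar(\DC_Q)$ with the honest Karoubi envelope of $\DC_Q$ will be handled at the end, using explicit idempotents in $\DC_Q$ together with the fact that $\StdBim_Q$ is semisimple.

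First I would define $\mathcal{G}$ on the new generators by sending each reflection index $Q_s$ to the standard bimodule $Q_s$, and the two bivalent vertices to the inclusion $Q_s \hookrightarrow B_{s,Q}$ and projection $B_{s,Q} \twoheadrightarrow Q_s$ appearing in the short exact sequences \eqref{ses1}, \eqref{ses2}. These sequences split after base change to $Q$ because the composition $R \to B_s \to R$ is multiplication by $\alpha_s$, a unit in $Q$. Well-definedness then reduces to checking the four added relations: \eqref{newidemprel1} and \eqref{newidemprel2} jointly encode the splitting $B_{s,Q} \cong Q \oplus Q_s$; \eqref{biadjointdashed} is the self-biadjunction coming from $Q_s \otimes_Q Q_s \cong Q$; and \eqref{rotatebivalent} is a straightforward cyclicity check on the explicit formulas for the splitting maps.

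Essential surjectivity of $\mathcal{G}$ is immediate since $\StdBim_Q$ is generated as an additive monoidal category by the $Q_s$. For full faithfulness, I would first use the biadjunctions for solid and dashed strands to reduce any Hom computation to one of the form $\Hom_{\Kar(\DC_Q)}(\mathbf{1}, Z)$, and then use the decomposition $B_{s,Q} \cong Q \oplus Q_s$ (forced in $\Kar(\DC_Q)$ by the splitting relations) to take $Z = Q_{\un{w}}$ a sequence of reflection indices. The key step is then to translate any closed diagram into a standard diagram in $\DStd_Q$: each solid $s$-strand can be rewritten, up to the scalar $\alpha_s^{-1}$, as a dashed $s$-strand flanked by a pair of bivalent vertices. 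After this translation Theorem \ref{FStdEquiv} applied over $Q$ furnishes the desired one-dimensionality or vanishing, matching $\StdBim_Q$ on the nose. The main obstacle is verifying that each two-color and three-color Soergel relation of $\DC$, after insertion of bivalent pairs on its boundary strands, reduces (via the dihedral calculus of \cite{EDihedral}) to the corresponding standard relation \eqref{2misomstd}--\eqref{H3std} of $\DStd$; the three-color relations match nearly on the nose, with the lower terms in the $H_3$ case automatically killed after localization, while the two-color dot-removal and associativity relations require a Jones-Wenzl unpacking of the type already carried out in \cite{EDihedral}.

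Finally, for the Karoubi-envelope claim: the endomorphism $e_s \define \alpha_s^{-1} \cdot (\text{bottom dot}) \circ (\text{top dot})$ is an idempotent in $\End_{\DC_Q}(B_{s,Q})$ by \eqref{alphais} (one has $(\text{top dot})\circ(\text{bottom dot}) = \alpha_s$), and it factors through $\mathbf{1}$, so the summand $Q$ of $B_{s,Q}$ already exists in the Karoubi envelope of $\DC_Q$. The complementary idempotent $1 - e_s$ cuts out a summand isomorphic to $Q_s$, and the bivalent vertices of $\Kar(\DC_Q)$ are forced to correspond to its canonical inclusion-projection pair. Hence every object of $\Kar(\DC_Q)$ arises as a summand of an object of $\DC_Q$, and the canonical comparison functor from the Karoubi envelope of $\DC_Q$ to $\Kar(\DC_Q)$ is essentially surjective and fully faithful, the latter because $\StdBim_Q$ is semisimple and hence Karoubi-closed.
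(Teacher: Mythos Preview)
Your overall strategy is sound and close to the paper's. The paper runs the argument primarily in the opposite direction, building a functor $\StdF: \DStd_Q \to \Kar(\DC_Q)$ and proving it is an equivalence; your functor $\mathcal{G}$ extending $\FC_Q$ to $\Kar(\DC_Q) \to \StdBim_Q$ also appears in the paper, where it is used to establish faithfulness of $\StdF$. Well-definedness of $\mathcal{G}$ is easier than you suggest: the Soergel relations of $\DC$ hold automatically in $\StdBim_Q$ because $\FC$ is already well-defined, so only the four new bivalent relations need checking. Your Karoubi-envelope argument, deducing idempotent-completeness of $\Kar(\DC_Q)$ from that of $\StdBim_Q$, is different from the paper's (which directly invokes the general presentation of a partial idempotent completion $\CC(M,e)$) but works once the equivalence with $\StdBim_Q$ is in hand.

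The real gap is in your fullness argument. Your claim that ``each solid $s$-strand can be rewritten, up to the scalar $\alpha_s^{-1}$, as a dashed $s$-strand flanked by a pair of bivalent vertices'' is false: the identity on a solid strand is a \emph{sum} of the dashed idempotent and the broken idempotent (this is \eqref{localdecomp}), and the broken term cannot be discarded. Moreover, your ``main obstacle'' is aimed at the wrong target. There is nothing to check about Soergel relations reducing to standard relations---they already hold in $\Kar(\DC_Q)$ by construction, and they hold in $\StdBim_Q$ because $\FC$ is well-defined. What is actually needed for fullness is a reduction algorithm showing that any mixed diagram equals a $Q$-linear combination of purely dashed diagrams. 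The paper carries this out in Lemma~\ref{reduceinKarDCQ}: repeatedly apply \eqref{localdecomp} to break or dash solid edges, then use that a dot on a bivalent vanishes by \eqref{dotbi}, a trivalent with all edges dashed vanishes by \eqref{dashedtriiszero}, and a $2m$-valent vertex with all edges dashed becomes (up to an invertible polynomial) the dashed $2m$-valent vertex. The genuine two-color content lies here---in proving \eqref{slidebivalentthru2m} via the death-by-pitchfork property \eqref{JWpitch} of the Jones--Wenzl projector---not in matching relations across categories.
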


Unfortunately, $\Kar(\DC_Q)$ is not cyclic! It is worse than that the rotation of one bivalent vertex is not the other. The bivalent vertex itself is not invariant under 360 degree
twisting, being off by a sign. Any morphism with an even number of bivalent vertices will be cyclic (i.e. invariant under 360 degree twists), but this still does not imply that twisting by
180 degrees is equal to rotation by 180 degrees, because bivalent vertices do not behave this way consistently. An example can be seen below in \eqref{twist2mwbivalent}. At least different
representatives of the same isotopy class only differ by a sign.

\begin{remark} \label{geometrynocyclic} If one is familiar with the geometric underpinning of Soergel bimodules, then it should not be terribly off-putting that these maps are not cyclic.
One expects cyclicity whenever one analyzes convolution between perverse sheaves, because the procedure of taking the biadjoint of a sheaf is a functor. Soergel bimodules are the
equivariant (derived) global sections of the semisimple perverse sheaves on the flag variety, and taking global sections is a well-behaved functor on this semisimple category, so that
cyclicity happens to be preserved. However, taking global sections of non-semisimple sheaves tends to forget structure, and break the compatibility with the biadjunction functor. If one
more appropriately models non-semisimple perverse sheaves as complexes of semisimple sheaves, then these complexes will be biadjoint in a cyclic way.

For instance, the standard bimodule $R_s$ corresponds to two different, mutually biadjoint perverse sheaves: the shriek and the star extension of the constant sheaf on $\mathbb{P}^1$ minus
a point. These two different perverse sheaves have two different resolutions in terms of semisimple perverse sheaves, and these descend to \eqref{ses1} and \eqref{ses2}. The two complexes
of Soergel bimodules which are quasi-isomorphic to $R_s$ are biadjoint in a cyclic way. The bimodule $R_s$ itself is self-biadjoint. However, the compatibility between these two
biadjunction structures is broken. \end{remark}

We call an isotopy class of graphs as above \emph{mixed graphs}. A mixed graph only represents a morphism in $\Kar(\DC_Q)$ up to sign. A mixed graph has some solid edges and some dashed
edges. A mixed graph without any dashed edges is a Soergel graph or \emph{solid graph}, and a mixed graph without any solid edges is a standard graph or \emph{dashed graph} (we will expand
what counts as a dashed graph soon). (Isotopy classes of) solid graphs and dashed graphs do unambiguously represent a morphism, without any sign issues, because they have no bivalent
vertices.

Now let us do some calculations. Using \eqref{newidemprel1} and \eqref{newidemprel2}, one can easily produce the following equalities after multiplication by $\a_s$. Since we're working
over $Q$, we may divide by $\a_s$.

\begin{equation} \ig{1}{dotbi} \label{dotbi} \end{equation}

\begin{remark} If defining this diagrammatic category over $R$ instead of $Q$, one should add \eqref{dotbi} to the list of relations. \end{remark}

The new bivalent vertices give rise to an idempotent $\ig{1}{dashedidemp}$, which is complementary to the idempotent $\ig{1}{brokenidemp}$ which already existed in $\DC_Q$. Therefore,
$B_{s,Q} \cong Q_s(1) \oplus Q(1)$. (We include the gradings for those who wish to use the homogeneous fraction field rather than the full fraction field. Remember that $Q \cong Q(2)$ so that the grading lives in $\Z/2\Z$.)  By convention we tend to include $\frac{1}{\a_s}$ in the projection map, rather than the inclusion map.

\begin{equation} \ig{1}{localdecomp} \label{localdecomp} \end{equation}
We can take a line and either ``break" it or ``dash" it. How very violent!

\begin{claim} The reflection indices behave like standard modules with respect to polynomials. In other words, we have \eqref{slidepolythrustd}. \end{claim}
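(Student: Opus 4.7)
The content of the claim is the analog of \eqref{slidepolythrustd}: a box labelled $f$ placed just to the left of a dashed $s$-edge equals a box labelled $s(f)$ placed just to the right. The plan is to reduce this to the polynomial-sliding relation \eqref{dotforcegeneral} already imposed on solid edges.

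First I would re-express a dashed $s$-edge as a pair of bivalent vertices connected by a short solid $s$-segment. Concretely, combining \eqref{newidemprel1}, \eqref{newidemprel2} and the consequence \eqref{dotbi}, the composition of the two bivalent vertices (one from $Q_s$ up to $B_s$, the other from $B_s$ up to $Q_s$) equals $\alpha_s$ times $\mathrm{id}_{Q_s}$. Inverting $\alpha_s$ — which is legitimate because we are working inside $\DC_Q$ — yields
$$ \mathrm{id}_{Q_s} \;=\; \frac{1}{\alpha_s}\, \bigl(\text{bivalent }B_s\to Q_s\bigr)\circ\bigl(\text{solid }s\text{-edge}\bigr)\circ\bigl(\text{bivalent }Q_s\to B_s\bigr).$$
Substituting this rewriting in the region where we wish to slide $f$, the polynomial $f$ is now positioned to the left of the internal solid $s$-segment.

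Next I would apply \eqref{dotforcegeneral} to this solid segment. The first summand produces $s(f)$ immediately to the right of the solid segment; collapsing the two flanking bivalent vertices back via \eqref{dotbi} cancels the prefactor $1/\alpha_s$, leaving precisely a dashed line with $s(f)$ to its right, as desired. The second summand is $\partial_s(f)$ times a diagram in which the solid segment is severed by a pair of facing dots. In this severed diagram, each of the two dots sits directly on top of (resp.\ below) a bivalent vertex. By \eqref{newidemprel1} — equivalently, by the orthogonality of the broken and dashed idempotents visible in \eqref{localdecomp} — a boundary dot composed with a bivalent vertex is zero, so this correction term vanishes.

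The main obstacle is purely bookkeeping: one must carefully match the scalar $\alpha_s$ between \eqref{dotbi} and the idempotent decomposition \eqref{localdecomp}, and confirm that the broken part of the correction in \eqref{dotforcegeneral} is indeed annihilated by the bivalents on either side. Both points are immediate from the relations already at hand, so no new ideas are required.
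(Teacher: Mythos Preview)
Your argument is correct and is essentially the same as the paper's: both insert a solid $s$-segment between bivalent vertices via \eqref{newidemprel1}, apply \eqref{dotforcegeneral} to that solid segment, kill the broken term using \eqref{dotbi}, and cancel the factor of $\alpha_s$. The only cosmetic difference is that the paper places $f$ on the left of \eqref{newidemprel1} and divides by $\alpha_s$ at the end, whereas you invert \eqref{newidemprel1} first; also, when you collapse the bivalents back around $s(f)$ you should cite \eqref{newidemprel1} rather than \eqref{dotbi}.
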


\begin{proof} Place a polynomial $f$ on the left side of the diagrams in \eqref{newidemprel1}. Use the polynomial forcing rules to force $f$ through the solid line in the middle. Any term
where the line breaks is zero by \eqref{dotbi}. The remaining term has $s(f)$ on the right hand side instead. Dividing by $\a_s$ gives the desired equality. \end{proof}

\begin{claim} The dashed cups and caps are redundant, being equal to the following maps. The only relations necessary are \eqref{newidemprel1} and \eqref{newidemprel2}.
\begin{equation} \ig{1}{dashedcupscaps} \label{dashedcupscaps} \end{equation} \end{claim}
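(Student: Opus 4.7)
The plan is a two-step verification. First, assuming all the axioms of $\Kar(\DC_Q)$, I will exhibit the identity \eqref{dashedcupscaps} by using the rotational consistency \eqref{rotatebivalent} to reduce a dashed cup or cap to the prescribed combination of a bivalent vertex with a solid cup or cap, and then invoking the bivalent idempotent \eqref{newidemprel1} to absorb the residual factor of $\a_s$. More precisely, composing a dashed cap with a bivalent vertex (dashed-to-solid) inserted on one of its legs yields, by \eqref{rotatebivalent}, a solid cap paired with the oppositely oriented bivalent; combining this with \eqref{newidemprel1} and dividing by $\a_s \in Q^{\times}$ isolates the dashed cap on the left and the right-hand side of \eqref{dashedcupscaps} on the right. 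The dashed cup is handled symmetrically.

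Second, I will demonstrate that once \eqref{dashedcupscaps} is taken as the definition of the dashed cups and caps, the remaining axioms \eqref{biadjointdashed} and \eqref{rotatebivalent} are forced by \eqref{newidemprel1}, \eqref{newidemprel2}, and the Frobenius structure on solid lines. For the biadjunction axiom \eqref{biadjointdashed}, substituting the definitions turns a dashed zig-zag into a solid zig-zag bookended by two bivalent vertices, with an overall normalization of $1/\a_s$; straightening the solid zig-zag via the $B_s$-self-biadjunction (built into the isotopy invariance of solid graphs) leaves a single solid strand capped by two bivalents, which by \eqref{newidemprel1} equals $\a_s$ times the dashed identity, exactly canceling the normalization. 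For the rotational axiom \eqref{rotatebivalent}, rotating a bivalent vertex through the proposed dashed cup/cap produces a composition of a solid cap, the original bivalent, and one extra bivalent attached to the cap, again with a normalization $1/\a_s$; collapsing the two bivalents by \eqref{newidemprel1} and using \eqref{newidemprel2} to mediate the interaction with the solid cap recovers the oppositely oriented bivalent vertex, as desired.

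The principal obstacle is the sign and normalization bookkeeping. The bivalent vertex is not cyclic, and a $360^{\circ}$ twist introduces a sign, so each straightening step must track precisely which orientation of bivalent vertex is in play; moreover the numerous factors of $\a_s$ and $1/\a_s$ produced at various stages must be merged via the polynomial sliding relation \eqref{slidepolythrustd} into the appropriate regions before the cancellations become manifest. Once this bookkeeping is in place the argument is entirely local, manipulating only a single solid strand at a time, and the redundancy of the dashed cups and caps together with the sufficiency of \eqref{newidemprel1} and \eqref{newidemprel2} follow cleanly.
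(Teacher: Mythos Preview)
Your proposal is correct and follows essentially the same two-step structure as the paper's own proof: first derive \eqref{dashedcupscaps} from \eqref{newidemprel1} by attaching a cup or cap and invoking \eqref{rotatebivalent}, then take \eqref{dashedcupscaps} as a definition and recover \eqref{biadjointdashed} and \eqref{rotatebivalent} from \eqref{newidemprel1} and \eqref{newidemprel2} via local manipulations and solid-strand isotopy. Your emphasis on the sign and $\a_s$ bookkeeping is apt given the non-cyclicity of the bivalent vertex, and matches the spirit of the paper's sample computation.
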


\begin{proof} Equation \eqref{dashedcupscaps} follows from \eqref{newidemprel1} by adding a cap or cup and using \eqref{rotatebivalent}. Conversely, if we only use \eqref{dashedcupscaps}
as a definition of the dashed cups and caps, it is a simple calculation to check \eqref{rotatebivalent} and \eqref{biadjointdashed}. We give a sample computation here.
\igc{1}{cupcalcexample} \qedhere
\end{proof}

Fix a category $\CC$ specified with generators and relations, an object $M \in \CC$, and an idempotent $e \in \End(M)$. Let $\CC(M,e)$ denote the partial idempotent completion which
formally adds the image of $e$ as a new object. Let us call this new object $X$. It is easy to give a presentation of $\CC(M,e)$ by generators and relations. One adds a new object $X$ and
two new generators, a map $\iota \colon X \to M$ and a map $\pi \colon M \to X$. One adds two new relations, which state that $\iota \pi = e \in \End(M)$ and $\pi \iota = \1 \in \End(X)$.
This presentation clearly gives a category $\CC(M,e)$ equipped with a fully faithful map $\CC \to \CC(M,e)$ which has all the desired properties. If the image of $e$ is already an object
in $\CC$ then $X$ will be isomorphic to it. Similarly, if we wish to adjoin a set of new summands $\{X_\b\}$, we need only add inclusions $\{\iota_\b\}$ and projections $\{\pi_\b\}$ with
the relations above for each $\b$. Similar statements can be made about monoidal categories with monoidal presentations, graded categories etc.

Now it is clear that $\Kar(\DC_Q)$ is the category obtained from $\DC_Q$ by adjoining the complements of $Q$ in $B_{s,Q}$, for each $s$. The bivalent vertices are the new maps $\iota$
and $\pi$, and the two relations on $\iota$ and $\pi$ correspond to \eqref{newidemprel1} and \eqref{newidemprel2}. To show that $\Kar(\DC_Q)$ is the Karoubi envelope of $\DC_Q$, it is
enough to show that $\Kar(\DC_Q)$ is idempotent complete. Because each $B_{s,Q}$ decomposes into $Q_s$ and the monoidal identity, it is clear that any object in $\DC_Q$ is isomorphic to a
direct sum of sequences consisting solely of reflection indices. Thus we need only show that any sequence consisting of reflection indices represents an indecomposable object. This will be
implied once we show that this diagrammatic category is equivalent to $\DStd_Q$.

We do some more computations in preparation for the proof of this equivalence. Suppose that one takes a $2m$-valent vertex and places $m$ consecutive bivalent vertices on it (say,
on bottom).

\igc{1}{2mwbivalent}

If one places a dot on top of this diagram, we get $0$. This is because we can use \eqref{dot2m} to replace the $2m$-valent vertex with the Jones-Wenzl morphism, and the Jones-Wenzl
morphism must have a dot on one of the bottom exits. This dot will then hit the bivalent vertex, yielding zero by \eqref{dotbi}. However, each line on top decomposes into two idempotents
as in \eqref{localdecomp}, and only the dashed idempotent survives. Thus if there are ever $m$ consecutive bivalent vertices on a $2m$-valent vertex, we may as well assume that all $2m$
are present.

If we place $2m$ bivalent vertices around a $2m$-valent vertex, we get a morphism of degree $2m$. In order to get a morphism of degree $0$ we should divide by a polynomial. In other words,
half the bivalent vertices should be inclusions and half projections, and one half should be paired with $\frac{1}{\a_s}$. So consider the following diagram with $m$ strands.

\igc{1}{w02mdashed}

This is a reduced expression for $w_0$, the longest element of $W_{s,t}$. Using \eqref{newidemprel1}, we get a sequence of vertical dashed lines, with $\a_s$ to the left of each line
colored $s$. If we pull all these polynomials to the far left region using \eqref{slidepolythrustd}, we get $\rho_{s,t}$, the product of all $m$ positive roots corresponding to reflections
in $\langle s, t \rangle$ (for a definition of positive roots in a dihedral group, see \cite{EDihedral}). Note that $w_0$ sends the set of positive roots to the set of negative roots, so
that $w_0(\rho)=(-1)^m \rho$.

A simple calculation using \eqref{rotatebivalent} shows that

\begin{equation} \ig{1}{twist2mwbivalent} \label{twist2mwbivalent} \end{equation}

Therefore

\begin{equation} \ig{1}{2mstdiscyclic} \label{2mstdiscyclic} \end{equation}

Thus the RHS of \eqref{2mstdiscyclic} is a degree $0$ map which \emph{is} cyclic, and is a perfect candidate for the $2m$-valent vertex in $\DStd_Q$.

\begin{defn} Let $\StdF$ denote the functor from $\DStd_Q$ to $\Kar(\DC_Q)$ defined herein, called the \emph{standardization functor}. On objects, it sends a sequence of indices to the
corresponding sequence of reflection indices. On morphisms, it sends caps and cups to dashed caps and cups, and it sends the $2m$-valent vertex to the morphism in \eqref{2mstdiscyclic}.
\end{defn}

We draw the image of the $2m$-valent vertex in $\DStd_Q$ as a dashed $2m$-valent vertex. From the above, it should be easy to check that

\begin{equation} \ig{1}{slidebivalentthru2m} \label{slidebivalentthru2m} \end{equation}

\begin{prop} The functor $\StdF$ is well-defined, and is an equivalence of categories. \label{StdFanequiv} \end{prop}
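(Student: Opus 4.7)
The plan is to verify well-definedness relation by relation, then prove the equivalence by mediating through bimodules. Essential surjectivity will be immediate from the local decomposition, while fully faithfulness will come from a comparison with the (known) equivalence $\FStd_Q \co \DStd_Q \weq \StdBim_Q$ of Theorem \ref{FStdEquiv}, via the localized bimodule functor $\FC_Q$.

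For well-definedness, I first dispatch the dashed isotopy and polynomial relations: biadjointness is \eqref{biadjointdashed}, while polynomial sliding across dashed strands is the Claim following \eqref{localdecomp}. The cap-cup relations \eqref{capcupstd} and circle relation \eqref{circlestd} reduce via \eqref{dashedcupscaps} to computations using \eqref{newidemprel1}, \eqref{newidemprel2} and \eqref{dotbi}. To check the $2m$-valent isomorphism \eqref{2misomstd}, compose two copies of the right-hand side of \eqref{2mstdiscyclic} and simplify with \eqref{slidebivalentthru2m} and \eqref{twocoloridemp}; the scalar $\rho_{s,t} \cdot w_0(\rho_{s,t}) = (-1)^m \rho_{s,t}^2$ matches the scalar produced by closing off a solid $w_0$ alternating composition into the identity up to $\rho_{s,t}^2$, giving the dashed identity after cancellation. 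For the rank-three Zamolodchikov relations \eqref{A1I2mstd}--\eqref{H3std}, I use \eqref{2mstdiscyclic} to rewrite each dashed $2m$-valent vertex as a solid $2m$-valent vertex flanked by bivalent vertices and a $\rho_{s,t}$ box; the bivalent vertices on each internal strand cancel in pairs via \eqref{slidebivalentthru2m}, reducing each rank-three standard relation to the corresponding rank-three Soergel relation already imposed in $\DC_Q$. The apparent issue with the $H_3$ ``lower terms" does not arise here, because after dashing all external strands any lower-term diagram (which by definition factors through a shorter sequence of solid strands) maps to zero between sequences of reflection indices: the top-summand projection $B_{s,Q} \onto Q_s$ annihilates the $Q$ summand through which such a lower term must factor.

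For essential surjectivity, \eqref{localdecomp} exhibits $B_{s,Q} \cong Q_s(1) \oplus Q(1)$, so any object of $\Kar(\DC_Q)$ is a summand of a sequence of solid strands, hence isomorphic to a direct sum of sequences of reflection indices; these are all in the image of $\StdF$. For fully faithfulness, I extend $\FC_Q$ to a functor $\Kar(\DC_Q) \to \StdBim_Q$ (sending the idempotent $\ig{1}{dashedidemp}$ to the $Q_s$ summand of $B_{s,Q}$) and check that $\FC_Q \circ \StdF = \FStd_Q$. Both functors send $Q_{\un{x}}$ to $R_{\un{x}} \otimes_R Q$; both send dashed caps and cups to the canonical structure maps of the standard 2-groupoid; and \eqref{2mstdiscyclic} together with the explicit dihedral description of the Soergel $2m$-valent vertex from \cite{EDihedral} shows that both send the dashed $2m$-valent vertex to the 1-tensor-preserving rex isomorphism $R_{\un{x}} \otimes_R Q \weq R_{\un{y}} \otimes_R Q$. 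Faithfulness of $\StdF$ then follows from faithfulness of $\FStd_Q$. For fullness, given $\varphi \co Q_{\un{x}} \to Q_{\un{y}}$ in $\Kar(\DC_Q)$, fullness of $\FStd_Q$ produces $\psi \in \DStd_Q$ with $\FStd_Q(\psi) = \FC_Q(\varphi)$; then $\StdF(\psi)$ and $\varphi$ agree under $\FC_Q$ and hence coincide by faithfulness.

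The main obstacle is the identification $\FC_Q \circ \StdF = \FStd_Q$ on the $2m$-valent vertex: one must unpack the ``convoluted and not particularly useful" bimodule form of the Soergel $2m$-valent vertex and verify that sandwiching it between bivalent vertices projects it onto the top standard subquotient $R_{w_J}$, with the scalar $\rho_{s,t}$ from \eqref{2mstdiscyclic} precisely cancelling the discrepancy between this projection and the standard 1-tensor-to-1-tensor isomorphism. This is the rank-two content of \cite{EDihedral}, and I would invoke it rather than reproduce the calculation.
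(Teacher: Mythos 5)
Your treatment of well-definedness and essential surjectivity is in the right spirit, and your faithfulness argument (factor $\FStd_Q = \FC_Q \circ \StdF$ through bimodules, then use faithfulness of $\FStd_Q$) is exactly the paper's. The well-definedness verification is a bit more awkward than the paper's --- instead of rewriting each dashed $2m$-valent vertex via \eqref{2mstdiscyclic} and tracking scalars, the paper starts from the solid relations \eqref{twocoloridemp}, \eqref{A1I2m}--\eqref{H3}, places bivalent vertices on the bottom boundary (which kills every bottom-boundary-dotted term, including the $H_3$ lower terms and the non-identity Jones--Wenzl terms), and then uses \eqref{slidebivalentthru2m} to push the dashed-ness from bottom to top, landing on the $\DStd$ relation conjugated by invertible bivalent vertices. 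This avoids the scalar bookkeeping entirely. But your approach is salvageable.

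The genuine gap is in fullness. You write: given $\varphi \co Q_{\un{x}} \to Q_{\un{y}}$, fullness of $\FStd_Q$ gives $\psi$ with $\FStd_Q(\psi) = \FC_Q(\varphi)$, hence $\FC_Q(\StdF(\psi)) = \FC_Q(\varphi)$, ``hence $\StdF(\psi) = \varphi$ by faithfulness.'' But faithfulness of \emph{which} functor? You need faithfulness of $\FC_Q \co \Kar(\DC_Q) \to \StdBim_Q$ on the objects $Q_{\un{x}}$. This is not among the things you have established: you have faithfulness of $\FStd_Q$ (given) and of $\StdF$ (just deduced), but neither implies faithfulness of $\FC_Q$. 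In fact faithfulness of $\FC_Q$ on reflection sequences is essentially equivalent to fullness of $\StdF$ given everything else, so the argument is circular. At this point in the paper it is explicitly not known that the diagrammatic categories inject into bimodules --- that is one of the things being proven. The paper sidesteps this with a purely diagrammatic fullness proof: Lemma \ref{reduceinKarDCQ} shows that any mixed graph in $\Kar(\DC_Q)$ can be reduced, using \eqref{localdecomp}, \eqref{newidemprel1}, \eqref{dotbi}, \eqref{dashedtriiszero}, etc., to a linear combination of graphs with a single polynomial box and no trivalent vertices, dots, or solid $2m$-valent vertices; applied to a morphism between reflection sequences, this leaves only dashed graphs, which are visibly in the image of $\StdF$. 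You need this reduction lemma (or some equivalent diagrammatic size control on $\Hom_{\Kar(\DC_Q)}(Q_{\un{x}}, Q_{\un{y}})$) to close the argument.
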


This proposition implies all of Theorem \ref{localdiagthm} except for the connection to $\StdBim$. We have already showed that $\StdF$ is essentially surjective, because $\Kar(\DC_Q)$ is
additively generated by reflection indices. We need to check that $\StdF$ is well-defined, full and faithful.

\begin{proof}[Proof that $\StdF$ is well-defined.] We have already checked the isotopy relations in $\StdBim$, because of \eqref{biadjointdashed} and \eqref{2mstdiscyclic}. We have also checked
polynomial-sliding. Relation \eqref{capcupstd} follows as below (we used $\d = \frac{\a_s}{2}$, but any $\d$ will work).

\igc{1}{capcupstdproof}

The proof of \eqref{circlestd} is easy, and we leave it as an exercise.

The proofs of \eqref{2misomstd} and the three color relations all follow from the same method. Take relations \eqref{twocoloridemp} and the three color relations in $\DC$, and place a
bivalent vertex below every strand on bottom. Doing so will kill any diagram with a bottom boundary dot, including all the non-identity diagrams in the Jones-Wenzl projector in
\eqref{twocoloridemp}, and all the lower terms in the $H_3$ relation \eqref{H3}. We ignore all those terms. For any diagram composed entirely out of $2m$-valent vertices for various $m$,
we can use \eqref{slidebivalentthru2m} to pull the ``dashed-ness" from bottom to top, until the entire diagram is dashed except with bivalent vertices at the top. The result is precisely
the corresponding relation in $\DStd$, with bivalent vertices on top. Bivalent vertices are invertible, so this checks the relation. \end{proof}

Now we can apply any relations in $\DStd$ to dashed diagrams in $\Kar(\DC_Q)$.

We see that surrounding a $2m$-valent vertex with bivalent vertices yields (up to polynomial) the dashed version of the map. However, there can be no dashed version of the dot or the
trivalent vertex, since there are no maps between standard bimodules when they do not express the same element of $W$. Unsurprisingly, surrounding a dot or a trivalent vertex with bivalent
vertices is zero. For the dot this is \eqref{dotbi}. It is not too hard to show that

\igc{1}{trivalentlemma}

Therefore

\begin{equation} \ig{1}{dashedtriiszero} \label{dashedtriiszero} \end{equation}

\begin{lemma} \label{reduceinKarDCQ} Every morphism in $\Kar(\DC_Q)$ is a linear combination of graphs for which: \begin{itemize} \item The only dots appearing are boundary dots. \item There is a single polynomial in the leftmost region, and no other connected components without boundary. \item There are no trivalent
vertices. \item There are no solid $2m$-valent vertices, only dashed $2m$-valent vertices. \end{itemize} \end{lemma}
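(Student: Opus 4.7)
The plan is to establish the lemma by a systematic rewriting procedure that uses the decomposition \eqref{localdecomp} of the identity endomorphism of $B_{s,Q}$ as a sum of a ``dashed'' idempotent and a ``broken'' idempotent, and then simplifies each resulting term using the relations already proved in this section.

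First I would eliminate the solid $2m$-valent vertices one at a time. Fix such a vertex $v$ and apply \eqref{localdecomp} to each of its $2m$ adjacent edges, writing the identity of each $B_{s,Q}$ as the sum of the two idempotents. This expresses the diagram as a sum of $2^{2m}$ terms. In any term where at least one of the $2m$ adjacent edges has been replaced by the broken idempotent, the resulting diagram contains a solid $2m$-valent vertex with a dot on one of its legs; relation \eqref{dot2m} then rewrites this locally as a $\Bbbk$-linear combination of diagrams (the Jones--Wenzl morphism) that contain no $2m$-valent vertex at all. In the unique remaining term, the vertex $v$ is surrounded by $2m$ bivalent vertices, and \eqref{2mstdiscyclic} shows that this equals a unit in $R$ (namely $\rho_{s,t}^{-1}$, invertible in $Q$) times a dashed $2m$-valent vertex. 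Iterating over all solid $2m$-valent vertices in the diagram strictly reduces their count and eventually eliminates all of them, leaving a sum of diagrams whose only interior solid vertices are dots and trivalent vertices.

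Next I would eliminate all remaining interior solid dots and trivalent vertices by applying \eqref{localdecomp} to each remaining interior solid edge and expanding. In any resulting term, every interior dot or trivalent vertex has each adjacent leg either dashed (terminating in a bivalent vertex) or broken (terminating in a dot, with a $\frac{1}{\alpha_s}$ factor). A dot meeting a dashed leg vanishes by \eqref{dotbi}, and a trivalent vertex with any dashed leg vanishes by \eqref{dashedtriiszero}. The only surviving terms are those in which every interior dot or trivalent vertex has all its legs broken; then these solid vertices and their attached stub-dots form closed subdiagrams in the Frobenius calculus, which collapse to polynomial scalars via \eqref{assoc1}, \eqref{unit}, \eqref{needle}, and \eqref{alphais}. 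After this pass, the only dots remaining in the diagram are boundary dots, and no solid trivalent or $2m$-valent vertex survives.

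Finally, I would gather all polynomial scalars and push them to the leftmost region, using \eqref{dotforcegeneral} on any solid strand attached to the boundary and the polynomial-sliding relation \eqref{slidepolythrustd} for dashed strands (established in the claim following \eqref{localdecomp}). The main obstacle is organizing the rewriting so that it actually terminates, since the first pass introduces new trivalent vertices through the Jones--Wenzl morphism appearing in \eqref{dot2m}. I would therefore set up a lexicographic complexity measure on the pair consisting of (number of solid $2m$-valent vertices, number of interior solid dots and trivalent vertices) and verify that each rewriting step strictly decreases this measure; the new trivalent vertices produced by \eqref{dot2m} are harmless because they contribute only to the second component and appear only after the first component has strictly dropped.
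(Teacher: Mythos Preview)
Your approach is essentially the paper's: insert \eqref{localdecomp} on solid edges and simplify. The paper does this one edge at a time rather than in bulk, which makes the case analysis cleaner, but the underlying idea is the same. There are, however, two genuine gaps in your execution.

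First, the assertion that ``a trivalent vertex with any dashed leg vanishes by \eqref{dashedtriiszero}'' is false. Relation \eqref{dashedtriiszero} only kills a trivalent vertex when \emph{all three} legs carry bivalent vertices. If exactly two legs are dashed and one is broken, then after absorbing the dot with \eqref{unit} you are left with a short solid arc capped by a bivalent vertex at each end; by \eqref{newidemprel1} this becomes a continuous dashed arc times a unit in $Q$, not zero. So your dichotomy ``any dashed leg $\Rightarrow$ vanish; all broken $\Rightarrow$ closed Frobenius scalar'' misses a case, and the surviving terms are not only those with every leg broken. The trivalent vertex is still eliminated in this mixed case, but you need to say so and to handle the new solid-between-bivalents arcs explicitly (this is the paper's step (5)).

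Second, you never remove closed components built from dashed edges. After your passes, the diagram may still contain closed dashed circles or, more generally, closed dashed subgraphs involving dashed $2m$-valent vertices. The lemma requires that there be no boundary-free components other than a single polynomial box. The paper handles this in its final step by invoking the relations of $\DStd_Q$: closed diagrams in $\DStd$ reduce to the empty diagram (equivalently, $\StdF$ is well-defined and $\Hom_{\DStd_Q}(\emptyset,\emptyset)=Q$). You should add this as a final step.

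A minor point: the reference to \eqref{2mstdiscyclic} for the all-dashed $2m$-valent term is not quite right; that relation records cyclicity. The fact you need is the computation immediately preceding it, which identifies a solid $2m$-valent vertex surrounded by $2m$ bivalent vertices with $\rho_{s,t}$ times the dashed $2m$-valent vertex.
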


\begin{proof} Consider a diagram in $\Kar(\DC_Q)$. One can use \eqref{localdecomp} to either break or dash any edge. The reduction goes as follows: \begin{enumerate} \item Counting regions
separated by a dashed line as a single \emph{area}, we can assume there is a single area. This can be done by taking a solid line which separates two areas and either breaking it or
dashing it. \item We can assume that every $2m$-valent vertex appears only in its dashed version. If there are any solid $2m$-valent vertices, break or dash one of its edges. If broken,
one can use \eqref{dot2m} to remove the $2m$-valent vertex. If dashed, now break or dash the next edge. Eventually we can assume every edge is dashed, any thus can replace the $2m$-valent
vertex with its dashed version (up to multiplication by a polynomial in some region). \item We can assume there are no trivalent vertices. If there are any trivalent vertices, break or
dash one of its edges. If broken, one can use \eqref{unit} to remove the trivalent vertex. If dashed, now break or dash the next edge. One of the three edges must be broken, or the result
is zero by \eqref{dashedtriiszero}. \item We can assume there are no dots except for boundary dots. Any dot not connected to the boundary must be connected to another dot (and so becomes a
box), or to bivalent vertex (and so becomes zero). \item Any remaining solid line can only run into the boundary or into a bivalent vertex (it can not form a circle, for this would create
a second area). Any solid line meeting two bivalent vertices can be removed by \eqref{newidemprel1}, yielding a continuous dashed line (up to a polynomial). Thus any connected component
without boundary is either a box or is a purely dashed diagram. \item We can assume all boxes are in the leftmost region. This is because there is a single area, and we can slide
polynomials through dashed lines using \eqref{slidepolythrustd}. Thus the polynomials merge into a single box. \item Any remaining closed component can be removed. This uses the fact that
closed diagrams in $\DStd$ reduce to the empty diagram. \end{enumerate} \end{proof}

\begin{cor} $\StdF$ is full. \end{cor}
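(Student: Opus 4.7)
The plan is to feed an arbitrary morphism $\varphi \in \Hom_{\Kar(\DC_Q)}(Q_{\un{x}}, Q_{\un{y}})$ into Lemma \ref{reduceinKarDCQ} and then observe that the hypothesis on the boundary—every strand is dashed—forces the reduced form to collapse into a purely dashed diagram, which manifestly lies in the image of $\StdF$.

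First, I would apply Lemma \ref{reduceinKarDCQ} to write $\varphi$ as a $\Bbbk$-linear combination of graphs in which the only dots are boundary dots, there are no trivalent vertices and no solid $2m$-valent vertices, and there is at most a single polynomial box in the leftmost region (plus possibly isolated dashed closed components, which by Theorem \ref{FStdEquiv} reduce to polynomials). Next, since both the bottom boundary $\un{x}$ and the top boundary $\un{y}$ consist entirely of reflection indices (dashed strands), there are no solid points on the boundary. In particular a boundary dot—being a solid univalent vertex meeting the boundary—cannot occur, so we may assume none are present.

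Second, I would analyze the surviving solid edges. Each such edge must have both endpoints in the interior of the strip: neither endpoint can be a dot, trivalent vertex, nor a solid $2m$-valent vertex (by the reduction), and neither endpoint can lie on the boundary. The only interior end for a solid edge is therefore a bivalent vertex. Thus every solid edge either forms a closed loop or is exactly the left-hand configuration of relation \eqref{newidemprel1} (a solid segment connecting two bivalent vertices). Closed solid loops are absorbed into polynomials by the same argument used in the last step of Lemma \ref{reduceinKarDCQ}, while each two-bivalent solid segment may be replaced, by \eqref{newidemprel1}, with a continuous dashed strand times a factor of $\a_s^{-1}$ (pulled into the leftmost box via \eqref{slidepolythrustd}).

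After these replacements, every graph in the linear combination is purely dashed apart from a single polynomial in the leftmost region. Such diagrams are, by construction, built from dashed cups and caps and dashed $2m$-valent vertices; and these are precisely the images under $\StdF$ of the cups, caps, and $2m$-valent vertices of $\DStd_Q$ (see \eqref{dashedcupscaps} and \eqref{2mstdiscyclic}), with the polynomial box in the leftmost region being the image of the corresponding box in $\DStd_Q$. Hence $\varphi$ is in the image of $\StdF$, proving fullness. The only step requiring care is the one flagged above—checking that once boundary dots are excluded, every remaining solid edge is really caught by \eqref{newidemprel1}; this is a clean graph-combinatorial observation given Lemma \ref{reduceinKarDCQ}, so no serious obstacle is expected.
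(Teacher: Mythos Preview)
Your proposal is correct and follows essentially the same approach as the paper: apply Lemma \ref{reduceinKarDCQ}, observe that dashed boundaries forbid boundary dots, note that every remaining solid edge must end in bivalent vertices and hence can be removed via \eqref{newidemprel1}, leaving a purely dashed diagram in the image of $\StdF$. One small redundancy: the reduction in Lemma \ref{reduceinKarDCQ} already forces a single area, so solid closed loops are in fact already excluded and need not be handled separately.
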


\begin{proof} Apply the reduction of the lemma to a map between reflection indices. There can be no boundary dots. A solid edge can not connect to anything except a
bivalent vertex any more. Thus every solid edge can be removed with \eqref{newidemprel1} (at the cost of adding a polynomial). Any map with only dashed edges is clearly in the image of
$\StdF$. \end{proof}

We can define a functor $\Kar(\DC_Q) \to \StdBim$ extending $\FC$. This functor acts on bivalent vertices as in \eqref{bivalentgenerators}. Clearly this functor intertwines $\StdF$ and
$\FC_\std$. Since $\FC_\std$ is an equivalence, this would imply that $\StdF$ is faithful, and is therefore an equivalence. This concludes the proof of Proposition \ref{StdFanequiv} and Theorem \ref{localdiagthm}

In the next section we construct a quasi-inverse for $\StdF$, giving a proof that $\StdF$ is faithful without needing the functor $\FC$ to bimodules.

\subsection{Computation using localization}
\label{subsec-compute}

Each object $B_{\un{x}}$ in $\DC$ splits in $\Kar(\DC_Q)$ as the sum of $Q_\eb$ over all subsequences $\eb$ of ${\un{x}}$. Here, $Q_\eb$ denotes the object which is the tensor product of $Q_{{\un{x}}_k}$
when $\eb_k=1$ and $\1$ when $\eb_k=0$. Given a graph expressing a morphism from a sequence $B_{\un{x}}$ of length $d$ to a sequence $B_{{\un{x}}'}$ of length $d'$, we can localize to obtain a $2^{d'} \times 2^d$ matrix of maps
between reflection sequences. Since Hom spaces between standard bimodules are always either rank 1 or 0, this matrix is actually populated with polynomials in $Q$, and is fairly sparse
(because many Hom spaces are zero a priori). Computing any term in this matrix consists of applying the appropriate projection and inclusion maps to the top and bottom of the graph, and
using the diagrammatics of $\Kar(\DC_Q)$ to reduce the graph to a dashed graph with the desired polynomial on the left. That is, given $({\un{x}},\eb)$ and $({\un{x}}',\eb')$, the coefficient of the
map $Q_\eb \to Q_{\eb'}$ of a map $\phi$ is given by reducing the following diagram $\phi_\eb^{\eb'}$.

\igc{2}{localizingamap}

There is no sign issue in this convention, even though a mixed graph represents a morphism only up to sign. For any Soergel graph $\phi$ we start with, we can choose a representative of
the isotopy class, and add idempotents as above to obtain a specific representative of the mixed graph. Different representatives of $\phi$ will give different mixed graphs, but they
differ only by isotopy of the solid part of the graph, and therefore have the same sign when viewed as morphisms in $\Kar(\DC_Q)$.

Note that by \eqref{slidebivalentthru2m}, the coefficient associated to $\eb=(1,1,\ldots,1)$ and $\eb' = (1,1,\ldots,1)$ of a $2m$-valent vertex is precisely $1$. Therefore, for any rex
move, this ``highest" coefficient will be $1$.

One can check if two maps in $\DC$ are equal by computing these two matrices and comparing the terms. More combinatorially, one can compute once and for all the ``basic" matrices
attached to the generating morphisms. For instance, the dot gives a $2 \times 1$ matrix, the trivalent vertex a $4 \times 2$ matrix, and the $2m$-valent vertex a $2^m \times 2^m$ matrix.
Computing a more general map consists merely of multiplying these basic matrices and annoying bookkeeping. Computers excel at such tasks.

We have not yet proven it, but the passage $\DC \to \DC_Q$ is faithful. Not knowing this, the equality of two matrices only implies that the original maps are equal modulo $R$-torsion.
However, location is injective in the bimodule world, because Hom spaces are free left $R$-modules. In other words, all $R$-torsion is in the kernel of the functor $\FC$. Therefore we can
calculate whether two Soergel graphs have the same image under $\FC$ by localizing $\DC \to \Kar(\DC_Q)$ and computing the matrices above. This is a powerful tool.

For instance, we want to know whether both sides of the $B_3$ relation \eqref{B3} correspond to the same map between Bott-Samelson bimodules. One need only compute by hand the matrices
associated to the 4-valent, 6-valent, and 8-valent vertices, and then plug in two appropriate formulae into a computer. Checking that two sparse matrices of size $2^8 \times 2^8$ are equal
is trivial. If we knew what the $H_3$ relation \eqref{H3} should be, then checking it would require calculating the 10-valent vertex (which is easy), and then computation of a pair of
$2^{15} \times 2^{15}$ matrices (which is quite time-consuming). Unfortunately, backsolving for the coefficients in the H3 relation would require doing linear algebra with a large number
$N$ of unknowns (at least $70$). There is an equation for each nonzero term in a sparse $2^{15} \times 2^{15}$ matrix, and the coefficients come from $N$ different such matrices which need
to be computed. Neither the author's computers nor their brains appear to be up to the task.

Finally, we provide the alternative proof of faithfulness.

\begin{prop} $\StdF$ is faithful. \label{stdfisfaithful2}\end{prop}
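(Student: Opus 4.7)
The plan is to construct a purely diagrammatic quasi-inverse $G \colon \Kar(\DC_Q) \to \DStd_Q$ of $\StdF$, from which faithfulness follows immediately via $G \circ \StdF = \mathrm{id}_{\DStd_Q}$.

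On objects, the splitting $B_{s,Q} \cong Q_s(1) \oplus Q(1)$ of \eqref{localdecomp} exhibits every object of $\Kar(\DC_Q)$ as a direct sum of reflection sequences $Q_{\un x}$, and I send $Q_{\un x}$ to ${\un x}$. On morphisms, I first restrict to $\Hom_{\Kar(\DC_Q)}(Q_{\un x}, Q_{\un y})$: since both source and target have purely dashed boundary, Lemma \ref{reduceinKarDCQ} reduces any such morphism to a $\Bbbk$-linear combination of fully-dashed diagrams carrying a single polynomial box on the left, which I declare to be $G(\phi)$, interpreted as a morphism in $\DStd_Q$. Extension to arbitrary objects is by the additive structure induced by the decompositions.

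The identity $G \circ \StdF = \mathrm{id}_{\DStd_Q}$ is checked on generators. By construction $\StdF$ sends each dashed cup, cap, or $2m$-valent vertex to the corresponding solid shape sandwiched by bivalent vertices, as in \eqref{dashedcupscaps} and \eqref{2mstdiscyclic}. When the reduction of Lemma \ref{reduceinKarDCQ} is applied to such a sandwich, any branch that breaks rather than dashes an interior edge dies immediately: the resulting dot hits a bivalent vertex, and this composite is zero by \eqref{dotbi}. The unique surviving branch dashes every interior edge and recovers the original dashed generator, the polynomial prefactor $\rho_{s,t}$ appearing in \eqref{2mstdiscyclic} being absorbed by its inverse placed in the leftmost region.

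The main obstacle, and the real content of the argument, is the well-definedness of $G$: the reduction in Lemma \ref{reduceinKarDCQ} involves non-canonical choices of which edges to break or dash and in which order. I would prove this by a confluence argument organized relation-by-relation: for every defining relation of $\Kar(\DC_Q)$, I reduce both sides fully and verify that the resulting dashed diagrams agree modulo the defining relations of $\DStd_Q$. This mirrors the verification of well-definedness of $\StdF$ carried out in Proposition \ref{StdFanequiv}, using the same toolkit --- the splitting \eqref{localdecomp}, the idempotent relations \eqref{newidemprel1}--\eqref{newidemprel2}, the vanishings \eqref{dotbi} and \eqref{dashedtriiszero}, and the formula \eqref{2mstdiscyclic}. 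The delicate cases are those in which a single edge is broken by one reduction path and dashed by another; for these, one verifies directly that the two local orderings of the splitting \eqref{localdecomp} produce the same element of $\DStd_Q$ after the remaining reductions are carried out.
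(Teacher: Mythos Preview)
Your overall strategy---construct a quasi-inverse $G$ to $\StdF$---matches the paper's. The implementation, however, diverges, and your version carries a genuine gap.

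The paper defines its quasi-inverse $\GC$ \emph{on generators}: reflection sequences go to themselves, normal sequences go to the corresponding formal direct sums, a bivalent vertex goes to the evident inclusion or projection, and each solid generator (dot, trivalent, $2m$-valent vertex) is sent to the explicit matrix of standard diagrams described in \S\ref{subsec-compute}. Since $\Kar(\DC_Q)$ is given by generators and relations, well-definedness reduces to checking a finite list of relations in $\DStd_Q$, and since Hom spaces there are either zero or rank one over $Q$, this check is essentially automatic.

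You instead define $G$ by applying the reduction algorithm of Lemma~\ref{reduceinKarDCQ} to an arbitrary morphism and reading off the resulting dashed diagram. The obstacle you identify---non-canonical choices in the reduction---is real, but your proposed fix conflates two different problems. Checking ``for every defining relation of $\Kar(\DC_Q)$ that both sides reduce to the same thing'' is what one does when $G$ is defined on generators; it presupposes that each side already has a well-defined image. What you actually need is \emph{confluence} of the reduction procedure itself: that two different sequences of break/dash choices applied to the \emph{same} diagram yield the same normal form. This is a diamond-lemma-style statement about the rewriting system, not a relation-by-relation check, and it would have to contend with all the relations of $\DC_Q$ (including the two- and three-color ones) interacting with the splitting \eqref{localdecomp}. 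You have not supplied such an argument, and it would be substantially harder than the paper's route. The cleanest repair is simply to adopt the paper's definition-on-generators; your reduction lemma then becomes a consequence rather than a definition.
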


\begin{proof} In fact, the techniques we have developed allow us to construct a quasi-inverse for $\StdF$. We construct a functor $\GC$ from $\Kar(\DC_Q)$ to $\DStd_Q$ as follows. Let
$\GC$ send reflection sequences to themselves, and normal sequences to the corresponding formal direct sum of reflection sequences. On morphisms, $\GC$ will send standard graphs to
themselves. The generating morphisms which are not standard graphs are sent as follows: a bivalent vertex is sent to the appropriate inclusion or projection; a dot or trivalent vertex or
$2m$-valent vertex is sent to the appropriate matrix of standard diagrams. It is trivial to check that all the relations hold, so this functor is well-defined, and is obviously a
quasi-inverse to $\StdF$. \end{proof}

Remember that diagrams and their linear combinations are only useful for displaying morphisms between tensor products of generators, not for \emph{direct sums} of those. To talk about a
morphism between direct sums, we need to use matrices of diagrams. Thus we do not expect there to be an actual diagram in $\DStd_Q$ corresponding to a solid graph.

This proof that $\Kar(\DC_Q) \cong \DStd_Q$ was entirely diagrammatic, and no mention of bimodules was required.

\subsection{Unbalanced realizations}
\label{nonbalanced}

Now we discuss the diagrammatic alterations which must be made to accommodate the case of non-balanced realizations. This was discussed for dihedral groups in the appendix to
\cite{EDihedral}, where two separate diagrammatic conventions are proposed to deal with the new bookkeeping required. We follow the second suggested convention from that appendix.

Fix a dihedral parabolic subgroup with $m=m_{st} < \infty$. Relation \eqref{dot2m} implies that there is a close connection between $2m$-valent vertices and two-colored Jones-Wenzl
projectors. However, this begs the question: which Jones-Wenzl projector? After all, the left-blue-aligned Jones-Wenzl projector is not equal to the rotation of the left-red-aligned one. A
careful examination of \eqref{dot2m} and \eqref{twocoloridemp} shows that different choices of Jones-Wenzl projector must be made, depending on the location of the dot and the orientation
of the $2m$-valent vertex.

If some rescaling of \eqref{dot2m} is to hold, for any positioning of the dot and the $2m$-valent vertex, then the Jones-Wenzl projector must satisfy the death by pitchfork property
\eqref{JWpitch}. In particular, $JW_{m-1}$ must be negligible, and must have some rotational eigenvalue. This is not possible when the realization is even-unbalanced (see \cite{EDihedral}
for more details). While one may be able to design a diagrammatic calculus for even-unbalanced realizations, we will not attempt to do so. When $m$ is even and $[m-1]=1$, all the two-color
relations above hold as stated.

The $2m$-valent vertex is supposed to correspond, under the functor $\FC$, to some non-zero morphism between bimodules, living in a one-dimensional space of morphisms. Let $\un{w}_{(s)}$
denote the reduced expression $\ldots tsts$ of length $m$ ending in $s$, and let $\un{w}_{(t)}$ denote the reduced expression $\ldots stst$ of length $m$ ending in $t$. There is a unique
bimodule map $f_{(s)} \co BS(\un{w}_{(t)}) \to BS(\un{w}_{(s)})$ which sends the 1-tensor to the 1-tensor, and a unique map $f_{(t)} \co BS(\un{w}_{(s)}) \to BS(\un{w}_{(t)})$ which does
the same. However, when the realization is not balanced, these maps are not rotations of each other (by one strand), and this is the underlying issue. However, these maps are individually
invariant under color-preserving rotations (just like $JW_{m-1}$, even in the odd-unbalanced case), so we may draw each unambiguously as some kind of $2m$-valent vertex. We label the
vertices $(s)$ or $(t)$ to distinguish the two. (We only draw the case when $m$ is odd, but nothing prevents drawing the even case too.)

\igc{1}{funky2m}

The purple strand is meant to encode the appropriate sequence of alternating red and blue strands. To reiterate, when the blue-centered $2m$-valent vertex is oriented such that its
upper-right strand is blue (as in the picture above), then it corresponds under $\FC$ to a morphism which preserves the 1-tensor. When the blue-centered $2m$-valent vertex is oriented such
that its upper-right strand is red, one differs from this map by an invertible scalar $\l$. In \cite{EDihedral}, this scalar $\l = [m-1]_y$ is discussed at additional length. Therefore,
one has the rotational relation: \begin{equation} {
\labellist
\small\hair 2pt
 \pinlabel {$\l$} [ ] at 65 24
 \pinlabel {$\l^{-1}$} [ ] at 194 23
\endlabellist
\centering
\ig{1}{funky2mrotate}
} \end{equation}

When we place a dot on a $2m$-valent vertex, we obtain a relation similar to \eqref{dot2m}. \begin{equation} {
\labellist
\small\hair 2pt
 \pinlabel {$JW_{m-1}$} [ ] at 92 93
 \pinlabel {$JW_{m-1}$} [ ] at 236 93
 \pinlabel {$JW_{m-1}$} [ ] at 92 20
 \pinlabel {$JW_{m-1}$} [ ] at 236 20
\endlabellist
\centering
\ig{1}{dot2malternate}
} \end{equation} In each case, the version of the
Jones-Wenzl projector used is the one whose coefficient of the identity is $1$, when color-aligned as above. By rotating these pictures, one knows how to place a dot on any strand in
either $2m$-valent vertex. To check that these relations make sense, observe that both sides send the 1-tensor to the 1-tensor.

To give the correct versions of \eqref{assoc2} and \eqref{twocoloridemp}, one should label the $2m$-valent vertices in such a way that both sides preserve the 1-tensor. \begin{equation}
\ig{1}{twocolorassocalternate} \end{equation} \begin{equation} \ig{1}{twocoloridempalternate} \end{equation} There are additional, color-switched versions of each of these relations.

Finally, to give the correct version of the three-color relations, one should again ensure that both sides preserve the 1-tensor. Here is the $A_3$ relation. \begin{equation}
\ig{1}{A3altalt} \end{equation}

Note that, when working with these diagrams, there is a scalar ambiguity that appears when defining a rex move, determined by the choice of central color on each $2m$-valent vertex. Our
convention is that one will always choose a coloring so that the 1-tensor is preserved by the rex move.

For the remainder of this paper, we will work solely with the balanced case, and thus will not need the extra confusion of labeled $2m$-valent vertices. In the rest of the paper, it is
roughly the case that diagrams are only important ``up to scalar." That is, we will be asking whether certain morphisms span or are linearly independent, and these concepts are not
affected by rescaling. It should not be hard to convince oneself that the remainder of this paper applies equally well to the odd-unbalanced case.

\part{Light leaves morphisms and proofs}

\label{pt:proofs}

\section{Libedinsky's Light Leaves} \label{sec-LLL}

In this section, we investigate Libedinsky's ``light leaves'' maps \cite{Lib} from a diagrammatic perspective, and prove (modulo the arguments in the next chapter) that ``double leaves"
form a cellular basis for $\DC$.


\subsection{Diagrammatics for light leaves}
\label{subsec-LibedDiag}

Fix a rex ${\un{w}}$. We want to find a basis of diagrams for the space of morphisms $B_{\un{x}} \to B_{\un{w}}$ modulo ``lower terms." Here, lower terms are morphisms which induce the
zero map to the unique standard summand $Q_w \sumset B_{\un{w}}$ after localization. This basis will be parametrized by subsequences $\eb$ of $\un{x}$ which express $w$. Libedinsky
associates a morphism $\LL_{{\un{w}},\eb}$ to $\eb$, although this choice is not canonical. In fact, there are many valid choices for what each $\LL_{{\un{w}},\eb}$ can be, and selecting
one morphism for each $\eb$ will give a basis modulo lower terms. Thus we will give a general rubric for selecting $\LL_{{\un{w}},\eb}$, which does not specify a single map but a set of
maps, any of which will suffice. See Remark \ref{LLprecision} below on how to be more specific.

Recall that a rex move is a morphism constructed from $2m$-valent vertices which corresponds to a path in the reduced expression graph of some element $w \in W$. Rex moves have degree $0$.
When the realization is unbalanced, there is an ambiguity when defining the rex move associated to a path, which amounts to an invertible scalar. This scalar will be irrelevant for our
discussion below: a rescaled basis is a basis still. However, we use the convention that rex moves always preserve the 1-tensor.

\begin{constr} For every $({\un{x}},\eb)$ expressing $w$ and every $k \le \ell({\un{x}})$, we let $({\un{x}}_{\le k},\eb_{\le k})$ be the first $k$ terms, expressing an element $w_k$, and let ${\un{x}}_{> k}$
denote the remaining terms. When ${\un{x}}$ is the empty set and $\eb$ its unique subsequence, the map $\LL_{{\un{w}},\eb}$ is the empty diagram. Suppose that, by induction, we have already chosen  a map $\LL_{k-1} \define \LL_{{\un{x}}_{\le k-1},\eb_{\le k-1}} \co B_{{\un{x}}_{\le k-1}} \to B_{{\un{w}}_{k-1}}$ for some rex ${\un{w}}_{k-1}$ of $w_{k-1}$. Suppose that the next index ${\un{x}}_k$ is $s$. By
placing a vertical line $\1_{s}$ next to $\LL_{{\un{x}}_{\le k-1},\eb_{\le k-1}}$ we get a map from $B_{{\un{x}}_{\le k}} \to B_{{\un{w}}_{k-1}} \ot B_s$. We will now choose a map $\phi_k \co
B_{{\un{w}}_{k-1}} \ot B_s \to B_{{\un{w}}_k}$ for some rex ${\un{w}}_k$ of $w_k$. The composition will be $\LL_k \define \LL_{{\un{x}}_{\le k},\eb_{\le k}}$.

It follows that $\LL_{{\un{w}},\eb}$ is a composition $\phi_{\ell({\un{w}})} \circ \phi_{\ell({\un{w}})-1} \ot \1 \circ \cdots \circ \phi_1 \ot \1$, where at the $k$-th step $\1$ denotes the
identity map of $B_{{\un{w}}_{> k}}$. The composition of the first $k$ terms is $\LL_k \ot \1_{B_{{\un{w}}_{>k}}}$.

\begin{equation} \label{LLinduction} \ig{1}{LLinduction} \end{equation}

There are four possibilities for the map $\phi_k$, depending on $\eb_k$. To obtain $\phi_k$ follow these three steps (see also figure \ref{fourmaps}):

\begin{itemize} \item If $\eb_k$ is either U1 or U0, do nothing. If $\eb_k$ is either D0 or D1, then $s$ is in the right descent set of $w_{k-1}$. Apply $\b \ot \1_{B_s}$, where $\b$ is
some rex move from ${\un{w}}_{k-1}$ to a rex with $s$ on the right. Now the top has $B_s \ot B_s$ on the far right.

\item If $\eb_k$ is U1 do nothing. If $\eb_k$ is U0, apply a dot to the rightmost $B_s$. If $\eb_k$ is D1, apply a cap to the final $B_s \ot B_s$. If $\eb_k$ is D0, apply a merging
trivalent vertex to the final $B_s \ot B_s$.

\item We have now reached some reduced expression for $w_k$. Now apply some rex move $\a$ to get to the desired rex ${\un{w}}_k$. \end{itemize} \end{constr}

\begin{figure}  \caption{The Four Maps} $\ig{1}{wisemap1} \ig{1}{wickedmap1} \ig{1}{simplemap1} \ig{1}{noaskmap1}$ \label{fourmaps} \end{figure}

As expected, the degree of the morphism $\LL_{{\un{w}},\eb}$ is $+1$ for each U0 and $-1$ for each D0 and hence agrees with defect of $\eb$.
Note also that the width (i.e. number of strands) always weakly decreases from bottom to top in an $\LL$ map.

\begin{ex} Here is a possible map $\LL_{\un{w},\eb}$ for $\un{w}=rbrgbrr$ with $m_{br}=3$ and $m_{bg}=m_{gr}=2$, and for $\eb = 1111010$. \igc{1}{LLexample} \end{ex}

\begin{remark} \label{LLprecision} Clearly there are many choices in this construction. The first important choice is which rex ${\un{w}}$ is the final target. If one is to compare
$\LL_{{\un{x}},\eb}$ with $\LL_{{\un{x}},\fb}$ for two subsequences $\eb,\fb$ both expressing $w$, then they should have the same target ${\un{w}}$, and this is generally assumed. However, at each step
in the inductive construction one needs to choose a rex ${\un{w}}_k$, and there is no particular need to be consistent with this choice. For instance, the intermediate map $LL_k$ in the
construction of $\LL_{{\un{x}},\eb}$ need not equal the map we constructed for $\LL_{{\un{x}}_{\le k},\eb_{\le k}}$. There is even no need for the intermediate rex ${\un{w}}_k$ to agree with the final
target for the chosen map $\LL_{{\un{x}}_{\le k},\eb_{\le k}}$. When $\eb_k$ is D0 or D1, one has a free hand to choose which rex with $s$ on the right will be factored through, and which rex
move $\b$ will take us there. The rex move $\a$ is also freely chosen, and by no means does it have to be the shortest or easiest way to traverse the reduced expression graph $\G_w$.

To be absolutely precise, i.e. to make the above construction into an algorithm, one should fix once and for all the following data: \begin{enumerate} \item For each $w \in W$, a rex
${\un{w}}$. \item For each $w \in W$ and each index $s \in \SC$ in the \emph{right descent set} of $w$, a rex ${\un{w}}_s$ which ends in $s$. \item For any two rexes ${\un{w}}_1$ and ${\un{w}}_2$ in for $w$, a rex move from ${\un{w}}_1$ to ${\un{w}}_2$. \end{enumerate} This is not the only way to make the algorithm precise.

The more precise one is, the more annoying certain statements get. Flexibility will be more useful. However, at some point (in the final chapter) we will have to show how one set of
choices ``spans" all the other possible choices. \end{remark}

\begin{remark} \label{LLof111} When $\eb=(1,1,\ldots,1)$, it is a sequence of all U1 precisely when ${\un{w}}$ is a reduced expression. If so, the set of possible $\LL_{{\un{w}},\eb}$ is precisely
the set of rex moves. The most convenient choice is for $\LL_{{\un{w}},\eb}$ to be the identity map.

Now suppose that $({\un{w}},\eb)$ has length $k$, and the first $k-1$ terms are all U1, a rex for $w_{k-1}$. In the construction of $\LL_{{\un{w}},\eb}$ we may choose $\LL_{k-1}$ to be the identity
map, meaning that $\LL_{{\un{w}},\eb} = \phi_k$. Thus for any sequence $({\un{w}},\eb)$ the map $\phi_k$ is a valid choice for $\LL_{{\un{w}}',\eb'}$, where ${\un{w}}' = {\un{w}}_{k-1} s$ and $\eb' =
(1,1,1,\ldots,1,\eb_k)$. This observation gives us a different inductive way to look at light leaves: \igc{1}{LLinductionAlt} One should think of the top of an $\LL$ map as being a rex
labelled with all U1's, for the purpose of further $\LL$ maps. \end{remark}

We will write $\LL_{{\un{x}},w}$ to indicate a set consisting of one fixed choice of $\LL_{{\un{x}},\eb}$ for each subexpression $\eb$ of $\un{x}$ expressing $w$. When we speak of the ``span" of  $\LL_{{\un{x}},w}$, we mean all morphisms obtained as linear combinations of $\LL$ maps, with polynomials appearing in the left-most region. Note that $\LL$ maps themselves never have polynomials in any region.

\begin{remark} Not every diagram is in the light leaves basis. Here is a diagram which is not in the light leaves basis, and a description of it as an $R$-linear combination of $\LL$ maps.
\igc{1}{notLLexample1}

Here is a diagram which is not in the span of $\LL$ maps at all, because it factors through ``lower terms."
\igc{1}{notLLexample2}
However, when viewed under adjunction as a map to $\emptyset$, it is an $\LL$ map.
\igc{1}{notLLexample3}
Note that $\emptyset$ has no lower terms, so that $\LL_{{\un{x}},e}$ should genuinely form a basis for
$\Hom_{\DC}(B_{\un{x}},\1)$.
\end{remark}

\subsection{Localizing light leaves}

Let us fix a light leaves map $\LL_{{\un{x}},\eb}$, with $\eb$ expressing $w$. Let ${\un{w}}$ be the target of the map. Now consider what happens after the passage $\DC \to \Kar(\DC_Q)$. The target $B_{\un{w}}$ splits up into standard summands, with a unique summand isomorphic to $Q_w$. For each subsequence $\fb$ of ${\un{x}}$ which also expresses $w$ we have a summand $Q_\fb \sumset B_{\un{w}}$ isomorphic to $Q_w$, and this is sent by $\LL_{{\un{w}},\eb}$ into the unique summand $Q_w \sumset B_{\un{w}}$ with some coefficient $p^\eb_\fb \in Q$. For the conventions used to calculate this coefficient, see section \ref{subsec-compute}. A priori this coefficient depends on the choices made in the construction of $\LL_{{\un{w}},\eb}$.

\begin{prop}[Path Dominance Upper-triangularity] If $p^\eb_\fb \ne 0$ then $\fb \le \eb$ in the path dominance order. Moreover, $p^\eb_\eb$ is non-zero, and is a product of roots
independent of the choice of $\LL_{{\un{x}},\eb}$. \end{prop}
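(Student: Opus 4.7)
\textit{Plan.} I work in the localized Karoubi envelope $\Kar(\DC_Q)$ from Theorem~\ref{localdiagthm}, where $B_{\un{x}}$ and $B_{\un{w}}$ split canonically into standard summands $Q_\fb$ and $Q_\gb$ indexed by subexpressions. The coefficient $p^\eb_\fb$ is the scalar entry of the composite
\[
Q_\fb \hookrightarrow B_{\un{x}} \xrightarrow{\LL_{\un{x},\eb}} B_{\un{w}} \twoheadrightarrow Q_w.
\]
The essential algebraic input is that $\Hom_{\StdBim_Q}(Q_v, Q_{v'}) = \delta_{v,v'} \cdot Q$, so every morphism between standard summands preserves element-labels. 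Since $\un{w}$ is reduced, the only subexpression of $\un{w}$ expressing $w$ is $(1, \ldots, 1)$, so the top summand $Q_w \subset B_{\un{w}}$ is unambiguous, and any nonzero matrix entry $(\LL_{\un{x},\eb})_{\fb, \gb}$ forces the elements of $W$ expressed by $\fb$ and by $\gb$ to coincide.

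\textit{Strong inductive hypothesis.} I induct on $k$, the length of the truncation $\un{x}_{\le k}$, proving the following strengthening: for the intermediate light leaf $\LL_k : B_{\un{x}_{\le k}} \to B_{\un{w}_k}$, any nonzero matrix entry $(\LL_k)_{\fb', \gb'}$ into \emph{any} summand $Q_{\gb'} \subset B_{\un{w}_k}$ (not merely the top one) forces $\fb' \le \eb_{\le k}$ in path dominance. The proposition is then the special case $\gb' = (1,\ldots,1)$. The strengthening is essential because, in the $D0$ and $D1$ cases of the step, the factorization $\LL_k = \phi_k \circ (\LL_{k-1} \otimes \1_{B_s})$ routes nonzero contributions through non-top intermediate summands $Q_{\gb_1} \subset B_{\un{w}_{k-1}}$ whose labels are strictly less than $w_{k-1}$; a weaker hypothesis could not control these. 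This is the main obstacle I expect, and the strong form is designed to sidestep it.

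\textit{Inductive step.} Label-preservation applied directly to $\LL_k$ shows that the element $f'_k$ expressed by $\fb'$ in $\un{x}_{\le k}$ equals the element expressed by $\gb'$ in $\un{w}_k$, and the latter is automatically $\le w_k$ because $\un{w}_k$ is reduced (subword property of Bruhat order). This yields $f'_k \le w_k = e'_k$. For $i < k$, factoring $\LL_k = \phi_k \circ (\LL_{k-1} \otimes \1_{B_s})$ produces some intermediate $\gb_1 \subset \un{w}_{k-1}$ with $(\LL_{k-1})_{\fb, \gb_1} \ne 0$; the inductive hypothesis applied to $\LL_{k-1}$ then gives $\fb \le \eb_{\le k-1}$, i.e.\ $f'_i = f_i \le e_i = e'_i$ for $i \le k-1$. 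Combining, $\fb' \le \eb_{\le k}$. The argument is completely uniform across the four possibilities for $\eb_k$; no separate analysis of the dot, cap, merge, or rex-move combinatorics is needed.

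\textit{Diagonal.} For the second assertion I invoke the following \emph{Rex Move Lemma}, valid in the balanced setting: a rex move $\gamma : B_\un{y} \to B_{\un{y}'}$ between two rexes of $v$ preserves the $1$-tensor and hence restricts to the identity on the top summand $Q_v$; moreover, since $\un{y}$ is reduced, label-preservation forbids any non-top summand of $B_\un{y}$ from contributing to $Q_v \subset B_{\un{y}'}$. Consequently every rex move embedded in $\LL_{\un{x},\eb}$ contributes the scalar $1$ to the diagonal. The remaining scalar contributions come from the non-rex-move operation in each $\phi_k$: U1 contributes $1$, U0 contributes $\alpha_s$ (dot composed with cup on the $Q$-summand of $B_s$), and D1 (resp.\ D0) contributes the scalar of the cap (resp.\ merge) on the $(1,1)$-summand (resp.\ $(0,0)$-summand) of $B_s \otimes B_s$. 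Each is a fixed expression in the simple roots determined by $\eb_k$ alone, so $p^\eb_\eb$ is a well-defined nonzero product of roots, independent of the rex-move choices in the construction of $\LL_{\un{x},\eb}$.
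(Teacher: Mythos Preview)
Your upper-triangularity argument is correct and rests on the same observation the paper uses: any summand of $B_{\un{w}_k}$ is isomorphic to $Q_u$ with $u \le w_k$, so label-preservation forces $v_k \le w_k$. The paper, however, applies this directly to each factorization $\LL_{\un{x},\eb} = (\text{stuff}) \circ (\LL_k \otimes \1)$ without any induction; your ``strong hypothesis'' is not needed, and your claim that it is essential is mistaken. The paper's route is shorter precisely because it never tracks which intermediate summand of $B_{\un{w}_{k-1}}$ the map passes through.

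There is a genuine error in your diagonal computation. In the D0 case the merge is applied to the last two $s$-strands: the \emph{first} of these is the final strand of the rex $\un{w}_{k-1}$ (after the rex move $\beta$), which sits in the top summand and therefore carries label $1$; only the \emph{second} strand, coming from $\un{x}_k$, carries label $\eb_k = 0$. So the relevant summand is $(1,0)$, not $(0,0)$. On the $(0,0)$-summand the merge followed by projection to the $1$-summand of $B_s$ is a dot composed with a bivalent projection, which vanishes by \eqref{dotbi}; your formula would thus give $p^\eb_\eb = 0$ whenever $\eb$ contains a D0, contradicting the nonvanishing you assert. The paper avoids this pitfall by an explicit diagrammatic reduction (Figure~\ref{calculatingp}): the inclusion-dot for D0 is absorbed into the trivalent by \eqref{unit}, leaving no factor, and the contributions from U0 and D1 are computed to be $w_{k-1}(\a_{\un{x}_k})$ and $-w_{k-1}(\a_{\un{x}_k})$ respectively, yielding the explicit product formula $p^\eb_\eb = \prod \a_k$.

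A smaller point: ``determined by $\eb_k$ alone'' is imprecise. The local factor at step $k$ is determined by $\eb_k$, but pulling it to the far left introduces the twist by $w_{k-1}$, so the actual root appearing depends on the entire truncation $\eb_{\le k}$.
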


\begin{proof} Let us use the same notation as the previous section, so that $w_k$ is the element expressed by $({\un{w}}_{\le k},\eb_{\le k})$. We write $v_k$ for the corresponding element with $\eb$ replaced by $\fb$.

Remember that $\LL_{{\un{w}},\eb}$ is defined inductively, beginning with $\LL_k = \LL_{{\un{w}}_{\le k},\eb_{\le k}} \ot \1$. The target of $\LL_k$ is ${\un{w}}_k$, a rex for $w_k$, which only has
standard summands corresponding to elements $u \le w_k$. Therefore $\LL_k$ will clearly act as zero on $Q_{\fb_{\le k}}$ unless $v_k \le w_k$. Thus $\LL_{{\un{w}},\eb}$ will act as zero on
$Q_\fb$ unless $v_k \le w_k$ for every $k$, which is exactly the condition for $\fb \le \eb$ in the path dominance order. The upper-triangularity now follows.

We now turn to an explicit formula for $p^\eb_\eb$. For each $k \le \ell({\un{x}})$ define a root $\a_k$ as follows:
\begin{gather*}
  \a_k \define \begin{cases} w_{k-1}(\a_{{\un{x}}_k}) & \text{if $\eb_k$ is U0,} \\
-w_{k-1}(\a_{{\un{x}}_k}) & \text{if $\eb_k$ is D1,} \\
1 & \text{otherwise.}
\end{cases}
\end{gather*}
Note that $w_{k-1}(\a_{{\un{x}}_k})$ is the coefficient one obtains if one places $\a_{{\un{x}}_k}$ in the region just before ${\un{w}}_k$, as an element of $Q_\eb$,
and pulls it to the far left region. We claim that $p^\eb_\eb = \prod_{k=1}^{l({\un{x}})} \a_k$.

Consider what happens at the $k$-th step, when one includes from either $\1$ or $Q_s$ into $B_s$, and then follows $\phi_k$, the inductive part of an $\LL$ map (see figure
\ref{calculatingp}). First let us take care of all the cases when $\eb_k$ is 0. If $\eb_k$ is U0 then $\phi_k$ is a dot and the inclusion from $R$ is a dot, leaving us with a factor of $\a_s$ in that spot,
which we drag left to obtain $\a_k$. If $\eb_k$ is D0 then $\phi_k$ is a trivalent vertex; the dot from the inclusion pulls into the trivalent vertex, leaving nothing behind. What remains
is an $\LL$ map with all 0's removed (with a coefficient), so it is enough to check the formula when $\eb$ consists only of 1's.

Remember that bivalent vertices ``pull through" rex moves, as in relation \eqref{slidebivalentthru2m}. All that remains of the $\LL$ map is rex moves and caps. If $\eb_k$ is U1 then one
has a bivalent vertex as the inclusion map, which will eventually pull through braids and run into either a D1 or the top of the diagram. If it runs into the top, it will precisely cancel
the projection map on top, leaving no coefficient. If $\eb_k$ is D1 then the bivalent vertex meets a bivalent vertex from an earlier U1, and using relation \eqref{dashedcupscaps} we obtain
a factor of $-\a_s$, which pulls left to become $\a_k$, as in figure \ref{calculatingp}.

\begin{figure}
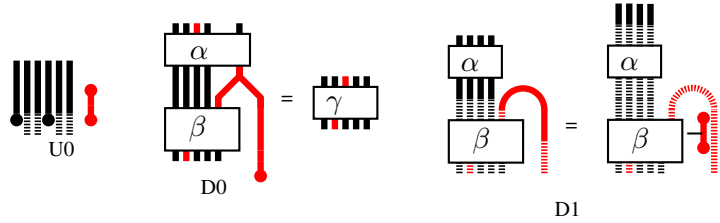
 \label{calculatingp} \caption{Calculating $p^\eb_\eb$} $\ig{1}{simplemap2} \qquad \ig{1}{wickedmap2} \qquad \ig{1}{wisemap2}$ \end{figure}

What we obtain is precisely the standard diagram representing the only map from $Q_{\eb}$ to $Q_{{\un{w}}}$, with a polynomial on the left equal to $\prod_{k=1}^{l({\un{w}})} \a_k$. \end{proof}

\begin{remark} Do not believe that just because $p^\eb_\eb$ did not depend on the choice of rex moves in the construction of $\LL_{{\un{w}},\eb}$, that no coefficients do. When $\fb < \eb$,
$p^\eb_\fb$ does depend on the rex moves chosen. Here is an example, where ${\un{w}}=(s,t,s,s,t,s)$ in type $A_2$, $\eb=(1,1,1,1,1,1)$ and $\fb=(0,0,0,0,0,0)$. \igc{1}{lowercoeffsdepend}
However, it is a priori clear that all coefficients $p_{\fb}^\eb$ are either products of $W$-conjugates of simple roots, or are zero. 
\end{remark}

\begin{cor} Fix an expression $\un{x}$ and let $\LL_{{\un{x}},w}$ be a set consisting of one light leaves map $\LL_{{\un{x}},\eb} \co B_{\un{x}} \to B_{\un{w}}$ for each subexpression $\eb$ expressing $w$. Now consider the maps $B_{\un{x},Q} \to Q_{w}$ in $\Kar(\DC_Q)$ obtained by postcomposing $\LL_{{\un{x}},\eb}$ with the projection $B_{\un{w},Q} \to Q_{w}$. These maps form a basis for $\Hom(B_{\un{w},Q}, Q_{w})$. Moreover, the original set $\LL_{{\un{x}},w}$ is linearly independent over $R$ as a subset of $\Hom(B_{\un{x}}, B_{\un{w}})$. \label{LLalocalbasis}\end{cor}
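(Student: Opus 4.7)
\medskip

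The plan is to deduce both statements from the Path Dominance Upper-triangularity proposition that was just established.

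First I would unpack what $\Hom(B_{\un{x},Q}, Q_w)$ looks like. Since $B_{\un{x},Q}$ splits in $\Kar(\DC_Q)$ as $\bigoplus_\fb Q_\fb$ where $\fb$ ranges over all subexpressions of $\un{x}$, and since $\Hom(Q_\fb, Q_w) = Q$ if $\fb$ expresses $w$ and $0$ otherwise, we obtain
\[
\Hom(B_{\un{x},Q}, Q_w) \;\cong\; \bigoplus_{\fb \in M(\un{x},w)} Q,
\]
a free $Q$-module whose rank equals $|M(\un{x},w)|$, which is also the cardinality of $\LL_{{\un{x}},w}$. So to prove the first claim, it suffices to show linear independence over $Q$ of the maps $\pi_w \circ \LL_{{\un{x}},\eb}$.

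Next I would arrange the maps $\pi_w \circ \LL_{{\un{x}},\eb}$ into a square matrix indexed by $M(\un{x},w) \times M(\un{x},w)$: the entry in row $\fb$, column $\eb$ is the coefficient $p^\eb_\fb$ extracted by precomposing with the inclusion $Q_\fb \hookrightarrow B_{\un{x},Q}$. Order $M(\un{x},w)$ by any linear refinement of the path dominance order. By Path Dominance Upper-triangularity, $p^\eb_\fb = 0$ unless $\fb \le \eb$, so this matrix is upper-triangular. Moreover the diagonal entries $p^\eb_\eb$ are nonzero products of roots (and their $W$-conjugates), hence invertible in $Q$. Therefore the matrix is invertible over $Q$, which gives both linear independence and spanning: the set $\{\pi_w \circ \LL_{{\un{x}},\eb}\}_{\eb \in M(\un{x},w)}$ is a $Q$-basis for $\Hom(B_{\un{x},Q}, Q_w)$.

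For the second assertion, suppose we have an $R$-linear relation
\[
\sum_{\eb \in M(\un{x},w)} r_\eb \,\LL_{{\un{x}},\eb} = 0 \quad \text{in } \Hom_\DC(B_{\un{x}}, B_{\un{w}}),
\]
where each $r_\eb \in R$ is placed as a box in the leftmost region. Applying the localization functor $\DC \to \Kar(\DC_Q)$ and then postcomposing with the projection $\pi_w \co B_{\un{w},Q} \to Q_w$, we obtain the relation $\sum_\eb r_\eb (\pi_w \circ \LL_{{\un{x}},\eb}) = 0$ inside $\Hom(B_{\un{x},Q}, Q_w)$, now viewed with coefficients in $Q \supset R$. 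The $Q$-linear independence just established forces each $r_\eb = 0$ in $Q$, hence in $R$.

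The only step that required work was the upper-triangularity proposition, which was already proved. The present argument is therefore essentially formal, with the one conceptual point being the reduction of an $R$-linear statement about Hom spaces in $\DC$ to a $Q$-linear statement in the much simpler category $\Kar(\DC_Q)$, where standard bimodules become genuine summands and the triangular computation can be carried out directly.
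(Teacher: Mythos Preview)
Your proof is correct and follows essentially the same approach as the paper: decompose $\Hom(B_{\un{x},Q},Q_w)$ as a direct sum over subexpressions $\fb$ expressing $w$, observe that the matrix $(p^\eb_\fb)$ is upper-triangular in the path dominance order with invertible diagonal, and then deduce $R$-linear independence by passing to $Q$. The paper's proof is the same argument compressed into three sentences; you have simply spelled out the details.
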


\begin{proof} The morphism space $B_{\un{w},Q} \to Q_{\un{w}}$ is the direct sum, for each $\fb$ expressing $w$, of the morphism space $Q_\fb \to Q_{\un{w}}$. The basis result now follows
from upper-triangularity, and the invertibility of the diagonal in $Q$. Linear independence follows immediately. \end{proof}

This corollary is essentially Libedinsky's theorem \cite{Lib}, and we have now presented a diagrammatic proof for it, entirely within the context of $\DC$ and $\DStd$ (i.e. without ever
using bimodules). As soon as one knows that the dimensions of $\Hom$ spaces is given by the standard pairing (as is the case for Soergel bimodules by \ref{SoergelThm2}) it follows by counting dimensions that the set $LL_{\un{w}, e}$ gives a  basis of $\Hom(B_{\un{w}}, R)$.

\subsection{Double leaves}

The previous section was essentially a discussion of maps $B_{\un{w}} \to Q_w$ for some $w$. Let us use this to discuss maps $B_{\un{w}} \to B_{\un{y}}$ for arbitrary expressions ${\un{w}}$ and ${\un{y}}$.

Consider a light leaves map $\LL_{{\un{x}},\eb} \co B_{\un{x}} \to B_{\un{w}}$ where ${\un{w}}$ is a rex for $w$. Flipping this diagram upside-down, we get a map $\oLL_{{\un{x}},\eb} \co
B_{\un{w}} \to B_{\un{x}}$. By the results of the previous section, $\oLL_{{\un{w}},\eb}$ behaves nicely after localization, giving a nonzero map $Q_{{\un{w}}} \to Q_\fb$ only when $\fb
\le \eb$. The coefficients appearing are not actually $p^\eb_\fb$, because the polynomials $\frac{1}{\a_s}$ should appear only in the projections maps from $B_{s,Q}$ to $Q$ or $Q_s$, not
in the inclusion maps. In the new formula for $p^\eb_\eb$, U0 and U1 will not contribute, and D0 and D1 will contribute $\frac{1}{\a_k}$. We leave the reader to check the details.
Regardless, the result is still invertible in $Q$ and the analogue of Corollary \ref{LLalocalbasis} holds.

Now let ${\un{x}}$ and ${\un{y}}$ be arbitrary sequences with subsequences $\eb$ and $\fb$ respectively, such that $({\un{x}},\eb)$ and $({\un{y}},\fb)$ both express $w$. Choose a rex ${\un{w}}$ for $w$, and
construct maps $\LL_{{\un{w}},\eb} \co B_{\un{w}} \to B_{\un{w}}$ and $\oLL_{{\un{y}},\fb} \co B_{\un{w}} \to B_{\un{y}}$. We define the corresponding \emph{double leaves map} to be the composition 
\[
\LLL_{w,\fb,\eb} \define \oLL_{{\un{y}},\fb} \circ \LL_{{\un{x}},\eb}.
\] 
After localization, we obtain a coefficient $p^{\fb,\eb}_{\fb',\eb'}$ given by the inclusion of each standard summand $Q_{\eb'}$ of $B_{\un{x}}$ and projection to each standard summand $Q_{\fb'}$ of ${\un{y}}$. We know several facts about these coefficients:

\begin{itemize} \item $p^{\fb,\eb}_{\fb',\eb'}=0$ unless $({\un{x}},\eb')$ and $({\un{y}},\fb')$ express the same element $v$.
\item $p^{\fb,\eb}_{\fb',\eb'}=0$ unless both $\eb'
\le \eb$ and $\fb' \le \fb$. In particular, this implies that the commonly expressed element $v$ must satisfy $v \le w$. We refer to this latter phenomenon as \emph{Bruhat
upper-triangularity}, a special kind of path dominance upper-triangularity.
\item When $v=w$, $\eb'=\eb$ and $\fb'=\fb$, the coefficient is nonzero and is a product of roots, obeying a
simple formula independent of the choice of $\LL$ maps. \end{itemize}

\begin{prop} Let $\LLL_{{\un{w}},{\un{y}}}$ contain one map $\LLL_{w,\fb,\eb}$ for each $w \in W$ and each pair of subsequences $({\un{x}},\eb)$ and $({\un{y}},\fb)$ expressing $w$. Then after localization, $\LLL_{{\un{w}},{\un{y}}}$ forms a basis of maps $B_{\un{w}} \to B_{\un{y}}$. In particular, the set $\LLL_{{\un{w}},{\un{y}}}$ is linearly independent. \label{LLLalocalbasis} \end{prop}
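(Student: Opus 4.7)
The plan is to reduce the basis claim to the decomposition of the localized Hom space into one-dimensional blocks indexed in exactly the same way as $\LLL_{\un{w},\un{y}}$, and then to invert an upper-triangular matrix of scalars in $Q$.

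First I would use the splitting $B_{s,Q} \cong Q \oplus Q_s$ iteratively to obtain
\[
B_{\un{w},Q} \;\cong\; \bigoplus_{\eb'} Q_{\un{w}^{\eb'}}, \qquad B_{\un{y},Q} \;\cong\; \bigoplus_{\fb'} Q_{\un{y}^{\fb'}}
\]
in $\Kar(\DC_Q)$, where $\eb'$ and $\fb'$ range over all $01$-sequences of $\un{w}$ and $\un{y}$ respectively. Combining this with the equivalence $\Kar(\DC_Q) \simeq \StdBim_Q$ of Theorem \ref{localdiagthm} and the fact that $\Hom(Q_v, Q_{v'})$ is $Q$ if $v = v'$ and $0$ otherwise, one obtains
\[
\Hom_{\Kar(\DC_Q)}(B_{\un{w},Q}, B_{\un{y},Q}) \;\cong\; \bigoplus_{v \in W} \;\bigoplus_{\eb' \in M(\un{w},v)} \;\bigoplus_{\fb' \in M(\un{y},v)} Q.
\]
Its $Q$-dimension equals $|\LLL_{\un{w},\un{y}}|$, so it will suffice to show that the localized double leaves are $Q$-linearly independent.

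Next I would expand each double leaf $\LLL_{w,\fb,\eb} = \oLL_{\un{y},\fb} \circ \LL_{\un{w},\eb}$ (factoring through a chosen rex $\un{v}$ of $w$) in the direct-sum basis above. Its $(\eb',\fb')$-coordinate is the matrix entry $p^{\fb,\eb}_{\fb',\eb'}$ described in the paragraph before the proposition. The proof of Path Dominance Upper-triangularity for single leaves in fact establishes the stronger statement that $\LL_{\un{w},\eb}$ annihilates the \emph{entire} summand $Q_{\un{w}^{\eb'}} \hookrightarrow B_{\un{w},Q}$ (as a morphism into all of $B_{\un{v},Q}$, not merely after projection to its top standard summand) whenever $\eb' \not\le \eb$, because the inductive target $B_{\un{w}_k,Q}$ only has standard summands $Q_u$ with $u \le w_k$. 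Flipping the diagrams upside-down, the same argument shows that $\oLL_{\un{y},\fb}$ has zero component in the summand $Q_{\un{y}^{\fb'}}$ whenever $\fb' \not\le \fb$. Composing, $p^{\fb,\eb}_{\fb',\eb'} = 0$ unless both $\eb' \le \eb$ and $\fb' \le \fb$, while the diagonal coefficient $p^{\fb,\eb}_{\fb,\eb}$ is the nonzero product of $W$-translates of simple roots and their inverses already computed above, and hence a unit in $Q$.

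Finally I would read off the result matricially. Both the set of double leaves $\LLL_{w,\fb,\eb}$ and the set of direct summands of the decomposed Hom space are indexed by triples $(v,\alpha,\beta)$ with $v \in W$, $\alpha \in M(\un{w},v)$, $\beta \in M(\un{y},v)$. Ordering these triples by any linear refinement of the product partial order $(v',\alpha',\beta') \le (v,\alpha,\beta) \iff \alpha' \le \alpha \text{ and } \beta' \le \beta$ in path dominance, the coefficient matrix of $\LLL_{\un{w},\un{y}}$ with respect to the decomposition basis is square, upper-triangular, with unit diagonal entries, hence invertible over $Q$. This simultaneously proves that $\LLL_{\un{w},\un{y}}$ spans the localized Hom space and is $Q$-linearly independent there; linear independence in $\Hom_{\DC}(B_{\un{w}},B_{\un{y}})$ follows a fortiori since localization is injective on these Hom spaces. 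The main subtlety, and the only thing I would need to be careful about, is the strengthening of single-leaf upper-triangularity from projections onto the top summand to the entire image in $B_{\un{v},Q}$; this is already implicit in the inductive proof, but must be invoked in both the source and target factors of the composition.
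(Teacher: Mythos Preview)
Your proposal is correct and follows exactly the paper's (very terse) proof: decompose the localized Hom space into one-dimensional summands $\Hom(Q_{\eb'},Q_{\fb'})$ indexed by pairs of subsequences expressing a common element, then invoke path-dominance upper-triangularity of the coefficients $p^{\fb,\eb}_{\fb',\eb'}$ with invertible diagonal. One small correction to your last sentence: you cannot yet appeal to injectivity of localization on $\Hom_{\DC}$ (this is only established later, via Theorem~\ref{homspacetheorem}), nor do you need it---any $R$-linear dependence in $\DC$ is carried by the localization \emph{functor} to a $Q$-linear dependence among the images, which is trivial by what you just proved.
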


\begin{proof} After localization, the space of maps $B_{\un{w}} \to B_{\un{y}}$ is a direct sum of $\Hom(Q_\eb,Q_\fb)$ for each pair of subsequences. These terms have a partial order, and the
$\LLL$ maps satisfy upper-triangularity with respect to this partial order, with an invertible diagonal. \end{proof}

\begin{remark} In the definition of the double leaves basis we could have taken $\LL$ maps and rotated them 180 degrees, instead of flipping them. This would avoid some of the annoyances of the final
chapter, but would introduce its own annoyances. Most notably, rotation takes a map $B_{\un{w}} \to B_{\un{y}}$ and returns a map $B_{\o({\un{y}})} \to B_{\o({\un{w}})}$, where $\o$ denotes reversing the order of a
sequence. To define $\oLL$ using a rotated map, the target of the original map must be $B_{\o({\un{w}})}$, which is actually a rex for $w^{-1}$, and this requires additional bookkeeping.
Rotation will be more obviously useful in the final chapter. Note that both approaches are equally valid, although coming up with a change of basis matrix between them would be a combinatorial nightmare. Also, flipping a diagram vertically is more natural in terms of the cellular structure.\end{remark}

Let us now state some of our main results, which all assume Demazure Surjectivity:

\begin{thm} \label{homspacetheorem} The set $\LLL_{{\un{x}},{\un{y}}}$ forms a free $R$-basis for $\Hom(B_{\un{x}},B_{\un{y}})$ in $\DC$. \end{thm}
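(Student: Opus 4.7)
The plan is to establish the theorem in two stages: linear independence over $R$, which is essentially immediate from what has already been built, and spanning, which is the substantive content. For independence, suppose $\sum_i r_i \LLL_i = 0$ in $\Hom_{\DC}(B_{\un{x}}, B_{\un{y}})$ with $r_i \in R$. Applying the localization functor $\DC \to \Kar(\DC_Q)$ turns this into a $Q$-linear relation among the images of the $\LLL_i$, which are a $Q$-basis of $\Hom_{\Kar(\DC_Q)}(B_{\un{x},Q}, B_{\un{y},Q})$ by Proposition~\ref{LLLalocalbasis}. Hence each $r_i$ vanishes in $Q$, and therefore in $R$ since $R \hookrightarrow Q$. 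Note that this argument does not use any torsion-freeness of Hom spaces in $\DC$ itself; it only uses injectivity of the coefficient ring.

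For the spanning statement, I would first use the internal biadjunction (cups and caps of each color) to reduce to the case $\un{y} = \emptyset$. Tracking a double leaf through this biadjunction just folds $\oLL_{\un{y},\fb}$ over the top into cups and cap tails, producing a light leaf from $B_{\un{x}\cdot\un{y}^{\op}}$ into $R$. It therefore suffices to show that for any expression $\un{w}$, the light leaves $\LL_{\un{w},\eb}$ as $\eb$ ranges over subexpressions of $\un{w}$ expressing $e$ span $\Hom_{\DC}(B_{\un{w}}, R)$ as a left $R$-module. The proof proceeds by a double induction on $\ell(\un{w})$ and on the path-dominance order.

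The core argument would be a diagrammatic reduction algorithm. Given an arbitrary diagram $\phi \co B_{\un{w}} \to R$, I would process the strands of $\un{w}$ from left to right, matching each strand to one of the four light-leaf local moves (U1, U0, D1, D0) according to the behaviour of $\phi$ at that point. Locally the reduction uses: the polynomial relations \eqref{alphais}, \eqref{dotforcegeneral} to push boxes to the left-most region and in particular to ``fuse'' broken edges modulo Demazure surjectivity; the Frobenius one-color relations \eqref{assoc1}, \eqref{unit}, \eqref{needle} to collapse monochromatic subtrees into a canonical form; relations \eqref{assoc2}, \eqref{dot2m}, and \eqref{twocoloridemp} together with the Jones--Wenzl machinery of Section~\ref{subsec-JW} to trade every $2m$-valent vertex either for a rex-move completion or for a diagram of strictly smaller width; and the three-color relations \eqref{A1I2m}--\eqref{H3} to reconcile competing rex moves. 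The operations that genuinely reduce complexity produce light leaves, while every residual piece factors through some $B_{\un{v}}$ for $\un{v}$ strictly below in the path-dominance order and is handled by the inductive hypothesis.

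The main obstacle, I expect, is controlling the indeterminacy in the choice of rex moves that enter the definition of each $\LL_{\un{w},\eb}$. One must show that two differently chosen light leaves with the same subexpression data span the same $R$-subspace modulo strictly lower terms; equivalently, that any loop in the reduced-expression graph induces an endomorphism equal to the identity modulo lower terms. In finite rank~$3$ parabolic subgroups of types $A_1 \times I_2(m)$, $A_3$, and $B_3$ this follows on the nose from \eqref{A1I2m}--\eqref{B3}, but in type $H_3$ one only has \eqref{H3} in the weak form ``equal up to unspecified lower terms.'' The entire argument must therefore be phrased modulo the path-dominance filtration, using \eqref{H3} as a qualitative statement and relying on the nested induction on $\ell(\un{w})$ and on Bruhat order of the intermediate $w_k$ to absorb every lower-term summand back into the inductive hypothesis, so that the explicit form of those terms is never needed. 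Combining this spanning statement with the independence argument yields the theorem.
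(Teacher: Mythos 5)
Your linear-independence argument via localization matches the paper and is correct. The spanning part, however, has two substantial gaps.

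First, the proposed reduction to $\un{y} = \emptyset$ rests on the claim that ``tracking a double leaf through the biadjunction just folds $\oLL_{\un{y},\fb}$ over the top \ldots producing a light leaf.'' This is asserted but not proved, and it is not automatic. After folding, the $\un{y}^{\op}$ portion is processed in the reverse of the left-to-right order that the light-leaves construction prescribes, and the elementary pieces transform: a splitting trivalent in $\oLL$ together with the adjoining caps becomes a monochromatic tree that need not literally be one of the four local moves. One can convince oneself that the folded diagram is \emph{equal} to some $\LL_{\un{x}\cdot\un{y}^{\op},\eb'}$ via Frobenius tree identities and by absorbing the middle rex moves, but setting up this bijection between $\bigsqcup_w M(\un{x},w) \times M(\un{y},w)$ and $M(\un{x}\cdot\un{y}^{\op},e)$ at the level of actual morphisms, not just indexing sets, is real combinatorial work that you never do. The paper avoids this issue entirely: it does not fold anything. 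Instead it applies Lemma~\ref{rexesonlyplz} to the identity of $B_{\un{x}}$ to factor an arbitrary $\phi \co B_{\un{x}} \to B_{\un{y}}$ through a Bott--Samelson $B_{\un{w}}$ for a rex $\un{w}$, then applies the key intermediate result (Proposition~\ref{LLabasismlt}: light leaves span $\Hom(B_{\un{x}},B_{\un{w}})$ modulo the top-dot ideal $I_{\un{w}}$) on each side, and disposes of the lower-order pieces $g_l$, $f_l$ by Bruhat induction.

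Second, and more seriously, the ``diagrammatic reduction algorithm'' that is supposed to carry the spanning statement is described too vaguely to constitute a proof, and the proposed induction scheme is not the one that works. Processing strands ``from left to right, matching each strand to one of the four light-leaf local moves according to the behaviour of $\phi$'' presupposes that an arbitrary diagram can be read off in this way, which is exactly the statement to be proved. A double induction on $\ell(\un{w})$ and path dominance does not control the size of intermediate diagrams: applying a two-color relation such as \eqref{twocoloridemp} or \eqref{dot2m} can produce diagrams whose intermediate widths are larger than $\ell(\un{w})$ even though they eventually resolve into light leaves. This is precisely why the paper's Section~\ref{subsec-hereitis} inducts on \emph{maxwidth}, using the interlocking statements $(L_M)$ and $(X_M)$, and why Claims~\ref{goesbackdown} and \ref{staysup} analyze the intervals in which maximal width is attained. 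The one-color, two-color, Jones--Wenzl, and Zamolodchikov relations you list are of course all used, and you correctly identify rex-move indeterminacy and the weak form of \eqref{H3} as the central obstacles (the paper handles these through Lemma~\ref{rexmovesmlt} and the notion of ``error terms'' with strictly negative-positive decompositions, always phrased modulo $I_{\un{w}}$), but identifying the obstacles is not the same as overcoming them. As written, the hard part of the theorem --- the content of Chapter~\ref{sec-LLLproof} --- is replaced by a sketch whose key steps (the folding equality, the left-to-right matching, and the choice of induction parameter) are either unjustified or would not go through as stated.
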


\begin{prop} \label{homspaceprop} The set $\LL_{{\un{x}},e}$ forms a free $R$-basis for $\Hom(B_{\un{x}},\1)$ in $\DC$. \end{prop}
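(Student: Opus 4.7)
The plan is to deduce Proposition \ref{homspaceprop} as the special case $\un{y} = \emptyset$ of Theorem \ref{homspacetheorem}. When $\un{y}$ is the empty expression, its unique subexpression is empty and expresses $e \in W$, so the only $w$ contributing to $\LLL_{\un{x},\emptyset}$ is $w = e$, and the ``top half'' $\oLL_{\emptyset,\emptyset}$ is the identity diagram on $\1$. Thus $\LLL_{\un{x},\emptyset}$ and $\LL_{\un{x},e}$ coincide as sets of morphisms in $\DC$, and Proposition \ref{homspaceprop} reduces to the $\un{y} = \emptyset$ instance of Theorem \ref{homspacetheorem}.

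For linear independence, I would appeal directly to Corollary \ref{LLalocalbasis}, applied with target the empty rex $\un{w} = \emptyset$ (so $w = e$, $B_{\un{w},Q} = Q = Q_e$, and the projection $B_{\un{w},Q} \to Q_w$ is the identity). The corollary then says precisely that the images of the elements of $\LL_{\un{x},e}$ under localization form a $Q$-basis of $\Hom_{\Kar(\DC_Q)}(B_{\un{x},Q}, Q)$. Since the localization functor is faithful on morphism spaces up to $R$-torsion and our candidates localize to linearly independent vectors, they are $R$-linearly independent in $\Hom_{\DC}(B_{\un{x}}, \1)$.

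For spanning, I would invoke the main technical result of Section \ref{sec-LLLproof}: that double leaves span Hom spaces as left $R$-modules. Specializing to $\un{y} = \emptyset$ gives that every morphism $B_{\un{x}} \to \1$ is an $R$-linear combination of elements of $\LL_{\un{x},e}$. Combined with the previous paragraph, this yields a free $R$-basis.

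The hard part is of course the spanning statement, which is the subject of the next section rather than the current one. Its proof proceeds by induction on $\ell(\un{x})$ together with a diagrammatic reduction algorithm that uses the one-, two-, and three-color relations of $\DC$ to rewrite an arbitrary diagram as a linear combination of light leaves. The key inductive step consists of examining how the leftmost strand of $\un{x}$ interacts with the rest of the diagram, and applying the Frobenius relations, polynomial forcing \eqref{dotforcegeneral}, and the dot--$2m$-valent identity \eqref{dot2m} to sweep complexity upward until the diagram takes the desired ``$\LL$-followed-by-rex-move'' shape. That the $H_3$ Zamolodchikov relation \eqref{H3} is only known up to unspecified lower terms causes no difficulty at the level of spanning, because any such lower terms are themselves expressible in the light leaves basis by the inductive hypothesis.
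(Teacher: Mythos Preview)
Your reduction of Proposition~\ref{homspaceprop} to the case $\un{y}=\emptyset$ of Theorem~\ref{homspacetheorem} is exactly what the paper does (see the remark immediately following Proposition~\ref{homspaceprop}), and your appeal to Corollary~\ref{LLalocalbasis} for linear independence is also the paper's argument.

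However, your final paragraph misdescribes the spanning proof in Section~\ref{sec-LLLproof}. The paper does \emph{not} induct on $\ell(\un{x})$ or analyze ``how the leftmost strand of $\un{x}$ interacts with the rest of the diagram.'' Instead, the argument proceeds via a reduction (Section~\ref{subsec-reduction1}) to showing that light leaves span $\Hom(B_{\un{x}},B_{\un{w}})$ modulo the ideal $I_{\un{w}}$ of lower terms (Proposition~\ref{LLabasismlt}), and then a ``grand induction'' on the \emph{maxwidth} $M$ of a diagram (Section~\ref{subsec-hereitis}). The inductive engine consists of two interlocking statements $(L_M)$ and $(X_M)$, and the key tools are negative-positive decompositions of diagrams, the fact that rex moves differ only by strictly negative-positive terms (Lemma~\ref{rexmovesmlt}), and a sequence of claims (\ref{addnegtri}--\ref{addposdot}) showing that placing each generator below a light leaves map stays in the span of light leaves modulo $I_{\un{w}}$. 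Since you correctly flag this as ``the subject of the next section'' and not part of the present proof, this inaccuracy does not affect the validity of your argument for Proposition~\ref{homspaceprop} itself, but you should not present that sketch as the method the paper uses.
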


\begin{cor} Hom spaces in $\DC$ are free graded $R$-modules. \end{cor}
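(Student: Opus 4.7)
The plan is to deduce this immediately from Theorem \ref{homspacetheorem} together with the observation that the double leaves basis is homogeneous. Since being a free graded module amounts to admitting a homogeneous free basis, the only content beyond quoting the theorem is checking homogeneity.

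First I would note that every generating morphism of $\DC$ (dots, trivalent vertices, $2m$-valent vertices, cups, caps, and polynomial boxes) is homogeneous, with the degrees recorded in the definition of a Soergel graph. Hence any composition or horizontal concatenation of these generators is a homogeneous morphism whose degree is the sum of the degrees of its constituents. In particular, each light leaves morphism $\LL_{\un{x},\eb} \co B_{\un{x}} \to B_{\un{w}}$ is homogeneous: indeed, by the inductive construction, its degree is $d(\eb)$, since each U0 step contributes a dot of degree $+1$, each D0 step contributes a trivalent vertex of degree $-1$, and the U1, D1, rex-move, and identity pieces contribute degree $0$. The flipped morphism $\oLL_{\un{y},\fb}$ is likewise homogeneous of degree $d(\fb)$, so each double leaf $\LLL_{w,\eb,\fb} = \oLL_{\un{y},\fb} \circ \LL_{\un{x},\eb}$ is homogeneous of degree $d(\eb) + d(\fb)$.

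Next I would invoke Theorem \ref{homspacetheorem}: the set $\LLL_{\un{x},\un{y}}$ is a free $R$-basis of $\Hom_{\DC}(B_{\un{x}}, B_{\un{y}})$. Combining this with the homogeneity just established, we obtain a decomposition
\[
\Hom_{\DC}(B_{\un{x}}, B_{\un{y}}) \;=\; \bigoplus_{(w,\eb,\fb)} R \cdot \LLL_{w,\eb,\fb},
\]
where each summand $R \cdot \LLL_{w,\eb,\fb}$ is a free graded $R$-module of rank one, isomorphic to $R(-d(\eb)-d(\fb))$ after the appropriate shift (left multiplication by $R$ preserves the grading since $R$ acts through the leftmost region and boxes are homogeneous). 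A direct sum of free graded $R$-modules is free and graded, which proves the corollary.

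There is no real obstacle here once Theorem \ref{homspacetheorem} is in hand; the proof is essentially a bookkeeping check. The only subtlety worth flagging is that the $R$-module structure on $\Hom_{\DC}(B_{\un{x}}, B_{\un{y}})$ comes from placing boxes in the leftmost region (equivalently, the right one, by \eqref{dotforcegeneral} together with polynomial sliding), and this action is compatible with the grading because boxes are assigned degree equal to the polynomial degree of their label.
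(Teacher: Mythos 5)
Your proof is correct and fills in exactly the bookkeeping the paper leaves implicit: the corollary is stated without proof as an immediate consequence of Theorem \ref{homspacetheorem}, and the key observation—that each $\LLL_{w,\eb,\fb}$ is homogeneous of degree $d(\eb)+d(\fb)$, so a free $R$-basis is automatically a homogeneous basis—is the intended one (the paper even records the degree computation for $\LL_{\un{x},\eb}$ in the light leaves section). No gaps.
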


\begin{remark} Proposition \ref{homspaceprop} is a special case of Theorem \ref{homspacetheorem}, because when ${\un{y}}=\emptyset$ we must have $w=e$, and $\oLL_{{\un{y}}}$ is the empty
diagram.\end{remark}

The final chapter of this paper contains an elementary diagrammatic (though unpleasant) proof of these results.  Of course it is enough to show that $\LLL$ spans, as we have already shown that this set is linearly independent over $R$. The reader is now equipped to read that chapter, which does not use anything beyond this
section.

We assume these three results for the rest of this chapter.

Because Theorem \ref{homspacetheorem} did not depend on the particular $\LL$ maps chosen, this implies that two different choices of $\LLL$ are pairwise dependent. It does \emph{not} imply
that two different choices of $\LL_{{\un{x}},w}$ are pairwise dependent, because the dependence relations may require lower terms.

\begin{ex} Let $s,t$ be the indices in type $A_2$. There are two choices for the map $\LL_{sts,111}$, as below. The difference between them is not an $\LL$ map, but it is an $\LLL$ map.
\igc{1}{LLvsLLnew} \end{ex}

\subsection{Cellularity}

We assume Theorem \ref{homspacetheorem}, so that Hom spaces are free left $R$-modules and localization is injective on Hom spaces.

\begin{claim} For any $w \in W$, let $\XC_w$ be the set of maps in $\DC$ which, after localization, induce the zero map on every $Q_w$ summand of their source object. Then $\XC_w$ is a
2-sided ideal in $\DC$. \end{claim}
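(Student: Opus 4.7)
The plan is to verify closure under $R$-linear combinations, post-composition, and pre-composition; in each case the argument is formal and amounts to unpacking the definition of $\XC_w$ through the localization functor $\DC \to \Kar(\DC_Q)$. Recall that every Bott-Samelson $B_{\un{x}}$ splits in $\Kar(\DC_Q)$ as $\bigoplus_\eb Q_{\un{x}^\eb}$, with the $Q_w$-summands being precisely those indexed by subsequences $\eb$ of $\un{x}$ with $\un{x}^\eb = w$. Membership of a morphism $f$ in $\XC_w$ is defined by vanishing of the composites $f \circ \iota_\eb$ for every such inclusion $\iota_\eb \co Q_w \hookrightarrow B_{\un{x},Q}$. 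Closure under $R$-linear combinations (polynomials in the leftmost region) is immediate, since the $R$-action is compatible with localization and the zero morphism is preserved under scalar multiplication.

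For post-composition, suppose $f \co B_{\un{x}} \to B_{\un{y}}$ lies in $\XC_w$ and let $g \co B_{\un{y}} \to B_{\un{z}}$ be arbitrary. For any $Q_w$-summand inclusion $\iota_\eb \co Q_w \hookrightarrow B_{\un{x},Q}$, functoriality of localization gives
\[
(g \circ f) \circ \iota_\eb = g \circ (f \circ \iota_\eb) = g \circ 0 = 0,
\]
so $g \circ f \in \XC_w$.

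For pre-composition (the only step with genuine content), suppose $f \co B_{\un{x}} \to B_{\un{y}}$ lies in $\XC_w$ and let $h \co B_{\un{u}} \to B_{\un{x}}$ be arbitrary. Fix a $Q_w$-summand inclusion $\iota_\d \co Q_w \hookrightarrow B_{\un{u},Q}$. The composite $h \circ \iota_\d$ is a morphism $Q_w \to B_{\un{x},Q} = \bigoplus_\eb Q_{\un{x}^\eb}$. The key observation is that in $\StdBim_Q \cong \Kar(\DC_Q)$ one has $\Hom(Q_w,Q_v) = Q$ if $v=w$ and $0$ otherwise (because the realization is faithful over $Q$ by Theorem \ref{localdiagthm}); consequently $h \circ \iota_\d$ has zero component in every summand $Q_{\un{x}^\eb}$ with $\un{x}^\eb \ne w$, and therefore factors through the direct sum of the $Q_w$-summands of $B_{\un{x},Q}$. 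Since $f$ annihilates each of those summands by hypothesis, $f \circ h \circ \iota_\d = 0$, and we conclude $f \circ h \in \XC_w$. The main (and really only) leverage in the argument is the $\Hom$-vanishing between distinct standard bimodules after localization, which forces any morphism out of $Q_w$ into $B_{\un{x},Q}$ to land in the $Q_w$-isotypic part; everything else is bookkeeping about the functoriality of localization.
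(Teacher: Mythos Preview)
Your proof is correct and is precisely the routine verification the paper has in mind; the paper's own proof is simply ``Left to the reader.'' One minor sharpening: the Hom-vanishing $\Hom(Q_w,Q_v)=0$ for $v\ne w$ in $\Kar(\DC_Q)$ follows directly from the equivalence $\Kar(\DC_Q)\cong\DStd_Q$ (Proposition~\ref{StdFanequiv}), where it holds by construction of $\DStd$, so you need not invoke faithfulness of the realization.
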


\begin{proof} Left to the reader. \end{proof}

\begin{claim} The light leaves map $\LLL_{w,\eb,\fb}$ will be in $\XC_v$ for any $v \nleq w$. \end{claim}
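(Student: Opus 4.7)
The plan is to exploit the fact that after localization $B_{\un{w}}$, for $\un{w}$ a reduced expression of $w$, decomposes into standard bimodules $Q_u$ only for $u \le w$. Combined with the fact that $\Hom(Q_u, Q_{u'}) = 0$ for $u \ne u'$ in $\Kar(\DC_Q) \cong \StdBim_Q$, this should immediately force $\LL_{\un{x}, \eb}$ to kill any $Q_v$ summand of $B_{\un{x}}$ whenever $v \nleq w$; postcomposing with $\oLL_{\un{y},\fb}$ does not change that vanishing.

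More concretely, first I would recall that after passing to $\Kar(\DC_Q)$ the source $B_{\un{x}}$ splits as $\bigoplus_{\eb'} Q_{\eb'}$, where the summand indexed by a subexpression $\eb'$ is isomorphic as a standard bimodule to $Q_{v'}$, with $v'$ the element expressed by $\eb'$. Thus the $Q_v$ summands of $B_{\un{x}}$ are precisely those indexed by subexpressions $\eb'$ of $\un{x}$ expressing $v$. By definition $\LLL_{w,\eb,\fb} \in \XC_v$ means the induced map on each such summand is zero.

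Next I would apply the part of the Path Dominance Upper-triangularity proposition that shows $\LL_{\un{x},\eb}$ restricted to the $Q_{\eb'}$ summand maps into the unique $Q_w$ summand of $B_{\un{w}}$ (when $\eb' \le \eb$) or is zero (otherwise). Actually, the key observation is more elementary: since $\un{w}$ is a reduced expression for $w$, Soergel's filtration (or, intrinsically in $\DC_Q$, the decomposition into dashed generators via \eqref{localdecomp}) shows that $B_{\un{w}}$ only has standard summands $Q_u$ with $u \le w$. By the groupoid structure of $\StdBim_Q$, the image of $Q_{\eb'} \cong Q_v$ under any morphism into $B_{\un{w}}$ must land inside the isotypic component for $Q_v$; when $v \nleq w$ this component is trivial, so the localized restriction of $\LL_{\un{x},\eb}$ to $Q_{\eb'}$ vanishes.

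Finally, writing $\LLL_{w,\eb,\fb} = \oLL_{\un{y},\fb} \circ \LL_{\un{x},\eb}$ and using that composition respects the decomposition into standard summands after localization, vanishing on each $Q_v$ summand of $B_{\un{x}}$ is preserved. Hence $\LLL_{w,\eb,\fb} \in \XC_v$ for every $v \nleq w$, as desired. There is no real obstacle here beyond carefully invoking the previously established injectivity of localization on Hom spaces (Theorem \ref{homspacetheorem}, assumed in this subsection) and the fact that standard summands of a Bott--Samelson bimodule associated to a rex are bounded above by $w$ in Bruhat order.
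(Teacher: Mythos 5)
Your argument is correct and matches the paper's one-line proof ``This follows from Bruhat upper-triangularity'': you unpack exactly what that phrase encodes, namely that $\LLL_{w,\eb,\fb}$ factors through $B_{\un{w}}$, whose standard summands $Q_u$ all have $u \le w$, so that any $Q_v$ summand of the source with $v \nleq w$ is necessarily annihilated because $\Hom(Q_v, Q_u) = 0$ in $\StdBim_Q$ for $v \neq u$. (One small side note: the injectivity of localization you invoke at the end is not actually needed for this particular claim, since $\XC_v$ is \emph{defined} in terms of the map's behavior after localization; you only need to compute in $\Kar(\DC_Q)$, not pull information back.)
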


\begin{proof} This follows from Bruhat upper-triangularity. \end{proof}

For any $I \subset W$, let $\LLL_I$ denote the span of all $\LLL$ maps which factor through $w \in I$. Now let $I \subset W$ be an ideal with respect to the Bruhat order. In other words,
if $w \in I$ and $v \le w$ then $v \in I$.

\begin{claim} When $I$ is an ideal, $\LLL_I$ is a 2-sided ideal in $\DC$. It is equal to $\XC_{W \setminus I}$, the intersection of the ideals $\XC_w$ for each $w \notin I$. \end{claim}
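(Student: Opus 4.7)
The plan is to prove the identity $\LLL_I = \XC_{W \setminus I}$ directly; the 2-sided ideal property of $\LLL_I$ then follows immediately, since by the preceding claim each $\XC_v$ is a 2-sided ideal and intersections of 2-sided ideals are 2-sided.

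For the easy inclusion $\LLL_I \subseteq \XC_{W \setminus I}$, fix $w \in I$, fix a basis element $\LLL_{w, \fb, \eb} \in \Hom(B_{\un{x}}, B_{\un{y}})$, and fix $v \notin I$. A $Q_v$-summand of the source $B_{\un{x}, Q}$ corresponds to a subsequence $\eb_0$ of $\un{x}$ with endpoint $v$, and its image under $\LLL_{w,\fb,\eb}$ in a given $Q_{\fb_0}$-summand of the target is controlled by the coefficient $p^{\fb,\eb}_{\fb_0,\eb_0}$. By Bruhat upper-triangularity (the second bullet before Proposition \ref{LLLalocalbasis}), this coefficient vanishes unless $\eb_0 \le \eb$, which forces $v \le w$. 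Since $I$ is a Bruhat ideal, $v \le w \in I$ would force $v \in I$, contradicting $v \notin I$. Hence $\LLL_{w, \fb, \eb}$ acts as zero on every $Q_v$-summand and so lies in $\XC_v$.

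For the reverse inclusion, I will use that the $\LLL$-maps form a free $R$-basis of the Hom space (Theorem \ref{homspacetheorem}), yielding an $R$-module direct-sum decomposition
\[
\Hom_{\DC}(B_{\un{x}}, B_{\un{y}}) = \LLL_I \oplus \LLL^{W \setminus I},
\]
where $\LLL^{W\setminus I}$ denotes the span of basis elements $\LLL_{w,\fb,\eb}$ with $w \notin I$. It suffices to show $\LLL^{W \setminus I} \cap \XC_{W \setminus I} = 0$. Let $f = \sum c_{w,\fb,\eb} \LLL_{w,\fb,\eb}$ be a nonzero element of $\LLL^{W \setminus I}$. Pick $w$ maximal in the Bruhat order among those with some $c_{w,\fb,\eb} \ne 0$, then pick $(\fb_0, \eb_0)$ maximal in path dominance among pairs with $c_{w, \fb_0, \eb_0} \ne 0$. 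Now localize $f$ and restrict to the $(Q_{\eb_0}, Q_{\fb_0})$-component. The contribution of each basis term $\LLL_{w', \fb', \eb'}$ is $c_{w', \fb', \eb'} \cdot p^{\fb', \eb'}_{\fb_0, \eb_0}$, which by upper-triangularity vanishes unless $\eb_0 \le \eb'$ and $\fb_0 \le \fb'$; in particular $w' \ge w$. If $w' > w$ and $w' \in I$ then $c_{w',\cdot,\cdot} = 0$ because $f \in \LLL^{W\setminus I}$; if $w' > w$ and $w' \notin I$ then $c_{w',\cdot,\cdot} = 0$ by maximality of $w$. If $w' = w$, maximality of $(\fb_0, \eb_0)$ in path dominance leaves only $(\fb', \eb') = (\fb_0, \eb_0)$. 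So the sole surviving term is $c_{w, \fb_0, \eb_0} \cdot p^{\fb_0, \eb_0}_{\fb_0, \eb_0}$, which is nonzero since $c_{w, \fb_0, \eb_0} \ne 0$ and the diagonal $p^{\fb_0,\eb_0}_{\fb_0,\eb_0}$ is a nonzero product of roots (the third bullet before Proposition \ref{LLLalocalbasis}). But $f \in \XC_{W \setminus I} \subseteq \XC_w$ (since $w \notin I$) would force $f$ to vanish on every $Q_w$-summand of the source, in particular on $Q_{\eb_0}$, making the $(Q_{\eb_0}, Q_{\fb_0})$-component zero. This contradicts the nonvanishing just established.

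The principal bookkeeping obstacle is orienting the two upper-triangularities correctly. Bruhat upper-triangularity gives nonzero $p^{\fb',\eb'}_{\fb_0,\eb_0}$ only for $w' \ge w$, so $w$ must be chosen \emph{maximal} among terms appearing outside $I$ in order to rule out higher contributions; within the $w$-layer, path dominance constrains $(\fb', \eb') \ge (\fb_0, \eb_0)$, so $(\fb_0, \eb_0)$ must similarly be chosen maximal to isolate the diagonal. With those two choices in place, the nonvanishing of $p^{\fb_0,\eb_0}_{\fb_0,\eb_0}$ furnishes the only surviving contribution and delivers the contradiction.
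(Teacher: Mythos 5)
Your proof is correct and follows essentially the same route as the paper's: both arguments expand in the $\LLL$-basis, pick a maximal $w$ (and then maximal $(\fb,\eb)$ in path dominance) among the nonzero coefficients, invoke Bruhat/path-dominance upper-triangularity to show that this single term survives in the $(Q_{\eb_0}, Q_{\fb_0})$-component of the localization, and then use the nonvanishing of the diagonal to get a contradiction with membership in $\XC_{W\setminus I}$. Your reformulation via the splitting $\Hom = \LLL_I \oplus \LLL^{W\setminus I}$ and the statement $\LLL^{W\setminus I} \cap \XC_{W\setminus I} = 0$ makes the final deduction (that every $w$ appearing in $\phi$ lies in $I$) a little more transparent than the paper's somewhat terse ``since this is true for each maximal choice of $w$,'' but the underlying mechanism is identical.
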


\begin{proof} We need only show that $\LLL_I$ is equal to $\XC_{W \setminus I}$, for the latter is clearly a 2-sided ideal. The inclusion $\LLL_I \subset \XC_{W \setminus I}$ follows from
the previous claim.

Now suppose that $\phi \in \Hom({\un{w}},{\un{y}})$ is in $\XC_{W \setminus I}$, and write $\phi$ as a linear combination of $\LLL$ maps. Unless $\phi$ is zero, some $\LLL_{w,\fb,\eb}$ has a
non-zero coefficient. Choose $w$, $\eb$, and $\fb$ successively such that each is maximal in the Bruhat/path dominance order relative to the constraint that, with the previous choices,
there is a nonzero coefficient for $\LLL_{w,\fb,\eb}$ in $\phi$. Because of upper triangularity, this is the only coefficient which can possibly contribute to a map from $Q_\eb$ to
$Q_{\fb}$, and it does contribute in a non-zero way. Therefore $\phi$ induces a nonzero map on $Q_\eb$, implying that $w \in I$. Since this is true for each maximal choice of $w$, we see
that $\phi \in \LLL_I$. \end{proof}

It is clear that if ${\un{w}}$ and $\un{w}'$ are rexes for $w \in W$, then any two rex moves $B_{\un{w}} \to B_{\un{w}'}$ are equal modulo $\LLL_{< w}$. Both rex moves are in $\LLL_{\le w}$, and induce the identity map on the unique $Q_w$ summand after localization.

Now let ${\un{y}}={\un{x}}$. Recall that $\LLL_{w,\fb,\eb} = \oLL_{{\un{x}},\fb} \circ \LL_{{\un{x}},\eb}$, factoring through ${\un{w}}$ in the middle. Suppose we compose them in the opposite order, to get an
endomorphism of $B_{\un{w}}$. We have an $\LLL$ basis for endomorphisms of $B_{\un{w}}$ as well, and there is a \emph{unique} light leaves morphism which induces a nonzero map on
the unique standard summand $Q_w$. This map can be any rex move, and though we often assume for convenience that it is the identity, this assumption is not necessary. Let $\psi(\eb,\fb)$
denote the coefficient of this light leaves morphism inside the composition $\LL_{{\un{w}},\eb} \circ \oLL_{{\un{w}},\fb}$.

\begin{claim} Let ${\un{x}},{\un{y}},{\un{z}}$ be arbitrary. Fix $w \in W$, and choose subsequences $\eb$ of ${\un{x}}$, $\fb$ and $\gb$ of ${\un{y}}$, and $\hb$ of ${\un{z}}$ which all express $w$. Then the
composition $\LLL_{\hb,\gb} \LLL_{\fb,\eb} \co B_{\un{x}} \to B_{\un{z}}$ is equal to $\psi(\gb,\fb) \LLL_{\hb,\eb}$ modulo $\LLL_{< w}$. \end{claim}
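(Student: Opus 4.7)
The plan is to unfold the composition and isolate the endomorphism of $B_{\un{w}}$ sitting in the middle, then expand that endomorphism in the $\LLL$-basis of $\End(B_{\un{w}})$ and use the two-sided ideal $\LLL_{<w}$ to kill lower-order contributions. Explicitly, write
\[
\LLL_{\hb,\gb}\circ\LLL_{\fb,\eb}
\;=\;\oLL_{{\un{z}},\hb}\circ E\circ \LL_{{\un{x}},\eb},\qquad
E\;:=\;\LL_{{\un{y}},\gb}\circ\oLL_{{\un{y}},\fb}\in\End(B_{\un{w}}).
\]
By Proposition \ref{LLLalocalbasis}, $\End(B_{\un{w}})$ has a basis of double leaves morphisms, indexed by triples $(v,\eb',\fb')$ with $v\le w'$ for some $w'$ and $\eb',\fb'$ subsequences of ${\un{w}}$ expressing $v$. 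All terms with $v<w$ lie in $\LLL_{<w}$ by definition.

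The key observation is that since ${\un{w}}$ is a \emph{reduced} expression for $w$, the only subsequence of ${\un{w}}$ that expresses $w$ is the all-$1$'s sequence $\mathbf{1}$: any proper subsequence has Demazure product of length strictly less than $\ell(w)$, hence cannot equal $w$ in $W$. Therefore the only basis element of $\End(B_{\un{w}})$ at level $w$ is $\rho:=\oLL_{{\un{w}},\mathbf{1}}\circ\LL_{{\un{w}},\mathbf{1}}$, which by Remark \ref{LLof111} is a composition of rex moves (a rex-move endomorphism of $B_{\un{w}}$). So we may expand
\[
E \;=\; c\cdot\rho \;+\; T,\qquad T\in\LLL_{<w},
\]
and by the very definition of $\psi$ preceding the claim, the scalar $c$ is precisely $\psi(\gb,\fb)$.

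Now invoke the observation, stated in the paragraph just before the claim, that any two rex moves $B_{\un{w}}\to B_{\un{w}}$ agree modulo $\LLL_{<w}$; in particular $\rho\equiv \mathrm{id}_{B_{\un{w}}}\pmod{\LLL_{<w}}$. Hence $E\equiv \psi(\gb,\fb)\cdot\mathrm{id}_{B_{\un{w}}}\pmod{\LLL_{<w}}$. Since $\LLL_{<w}$ is a two-sided ideal (by the second claim of the Cellularity subsection), conjugating by $\oLL_{{\un{z}},\hb}$ on the left and $\LL_{{\un{x}},\eb}$ on the right preserves this congruence, giving
\[
\LLL_{\hb,\gb}\circ\LLL_{\fb,\eb}
\;\equiv\;\psi(\gb,\fb)\cdot\oLL_{{\un{z}},\hb}\circ\LL_{{\un{x}},\eb}
\;=\;\psi(\gb,\fb)\cdot\LLL_{\hb,\eb}\pmod{\LLL_{<w}},
\]
as desired. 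There is no real obstacle; the only delicate point is the uniqueness of the all-$1$'s subsequence, which collapses what could a priori be a whole matrix of $w$-level contributions to the single scalar $\psi(\gb,\fb)$, and after that the two-sided-ideal properties of $\LLL_{<w}$ do all the remaining work.
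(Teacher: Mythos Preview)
Your argument is correct and follows essentially the same route as the paper: factor out the middle endomorphism $E=\LL_{{\un{y}},\gb}\circ\oLL_{{\un{y}},\fb}$ of $B_{\un{w}}$, observe that modulo $\LLL_{<w}$ it equals $\psi(\gb,\fb)$ times the identity (the paper states this directly from the definition of $\psi$, while you additionally spell out why the all-$1$'s subsequence is the unique one at level $w$ and why the corresponding rex move is the identity modulo lower terms), and then use that $\LLL_{<w}$ is a two-sided ideal. One small correction: the basis statement you invoke is Theorem \ref{homspacetheorem} (assumed at the start of the Cellularity subsection), not Proposition \ref{LLLalocalbasis}, which only gives a basis after localization.
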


\begin{proof} $\LLL_{\hb,\gb} \LLL_{\fb,\eb}$ is a composition of four maps, the inner ones being $\LL_{{\un{y}},\gb} \oLL_{{\un{y}},\fb}$. This composition is equal to $\psi(\gb,\fb)$ times the
identity of $B_{{\un{w}}}$, plus maps in $\LLL_{< w}$. Therefore the overall composition is equal to $\psi(\gb,\fb) \oLL_{{\un{z}},\hb} \LL_{{\un{w}},\eb}$ modulo $\LLL_{< w}$, as desired. \end{proof}

\begin{claim} Let $a \co B_{\un{y}} \to B_{\un{z}}$ be an arbitrary morphism, and $\LLL_{w,\eb,\fb}$ be a light leaves map $B_{\un{x}} \to B_{\un{y}}$ factoring through the rex $B_{\un{w}}$. Then $a \LLL_{w,\eb,\fb}
= \sum_{\fb'} r_a(\fb,\fb') \LLL_{w,\eb,\fb'}$ modulo $\LLL_{< w}$. The sum runs over subexpressions $\fb'$ of ${\un{z}}$ expressing $w$. The coefficients $r_a(\fb,\fb')$ do not depend on $\eb$. \end{claim}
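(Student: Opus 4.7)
The plan is to expand $a \circ \oLL_{\un{y}, \fb}$ in the double leaves basis of $\Hom(B_{\un{w}}, B_{\un{z}})$, then post-compose with $\LL_{\un{x}, \eb}$ and analyze term by term. Since $\oLL_{\un{y}, \fb} \co B_{\un{w}} \to B_{\un{y}}$, the morphism $a \circ \oLL_{\un{y}, \fb}$ lies in $\Hom(B_{\un{w}}, B_{\un{z}})$, and Theorem \ref{homspacetheorem} gives an expansion
\[
a \circ \oLL_{\un{y}, \fb} \;=\; \sum_{v, \eb'', \fb'} c_{v, \eb'', \fb'} \, \LLL_{v, \eb'', \fb'},
\]
whose coefficients depend only on $a$ and $\fb$, but manifestly not on $\eb$. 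Because $\un{w}$ is a reduced expression for $w$, every subexpression $\eb''$ of $\un{w}$ expresses some $v \le w$; moreover, for $v = w$ the only such subexpression is the full one, $(1, \ldots, 1)$. This rigidity is the structural fact that eventually kills the $\eb$-dependence.

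Next I post-compose with $\LL_{\un{x}, \eb}$ and handle each summand. For $v < w$, the factor $\LLL_{v, \eb'', \fb'}$ factors through a Bott-Samelson bimodule $B_{\un{v}}$ for a rex $\un{v}$ of $v$, so $\LLL_{v, \eb'', \fb'} \circ \LL_{\un{x}, \eb}$ lies in $\LLL_{\le v} \subset \LLL_{<w}$ and may be discarded. For $v = w$ we have $\eb'' = (1, \ldots, 1)$, and by Remark \ref{LLof111} (choosing the identity rex move on top) I identify $\LL_{\un{x}, \eb}$ itself with the double leaves morphism $\LLL_{w, \eb, (1,\ldots,1)} \co B_{\un{x}} \to B_{\un{w}}$.

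The crucial step is then to invoke the preceding claim --- the composition formula for two double leaves maps at the same level $w$ --- with its middle sequence taken to be our intermediate rex $\un{w}$. Both of the subexpressions of $\un{w}$ that appear are forced to be $(1,\ldots,1)$, and I obtain
\[
\LLL_{w, (1,\ldots,1), \fb'} \circ \LL_{\un{x}, \eb} \;\equiv\; \psi\bigl((1,\ldots,1),(1,\ldots,1)\bigr) \cdot \LLL_{w, \eb, \fb'} \pmod{\LLL_{<w}}.
\]
The scalar $\psi$ depends only on the fixed choices made for the $\LL$ and $\oLL$ maps at the trivial subexpression of $\un{w}$; with identity rex moves chosen throughout it is simply $1$. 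Combining everything and setting $r_a(\fb, \fb') := \psi \cdot c_{w, (1,\ldots,1), \fb'}$ yields $a \circ \LLL_{w, \eb, \fb} \equiv \sum_{\fb'} r_a(\fb, \fb') \LLL_{w, \eb, \fb'} \pmod{\LLL_{<w}}$, with coefficients manifestly independent of $\eb$.

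The subtle step --- and the reason the preceding claim is precisely the right tool --- is legitimately identifying $\LL_{\un{x}, \eb}$ as a level-$w$ double leaves map so that the composition formula applies on the nose. That this identification is exact (not merely modulo $\LLL_{<w}$) relies on Remark \ref{LLof111}, which lets one take the top rex move to be the identity. Everything else is bookkeeping: $v > w$ never occurs, since $\un{w}$ is a rex for $w$; $v < w$ contributions are automatically in $\LLL_{<w}$; and the $\eb$-independence of the $r_a(\fb, \fb')$ is transparent from the expansion because the intermediate source-side index at level $w$ is rigidly $(1,\ldots,1)$, so all $\eb$-data is funnelled through the single factor $\LL_{\un{x}, \eb}$.
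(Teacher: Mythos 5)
Your proof is correct and follows essentially the same route as the paper's: expand $a \circ \oLL_{\un{y},\fb}$ in the double leaves basis of $\Hom(B_{\un{w}}, B_{\un{z}})$, observe that since $\un{w}$ is a rex the only level-$w$ subexpression of $\un{w}$ is $(1,\ldots,1)$ so only the maps $\oLL_{\un{z},\fb'}$ survive modulo $\LLL_{<w}$, and post-compose with $\LL_{\un{x},\eb}$. The paper skips your appeal to the composition formula (the preceding claim) because with $\LL_{\un{w},(1,\ldots,1)}$ chosen to be the identity one simply has $\oLL_{\un{z},\fb'} \circ \LL_{\un{x},\eb} = \LLL_{w,\eb,\fb'}$ on the nose, so the extra $\psi$ bookkeeping is unnecessary, but that is a cosmetic detour rather than a difference in substance.
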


\begin{proof} Write $a \LLL_{\eb,\fb} = a \oLL_{{\un{y}},\fb} \LL_{{\un{w}},\eb}$, and consider $a \oLL_{{\un{y}},\fb}$ as a map from $B_{\un{w}} \to B_{\un{z}}$. The space of maps $B_{\un{w}} \to B_{\un{z}}$ modulo
$\LLL_{< w}$ is spanned by $\{ \oLL_{{\un{z}},\fb'}\}$ over all $\fb'$, so that $a \oLL_{{\un{y}},\fb} = \sum_{\fb'} r_a(\fb,\fb') \oLL_{{\un{z}},\fb'}$ modulo lower terms. Composing with $\LL_{{\un{x}},\eb}$
once more, we get the desired result. \end{proof}

\begin{defn} Let $\iota \co \DC \to \DC^{\op}$ denote the antiinvolution which preserves objects and flips diagrams upside-down. \end{defn}

Note that this reverses vertical composition, but not horizontal composition ($\iota$ is monoidal and contravariant). Clearly $\iota(\LLL_{\fb,\eb}) = \LLL_{\eb,\fb}$.

For the definition and basic properties of cellular categories, see \cite{West}.

\begin{prop} The category $\DC$ is cellular, with cellular basis $\LLL$ (for any appropriate choice of $\LLL$ maps) and antiinvolution $\iota$. \label{Discellular} \end{prop}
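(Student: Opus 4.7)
The plan is to unpack Westbury's definition of a cellular category and observe that every required datum has been assembled in the preceding sequence of claims; what remains is to read off the axioms. The poset $\Lambda$ will be $(W, \le)$ with the Bruhat order. For each $w \in W$ and each object $B_{\un{x}}$, the ``index set'' $M(w,{\un{x}})$ will be the set of subexpressions $\eb$ of ${\un{x}}$ expressing $w$. The cellular basis element attached to $w$ and a pair $(\fb \in M(w,{\un{y}}),\, \eb \in M(w,{\un{x}}))$ will be $\LLL_{w,\fb,\eb} \in \Hom(B_{\un{x}},B_{\un{y}})$, and the antiinvolution will be the diagrammatic flip $\iota$. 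There are four things to check: (i) $\LLL$ is an $R$-basis of each morphism space; (ii) $\iota$ is an antiinvolution of categories satisfying $\iota(\LLL_{w,\fb,\eb}) = \LLL_{w,\eb,\fb}$; (iii) the $R$-span $\LLL_{< w}$ of basis elements labeled by strictly smaller $v \in W$ is a two-sided ideal; (iv) the cellular multiplication axiom modulo $\LLL_{< w}$.

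First I would point to Theorem~\ref{homspacetheorem}, which provides (i), noting that the construction of $\LLL_{w,\fb,\eb}$ already respects the decomposition of Hom spaces into summands indexed by $w \in W$ and by pairs of subexpressions of ${\un{x}},{\un{y}}$ expressing $w$. For (ii), the antiinvolution $\iota$ is monoidal, contravariant, and preserves objects by definition; that $\iota(\LL_{{\un{x}},\eb}) = \oLL_{{\un{x}},\eb}$ (and conversely) is built into the definition of $\oLL$, so $\iota(\LLL_{w,\fb,\eb}) = \iota(\oLL_{{\un{y}},\fb} \circ \LL_{{\un{x}},\eb}) = \oLL_{{\un{x}},\eb} \circ \LL_{{\un{y}},\fb} = \LLL_{w,\eb,\fb}$. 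For (iii), observe that for the ideal $I_w = \{v \in W : v \not\ge w\}$ (the complement of the Bruhat up-set of $w$) we have $\LLL_{< w} = \LLL_{I_w}$, which is a two-sided ideal by the third Claim of this subsection (with $I = I_w$).

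The core point is axiom (iv). Given $a \co B_{\un{y}} \to B_{\un{z}}$ and $\LLL_{w,\fb,\eb} \co B_{\un{x}} \to B_{\un{y}}$, the Claim established immediately before the proposition gives
\[
a \circ \LLL_{w,\fb,\eb} \equiv \sum_{\fb'} r_a(\fb,\fb')\, \LLL_{w,\fb',\eb} \pmod{\LLL_{< w}},
\]
with coefficients $r_a(\fb,\fb') \in R$ depending only on $a$, $\fb$, $\fb'$ (in particular not on $\eb$ or on ${\un{x}}$). This is precisely Westbury's left-multiplication axiom for a cellular basis. The right-multiplication axiom follows formally by applying $\iota$: for $b \co B_{\un{x}'} \to B_{\un{x}}$ one writes $\LLL_{w,\fb,\eb} \circ b = \iota(\iota(b) \circ \LLL_{w,\eb,\fb})$, applies the previous identity to $\iota(b)$ and $\LLL_{w,\eb,\fb}$, and uses $\iota(\LLL_{w,\eb',\fb}) = \LLL_{w,\fb,\eb'}$ together with the fact that $\iota$ preserves $\LLL_{<w}$ (as this ideal is spanned by basis elements and $\iota$ permutes them).

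The main (and really the only) nontrivial content is the Claim that $a \circ \oLL_{{\un{y}},\fb}$ expands in the $\{\oLL_{{\un{z}},\fb'}\}$ modulo lower terms with coefficients independent of $\eb$. This has already been established using that $\{\oLL_{{\un{z}},\fb'}\}$ spans $\Hom(B_{\un{w}},B_{\un{z}})$ modulo $\LLL_{< w}$, a consequence of the upper-triangularity of $\LLL$ after localization together with Theorem~\ref{homspacetheorem}. With this in hand, the verification of cellularity is essentially bookkeeping; the proposition follows.
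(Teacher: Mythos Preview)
Your proposal is correct and follows essentially the same approach as the paper, which is extremely terse (``This proposition follows from the previous claim''); you have simply spelled out the verification of the axioms in detail. One small slip: you write $\LLL_{<w} = \LLL_{I_w}$ with $I_w = \{v : v \not\ge w\}$, but $\LLL_{<w}$ denotes the span of double leaves factoring through $v$ with $v < w$, and $\{v : v < w\} \subsetneq \{v : v \not\ge w\}$ whenever there exist elements incomparable to $w$. This does not harm the argument, since $\{v : v < w\}$ is itself a Bruhat ideal and the third Claim applies to it directly; moreover, in Westbury's axiomatics the two-sidedness of $\LLL_{<w}$ is a consequence of the multiplication axiom and the antiinvolution rather than an independent hypothesis, so your step (iii) is optional in any case.
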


This proposition follows from the previous claim. The cells correspond to $w \in W$ with the Bruhat order, and the set $M({\un{x}},w)$ consists of all subsequences of ${\un{x}}$ expressing $w$. We do not know of any interesting interactions between the cellular structure and the monoidal structure.

\subsection{The diagrammatic character}

 Recall that for any ideal $I \subset W$ in the Bruhat order we have a (cellular) ideal $\LLL_I$ in $\DC$. For any coideal $J$ (i.e. $W \setminus J$ is an ideal) we consider the quotient category
\[
\DC^J \define \DC/\LLL_{W \setminus J}.
\]
A basis for morphisms in $\DC^J$ is given by double leaves maps which do not factor through $W \setminus J$. If $w$ is a minimal element in $J$ then the images of $B_{\un{w}}$ in $\DC^J$ for any rex $\un{w}$ are canonically isomorphic. (The difference of any two morphisms $B_{\un{w}} \rightrightarrows B_{\un{w}'}$ corresponding to rex moves $\un{w} \rightrightarrows \un{w'}$ lies in $\LLL_{W \setminus J}$.) Similarly for any rex $\un{w}$ we have
\begin{equation} \label{endquot}
\End_{\DC^J}(B_{\un{w}}) = R.
\end{equation}

Given any $w \in W$ we set
\[
\DC^{\ge w} \define \DC^{ \{ y \;|\; y \ge w \} }
\]
and the above remarks show that for any rex $\un{w}$ the object $B_{\un{w}}$ does not depend on the choice of reduced expression up to canonical isomorphism. Given any expression $\un{x}$, it follows from Theorem \ref{homspacetheorem} that $\Hom_{\DC^{\ge w}}(B_{\un{x}}, B_{\un{w}})$ is a free left $R$-module with basis the set of light leaves maps $\{ LL_{\un{x}, \eb} \}$ where $\eb$ is a subexpression of $\un{x}$ expressing $w$. It follows by Lemma \ref{lem:defects} that we have the identity
\begin{equation} \label{bsch}
\un{H}_{\un{x}} = \sum  \grk \Hom_{\DC^{\ge w}}(B_{\un{x}}, B_{\un{w}}) H_w
\end{equation}
where $\grk$ denotes the graded rank of the free $R$-module $\Hom_{\DC^{\ge w}}(B_{\un{x}}, B_{\un{w}})$.

We would like to extend this ``character'' map to the Karoubi envelope $\Kar(\DC)$. The problem is that for an arbitrary $\Bbbk$ and $B \in \Kar(\DC)$, the $R$-module $\Hom_{\DC^{\ge w}}(B_{\un{x}}, B_{\un{w}})$ is projective (as the summand of a free $R$-module), but is not necessarily free. Thus it is not a priori clear what $\grk$ should mean.

For this reason we assume that $\Bbbk$ is a local ring. By Nakayama's lemma and its graded version, direct summands of free graded $R$-modules are graded free.

\begin{defn}
We define the \emph{diagrammatic character} by
\begin{align*}
\ch : [\Kar(\DC)] & \to \HB \\
B & \mapsto \sum_{w \in W} \grk \Hom_{\Kar(\DC^{\ge w})}(B, B_{\un{w}})H_w.
\end{align*} 
\end{defn}

The diagrammatic character is obviously a homomorphism of abelian groups, and it is easy to check that
$\ch(v[B]) = \ch(B(1)) = v\ch(B)$. 
Hence $\ch$ is a homomorphism of $\Zvv$-modules. It is immediate from \eqref{bsch} that
\begin{equation} \label{eq:chhom}
\ch(B_{\un{x}}B_{\un{y}}) = H_{\un{x}\un{y}} = H_{\un{x}}H_{\un{y}} = \ch(B_{\un{x}})\ch(B_{\un{y}}).
\end{equation}
and so $\ch$ is a homomorphism on the $\Zvv$-submodule of $[\DC]$ generated by the isomorphism classes of Bott-Samelson bimodules.  In the next section will see that $\ch$ is an isomorphism of algebras if $\Bbbk$ is a complete local ring.

\subsection{Soergel's theorem}
\label{subsec-SThmDiag}

We present here another proof of Soergel's Categorification Theorem (Theorems \ref{SoergelThm1} and \ref{SoergelThm2}). This proof applies directly to $\DC$, but implies the corresponding theorem for Soergel bimodules via Theorem \ref{thm-equiv}. The proof is quite formal, relying only on general facts about Krull-Schmidt categories and Theorem \ref{homspacetheorem} showing that double leaves give a basis for Hom spaces between Soergel bimodules. We find our proof conceptually simpler than Soergel's original proof, although the complexity of the diagrammatic arguments in the final chapter does temper this somewhat.

Recall that an object $M$ in an additive category is indecomposable if $M \ne 0$ and $M \cong M' \oplus M''$ implies that one of $M'$ or $M''$ is zero. Recall that a Krull-Schmidt category is an additive category in which every object is isomorphic to a finite direct sum of indecomposable objects, and an object is indecomposable if and only if its endomorphism ring is local. Now assume that $\Bbbk$ is a complete local ring. It is known that any $\Bbbk$-linear idempotent complete additive category such that all Hom spaces are finitely generated is Krull-Schmidt. (This follows from the fact that any finitely generated $\Bbbk$ algebra is either local or admits a non-trivial idempotent.) Theorem \ref{homspacetheorem} shows that this condition is met for degree zero morphisms in $\DC$. We conclude:

\begin{lemma}
  If $\Bbbk$ is a complete local ring then the category $\Kar(\DC)$ is Krull-Schmidt.
\end{lemma}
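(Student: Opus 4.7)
The plan is to verify the hypotheses of the general criterion the authors have just invoked: a $\Bbbk$-linear idempotent complete additive category with finitely generated degree zero Hom spaces over a complete local ring is Krull-Schmidt. By construction (as a Karoubi envelope), $\Kar(\DC)$ is idempotent complete, so the only substantive step is the finite generation of degree zero Hom spaces. The engine for this is Theorem \ref{homspacetheorem}, which has already been established.

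First I would reduce to finite generation of $\Hom_0$ between Bott-Samelson objects. Any pair of objects $M, N \in \Kar(\DC)$ sits as a direct summand of $(B_{\un{x}}(n), B_{\un{y}}(m))$ for some expressions and shifts, so $\Hom_0(M,N)$ is a $\Bbbk$-direct summand of $\Hom_0(B_{\un{x}}(n), B_{\un{y}}(m)) = \Hom^{n-m}_{\DC}(B_{\un{x}}, B_{\un{y}})$. Thus it suffices to bound a single graded piece of $\Hom_{\DC}(B_{\un{x}}, B_{\un{y}})$.

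Next I would invoke Theorem \ref{homspacetheorem}: the space $\Hom_{\DC}(B_{\un{x}}, B_{\un{y}})$ is free as a graded left $R$-module on the finite set $\LLL_{\un{x},\un{y}}$. Consequently its degree $d$ piece decomposes as a finite sum $\bigoplus_{\LLL} R_{d - \deg \LLL}$. Since the realization has $\hg$ of finite rank over $\Bbbk$, each graded component $R_k = S^{k/2}(\hg^*)$ is a finitely generated (free) $\Bbbk$-module, so every graded piece of $\Hom_{\DC}(B_{\un{x}}, B_{\un{y}})$ is finitely generated as a $\Bbbk$-module. The same therefore holds for $\Hom_0(M,N)$ in $\Kar(\DC)$.

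With finite generation in hand, the Krull-Schmidt conclusion is standard: for any object $M$, the ring $A = \End_{\Kar(\DC)}(M)$ is a finitely generated $\Bbbk$-algebra, and over a complete local ring such an algebra is semi-perfect (idempotents lift modulo the Jacobson radical, and $A/\rad A$ is a finite product of matrix algebras over the residue field). Semi-perfectness of all endomorphism rings, combined with idempotent completeness, gives a finite decomposition of $\id_M$ into primitive orthogonal idempotents, hence a decomposition of $M$ into indecomposables whose endomorphism rings are local; uniqueness of the decomposition follows in the usual way from Fitting's lemma over a local ring. The main obstacle has already been overcome in Theorem \ref{homspacetheorem}; what remains here is purely the bookkeeping translation from the $R$-module structure to the $\Bbbk$-module structure and the invocation of the semi-perfect machinery.
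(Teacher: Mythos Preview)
Your proposal is correct and follows essentially the same approach as the paper: the paper's argument is the short paragraph immediately preceding the lemma, which invokes the general criterion and cites Theorem \ref{homspacetheorem} for the finite generation of degree zero Hom spaces. You have simply filled in the details the paper leaves implicit, namely the passage from $R$-freeness on a finite double leaves basis to $\Bbbk$-finite generation of each graded piece (via the finite rank of each $R_k$), and the standard semi-perfectness machinery that the paper compresses into the parenthetical remark about finitely generated $\Bbbk$-algebras being either local or admitting a non-trivial idempotent.
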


The following is a diagrammatic variant of Soergel's theorem, classifying the indecomposable Soergel bimodules:

\begin{thm} \label{SoergelReproof} 
Assume that $\Bbbk$ is a complete local ring. Then for all $w \in W$ there exists a unique summand $B_w$ of $B_{\un{w}}$ which is not isomorphic to the shift of a summand of $B_{\un{v}}$ for any rex $\un{v}$ for $v < w$. The object $B_w$ does not depend on the reduced expression $\un{w}$ up to isomorphism. Moreover any indecomposable object in $\Kar(\DC)$ is isomorphic to a shift of $B_w$ for some $w \in W$. Hence one has a bijection:
\begin{align*}
W & \simto
\left \{ \begin{array}{c}
\text{indecomposable objects in $\Kar(\DC)$} \\
\text{up to shifts and isomorphism}
\end{array} \right \} \\
w & \mapsto B_w
\end{align*}
\end{thm}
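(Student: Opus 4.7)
The strategy is to leverage Krull--Schmidt in $\Kar(\DC)$ together with the key fact, coming from \eqref{endquot}, that $\End_{\DC^{\ge w}}(B_{\un{w}}) = R$ for any rex $\un{w}$ for $w$; in degree zero this is $R_0 = \Bbbk$, a local ring. Let $\pi_w \colon \End_0(B_{\un{w}}) \twoheadrightarrow \Bbbk$ denote the induced surjection and write $B_{\un{w}} = \bigoplus_i B_i$ as a Krull--Schmidt decomposition with primitive orthogonal idempotents $e_i \in \End_0(B_{\un{w}})$. The images $\pi_w(e_i)$ are orthogonal idempotents of $\Bbbk$ summing to $1$, so since $\Bbbk$ has only the trivial idempotents $0$ and $1$, exactly one $e_{i_0}$ satisfies $\pi_w(e_{i_0}) = 1$; I define $B_w := B_{i_0}$.

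To identify the remaining summands, note that each $e_i$ with $i \ne i_0$ lies in $\ker \pi_w$. By Theorem~\ref{homspacetheorem} this kernel is spanned, as a left $R$-module, by double leaves $\LLL_{v,\eb,\fb}$ with $v \not\ge w$; since $\eb, \fb$ are subexpressions of the rex $\un{w}$, the common endpoint $v$ satisfies $v \le w$, and hence $v < w$. Rewriting $e_i = e_i \cdot e_i$ exhibits $e_i$ as a composition factoring through a finite direct sum $\bigoplus_k B_{\un{v}_k}(n_k)$ with $v_k < w$, and Krull--Schmidt then identifies the indecomposable $B_i$ as a shifted summand of some $B_{\un{v}_k}$. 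Together with the existence of $B_w$, this establishes the uniqueness claim: $B_w$ is the only summand of $B_{\un{w}}$ that is not a shift of a summand of some $B_{\un{v}}$ for a rex $\un{v}$ of $v < w$.

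For independence of rex, given two rexes $\un{w}, \un{w}'$ for $w$ with distinguished summands $B_w, B'_w$ carved out by idempotents $e_w, e'_w$, I choose rex moves $\alpha\colon B_{\un{w}} \to B_{\un{w}'}$ and $\beta\colon B_{\un{w}'} \to B_{\un{w}}$. In $\DC^{\ge w}$ all other summands of $B_{\un{w}'}$ are killed, so $e'_w \equiv \mathrm{id}$ and $\beta\alpha \equiv \mathrm{id}$ there; hence $e_w\,\beta\,e'_w\,\alpha\,e_w \equiv e_w \pmod{\LLL_{<w}}$ in $\End_0(B_{\un{w}})$. The error lies in $\End_0(B_w) \cap \LLL_{<w}$, which is contained in the maximal ideal of the local ring $\End_0(B_w)$, so this composition is a unit in $\End_0(B_w)$. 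Therefore $e'_w\,\alpha\,e_w \colon B_w \to B'_w$ admits a one-sided inverse, and by symmetry one gets $B_w \cong B'_w$.

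The completeness claim---that every indecomposable $X \in \Kar(\DC)$ is a shift of some $B_w$---will be the main obstacle, since the previous paragraphs only decompose Bott--Samelsons attached to reduced expressions. The plan is strong induction on $\ell(\un{x})$ for an expression $\un{x}$ with $X$ a direct summand of $B_{\un{x}}(n)$. When $\un{x}$ contains an adjacent repeat $ss$, the splitting \eqref{BsBsDecomp} expresses $B_{\un{x}}$ as a sum of Bott--Samelsons of length $\ell(\un{x}) - 1$ and the inductive hypothesis applies; when $\un{x}$ is reduced, the second paragraph presents every summand as either $B_x$ itself or a shifted summand of some $B_{\un{v}}$ with $\ell(\un{v}) < \ell(\un{x})$, and induction again concludes. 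The delicate case is $\un{x}$ non-reduced with no adjacent repeats. Here the plan is, using Matsumoto's theorem, to perform a braid move on a subword $\rho = sts\cdots$ of length $m_{st}$ so as to create an adjacent repeat in the resulting expression $\un{x}'$. The rex move on $\rho$ satisfies $\beta\alpha = $ the idempotent projecting onto $B_{s,t}$ by \eqref{twocoloridemp}, so applying the second paragraph to the rex $\rho$ (for the longest element of $\langle s, t \rangle$) gives $B_\rho \cong B_{s,t} \oplus (\text{shifted summands of strictly shorter Bott--Samelsons})$. This splits $B_{\un{x}} = B_{\un{y}}B_\rho B_{\un{z}}$ into the summand $B_{\un{y}}B_{s,t}B_{\un{z}}$---which is simultaneously a summand of the braided Bott--Samelson $B_{\un{x}'}$, now containing an adjacent repeat and reducible by the first subcase---together with summands sitting inside Bott--Samelsons of strictly smaller length; both feed back into the inductive hypothesis.
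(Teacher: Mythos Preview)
Your construction of $B_w$, the identification of the other summands as living in shorter Bott--Samelsons, and the rex-independence argument are all correct and essentially match the paper's approach (the paper phrases the first step via localization rather than via the quotient $\DC^{\ge w}$, but these are equivalent characterizations).

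The completeness argument is where you diverge from the paper. The paper does \emph{not} reduce to reduced expressions. Instead, given an arbitrary indecomposable $B$ cut out by an idempotent $e \in \End_0(B_{\un{x}})$, it takes $w$ maximal such that $e \ne 0$ in $\DC^{\ge w}$, writes $e$ modulo $\LLL_{<w}$ in the double leaves basis as $\sum \gamma_{\eb,\fb}\, \oLL_{\un{x},\fb}\circ \LL_{\un{x},\eb}$, and then uses $e^3 = e$ together with a Jacobson radical argument to produce $\eb',\fb'$ with $\LL_{\un{x},\eb'}\circ e \circ \oLL_{\un{x},\fb'} \in \Bbbk^\times$. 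This yields a split inclusion $B_w \hookrightarrow B$ directly. The advantage is that it handles all expressions $\un{x}$ uniformly and never needs to manipulate the word $\un{x}$.

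Your inductive approach via braid moves is more elementary in spirit, but there is a genuine (if easily repaired) gap in case (c). You assert that a single braid move on a non-reduced, repeat-free expression produces an adjacent repeat. This is false: in type $A_3$ take $\un{x} = s_2 s_1 s_3 s_2 s_1 s_3 s_2$, which is non-reduced (it expresses an element of length $5$) and repeat-free; the only available braid moves are the two commuting swaps $s_1 s_3 \leftrightarrow s_3 s_1$, and neither produces a repeat. What \emph{is} true (by Tits' solution to the word problem) is that some \emph{finite sequence} of braid moves produces a repeat. Your argument survives if you add a secondary induction on the length of such a sequence: each braid move shows that every indecomposable summand of $B_{\un{x}}$ is either a summand of a strictly shorter Bott--Samelson (handled by the primary length induction) or a summand of $B_{\un{y}} B_{s,t} B_{\un{z}} \subset B_{\un{x}'}$, and $\un{x}'$ is one braid move closer to having a repeat. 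With this fix your argument goes through.
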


\begin{proof}
  Fix a rex $\un{w}$ for $w$ and write the identity on $B_{\un{w}}$ as a sum of mutually orthogonal indecomposable idempotents:
\[
\1_{B_{\un{w}}} = e_1 + \dots + e_n.
\]
After localizing, each $e_i$ acts as an idempotent on $B_{\un{w}} \cong Q_w \oplus \bigoplus_{\eb \ne (1, \dots, 1)} Q_{\eb}$. Because $Q_w$ is indecomposable there exists a unique idempotent (say $e_1$) such that the restriction of $e_i$ to $Q_w$ is non-zero. (If we write each $e_i$ in terms of double leaves then $e_1$ is characterised as the unique idempotent with a non-zero coefficient of $\LLL_{\eb, w, \fb}$ where $\eb = \fb = (1, \dots, 1)$.) We define $B_w$ to be the image of $e$ in $\Kar(\DC)$. Hence for all $\un{w}$ we have constructed an indecomposable object $B_w$ in $\Kar(\DC)$.

It remains to show that any indecomposable object in $\Kar(\DC)$ is isomorphic to a shift of one of the objects $B_w$. So let $B$ be an arbitrary indecomposable object in $\Kar(\BC)$. That is, $B$ consists of a Bott-Samelson bimodule $B_{\un{x}}$ and an indecomposable idempotent $e \in \End(B_{\un{x}})$. For any $w \in W$ the ring $\End_{\DC^{\ge w}}(B_{\un{x}})$ is a quotient of $\End(B_{\un{x}})$. Fix $w$ maximal in the Bruhat order such that the image of $e$ in $\End_{D^{\ge w}}(B_{\un{x}})$ is non-zero. Equivalently, if we write $e$ in terms of double leaves
\[
e = \sum \lambda_{\eb,y,\fb} \LLL_{\eb, y, \fb}
\]
then $w$ is maximal such that some coefficient $\lambda_{\eb,w,\fb}$ is $\ne 0$. Hence in $\DC^{\ge w}$ we can write
\[
e = \sum \g_{\eb, \fb} (\oLL_{{\un{x}},\fb} \circ \LL_{\un{x},\eb})
\]
for some (homogenous) coefficients $\g_{\eb, \fb} \in R$, where the sum is over subexpressions $\eb, \fb$ of $\un{x}$ expressing $w$. Now assume that for all such subexpressions $\eb$ and $\fb$ with $d(\eb) + d(\fb) = 0$ ($d$ denotes the defect) we have
\[
 \LL_{\un{x},\eb} \circ e \circ \oLL_{{\un{x}},\fb} \in \mathfrak{m} \subset \Bbbk = \End^0_{\DC^{\ge w}}(B_{\un{w}})
\]
where $\mathfrak{m}$ denotes the maximal ideal of $\Bbbk$. Then by expanding $e^3 = e$ we conclude that each $\g_{\eb, \fb} \in R$ belongs to the ideal generated by $R^+$ and $\mathfrak{m}$ for all $\eb, \fb$. However both
\[ (R^+\End(B_{\un{x}}))^0 \quad \text{and} \quad \mathfrak{m} \End^0(B_{\un{x}}) \]
are contained in the Jacobson radical of $\End(B_{\un{x}})^0$. We obtain a contradiction, because no non-zero idempotent can be contained in the Jacobson radical.

We conclude that there exists subsequences $\eb'$ and $\fb'$ of $\un{x}$ such that $d(\eb') + d(\fb') = 0$ and such that
\[
\LL_{\un{x},\eb} \circ e \circ \oLL_{{\un{x}},\fb'} \in \Bbbk^{\times} \subset \Bbbk = \End^0_{\DC^{\ge w}}(B_{\un{w}}).
\]

Now let us return to $\Kar(\DC)$. Consider the composition
\[
B_{\un{w}} \stackrel{\oLL_{{\un{x}},\fb'}}{\longto} B \stackrel{\LL_{\un{x},\eb}}{\longto} B_{\un{w}}
\]
and recall the summand $B_w \subset B_{\un{w}}$ constructed earlier in the proof. These maps induce maps
\[
B_w \stackrel{i}{\to} B \stackrel{p}{\to} B_w
\]
such that the image of $p \circ i$ is invertible in $\End_{\DC^{\ge w}}(B_w)$. We conclude that $p \circ i$ does not belong to the maximal ideal of $\End(B_w)$ and hence is invertible. It follows that a shift of $B_w$ is isomorphic to a summand of $B$. However $B$ was assumed indecomposable, and hence $B \cong B_w(m)$ for some $m \in \ZM$. \end{proof}

\begin{cor}Assume that $\Bbbk$ is a complete local ring. The diagrammatic character
\[ \ch : [\Kar(\DC)] \to \HB \]
is an isomorphism of $\Zvv$-algebras.
\end{cor}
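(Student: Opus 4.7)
The plan is to reduce everything to the classification of indecomposables (Theorem \ref{SoergelReproof}) and then check that the matrix of $\ch$ in two natural bases is upper unitriangular over $\Zvv$.

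First I would establish multiplicativity. We already have $\ch([B_{\un x}][B_{\un y}]) = \ch([B_{\un x}]) \ch([B_{\un y}])$ from \eqref{eq:chhom}, and $\ch$ is $\Zvv$-linear by construction. Since the product on $[\Kar(\DC)]$ given by tensor is $\Zvv$-bilinear, to promote multiplicativity from Bott-Samelson classes to arbitrary objects it suffices to know that the classes $\{[B_{\un w}]\}_{w \in W}$ (picking one rex per element) form a $\Zvv$-module generating set. This follows from Theorem \ref{SoergelReproof}: that theorem gives the $\Zvv$-basis $\{[B_w]\}_{w \in W}$, and its proof produces a unitriangular expansion
\[ [B_{\un w}] = [B_w] + \sum_{v < w} n^v_w [B_v], \qquad n^v_w \in \Nvv, \]
which can be inverted term by term along the Bruhat order to express each $[B_w]$ as a $\Zvv$-combination of Bott-Samelson classes.

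For invertibility, I would compute the matrix of $\ch$ in the bases $\{[B_w]\}$ of $[\Kar(\DC)]$ and $\{\un H_w\}$ of $\HB$. Equation \eqref{bsch} together with the definition of $\ch$ gives $\ch([B_{\un w}]) = \un H_{\un w}$ for any rex $\un w$. On the Hecke side, the Deodhar formula (Lemma \ref{lem:defects}) combined with the upper-unitriangularity of the KL basis in the standard basis yields the unitriangular expansion
\[ \un H_{\un w} = \un H_w + \sum_{v < w} m^v_w \un H_v, \qquad m^v_w \in \Zvv. \]
Putting the two unitriangular formulas together and inducting on the Bruhat order produces $\ch([B_w]) = \un H_w + \sum_{v < w} \beta^v_w \un H_v$ for some $\beta^v_w \in \Zvv$. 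Hence the matrix of $\ch$ in these bases is upper unitriangular and therefore invertible in $GL_{|W|}(\Zvv)$, which establishes the corollary.

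The hard work is entirely absorbed in Theorem \ref{SoergelReproof} (and, upstream, in Theorem \ref{homspacetheorem} together with Krull--Schmidt via the completeness of $\Bbbk$); the remaining steps are formal. The only point that requires a moment's care is the assertion that the lower terms in the decomposition of $[B_{\un w}]$ are genuinely indexed by elements strictly smaller than $w$ in the Bruhat order (and not merely different from $w$). This is built into the construction of $B_w$ in Theorem \ref{SoergelReproof}, since the idempotent $e_1$ was singled out as the unique one whose restriction to the top standard summand $Q_w$ is non-zero; every other summand of $B_{\un w}$ is shown in that proof to appear already as a summand of some $B_{\un v}$ with $v < w$.
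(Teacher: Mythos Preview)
Your proof is correct and follows essentially the same architecture as the paper's: deduce multiplicativity from the fact that Bott--Samelson classes $\Zvv$-span $[\Kar(\DC)]$ (via Theorem~\ref{SoergelReproof}), then establish bijectivity by an upper-unitriangularity argument.

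The one noteworthy difference is in the triangularity step. The paper works directly in the \emph{standard} basis $\{H_y\}$ of $\HB$: from the definition of $\ch$ and the construction of $B_x$ one reads off immediately that $\ch(B_x) = H_x + \sum_{y<x} g_{y,x} H_y$, since $\Hom_{\DC^{\ge y}}(B_x, B_{\un y})$ is a summand of $\Hom_{\DC^{\ge y}}(B_{\un x}, B_{\un y})$, which vanishes unless $y \le x$ by the subword property. Your route instead computes $\ch([B_{\un w}]) = \un H_{\un w}$ and then combines two unitriangular changes of basis (Bott--Samelson to indecomposable on one side, Bott--Samelson product to Kazhdan--Lusztig on the other). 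This is slightly less direct but perfectly valid, and has the pleasant feature of making the matrix of $\ch$ explicit in the KL basis rather than the standard one. Your closing remark justifying that the lower summands of $B_{\un w}$ are indexed by $v<w$ is the one point that is not literally stated in Theorem~\ref{SoergelReproof}, but your argument (the other primitive idempotents lie in $\LLL_{<w}$ because only $e_1$ has nonzero coefficient of $\LLL_{(1,\dots,1),w,(1,\dots,1)}$) extracts it correctly from that proof.
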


\begin{proof}
It is immediate from Theorem \ref{SoergelReproof} that:
\begin{enumerate}
\item $[\Kar(\DC)]$ is spanned by the classes $[B_{\un{x}}]$ for all expressions $\un{x}$;
\item the classes $\{ [B_x] \; | \; x \in W \}$ give a $\Zvv$-basis for $[\Kar(\DC)]$.
\end{enumerate}
Combining (1) with \eqref{eq:chhom} we conclude that $\ch$ is a homomorphism. Using the definition of the diagrammatic character and the construction of the objects $B_x$ we have
\[
\ch(B_x) = \sum_{y \le x} g_{y,x} H_y
\]
for some $g_{y,x} \in \Zvv$ with $g_{x,x} = 1$. Hence the set $\{ \ch(B_x) \; | \; x \in W \}$ is a basis for $\HB$, being upper triangular in the standard basis. By (2), $\ch$ maps a basis of $[\Kar(\DC)]$ to a basis of $\HB$ and hence is an isomorphism.
\end{proof}

One can check directly that $\ch(B_s) = \un{H}_s$ for all $s \in S$. Hence:

\begin{cor} \label{cor:homto[B]}
Assume that $\Bbbk$ is a complete local ring. The map $\un{H}_s \mapsto [B_s]$ defines a homomorphism $\HB \to [\Kar(\DC)]$.\end{cor}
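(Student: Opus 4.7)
The plan is to invert. The preceding corollary establishes that $\ch : [\Kar(\DC)] \to \HB$ is an isomorphism of $\Zvv$-algebras, so its inverse $\ch^{-1}$ is automatically a homomorphism of $\Zvv$-algebras. Thus the corollary reduces to the single identity $\ch([B_s]) = \un{H}_s$ for every $s \in S$: granted this, $\ch^{-1}$ is a ring homomorphism $\HB \to [\Kar(\DC)]$ sending $\un{H}_s \mapsto [B_s]$, and any such homomorphism is uniquely determined by the images of the generators $\un{H}_s$.

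What remains is the character computation $\ch(B_s) = \un{H}_s$. By definition
\[
\ch(B_s) = \sum_{w \in W} \grk \Hom_{\Kar(\DC^{\ge w})}(B_s, B_{\un{w}}) \, H_w,
\]
so I would enumerate the non-zero contributions using the light leaves basis (Theorem \ref{homspacetheorem} applied to the quotients $\DC^{\ge w}$, as in equation \eqref{bsch}). Only subexpressions of $\un{x} = (s)$ expressing some $w$ contribute, and there are exactly two: the subexpression $\eb = (1)$ (type U1, defect $0$) expressing $w = s$, and $\eb = (0)$ (type U0, defect $1$) expressing $w = e$. The former yields a free $R$-module of graded rank $1$ in $\Hom_{\DC^{\ge s}}(B_s, B_s)$, and the latter yields a free $R$-module of graded rank $v$ in $\Hom_{\Kar(\DC)}(B_s, \1)$. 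Summing gives
\[
\ch(B_s) = H_s + v H_e = H_s + v = \un{H}_s,
\]
as required.

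There is no serious obstacle here: the heavy lifting was done in the preceding corollary and in Theorem \ref{homspacetheorem}. The only thing to be careful about is the degree bookkeeping in the character formula, which is precisely the defect exponent appearing in Deodhar's formula (Lemma \ref{lem:defects}); in fact the identity $\ch(B_s) = \un{H}_s$ is a special case of equation \eqref{bsch} combined with the observation that the ``top'' light leaf at $w = s$ contributes $1$ and the ``bottom'' light leaf at $w = e$ contributes $v^{d(\eb)} = v$.
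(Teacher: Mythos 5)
Your argument is correct and matches the paper's: the paper simply states ``One can check directly that $\ch(B_s) = \un{H}_s$'' and then deduces the corollary by inverting the isomorphism $\ch$ from the preceding corollary, which is exactly your plan. Your filling-in of the direct check via the two subexpressions of $(s)$ (U1 with defect $0$ at $w=s$, U0 with defect $1$ at $w=e$) and Deodhar's formula is the intended computation.
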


\subsection{The equivalence to bimodules}

In this section we assume that $\hg$ is a Soergel realization over a field $\Bbbk$. In this section we prove that our diagrammatic category is equivalent to Soergel bimodules.

By Corollary \ref{cor:homto[B]} the map $\HB \to [\DC] : \un{H}_s \mapsto [B_s]$ is a homomorphism. By taking the graded ranks of Hom spaces in $\DC$ we obtain a pairing on $[\DC]$ which induces a semi-linear pairing $( - , - )_{\DC}$ on $\HB$ by pull-back. It is obvious from the diagrammatic description of $\DC$ that $B_s$ is self-biadjoint in $\DC$, so $\un{H}_s$ is self-biadjoint in this pairing. It follows that $( -, - )_{\DC}$ is determined by the trace $\e_\DC : \HB \to \Zvv : h \mapsto ( h, 1 )_{\DC}$. Because the degree of any light leaves map is given by the defect of the corresponding subexpression, it follows from Proposition \ref{homspaceprop} and Corollary \ref{cor:defects} that $\e_{\DC}$ agrees with the standard trace on objects of the form $\un{H}_{\un{w}}$ for all expressions $\un{w}$. As these elements generate $\HB$ we conclude that $\e$ and $\e_{\DC}$, and hence $( - , - )_{\DC}$ and the standard form $( -, - )$, agree.

In section \ref{subsec-functor} we constructed a monoidal functor $\FC : \DC \to \BSBim$. It induces a monoidal functor on the idempotent completions $\FC : \Kar(\DC) \to \SBim$.

\begin{thm} Under the above assumptions $\FC : \Kar(\DC) \to \SBim$ is an equivalence of monoidal categories. \label{thm-equiv}
\end{thm}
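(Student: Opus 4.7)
My plan is to verify essential surjectivity and full faithfulness separately, the monoidal compatibility of $\FC$ being automatic from the construction in Section \ref{subsec-functor}. Essential surjectivity is immediate: every Soergel bimodule is by definition a direct summand (up to grading shift) of some Bott-Samelson bimodule $B_{\un{w}}$, and $\FC$ sends $B_{\un{w}} \in \DC$ to $B_{\un{w}} \in \BSBim$, so after passing to Karoubi envelopes every object of $\SBim$ lies in the essential image. All that remains is to show that for any expressions $\un{x}, \un{y}$ the map
\[
\FC_{\un{x},\un{y}} \co \Hom_{\DC}(B_{\un{x}},B_{\un{y}}) \to \Hom_{\SBim}(B_{\un{x}},B_{\un{y}})
\]
is an isomorphism of graded left $R$-modules.

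First I would compare graded ranks. By Theorem \ref{homspacetheorem} the source is free over $R$ with basis the double leaves $\LLL_{\un{x},\un{y}}$; combining the defect formula (Lemma \ref{lem:defects}) with the identification of the diagrammatic trace and the standard trace discussed around \eqref{eq:chhom} shows that its graded rank equals the standard pairing value $(\un{H}_{\un{x}},\un{H}_{\un{y}})$. On the bimodule side, Soergel's Hom formula (Theorem \ref{SoergelThm2}) asserts that the target is free with exactly the same graded rank. Hence $\FC_{\un{x},\un{y}}$ is a graded $R$-linear map between free graded left $R$-modules of equal finite graded rank.

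Next I would establish injectivity of $\FC_{\un{x},\un{y}}$ by passing to the generic fibre. Consider the commutative square
\[
\begin{array}{ccc}
\Hom_{\DC}(B_{\un{x}},B_{\un{y}}) & \longrightarrow & \Hom_{\SBim}(B_{\un{x}},B_{\un{y}}) \\
\downarrow & & \downarrow \\
\Hom_{\DC_Q}(B_{\un{x},Q},B_{\un{y},Q}) & \longrightarrow & \Hom_{\SBim_Q}(B_{\un{x},Q},B_{\un{y},Q})
\end{array}
\]
Both vertical arrows are injective: the left one by Proposition \ref{LLLalocalbasis}, since the double leaves stay linearly independent after localization, and the right one because its target is a free, hence torsion-free, $R$-module. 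The bottom arrow is bijective: by Theorem \ref{localdiagthm} the left-hand localization identifies with $\StdBim_Q$, and the splittings $B_{s,Q} \cong Q \oplus Q_s$ together with Theorem \ref{FStdEquiv} identify the right-hand localization with the same category; by the definition of $\FC$ on dots, trivalent vertices and $2m$-valent vertices, the functor $\FC_Q$ intertwines these two identifications. Diagram chasing gives injectivity of the top arrow.

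Finally I would upgrade ``injective plus equal graded rank'' to an isomorphism by a graded determinant argument. Choose homogeneous left $R$-bases of source and target so that $\FC_{\un{x},\un{y}}$ is represented by a square matrix $M$ with entries in $R$. Since the graded shifts of the bases agree, each monomial in the permutation expansion of $\det M$ is homogeneous of total degree $0$, so $\det M \in R_0 = \Bbbk$. Injectivity forces $\det M \ne 0$, and $\Bbbk$ being a field makes $\det M$ a unit, so $M$ is invertible and $\FC_{\un{x},\un{y}}$ is an isomorphism. The only genuinely deep input is Theorem \ref{homspacetheorem}, whose proof will occupy Section \ref{sec-LLLproof}; modulo that, the main delicate point is verifying that the localized functor $\FC_Q$ really does intertwine the two incarnations of $\StdBim_Q$, which reduces to unpacking the explicit formulas that define $\FC$ on generators.
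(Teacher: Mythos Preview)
Your proof is correct, but it takes a genuinely different route from the paper's. The paper argues \emph{fullness first}: it invokes Libedinsky's theorem that dots, trivalent vertices and $2m_{st}$-valent vertices generate all morphisms between Bott-Samelson bimodules, so $\FC$ is full; then, since both sides have the same finite $\Bbbk$-dimension in each graded component (by the trace comparison and Soergel's Hom formula), a surjection of vector spaces of equal dimension is an isomorphism. You instead argue \emph{faithfulness first}, via the localization square, and then upgrade injectivity plus equal graded rank to an isomorphism with a determinant argument.

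Both strategies rest on the same two deep inputs, Theorem \ref{homspacetheorem} and Soergel's Hom formula, so neither is more elementary in that respect. Your route has the advantage of not citing Libedinsky's generation result (or the light-leaves variant in the remark following the paper's proof): injectivity via localization is entirely internal to the machinery already developed in the paper (Theorem \ref{localdiagthm}). The paper's route is slightly cleaner at the final step: once fullness is known, the observation that a surjection of equidimensional finite $\Bbbk$-spaces is an isomorphism is more direct than your determinant computation. Incidentally, you could shortcut your own final step the same way: injective $\Bbbk$-linear maps between equidimensional finite $\Bbbk$-spaces are isomorphisms, no determinant needed. Two minor slips: in your square, the right vertical arrow is injective because its \emph{source} (not target) is free over $R$; and the left vertical arrow is injective most directly because $\Hom_{\DC}$ is free over $R$ by Theorem \ref{homspacetheorem}, though your Proposition \ref{LLLalocalbasis} justification also works.
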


\begin{proof} Because idempotent completion preserves equivalences, it is enough to show that $\FC : \DC \to \BSBim$ is an equivalence. Clearly this functor is essentially surjective, so it is enough to show that it is fully-faithful. It is a theorem due to Libedinsky \cite{Lib} that images under $\FC$ of the dots, trivalent vertices and $2m_{st}$-valent vertices generate all moprhisms between Bott-Samelson bimodules. It follows that $\FC$ is full. Now by the above discussion and the fact that our realization is a Soergel realization, the graded dimensions of the homomorphism spaces in $\DC$ and $\BSBim$ conincide. We conclude that $\FC$ induces an isomorphism on Hom spaces, being a surjection between graded vector spaces of the same (finite) dimension in each graded component. Hence $\FC$ is fully-faithful.\end{proof}

\begin{remark} One can avoid the appeal to Libedinsky's result as follows. By adjunction, it is enough to prove that the images of dots, trivalent vertices and $2m_{st}$-valent vertices generate $\Hom(B_{\un{w}}, R)$ for any expression $\un{w}$. Now a fixed choice of light leaves maps $\LL_{\un{w},e}$ is obviously mapped to a composition of such maps, and is mapped to a linearly independent subset of $\Hom(B_{\un{w}}, R)$ by the same localisation argument as in the proof of Corollary \ref{LLalocalbasis}, this time carrried out in the localized category of Soergel bimodules. By comparing the degrees of $\LL_{{\un{w}},\eb}$ and the graded rank of $\Hom(B_{\un{w}}, R)$ one concludes that $\LL_{\un{w},e}$ spans $\Hom(B_{\un{w}}, R)$ as a graded $R$-module. Hence the result. (This is basically an adaption of Libedinsky's argument.)
\end{remark}

\section{Double Leaves Span}
\label{sec-LLLproof}

This chapter contains a diagrammatic proof that light leaves form a spanning set for Hom spaces. It is somewhat involved, and a key role is played by the recursive combinatorial structure of light leaves maps.

\subsection{Negative-positive decompositions}
\label{subsec-negpos}

For the rest of this chapter we will be interested in embedded graphs, not isotopy classes thereof. We will abusively use the term \emph{Soergel graph} to refer to a graph embedded without
horizontal tangent lines, so that no two vertices share the same $y$-coordinate. This kind of graph can be written as the product of the generators in the introduction, tensored with
identity maps. Recall that those generators were the bottom boundary dot, the top boundary dot, the trivalent split, the trivalent merge, the $2m$-valent vertex viewed as a map with $m$ inputs and $m$
outputs, and polynomials. In all our arguments, polynomials will be treated separately from other parts of a graph.

Given such a graph, it has \emph{height} $k$ if it is a product of $k$ generators, ignoring the polynomials. At a given $y$-coordinate without a vertex, we say the diagram has \emph{width}
$k$ if the object given by the horizontal line at that coordinate is $B_{\un{x}}$ for a sequence of length $k$ (i.e. if the horizontal line passes through $k$ strands). The \emph{maxwidth} of a
diagram is the maximal width attained.

We classify the generators as being \emph{positive}, \emph{neutral}, or \emph{negative}, depending on whether they increase, preserve, or decrease the width when reading from bottom to
top. Thus a top boundary dot and a splitting trivalent are positive, a $2m$-valent vertex and a box are neutral, and a bottom boundary dot and a merging trivalent are negative. Note that
light leaves $\LL_{{\un{x}},\eb}$ are constructed purely out of non-positive maps. The \emph{negative height} of a map is the number of negative generators used, and similarly for the
\emph{positive height}.

\igc{1}{generatorsposneg}

The central generator is supposed to represent any $2m$-valent vertex.

A graph will be called \emph{negative-positive} if it consists of a composition of negative and neutral (\emph{non-positive}) maps followed by a composition of positive and neutral
(\emph{non-negative}) maps. In other words, the maximal width is attained on the outside; the map shrinks in width towards the middle and then expands again. We say the map is
\emph{strictly} negative-positive if the width shrinks non-trivially. Because polynomials are neutral, they can appear anywhere. Given a morphism in $\DC$, an expression for it as a
$\Bbbk$-linear combination of negative-positive graphs is called a \emph{negative-positive decomposition}.

Any map of the form $\LL_{{\un{x}},\eb}$ is constructed out of non-positive generators. Thus every double leaves map $\LLL$ is a negative-positive map, and the main theorem implies that every morphism in $\DC$ has a negative-positive decomposition.

We state some lemmas about negative-positive decompositions:

\begin{claim} Consider the Jones-Wenzl morphism as a degree $0$ map (i.e. the RHS of \eqref{twocoloridemp}). With the exception of the identity map, every other diagram is strictly
negative-positive. Moreover, every other diagram attains a width $\le m-2$. \end{claim}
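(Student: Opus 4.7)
The plan is to reduce the claim to a local analysis of the Temperley--Lieb-to-Soergel translation for each crossingless matching appearing in the expansion of $JW_{m-1}$. Recall that $JW_{m-1}$ is a $\Bbbk$-linear combination of $(m-1,m-1)$ crossingless matchings, with coefficient $1$ on the identity matching (all parallel strands) and nonzero coefficients only on matchings that contain at least one cap on the bottom and at least one cap on the top. The identity matching, once attached as a degree $0$ endomorphism of the length-$m$ color-alternating Bott--Samelson, visibly becomes the identity map, so there is nothing to prove for that term.

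The central step is a local analysis of caps. I would show that a cap connecting TL-positions $i$ and $i+1$ on the bottom forces the corresponding Soergel graph to contain, at the very bottom of the length-$m$ ambient endomorphism, a ``pitchfork'' consisting of a merging trivalent joining Soergel-strands $i$ and $i+1$ followed by a dot on the merged strand. This is because the region of the disc bounded below by that cap retracts to a tree whose only boundary points lie on these two adjacent bottom strands, and after attaching the outer trivalent vertices mediating between the $m-1$ TL strands and the $m$ Soergel strands, the only possible outcome is the merge-then-dot pattern. Symmetrically, a cap on top produces a dot-then-split pitchfork that increases the width by $2$. Both halves of the claim then follow: since every non-identity matching has at least one bottom cap and one top cap, we can read its Soergel graph bottom-to-top as a bottom pitchfork (two consecutive width-decreasing generators), followed by the rest of the non-positive moves coming from any other caps, reaching a minimum width $k \le m-2$, then the non-negative moves ending with the top pitchfork, returning to width $m$. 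This gives strict negative-positivity and the width bound simultaneously.

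The main subtlety will be verifying that the merge-then-dot pattern can genuinely be scheduled as two adjacent generators in a valid height decomposition, rather than being broken up by unrelated negative moves from nested caps further up. This is ensured by the tree structure inherited from the region retraction: the two vertices making up the bottom pitchfork lie in the connected subregion beneath the outermost bottom cap and hence are topologically disconnected from the remainder of the graph at that height. One can therefore always place them as the two lowest generators without introducing height conflicts. The bookkeeping for matchings with many nested caps and cups is then a routine induction on the number of caps, and I expect this combinatorial scheduling argument to be the only point requiring careful attention; everything else follows formally from the recipe translating crossingless matchings into Soergel graphs.
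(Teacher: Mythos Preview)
Your overall strategy is sound and more explicit than the paper's one-line proof, which reads simply: ``This is obvious. Perhaps it is most evident using the diagrammatic cellular structure on the Temperley-Lieb algebra.'' The paper is pointing to the fact that the cellular filtration on $TL_{m-1}$ is by number of through-strands; a non-identity matching has at most $m-3$ through-strands, and the Soergel minimum width is through-strands plus one. Equivalently (as noted in Section~\ref{subsec-JW}) the intermediate object is itself color-alternating of some length $k$, so $k \equiv m \pmod 2$ and $k < m$ forces $k \le m-2$ automatically.

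Your local description of the pitchfork, however, is wrong in a way that matters. First, adjacent Soergel strands in the length-$m$ alternating Bott--Samelson carry \emph{different} colors, so ``a merging trivalent joining Soergel-strands $i$ and $i{+}1$'' is not a legal vertex at all. Second, in the region-retraction recipe the leaves of each tree lie on the \emph{strip boundary segments} where the region meets the top or bottom of the strip --- not on the TL arcs bounding the region. The region below a cap at TL positions $i, i{+}1$ meets the strip boundary in exactly \emph{one} segment, so it retracts to a single boundary dot (one negative generator, not two). The second negative generator you want lives in a different tree: the region \emph{above} that cap now has both Soergel positions $i$ and $i{+}2$ among its bottom leaves (same color), so its tree contributes at least one merging trivalent. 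You therefore still get two negative generators attributable to the cap, but split across two disjoint trees, and your justification ``the region \dots retracts to a tree whose only boundary points lie on these two adjacent bottom strands'' is not the reason.

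Once this is corrected, your scheduling worry disappears as well: the trees coming from distinct regions are pairwise disjoint, and each tree can individually be drawn with all merges and end-dots below all splits and start-dots. Doing this for every tree at once puts the whole diagram in negative-positive form with minimum width equal to the number of regions touching both top and bottom, which is (through-strands)$\,+1 \le m-2$. No induction on the number of caps is needed.
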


\begin{proof} This is obvious. Perhaps it is most evident using the diagrammatic cellular structure on the Temperley-Lieb algebra. \end{proof}

\begin{claim} Any strictly non-positive map from $B_{\un{w}}$, for ${\un{w}}$ a reduced expression, is in the span of maps with a bottom boundary dot. \label{posrexhasdot} \end{claim}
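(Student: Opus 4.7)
My plan is to prove this by strong induction on $N_V(\phi)$, the number of $2m$-valent vertices appearing in $\phi$, after first establishing as a key observation that the lowest non-neutral vertex of $\phi$ must be a dot rather than a trivalent merge. Below the lowest non-neutral vertex only neutral generators (substring rex moves via $2m$-valent vertices and polynomial boxes) occur; these preserve both the underlying group element and the total length of the sequence labelling each horizontal slice. Since $\un{w}$ is reduced, every horizontal slice strictly below the lowest non-neutral vertex is labelled by a reduced expression for the same element $w$. Reduced expressions contain no two adjacent letters of the same color, whereas a trivalent merge would require precisely such an adjacent pair just beneath it; hence the lowest non-neutral vertex must be a dot.

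In the base case $N_V(\phi)=0$ there are no $2m$-valent vertices, so the unique edge attached to this dot runs directly to the bottom boundary and forms an isolated connected component, which is a bottom boundary dot by definition. For the inductive step, if the dot is already a bottom boundary dot we are done; otherwise the other end of its edge must lie at some vertex below, and by the previous paragraph this vertex must be neutral, hence a $2m$-valent vertex $V$. After sliding any intervening polynomial boxes aside using \eqref{dotforcegeneral}, we apply relation \eqref{dot2m} to the local configuration consisting of $V$ with a dot on one of its top strands, rewriting $\phi$ as a linear combination of diagrams in which $V$ is replaced by a Jones-Wenzl-style expansion built from dots and trivalent vertices only.

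Since the Jones-Wenzl morphism contains no $2m$-valent vertices (see Section~\ref{subsec-JW}), each resulting diagram has strictly fewer $2m$-valent vertices than $\phi$. Moreover, each resulting diagram remains strictly non-positive: the replaced region still decreases width from $m$ to $m-1$, so at least one strictly negative generator (either a new dot or merging trivalent from the Jones-Wenzl expansion) persists, and in the orientation prescribed by \eqref{dot2m} the Jones-Wenzl vertices all appear as non-positive generators. The inductive hypothesis applied to each resulting diagram shows it lies in the span of maps with a bottom boundary dot, and therefore so does $\phi$.

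The main technical obstacle is verifying that the diagrams produced by applying \eqref{dot2m} are genuinely strictly non-positive and structurally simpler in the claimed sense. This requires tracking the orientations of the dots and trivalent vertices arising from the region-retract construction of Section~\ref{subsec-JW}, together with the death-by-pitchfork property and the negative-positive decomposition of $JW_{m-1}$ recorded there, in order to guarantee both the well-foundedness of the induction and the non-positivity of each term.
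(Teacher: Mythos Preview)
Your proof follows the same core idea as the paper's: since neutral generators preserve reducedness, the lowest non-neutral generator in $\phi$ must be a dot rather than a merging trivalent, and relation \eqref{dot2m} is then used to push this dot downward through the $2m$-valent vertices. The difference lies in how you organize the induction, and your choice introduces a genuine gap.

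Your inductive hypothesis applies only to strictly non-positive graphs, so after applying \eqref{dot2m} you need every resulting diagram to remain strictly non-positive. You flag this as the ``main technical obstacle,'' and it is not merely technical: it fails. The Jones-Wenzl expansion in general contains positive generators. Compare the first claim in Section~\ref{subsec-negpos}, where the non-identity terms of the degree-zero Jones-Wenzl morphism are described as \emph{strictly negative-positive}, meaning they pass through a width strictly below $m$ and then climb back up; such terms necessarily contain splitting trivalents or top dots. The expansion in \eqref{dot2m} behaves similarly: for $m\ge 3$ some terms have a dot on an interior bottom strand, forcing a merge of the two newly adjacent same-colour neighbours and then a split (and possibly a top dot) to reach the correct $m-1$ alternating target. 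These terms are not non-positive, so your inductive hypothesis does not apply to them.

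The paper's proof sidesteps this entirely. It never asks the intermediate diagrams to be globally non-positive; instead it uses the sharper local fact that every term on the right-hand side of \eqref{dot2m} has a \emph{bottom boundary dot in the local picture}: one of the $m$ strands at the bottom of where the $2m$-valent vertex sat terminates directly in a dot. That new dot lies immediately above what remains of the original rex move, which now contains one fewer $2m$-valent vertex. One then repeats \eqref{dot2m} on this new dot and the next $2m$-valent vertex below it, ignoring whatever (possibly positive) debris has accumulated above. Equivalently, the correct induction is on the number of $2m$-valent vertices \emph{below the tracked dot}, not on the total $N_V(\phi)$. With this modification your argument goes through.
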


\begin{proof} No rex can be the source of a merging trivalent $ss \to s$. Thus the map must consist of some rex move with polynomials followed by a bottom dot. We know,
using \eqref{dot2m}, that a dot on top of a $2m$-valent vertex yields a sum of diagrams, each of which has a bottom boundary dot. Thus we can ``pull" the dot successively through all the
$2m$-valent vertices in the rex move (ignoring any polynomials) until we have a bottom boundary dot. \end{proof}

\begin{claim} If $\phi = fgh$ where $f$ is non-negative, $h$ is non-positive, and $g$ has a (strictly) negative-positive decomposition, then $\phi$ has a (strictly) negative-positive
decomposition. \end{claim}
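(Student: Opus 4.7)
The plan is to reduce the proof to the observation that the composition of non-positive maps is non-positive, and likewise for non-negative maps. Write $g = \sum_i c_i g_i$ for its (strictly) negative-positive decomposition, so that each $g_i = p_i n_i$ (reading bottom-to-top) with $n_i$ non-positive and $p_i$ non-negative. Then by bilinearity
\[
\phi = fgh = \sum_i c_i (f p_i)(n_i h),
\]
and it suffices to show that each summand $(f p_i)(n_i h)$ is a (strictly) negative-positive graph.

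The key step is simply to verify that $f p_i$ is non-negative and $n_i h$ is non-positive. This is immediate: a composition of generators each of which is positive or neutral is again built only out of positive and neutral generators (since concatenation of graphs concatenates their lists of generators in height order), and similarly for the non-positive case. Hence $(f p_i)(n_i h)$ is by definition a negative-positive graph, with the interface occurring at the original seam between $n_i$ and $p_i$ inside $g_i$.

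For the strict version, note that the middle width of the graph $(f p_i)(n_i h)$ equals the middle width of $g_i$, i.e.\ the width at the seam between $n_i$ and $p_i$. When $g_i$ is strictly negative-positive this width is strictly less than the maxwidth of $g_i$, which in turn equals both the top width of $g_i$ (= bottom width of $f$) and the bottom width of $g_i$ (= top width of $h$). Since $f$ is non-negative its maxwidth is at least its bottom width, and similarly for $h$, so the maxwidth of $(f p_i)(n_i h)$ is at least the maxwidth of $g_i$ and in particular is strictly greater than the middle width. Thus each summand is strictly negative-positive, proving the claim.

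There is no substantive obstacle here; the only thing to be careful about is the bookkeeping of ``top'' versus ``bottom'' in the composition convention, and the observation that neutral generators (including polynomial boxes and $2m$-valent vertices) are counted as both non-positive and non-negative, so that concatenation behaves well on the two classes.
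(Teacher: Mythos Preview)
Your proof is correct and is exactly the unpacking of what the paper records simply as ``This is obvious.'' One harmless imprecision: the maxwidth of a strictly negative-positive $g_i$ need not equal \emph{both} its top and bottom widths (only their maximum), but your subsequent inequality $\max(\text{top of }f,\text{bottom of }h)\ge \text{maxwidth}(g_i)$ does not rely on that and goes through as written.
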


\begin{proof} This is obvious. \end{proof}

\begin{lemma} \label{rexmovesmlt} Let ${\un{w}}$ and ${\un{w}}'$ be two rexes for the same element $w \in W$, and let $\beta$ and $\beta'$ be two rex moves from ${\un{w}}$ to ${\un{w}}'$. Then $\beta -
\beta'$ has a strictly negative-positive decomposition. In particular, $\beta -
\beta'$ is in the span of diagrams having both a bottom and a top boundary dot.\end{lemma}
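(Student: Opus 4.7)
The plan is to reduce the statement to the combinatorial description of the rex graph $\Gamma_w$ recalled in Section \ref{subsec-RexMoves}. The rex moves $\beta, \beta'$ correspond to two paths $p, p'$ from $\un{w}$ to $\un{w}'$ in $\Gamma_w$, and any two such paths can be connected by a sequence of elementary path modifications, each of which is: (a) insertion or deletion of a back-and-forth traversal along a single edge; (b) commutation across a disjoint square; or (c) application of a three-color loop coming from a finite rank 3 parabolic of type $A_3$, $B_3$, $H_3$, or $A_1 \times I_2(m)$. (This is the content of the topological fact quoted from \cite[Chapter 2, \S 5]{Ronan}.) By telescoping the sequence, I only need to handle the case when $p$ and $p'$ differ by a single elementary modification.

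In this situation I factor $\beta = fgh$ and $\beta' = fg'h$, where $h$ is the rex move corresponding to the common initial segment of $p$ and $p'$, $f$ to the common final segment, and $g, g'$ are the two local rex moves (which differ only in the strands participating in the elementary modification, tensored with identities). Since $f$ and $h$ are built entirely from $2m$-valent vertices, they are neutral, and by the claim immediately preceding the lemma in Section \ref{subsec-negpos} it suffices to prove that each local difference $g - g'$ admits a strictly negative-positive decomposition.

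The local analysis goes by cases. In case (b), the interchange law in a strict monoidal category gives $g = g'$, so the difference vanishes. In case (c) for types $A_3$, $B_3$, and $A_1 \times I_2(m)$, the Zamolodchikov relations \eqref{A3}, \eqref{B3}, \eqref{A1I2m} are exact equalities in $\DC$, so again $g = g'$. In case (a), $g - g'$ is, up to identities on the unchanged strands, the composition of two $2m$-valent vertices minus the identity on a length-$m$ alternating expression; by \eqref{twocoloridemp} this equals the sum of all non-identity Jones-Wenzl components, each of which is strictly negative-positive since its maximum width is at most $m-2$ (see the discussion of negative-positive decompositions of Jones-Wenzl morphisms in Section \ref{subsec-JW}). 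Finally, in case (c) for $H_3$, relation \eqref{H3} expresses $g - g'$ as a sum of \emph{lower terms}, i.e.\ morphisms factoring through $B_{\un{y}}$ for some expression $\un{y}$ strictly shorter than $\ell(w_0^{H_3}) = 15$; such morphisms are tautologically strictly negative-positive, regardless of what the unknown coefficients are. This is the main subtlety of the proof: although the $H_3$ Zamolodchikov relation is known only in the rough form of \eqref{H3}, the shape of the lower terms that appear is enough for this lemma.

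The ``in particular'' clause then follows from Claim \ref{posrexhasdot}. In any strictly negative-positive decomposition of $\beta - \beta'$, each summand factors as a non-positive bottom half followed by a non-negative top half, meeting at width strictly less than $\ell(\un{w})$. The bottom half is a strictly non-positive map from $B_{\un{w}}$, whose source is a rex, so by Claim \ref{posrexhasdot} it lies in the span of diagrams with a bottom boundary dot. The top half is the vertical flip of a strictly non-positive map from the rex $B_{\un{w}'}$, so the vertical flip of the same claim places a top boundary dot on it. These boundary dots survive the composition, giving the desired form.
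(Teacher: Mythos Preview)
Your proof is correct and follows essentially the same approach as the paper's: telescope along a chain of elementary modifications of paths in $\Gamma_w$, factor each step as $f(g-g')h$ with $f,h$ neutral, and handle each local case (disjoint squares and $A_3$, $B_3$, $A_1\times I_2(m)$ giving zero; the boring loop via \eqref{twocoloridemp}; $H_3$ via the lower terms in \eqref{H3}). You supply slightly more detail than the paper does, in particular the explicit treatment of disjoint squares via the interchange law and the derivation of the ``in particular'' clause from Claim~\ref{posrexhasdot} applied to both halves, which the paper leaves implicit.
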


\begin{proof} We already know (see section \ref{subsec-RexMoves}) that two rex moves can be connected by a series of transformations. These transformations correspond to the Zamolodchikov
relations \eqref{A3std}, \eqref{B3std}, \eqref{H3std}, \eqref{A1I2mstd}, and the relation \eqref{2misomstd}. The difference between two rex moves which differ by a single transformation in
$\DC$ is given by the analog of each of these relations: \eqref{A3}, \eqref{B3}, \eqref{H3}, \eqref{A1I2m}, and \eqref{twocoloridemp}. Applying the transformation for $A_3$ or $B_3$ or
$A_1 \times I_2(m)$ will yield no difference between the rex moves. Applying the transformation for $H_3$ or \eqref{twocoloridemp} will have a difference with a strictly negative-positive
decomposition. More precisely, this transformation is applied somewhere within the rex move, but using the previous claim, the overall difference will still have a strictly
negative-positive decomposition. We can write $\beta - \beta' = (\beta - \beta_1) + (\beta_1 - \beta_2) + \ldots +(\beta_k -\beta')$ where each successive difference corresponds to a
single transformation. The result follows. \end{proof}

For $w \in W$ and an arbitrary sequence ${\un{x}}$ we write $w \le {\un{x}}$ if there exists a subsequence $\eb$ of ${\un{x}}$ expressing $w$.

\begin{lemma} \label{rexesonlyplz} Let ${\un{x}}$ be a sequence. The identity of $B_{\un{x}}$ has a negative-positive decomposition where each term factors through some $B_{\un{w}}$ for some reduced
expression ${\un{w}}$ for $w \le {\un{x}}$. \end{lemma}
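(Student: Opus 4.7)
The plan is to induct on $\ell(\un{x})$. The base case $\ell(\un{x})=0$ is trivial: the empty diagram factors through the empty rex for $e$. For the inductive step, if $\un{x}$ is itself reduced then $\1_{B_{\un{x}}}$ is already an (entirely neutral) negative-positive decomposition factoring through the rex $\un{x}$ for $x \le \un{x}$. Otherwise, let $k$ be minimal with $\un{x}_{\le k}$ non-reduced; then $\un{x}_{<k}$ is a rex for some $w \in W$ and $s = \un{x}_k$ lies in the right descent set of $w$. Choose a rex $\un{y} = \un{y}' s$ for $w$ ending in $s$, and rex moves $\beta \co B_{\un{x}_{<k}} \to B_{\un{y}}$ and $\beta' \co B_{\un{y}} \to B_{\un{x}_{<k}}$.

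Now $\beta'\beta$ and $\1_{B_{\un{x}_{<k}}}$ are both rex moves from $\un{x}_{<k}$ to itself, so Lemma \ref{rexmovesmlt} gives $\1_{B_{\un{x}_{<k}}} = \beta'\beta + R$ with $R$ a sum of strictly negative-positive terms. Tracing through the proof of Lemma \ref{rexmovesmlt}, the strictly NP summands come from applications of \eqref{twocoloridemp}, \eqref{A3}, \eqref{B3}, \eqref{H3}, and \eqref{A1I2m}; the intermediate sequences $\un{z}_l$ appearing in the waists of these summands are subsequences of rexes for $w$, so by the Bruhat subword property every $v \le \un{z}_l$ satisfies $v \le w$, equivalently $v \le \un{x}_{<k}$. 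Tensoring with $\1_{B_{\un{x}_{\ge k}}}$ gives
\[
\1_{B_{\un{x}}} \;=\; \bigl(\beta'\beta \ot \1_{B_{\un{x}_{\ge k}}}\bigr) \;+\; \bigl(R \ot \1_{B_{\un{x}_{\ge k}}}\bigr),
\]
and I will handle each term separately.

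For the first term, $\beta'\beta \ot \1$ factors through $B_{\un{y}} \ot B_{\un{x}_{\ge k}} = B_{\un{y}'} \ot B_s \ot B_s \ot B_{\un{x}_{>k}}$. Insert the splitting \eqref{BsBsDecomp}, $\1_{B_s \ot B_s} = i_1 p_1 + i_2 p_2$, into the middle $B_s \ot B_s$: this writes $\beta'\beta \ot \1$ as two terms, each a composition (non-negative)$\circ$(non-positive) factoring through $B_{\un{y}\un{x}_{>k}}$, a sequence of length $n-1$. By the induction hypothesis, $\1_{B_{\un{y}\un{x}_{>k}}}$ decomposes as a sum of NP maps through rexes for elements $v \le \un{y}\un{x}_{>k}$; insert this decomposition between the non-positive and non-negative halves. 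Composing a non-positive, an NP, and a non-negative piece yields an NP map, and since $\un{y}$ is a rex for $w$, any $v \le \un{y}\un{x}_{>k}$ satisfies $v \le \un{x}_{<k}\un{x}_{\ge k} = \un{x}$, as required. The second term is handled in parallel: each summand of $R \ot \1$ factors through some $B_{\un{z}_l \un{x}_{\ge k}}$ of length $< n$, and the induction hypothesis combined with the above control on $\un{z}_l$ gives an NP decomposition through rexes for $v \le \un{z}_l\un{x}_{\ge k} \le \un{x}$.

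The main obstacle is the verification, in the middle paragraph, that the intermediate waists $\un{z}_l$ in the strictly NP decomposition of $R$ express only elements bounded above by $w$. This does not follow from the abstract statement of Lemma \ref{rexmovesmlt} but rather from examining the specific local relations used in its proof, namely that the Jones-Wenzl \emph{death by pitchfork} relation \eqref{JWpitch} and the lower terms of the Zamolodchikov-type relations only introduce subsequences of the $m$-alternating or rank-three sequences involved. Once this bookkeeping is in place, the two inductive applications close cleanly.
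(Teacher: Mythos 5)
Your proof is correct in outline but takes a genuinely different route from the paper's, and the difference matters in one subtle place.

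The paper's own proof uses a \emph{double} induction: an outer induction on $\ell(\un{x})$, and an inner induction on the number of braid moves needed to reach a repeated letter. It never invokes Lemma~\ref{rexmovesmlt}. Instead it applies \eqref{iiidemp2} directly when a repeated letter is present, and otherwise applies \eqref{twocoloridemp} to perform \emph{one} braid move: the doubled $2m$-valent vertex factors through $\un{x}'$ (one step closer to a repeated letter, handled by the inner induction), and the strictly NP remainder terms have waists that are manifestly subsequences of $\un{x}$ (alternating sequences of length $\le m-2$ replacing an $m$-alternating block), handled by the outer induction. Because only \eqref{twocoloridemp} is used, the Zamolodchikov relations --- in particular the not-fully-known $H_3$ relation --- never enter.

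Your version instead picks the minimal $k$ with $\un{x}_{\le k}$ non-reduced, performs a full rex move $\beta'\beta$ at once, and invokes Lemma~\ref{rexmovesmlt} to control $\1 - \beta'\beta$. This is where the soft spot is. First, a minor slip: \eqref{A3}, \eqref{B3}, and \eqref{A1I2m} are exact identities with no lower terms, so they contribute nothing to $R$; only \eqref{twocoloridemp} and \eqref{H3} produce strictly NP summands. More substantively, your claim that the waists $\un{z}_l$ are subsequences of rexes for $w$ is not something you can read off for \eqref{H3}, since the $H_3$ lower terms are explicitly unknown in the paper; there is no reason their waists should be subwords of the length-$15$ $H_3$ rex. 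What \emph{is} true, and suffices, is the weaker statement: any $\un{z}_l$ arising from a relation in a parabolic $W_J$ (with $J$ rank $\le 3$, finitary) uses only colors from $J$, so any $v\le\un{z}_l$ lies in $W_J$ and hence $v\le w_J\le w$, the last inequality because applicability of the relation forces the $W_J$ longest rex to appear as a subword of some rex for $w$. Alternatively, and more cleanly, you could simply take $\beta'$ to be the literal reversal of $\beta$, so that $\1-\beta'\beta$ telescopes entirely through applications of \eqref{twocoloridemp} and no Zamolodchikov relation is ever touched; this restores exactly the control the paper gets for free. With either repair your argument closes, but as written the $H_3$ step is asserted rather than established.
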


\begin{proof} Let us use induction on the length of ${\un{x}}$. Whenever ${\un{x}}$ is a reduced expression the statement is trivial. Suppose that ${\un{x}}$ contains a repeated index $ss$. One can apply
the relation \eqref{iiidemp2} give a negative-positive decomposition where each term factors through ${\un{x}}'$, the sequence identical to ${\un{x}}$ except with one copy of $s$ removed. Applying
the inductive hypothesis to ${\un{x}}'$ and nesting negative-positive decompositions, we have the result for ${\un{x}}$.

Suppose that ${\un{x}}$ is not a reduced expression. There is some finite sequence of braid relations $\ubr{sts\ldots}{m} = \ubr{tst\ldots}{m}$ which, when applied to ${\un{x}}$, yield a new
sequence which has a repeated index. Let us induct on the number of such relations which need to be applied before a repeated index is reached. For each relation applied, we can use
\eqref{twocoloridemp} to replace the identity $\1_{sts\ldots}$ inside $\1_{\un{x}}$ with the doubled $2m$-valent vertex (the LHS of \eqref{twocoloridemp}) plus a linear combination of strictly
negative-positive maps. The doubled $2m$-valent vertex yields a neutral map factoring through some sequence ${\un{x}}'$ which is closer to having a repeated index. Therefore, by induction,
$\1_{\un{x}}$ has a negative-positive decomposition as desired. \end{proof}

\subsection{Modulo lower terms}
\label{subsec-modulolower}

Let us fix an element $w \in W$ with a reduced expression ${\un{w}}$. Let $I_{\un{w}}$ denote the right ideal (bottom ideal, if you think diagrammatically) consisting of linear cominations of diagrams with arbitrary source $\un{x}$, fixed target $\un{w}$ and which are strictly positive on top. We have already shown that this is the same as the ideal generated by all the top boundary dots. The elements of this ideal are the so-called \emph{lower terms} (with respect to $\un{w}$).
In the absence of localization, taking the quotient by $I_{\un{w}}$ is the best way to talk about maps to $Q_w$. After all, $Q_w$ is the unique standard summand which is joint kernel of the top
boundary dots. When ${\un{w}} = \emptyset$, the ideal $I_{\un{w}}$ is zero.

Let ${\un{w}}'$ be the reduced expression obtained after placing a $2m$-valent vertex above ${\un{w}}$. The $2m$-valent vertex, as a map from $B_{\un{w}} \to B_{{\un{w}}'}$, clearly sends $I_{\un{w}} \to
I_{{\un{w}}'}$, because we can ``pull" top-dots through $2m$-valent vertices. The color-reversed $2m$-valent vertex gives a map $B_{{\un{w}}'} \to B_{\un{w}}$ sending $I_{{\un{w}}'} \to I_{\un{w}}$. The
composition of these two maps, minus the identity, has a strictly negative-positive decomposition, and thus consists of lower terms. Therefore the doubled $2m$-valent vertex acts as the
identity modulo $I_{\un{w}}$. We see that, for any ${\un{x}}$, the spaces $\Hom(B_{\un{x}},B_{\un{w}})/I_{\un{w}}$ and $\Hom(B_{\un{x}},B_{{\un{w}}'})/I_{{\un{w}}'}$ are isomorphic. In fact, for any two rexes, these spaces of
morphisms modulo lower terms are all \emph{canonically} isomorphic. After all, for two arbitrary rexes ${\un{w}}$ and ${\un{w}}'$, we can use any rex move to give an isomorphism 
$\Hom(B_{\un{x}},B_{\un{w}})/I_{\un{w}} \to \Hom(B_{\un{x}},B_{{\un{w}}'})/I_{{\un{w}}'}$, and any two rex moves are equal modulo lower terms by Lemma \ref{rexmovesmlt}.

We are interested in the span of the maps $\LL_{{\un{x}},\eb}$ with target ${\un{w}}$, modulo lower terms. By the previous paragraph, we do not care which rex ${\un{w}}$ we chose, or what
rex move is applied at the very end of the construction of $\LL_{{\un{x}},\eb}$. However, other choices of rex moves in the construction of $\LL_{{\un{x}},\eb}$ may still be important.

Our first step towards showing that $\LLL$ forms a basis for all Hom spaces is showing that $\LL$ forms a basis for maps to rexes, modulo lower terms.

\begin{prop} \label{LLabasismlt} Let ${\un{x}}$ be arbitrary and ${\un{w}}$ be a rex for $w \in W$. Choose a map $\LL_{{\un{x}},\eb}$ for each $\eb$ expressing $w$. These maps form a basis for
$\Hom(B_{\un{x}},B_{\un{w}})/I_{\un{w}}$, under the action of $R$ on the left. \end{prop}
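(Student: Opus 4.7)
The plan is to show spanning by induction on $\ell(\un{x})$. Linear independence of the $\LL_{\un{x},\eb}$ modulo $I_{\un{w}}$ is already given by Corollary \ref{LLalocalbasis}: after localization each $\LL_{\un{x},\eb}$ has a distinct nonzero coefficient when projected onto $\Hom(B_{\un{x},Q}, Q_w)$, and any map in $I_{\un{w}}$ (being a sum of diagrams with a top boundary dot) acts as zero on that summand. The base case $\ell(\un{x}) = 0$ is routine: $\un{w}$ must be empty, $I_\emptyset = 0$, and one checks $\End(\1) = R$ by reducing any closed diagram to a polynomial using the one- and two-color relations; the empty light leaf $\LL_{\emptyset,e}$ is the identity of $\1$ and freely generates $R$ under the left action.

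For the inductive step, write $\un{x} = \un{x}' s$ and let $\phi : B_{\un{x}'s} \to B_{\un{w}}$ be arbitrary. The first move would be to insert $\1_{B_{\un{x}'}} \otimes \1_{B_s}$ at the bottom of $\phi$ and use Lemma \ref{rexesonlyplz} to decompose $\1_{B_{\un{x}'}}$ as a sum of maps factoring through $B_{\un{y}_i}$ for rexes $\un{y}_i$ with $y_i \le \un{x}'$; this expresses $\phi$ modulo lower terms as $\sum_i \psi_i \circ (e_i \otimes \1_s)$ with $\psi_i : B_{\un{y}_i} \otimes B_s \to B_{\un{w}}$ and $e_i : B_{\un{x}'} \to B_{\un{y}_i}$ non-positive. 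The second move would be to analyze each $\psi : B_{\un{y}} \otimes B_s \to B_{\un{w}}$ modulo $I_{\un{w}}$ in terms of the four possibilities U1/U0/D1/D0 for the last token of a subexpression. If $ys > y$, then $\un{y}s$ is a rex of $ys$, and Lemma \ref{rexmovesmlt} combined with Claim \ref{posrexhasdot} forces $\psi$, modulo lower terms, to be a rex move (U1) or a rex move composed with a top dot on the final $B_s$ (U0). If $ys < y$, then $s$ is a right descent of $y$; applying a rex move inside $\un{y}$ to bring an $s$ to the right end (canonical modulo lower terms by Lemma \ref{rexmovesmlt}) and then splitting the final $B_s \otimes B_s \cong B_s(1) \oplus B_s(-1)$ via \eqref{iiidemp1} separates $\psi$ into the cap contribution (D1, carrying an $\alpha_s$ polynomial) and the trivalent merge contribution (D0). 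The third move is assembly: having isolated the last-step morphism in each case, what remains is a map from $B_{\un{x}'}$ into a rex, to which the inductive hypothesis applies, giving a sum of light leaves $\LL_{\un{x}', \eb'}$; post-composing with the last step yields either a full light leaf $\LL_{\un{x}, (\eb', \eb_{\ell(\un{x})})}$ or a diagram in $I_{\un{w}}$ (any top boundary dot inside $I_{\un{y}}$ can be pulled up through the subsequent rex moves, trivalent vertices, and $2m$-valent vertices up to lower terms, by \eqref{dot2m} and the Frobenius relations).

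The main obstacle will be the descent case $ys < y$ of the second move. The rex move that brings $s$ to the right end of $\un{y}$ is not unique, and it interacts delicately with the $B_s \otimes B_s$ splitting: the $(1)$-summand produces an $\alpha_s$ polynomial at the join point which must be forced leftward past all of $\un{y}$ using \eqref{dotforcegeneral}, each forcing step generating a Demazure $\pa_s$ correction whose support needs to be tracked; moreover the rex move itself is only canonical modulo lower terms, and a priori those lower terms could escape the span of light leaves of strictly smaller defect. Verifying that all of these corrections do in fact land inside the span of light leaves, rather than producing genuinely new contributions, is the technically delicate heart of the argument and is what occupies the full Chapter \ref{sec-LLLproof}.
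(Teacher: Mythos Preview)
Your inductive scheme has two genuine gaps, and the paper's proof is organized precisely to avoid them.

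\textbf{The base case is not routine.} You assert that for $\ell(\un{x})=0$ one ``checks $\End(\1)=R$ by reducing any closed diagram to a polynomial using the one- and two-color relations.'' The paper explicitly flags this as one of the hard points: ``the base case where $\un{x}=\un{w}=\emptyset$ is already a highly non-trivial statement\ldots\ We do not know how to show this statement directly (say, with graph theory) except in type $A$ or dihedral type.'' A closed Soergel graph can contain arbitrarily many colors and arbitrarily many $2m$-valent vertices for distinct pairs; there is no local reduction that obviously terminates. Your induction on $\ell(\un{x})$ therefore begins with a base case as hard as the theorem itself.

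\textbf{The second move is circular.} When $\un{y}s$ is a rex, you claim that ``Lemma~\ref{rexmovesmlt} combined with Claim~\ref{posrexhasdot} forces $\psi$, modulo lower terms, to be a rex move (U1) or a rex move composed with a top dot on the final $B_s$ (U0).'' Neither result says this. Lemma~\ref{rexmovesmlt} compares two rex moves; Claim~\ref{posrexhasdot} concerns strictly non-positive maps. Your $\psi:B_{\un{y}s}\to B_{\un{w}}$ is an \emph{arbitrary} morphism and can have arbitrarily large maxwidth: it may pass through expressions far longer than $\un{y}s$ before returning to $\un{w}$. Concluding that such a $\psi$ is a rex move modulo $I_{\un{w}}$ is exactly the proposition for the source $\un{y}s$, and when $\un{x}'$ is already reduced you have $\ell(\un{y}s)=\ell(\un{x})$, so induction on length does not apply. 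The same circularity afflicts the $ys<y$ case.

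The paper's remedy is to induct not on $\ell(\un{x})$ but on the \emph{maxwidth} $M$ of the diagram. This makes the base case trivial (maxwidth $0$ is a polynomial, full stop) and guarantees that every reduction step---pulling dots through $2m$-valent vertices, resolving Jones--Wenzl error terms, handling the interval where width $M$ is attained---produces diagrams of strictly smaller maxwidth, so the induction actually closes. The analysis of adding a single generator below a light leaf (your Claims~\ref{addnegtri}--\ref{addposdot} analogues) is carried out there, but always with the inductive hypothesis $(L_{M-1})$ available on the pieces, which is what your scheme cannot supply.
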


We briefly defer the proof of this proposition. The discussion above implies that knowing this Proposition for a single rex ${\un{w}}$ of $w$ will imply the result for every
rex of $w$.

We already know linear independence using localization arguments, so it is enough to show that they span. We will prove this by induction, but the induction will not be easy. After all,
the base case where ${\un{x}}={\un{w}}=\emptyset$ is already a highly non-trivial statement: that all diagrams without boundary reduce to the empty diagram (with polynomials). We do not know how to
show this statement directly (say, with graph theory) except in type $A$ or dihedral type; the equivalent statement for $\DStd$ was shown only using nontrivial arguments involving homotopy groups.

Identical statements can be made about maps from ${\un{w}}$ using the vertical flip map which is an antiinvolution. The proposition as stated above is equivalent to one
saying that maps from ${\un{w}}$ modulo terms with bottom dots are spanned by $\oLL_{{\un{x}},\eb}$.

\subsection{Reduction to working modulo lower terms}
\label{subsec-reduction1}

\begin{proof}[Proof of Theorem \ref{homspacetheorem} assuming Proposition \ref{LLabasismlt}] Fix sequences ${\un{x}}$ and ${\un{y}}$ and a morphism $\phi \co {\un{x}} \to {\un{y}}$. We want to write $\phi$ as
an $R$-linear combination of diagrams of the form $\oLL_{{\un{y}},\fb} \LL_{{\un{x}},\eb}$ where the subsequences express a common element $w \in W$.

The identity map of ${\un{x}}$ has a negative-positive decomposition where every term factors through a reduced expression, by Lemma \ref{rexesonlyplz}. We only need to work with one diagram at
a time, so without loss of generality we assume that $\phi=fg$ factors as $g \co {\un{x}} \to {\un{w}}$ and $f \co {\un{w}} \to {\un{y}}$ for some reduced expression ${\un{w}}$. We will prove the statement by
induction on ${\un{w}}$. That is, we assume that any morphism $\phi=fg$ which factors through a rex $\vb$ for $v<w$ is in the span of $\LLL$. The base case follows from the same arguments.

Write $g = g_w + g_l$ and $f = f_w + f_l$. Here $g_w$ is an $R$-linear combination of $\LL_{{\un{x}},\eb}$, and $g_l \in I_{{\un{w}}}$; similarly, $f_w$ is in the span of $\oLL_{{\un{y}},\fb}$ and $f_l
\in \overline{I}_{{\un{w}}}$. This decomposition is guaranteed by Proposition \ref{LLabasismlt}. The composition $f_w g_w$ is clearly in the span of $\LLL$. This is sufficient to prove the base
case where $w = e \in W$, since it is clear that $g_l=f_l=0$. Now we need to account for the lower terms. It will suffice to show that $\psi g_l$ is in the span of $\LLL$ for
any $\psi$ (the argument with $f_l$ is the same, upside-down).

Consider $\psi g_l$. Now $g_l \in I_{{\un{w}}}$ so it is generated by top boundary dots, and we can separate $g_l$ into terms each generated by a single top boundary dot. Thus the composition
looks as follows:

\igc{1}{suckingupadot}

Let $\zb$ denote the sequence consisting of ${\un{w}}$ with the dotted index removed. This morphism factors through $\zb$, and the identity of $\zb$ has a decomposition which factors through
reduced expressions for elements $v$ found as subsequences of $\zb$. Subsequences of $\zb$ are strict subsequences of ${\un{w}}$, so $v < w$. Now induction implies that $\psi g_l$ is in the
span of $\LLL$, as desired. \end{proof}

\subsection{The grand induction}
\label{subsec-hereitis}

Now comes the crux of the argument, a giant induction on maxwidth to prove Proposition \ref{LLabasismlt}. This entire section represents the proof.

Fix a rex ${\un{w}}$ for $w$. Let $X_{{\un{x}}}$ denote the set of all possible maps which are valid constructions of $\LL_{{\un{x}},\eb}$ with target ${\un{w}}$, but which also have polynomials in any
region. We omit ${\un{w}}$ from the notation $X_{\un{x}}$ because, as noted, the choice of reduced expression giving the final target does not matter, because the canonical isomorphisms between Hom spaces modulo lower terms
preserve the sets $X_{\un{x}}$.

Fix $M \ge 0$ and consider the following two statements:

\vspace{0.5cm}

($L_M$) For any ${\un{x}}$ with $\ell({\un{x}}) \le M$ and any ${\un{w}}$ with $\ell({\un{w}}) \le M$, choose a single map $\LL_{{\un{x}},\eb}$ for each appropriate sequence $\eb$ with target ${\un{w}}$. Then every map
$B_{\un{x}} \to B_{\un{w}}$ of maxwidth $\le M$ is in the left $R$-span of the $\LL_{{\un{x}},\eb}$ modulo $I_{\un{w}}$.

\vspace{0.5cm}

($X_M$) For any ${\un{x}}$ with $\ell({\un{x}}) \le M$ and any ${\un{w}}$ with $\ell({\un{w}}) \le M$, every map $B_{\un{x}} \to B_{\un{w}}$ of maxwidth $\le M$ is in the $\Bbbk$-span of $X_{\un{x}}$ modulo $I_{\un{w}}$.

\vspace{0.5cm}

Obviously $(L_M)$ is stronger than $(X_M)$. Our induction will use $(X_M)$ and $(L_{M-1})$ to prove $(L_M)$, and $(L_M)$ to prove $(X_{M+1})$. The base case is $M=0$. A map of maxwidth $0$ necessarily has $\un{w} = \emptyset$ and is just a polynomial, so both $(L_0)$ and $(X_0)$ hold. Now we fix ${\un{x}}$ and ${\un{w}}$. Any map $B_{\un{x}} \to B_{\un{w}}$ will have maxwidth $M \ge \ell({\un{w}})$ and $M \ge \ell({\un{x}})$, and if the map is in
$X_{\un{x}}$ then it has maxwidth precisely $M=\ell({\un{x}})$. Thus the statements $(L_M)$ and $(X_M)$ are vacuous for maps to ${\un{w}}$ when $M < \ell({\un{w}})$. When proving the inductive statement for maps to
${\un{w}}$, there will be two separate cases: $M=\ell({\un{w}})$, which we think of as the ``base case" for ${\un{w}}$ because it does not use induction; and $M>\ell({\un{w}})$.

Suppose that $M=\ell({\un{w}})$, and consider a graph $B_{\un{x}} \to B_{\un{w}}$. There can be no negative maps on top of the diagram, because of the maxwidth constraint, so the diagram ends with a
non-negative map. Unless the diagram is purely neutral, it ends with a strictly non-negative map which, by the upside-down version of Claim \ref{posrexhasdot}, implies that the diagram
lies within $I_{\un{w}}$. Hence we can assume that the diagram is neutral, and $\ell({\un{x}})=\ell({\un{w}})$. Any neutral map must consist only of polynomials and $2m$-valent vertices, so it is a rex move
with polynomials, and ${\un{x}}$ is also a rex for $w$. Such a map is an element of $X_{\un{x}}$ as desired, when $\eb=(1,1,\ldots,1)$. To show $(L_M)$ we need to show that any single rex move, with
polynomials only in the leftmost region, will span morphisms consisting of compositions of all rex moves with all polynomials modulo $I_{\un{w}}$. First we use the polynomial forcing relation \eqref{dotforcegeneral} to move the polynomials
to the leftmost region. This leaves behind terms where strands in the rex move are broken, but such terms are in $I_{\un{w}}$, because they have a strictly negative-positive decomposition. Now
we apply Lemma \ref{rexmovesmlt} to show that the difference between two rex moves is also in $I_{\un{w}}$.

Thus we have shown $(L_M)$ and $(X_M)$ for the case when $M=\ell({\un{w}})$. We assume henceforth that $\ell({\un{w}})<M$.

\begin{proof}[$(L_{M-1})$ and $(X_M) \implies (L_M)$] We need to show that each element of $X_{\un{x}}$ is in the span of our particular fixed choice of $\LL$ maps. Since the width of a map in
$X_{\un{x}}$ is precisely $\ell({\un{x}})$, induction already works unless $\ell({\un{x}})=M$, which we now assume.

To get from an element of $X_{\un{x}}$ associated to $\eb$ to our particular choice of $\LL_{{\un{x}},\eb}$, we need to force all polynomials to the left, and change our rex moves (possibly changing
the intermediate rexes ${\un{w}}_k$ as well). Let us refer to a rex move in the $\LL$ construction by which $\phi_k$ it appears in, and by whether it appears in the form $\a$ or $\b$ in Figure
\ref{fourmaps}. In fact, rex moves of the form $\b$ inside $\phi_k$ can be viewed instead as part of $\a$ inside $\phi_{k-1}$, so we shall assume that $\b$ is always trivial, and the
location of the rex move is specified by which $\phi_k$ it appears in. With this assumption, ${\un{w}}_k = {\un{w}}^s_k$ always.

Suppose that we are building a new light leaves map for some $({\un{x}},\eb)$ expressing $w$, using arbitrary choices, but we make a single error. While performing a rex move, we
accidentally insert some map with a strictly negative-positive decomposition. We call a graph of this form an ``$\LL$ map with error" or simply an \emph{error term}. Polynomial forcing
\eqref{dotforcegeneral} implies that the cost of sliding a polynomial across a rex move is adding an error term. By inspection, every region in an $\LL$ map is separated from the leftmost
region by the reduced expression ${\un{w}}_k$, or a subset thereof, so that we can assume that all polynomials are on the left, modulo error terms.

Lemma \ref{rexmovesmlt} implies that the difference between two rex moves is an error (that is, what would be an error if plugged in to the $\LL$ construction). Thus if two constructions
of $\LL$ differ only in the choice of rex moves, then the difference between the $\LL$ moves is spanned by error terms. Two constructions can also differ in their choice of ${\un{w}}_k =
{\un{w}}^s_k$, but this can be accounted for by changing rex moves as well. One can replace the identity of ${\un{w}}_k$ with a rex move $fg$ where $g \co {\un{w}}_k \to {\un{w}}'_k$ and $f \co {\un{w}}'_k \to
{\un{w}}_k$. Then, viewing $g$ as part of $\a$ in $\phi_k$ and $f$ as part of $\a$ in $\phi_{k+1}$, we can effectively replace ${\un{w}}_k$ with ${\un{w}}'_k$. (This still works even when dealing with
expressions having $s$ on the right, because we can split the rex move into the parts that affect $s$ and the parts that do not.) Hence, two different constructions of the same $\LL$
differ by error terms. Thus to show that a particular choice of $\LL$ maps spans them all, we need only show that that this choice of $\LL$ maps spans any error term.

This discussion of error terms is not strictly required for the proof we give next, but does say that the difference between $\LL$ maps is under control, and is useful for picturing how
one would, by hand, attempt to reduce a diagram into the desired form. We will use the notion of error terms in the later proofs.

Note that adding an error does not affect the maxwidth of a map. Let $F$ denote the partially constructed error term $\LL_{M-1}$, after the first $M-1$ steps have been applied. Now $F$ is
a map of width $\le M-1$, so we may apply induction and replace $F$ with an $\LL$ map \emph{of our choice}, or with a term having dots on top. By adjusting the rex moves in $\phi_{M-1}$
and $\phi_M$ we assume that whenever $F$ is an $\LL$ map, the top of $F$ is our desired ${\un{w}}_{M-1}$. Now we apply $\phi_M$ to finish the construction. We draw the four possibilities.
\igc{1}{proof10}

As noted previously, any error in $\phi_M$ itself yields a map contained in $I_{\un{w}}$ because some dot pulls to the top, so we can assume $\phi_M$ is error free, and that
it has our desired rex move. If $F$ is a term which has a dot on top, then this dot will pull through $\phi_M$ to become a dot on the very top except in a single case: when $\eb_M$ is D0
or D1, and the dot is on the final strand.

We have sufficient choice of $\phi_M$ such that the first diagram is actually one of our specified $\LL$ maps! The second diagram is in $I_{\un{w}}$. For the third and fourth diagrams, we can
reapply induction to $F$ without the dot on the blue strand, thus replacing the question marks with $\LL$ maps of our choice (or with more terms of the second type). Because the top of $F$
was a reduced expression ending in $s$, removing $s$ from the end still yields a reduced expression for which $s$ is \emph{not} in the right descent set. Now the overall expression is our
desired $\LL$ map, for $e_M$ being U1 or U0 respectively (again, up to the freedom we have to alter $\a$). \end{proof}

\begin{proof}[$(L_{M-1}) \implies (X_M)$] Now let us fix ${\un{w}}$ with $\ell({\un{w}})<M$, and prove that $(L_{M-1})$ implies $(X_M)$ for ${\un{w}}$. Consider a diagram of maxwidth $\le M$. If the
maxwidth $M$ is never attained then we may use induction, so let us assume that width $M$ is attained at least once. Width $M$ may be maintained for a period of time using $2m$-valent
vertices, and eventually the width may drop again to $M-1$ and rise back to $M$ for another interval.

\begin{claim} \label{goesbackdown} Assume $(L_{M-1})$. A map $B_{\un{x}} \to B_{\un{w}}$ with $\ell({\un{x}}) \le M-1$ which only reaches width $M$ for one continuous interval is in the span of $X_{\un{x}}$ modulo $I_{\un{w}}$.
\end{claim}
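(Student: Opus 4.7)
\emph{Setup.} The map $\phi\co B_{\un{x}} \to B_{\un{w}}$ decomposes as $\phi_{\mathrm{up}} \circ N \circ \phi_{\mathrm{mid}} \circ P \circ \phi_{\mathrm{low}}$, where $P$ is the positive generator creating the rise to width $M$ (a splitting trivalent or a top-boundary dot), $N$ is the negative generator causing the drop (a merging trivalent or a bottom-boundary dot), $\phi_{\mathrm{mid}}$ is a composition of $2m$-valent vertices and polynomial boxes at width $M$, and $\phi_{\mathrm{low}}, \phi_{\mathrm{up}}$ have maxwidth $\le M-1$. It suffices to show that $\phi$ equals, modulo $I_{\un{w}}$, a sum of diagrams of maxwidth $\le M-1$; since $(L_{M-1})$ then expresses each such diagram as an $R$-linear combination of $\LL$ maps, and every $\LL$ map lies in the span of $X_{\un{x}}$.

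\emph{Execution.} I proceed by induction on the length of $\phi_{\mathrm{mid}}$, pushing $P$ upward through $\phi_{\mathrm{mid}}$ until it meets $N$. In the base case $\phi_{\mathrm{mid}}$ is trivial, so $P$ and $N$ are adjacent. If $P$ and $N$ act on disjoint strands, one commutes them past each other, dropping the maxwidth to $M-1$ throughout. If they interact, the Frobenius relations \eqref{assoc1}, \eqref{unit}, \eqref{needle} together with \eqref{alphais} resolve the four local combinations (split--merge, split--dot, dot--merge, dot--dot) to either zero, the identity on a single strand, or multiplication by $\a_s$; in every case the resulting diagram has maxwidth $\le M-1$. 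For the inductive step, if $P$ is a splitting trivalent and $v$ is the bottommost $2m$-valent vertex in $\phi_{\mathrm{mid}}$, apply two-color associativity \eqref{assoc2} to pull $P$ through $v$, at the possible cost of a preliminary rex move inside $\phi_{\mathrm{mid}}$; by Lemma \ref{rexmovesmlt} the difference between rex moves is a sum of strictly negative-positive maps of maxwidth $<M$, which are absorbed into the inductive hypothesis. If $P$ is a top-boundary dot meeting $v$, use \eqref{dot2m} to expand the combination into a Jones--Wenzl expression; by the claim preceding this one, every non-identity summand of the expansion factors through width $\le m-2$ on the $m$ strands of $v$, giving maxwidth $\le M-1$ overall, while the residual identity term replaces $v$ by a smaller configuration and strictly shortens $\phi_{\mathrm{mid}}$. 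Polynomials in $\phi_{\mathrm{mid}}$ are slid past $P$ using \eqref{dotforcegeneral} and \eqref{dotslidegeneral}; the broken-strand error terms have width $<M$ locally at the polynomial box and are handled by a secondary induction on the shorter width-$M$ segments they produce.

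\emph{Main obstacle.} The principal difficulty lies in the case where $P$ is a top-boundary dot whose new strand does not interact with any $2m$-valent vertex in $\phi_{\mathrm{mid}}$: the strand then passes undisturbed through the middle section and into $\phi_{\mathrm{up}}$. One must argue that either the dot strand is eventually terminated inside $\phi_{\mathrm{up}}$ (in which case further reduction applies, using the maxwidth-$\le M-1$ structure of $\phi_{\mathrm{up}}$ and the inductive hypothesis), or it reaches the top boundary $B_{\un{w}}$ intact, in which case the whole diagram possesses a top-boundary dot, lies in $I_{\un{w}}$, and may be discarded. A secondary technical point is verifying that \eqref{assoc2} can always be arranged to apply at each step via preliminary rex moves of controlled error, and that polynomial-sliding errors do not inadvertently recreate width-$M$ intervals. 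Once these bookkeeping checks are in place, iterating the above reductions rewrites $\phi$ modulo $I_{\un{w}}$ as a sum of maxwidth-$\le M-1$ diagrams, and a single invocation of $(L_{M-1})$ completes the proof.
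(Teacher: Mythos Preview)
Your approach is genuinely different from the paper's. You try to push $P$ upward through $\phi_{\mathrm{mid}}$ by local moves until it cancels against $N$, then invoke $(L_{M-1})$ on the resulting maxwidth-$(M-1)$ diagram. The paper does not argue this way: it first establishes the companion Claim~\ref{staysup}, which shows that the portion of the diagram from the bottom of the width-$M$ interval up to $B_{\un{w}}$ lies in the span of $X_{\un{y}}$ for $\un{y}$ of length $M$ (this is done via Claims~\ref{addnegtri}, \ref{addnegdot}, \ref{splitintwain}, \ref{add2m}). Claim~\ref{goesbackdown} then reduces to placing one positive generator beneath a map already in $X_{\un{y}}$, and this is handled by a direct case analysis on the light leaves structure (Claims~\ref{addpostri} and~\ref{addposdot}). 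Your route is more elementary in spirit, avoiding light leaves entirely; the paper's route buys access to the explicit inductive shape of $\LL$ maps, which is what makes its case analysis finite and clean.

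There is a genuine error in your dot case. You apply \eqref{dot2m} and speak of a ``residual identity term'' versus ``non-identity summands factoring through width $\le m-2$,'' citing the earlier claim about Jones--Wenzl morphisms. But that claim is about \eqref{twocoloridemp} (the doubled $2m$-valent vertex), not \eqref{dot2m}. Relation \eqref{dot2m} has no identity term: it replaces the entire combination (dot below $2m$-valent) by a sum of diagrams built only from dots and trivalents, with the $2m$-valent vertex gone. The property you actually need, and which the paper uses in its own Claim~\ref{addposdot}, is that each such term carries a boundary dot among the $m$ output strands; this is what lets one push a dot past $v$ and continue the induction on the number of remaining $2m$-valent vertices. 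Your argument can plausibly be repaired along those lines, but as written the inductive step does not close. A smaller point: the ``main obstacle'' you single out---the dot strand interacting with nothing in $\phi_{\mathrm{mid}}$---is in fact the easy case, since then $P$ commutes past all of $\phi_{\mathrm{mid}}$ and (if also disjoint from $N$) past $N$ as well, dropping the maxwidth to $M-1$ immediately; the delicate cases are precisely the interactions, where your \eqref{dot2m} argument needs correction.
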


\begin{claim} \label{staysup} Assume $(L_{M-1})$. A map $B_{\un{x}} \to B_{\un{w}}$ with $\ell({\un{x}}) = M$ which only reaches width $M$ for one continuous interval starting at the
bottom is in the span of $X_{\un{x}}$ modulo $I_{\un{w}}$. \end{claim}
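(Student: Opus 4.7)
I would argue by reading the diagram $\phi$ from bottom to top and peeling off its structure. Since $\phi$ has maxwidth $M = \ell(\un{x})$ attained only in one continuous interval at the bottom, reading upward we see a sequence of purely neutral generators ($2m$-valent vertices and polynomial boxes) keeping the width at $M$, then a single strictly negative generator dropping the width to $M-1$, then a graph of maxwidth $\le M-1$ that never returns to width $M$. Accordingly I would factor $\phi = g \circ h \circ f$, where $f \colon B_{\un{x}} \to B_{\un{y}}$ is the neutral bottom portion (so $\ell(\un{y}) = M$), $h \colon B_{\un{y}} \to B_{\un{z}}$ is the first strictly negative generator---either a bottom dot on a single strand $y_i$ or a merging trivalent on two equal-colored adjacent strands $y_i, y_{i+1}$ (so $\ell(\un{z}) = M-1$)---and $g \colon B_{\un{z}} \to B_{\un{w}}$ has maxwidth $\le M-1$. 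Since the base case $\ell(\un{w}) = M$ of the induction has already been disposed of, we may assume $\ell(\un{w}) \le M-1$, and $(L_{M-1})$ then applies directly to $g$, yielding $g \equiv \sum_{\eb''} r_{\eb''} \LL_{\un{z},\eb''} \pmod{I_{\un{w}}}$ for coefficients $r_{\eb''} \in R$ and subexpressions $\eb''$ of $\un{z}$ expressing $w$. As $I_{\un{w}}$ is closed under precomposition (a top boundary dot of a lower-term part of $g$ persists as a top boundary dot of the whole composite), this reduces the task to showing that each composite $\LL_{\un{z},\eb''} \circ h \circ f$ lies in $\Bbbk\text{-span}(X_{\un{x}}) + I_{\un{w}}$.

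The heart of the argument is absorbing $h$ into a light leaves construction for $\un{y}$. Let $v_{i-1} \in W$ be the element expressed by $\eb''_{\le i-1}$ on $\un{y}_{\le i-1}$, with intermediate rex target $\un{v}_{i-1}$ chosen freely inside $\LL_{\un{z},\eb''}$. I would define $\eb'$ on $\un{y}$ by inserting a $0$ at position $i$ (when $h$ is a dot) or by inserting U1 at position $i$ and $0$ at position $i+1$ (when $h$ is a trivalent), shifting $\eb''$ accordingly; in either case the new subexpression still expresses $w$. In the favorable subcase, when the U/D label of the newly inserted position matches the type of $h$ (U0 for a dot at a U-position, D0 for a trivalent at a D-position), matching rex-move choices in the $\LL_{\un{y},\eb'}$ construction to those in $\LL_{\un{z},\eb''}$ yields $\LL_{\un{z},\eb''} \circ h = \LL_{\un{y},\eb'}$ on the nose. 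In the obstructed subcase where the labels mismatch (for example, $h$ is a bottom dot on $y_i = s$ but $s$ lies in the right descent of $v_{i-1}$), I would exploit the freedom to choose $\un{v}_{i-1}$ ending in $s$ so that the dotted $y_i$-strand sits immediately adjacent to the rex's trailing $s$-strand, and then rewrite this local two-$s$-strand configuration via the one-color Frobenius relations \eqref{unit}, \eqref{assoc1}, \eqref{needle} and the idempotent decomposition \eqref{iiidemp2} of $B_s \otimes B_s$, producing a D0-style trivalent piece (matching an $\LL_{\un{y},\eb'}$) plus residual pieces in which the stray dot pulls up through the rest of the diagram to become a top boundary dot, hence landing in $I_{\un{w}}$.

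Finally I would absorb $f$. Since $f$ consists entirely of $2m$-valent vertices and polynomial boxes at width $M$, the composite $\LL_{\un{y},\eb'} \circ f$ is a diagram on input $\un{x}$ built solely from non-positive and neutral generators, and may be interpreted as an ``$\LL_{\un{x},\eb}$-with-errors'' in the sense of the proof of $(L_{M-1}) \implies (L_M)$: the braid pieces of $f$ correspond precisely to rex-move choices available inside the $\phi_k$ of a standard $\LL_{\un{x},\eb}$-construction for the subexpression $\eb$ obtained by pulling $\eb'$ back through the braid bijections underlying $f$, and polynomials inside $f$ are driven into the leftmost region using \eqref{dotforcegeneral}. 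By Lemma \ref{rexmovesmlt}, any discrepancy between two realizations of the same rex move is strictly negative-positive, so the resulting error terms live in subdiagrams of width $\le M-1$; $(L_{M-1})$ then replaces these either by further $X_{\un{x}}$-span contributions or by top-dot terms in $I_{\un{w}}$. The principal obstacle will be the label-mismatch subcase of the middle paragraph: one must verify that the local Frobenius rewriting there produces only genuine $\LL_{\un{y},\eb'}$-type maps together with lower-term corrections that actually belong to $I_{\un{w}}$, and does not spawn configurations escaping $\Bbbk\text{-span}(X_{\un{x}})$ modulo $I_{\un{w}}$; this is where the flexibility built into the light leaves construction (particularly the freedom to pick intermediate rexes ending in an arbitrary right-descent letter) becomes indispensable.
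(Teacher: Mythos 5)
Your decomposition $\phi = g \circ h \circ f$ and the order of attack---apply $(L_{M-1})$ to $g$, then absorb the negative generator $h$, then absorb the neutral bottom $f$---agree with the paper's proof, and your treatment of $h$ is basically correct: the merging trivalent always yields a light leaf outright (the paper's Claim \ref{addnegtri}), and the obstructed bottom-dot case is resolved by fusing the dot against the trailing $s$-strand of a suitably chosen intermediate rex using the Frobenius/dual-basis decomposition (the paper's Claim \ref{addnegdot}, the one place Demazure Surjectivity is used). Two small remarks: in the paper's dot-fusing argument both resulting terms are already light leaves, not one light leaf plus an $I_{\un{w}}$-correction as you suggest; and your ``inserting U1 at position $i$'' description of the trivalent case does not account for the D-subcases, which produce a D0 rather than a U1 in the first slot.

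The genuine gap is in your final paragraph. You claim the $2m$-valent vertices of $f$ ``correspond precisely to rex-move choices available inside the $\phi_k$ of a standard $\LL_{\un{x},\eb}$-construction for the subexpression $\eb$ obtained by pulling $\eb'$ back through the braid bijections underlying $f$.'' There is no such bijection. A $2m$-valent vertex is a projection to a common summand, not an isomorphism, so subexpressions of $sts\cdots$ and of $tst\cdots$ expressing a fixed element are not in natural correspondence. Folding a bottom $2m$-valent vertex into a rex move of the $\LL$ construction works only when the $m$ strands it touches all carry the label U1 in $\eb'$. If some carry 0's or D's, the $2m$-valent vertex simply has no home anywhere in the light-leaves recursion, Lemma \ref{rexmovesmlt} gives no discrepancy to absorb, and your ``$\LL$-with-errors'' interpretation breaks down. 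This is exactly what the paper's Claims \ref{splitintwain} and \ref{add2m} are for: one first splits the diagram along a width-$(M-1)$ dashed line to reduce to the case where the strands below the $2m$-valent vertex are the last $m$ strands of $\un{x}$ and everything before is a rex labelled all-U1, and then reduces further to the two-color dihedral case $w_l \in W_{s,t}$, where one either invokes the morphism-space result of \cite{EDihedral} or grinds through a case analysis. The bottommost $2m$-valent vertices are the hard part of the claim, and your plan does not reach them.
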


Suppose that we can show these two claims. Then we may simplify the diagram from top to bottom as follows. Consider the first interval where width $M$ appears. If that extends all the way
to the bottom of the map, we can use Claim \ref{staysup} to conclude that $X_M$ holds. If the interval ends and we return to width $M-1$, we can apply Claim \ref{goesbackdown} to replace that region with
an $\LL$ map (by $(L_{M-1})$, any $\LL$ map we choose), and thus the whole region now stays below width $M-1$. After doing this, the next interval becomes the first interval, and we repeat
the argument.

We proceed to prove Claim \ref{staysup} first. Let $\eta$ denote the map in question (having bottom ${\un{x}}$), ${\un{y}}$ the topmost sequence where width $M$ is attained, and $\un{z}$
the sequence immediately above, having $\ell(\un{z})=M-1$. The map $B_{\un{z}} \to B_{\un{w}}$ has maxwidth $M-1$ so by induction we can assume it is any construction of
$\LL_{{\un{z}},\eb}$ we choose. We can ignore polynomials on the left of the diagram. We begin by treating the case without $2m$-valent vertices on bottom, where ${\un{x}}={\un{y}}$. There
are two cases, to be treated differently.

\igc{1}{proof1}

\begin{claim} \label{addnegtri} Suppose a negative (merging) trivalent is added below a light leaves map. The result is light leaves. \end{claim}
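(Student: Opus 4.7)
The plan is to use the recursive construction of light leaves maps to absorb the merging trivalent into one of the inductive steps. Suppose the merging trivalent merges two adjacent $s$-colored strands at positions $k$ and $k+1$ of an enlarged source sequence $\un{x}'$ into a single $s$-strand at position $k$ of $\un{x}$. Writing $\LL_{\un{x},\eb}$ as the iterated composition $\phi_n \circ (\phi_{n-1} \otimes 1) \circ \cdots \circ (\phi_1 \otimes 1)$, the first $k-1$ steps act only on the leftmost $k-1$ strands, so the merging trivalent commutes past them. It therefore suffices to find a subexpression $\eb'$ of $\un{x}'$ and valid choices of rex moves and intermediate rexes such that the two consecutive steps $\phi'_{k+1} \circ (\phi'_k \otimes 1)$ in the construction of $\LL_{\un{x}',\eb'}$ equal the composition $\phi_k \circ (1 \otimes m)$, where $m$ denotes the merging trivalent. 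For $j > k+1$ we then set $\eb'_j = \eb_{j-1}$ and reuse $\phi_{j-1}$ verbatim.

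The verification splits into four cases based on the value of $\eb_k$. When $\eb_k = U1$ so that $\phi_k$ is the identity, take $\eb'_k = U1$ and $\eb'_{k+1} = D0$, so that $\phi'_k$ is the identity and $\phi'_{k+1}$ is precisely the merging trivalent (with trivial $\beta$ and $\alpha$). When $\eb_k = U0$ so that $\phi_k$ is a dot, take $\eb'_k = U1$ and $\eb'_{k+1} = D1$; the resulting $\phi'_{k+1}$ is a cap, which by the Frobenius unit relation \eqref{unit} equals a merging trivalent followed by a dot. When $\eb_k = D1$ or $D0$, the index $s$ already lies in the right descent set of $w_{k-1}$, so take $\eb'_k = D0$ and $\eb'_{k+1}$ equal to $\eb_k$; the rex move $\beta$ in $\phi'_k$ acts only on the leftmost factor and therefore commutes with $m$, and the one-color associativity relation \eqref{assoc1} identifies the two successive merges (for D0) or the merge followed by cap (for D1) with the composition $\phi_k \circ (1 \otimes m)$.

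In each case the equality holds on the nose after we choose compatible $\beta'$'s, $\alpha'$'s, and intermediate rexes, using the flexibility in the light leaves construction described in Remark~\ref{LLprecision}. The main obstacle, such as it is, is simply the bookkeeping: one must check that the token (U or D) assigned to each of the new positions is indeed forced by the Bruhat order relative to the intermediate rex, and that the freedom in the choice of $\un{w}'_k$ allows one to match the intermediate target used in $\phi_k$. Once this is verified, the displayed equalities are immediate from the one-color Frobenius relations, completing the proof that the composition is a bona fide light leaves morphism $\LL_{\un{x}',\eb'}$ (for some valid choice within the class $X_{\un{x}'}$).
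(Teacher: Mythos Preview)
Your proof is correct and follows essentially the same approach as the paper. The paper's proof is terser: it simply lists the same four case assignments $U0 \mapsto (U1,D1)$, $U1 \mapsto (U1,D0)$, $D0 \mapsto (D0,D0)$, $D1 \mapsto (D0,D1)$ and refers to a figure, whereas you spell out explicitly why the resulting diagrams agree using \eqref{unit} and \eqref{assoc1} and why the U/D decorations are consistent with the Bruhat stroll.
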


\begin{proof} Suppose that the new trivalent vertex is attached to the $k$-th strand, with ${\un{x}}_k=s$. There are four choices for $\eb_k$: U0, U1, D0, D1. The composition will be a light
leaves with $(s\ s)$ instead of ${\un{x}}_k$ and one of four sequences instead of $\eb_k$: (U1 D1), (U1 D0), (D0 D0) or (D0 D1) respectively. This is pictured below:
\[ \ige{1}{mergingbelowLL} \qedhere \]
\end{proof}

\begin{claim} \label{addnegdot} Suppose that we add a negative (bottom) dot after the $k$-th strand of an $\LL_{\zb,\eb}$ of our choice, yielding a map from $B_{\un{y}}$. The result is in
$X_{\un{y}}$. \end{claim}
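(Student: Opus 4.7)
The plan is to exhibit the morphism $(\mathrm{A}) \define \LL_{\zb, \eb} \circ (1 \otimes \mathrm{dot}_s \otimes 1) : B_{\un{y}} \to B_{\un{w}}$ as a specific valid construction of $\LL_{\un{y}, \fb}$ lying in $X_{\un{y}}$. The natural candidates are $\un{y} = (\zb_1, \dots, \zb_k, s, \zb_{k+1}, \dots, \zb_{\ell(\zb)})$, obtained from $\zb$ by inserting the new $s$-strand at position $k+1$, and $\fb = (\eb_1, \dots, \eb_k, 0, \eb_{k+1}, \dots)$, obtained from $\eb$ by inserting a $0$ at position $k+1$. Since $\fb_{k+1} = 0$, the intermediate states $w_i^{(\un{y}, \fb)}$ agree, up to the appropriate reindexing, with $w_i^{(\zb, \eb)}$, so $\fb$ expresses the same element $w$ as $\eb$.

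First I would take the first $k$ steps of the $\LL_{\un{y}, \fb}$ construction to be identical to the first $k$ steps of the given $\LL_{\zb, \eb}$, with the new $s$-strand simply passing through as an identity line; similarly, take steps $k+2$ onwards identical to steps $k+1$ onwards of $\LL_{\zb, \eb}$, by choosing ${\un{w}}_{k+1}^{(\un{y}, \fb)} = {\un{w}}_k^{(\zb, \eb)}$. The only novel step is $\phi_{k+1}$, and its treatment splits into two cases according to whether $s$ lies in the right descent set of $w_k$. In the U0 case ($s$ not in the right descent), take $\phi_{k+1}$ to be the top dot on the new $s$-strand; since the strand is otherwise isolated, by isotopy the dot can be slid all the way down the strand so that it sits at the bottom boundary, reproducing exactly the configuration of~(A). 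In the D0 case ($s$ in the right descent), choose $\beta$ to be any rex move from ${\un{w}}_k$ to a rex ${\un{w}}_k^s$ ending in $s$ and set $\alpha = \beta^{-1}$; then apply the Frobenius unit relation \eqref{unit} to absorb the merging trivalent together with the bottom dot on the new $s$-strand into the identity on the other $s$-leg, so that $\phi_{k+1}$ collapses to $\beta^{-1} \circ \beta = \mathrm{id}_{{\un{w}}_k}$, matching (A).

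The main obstacle is the D0 case, where the diagrammatic matching is subtle: the $\LL_{\un{y}, \fb}$ construction necessarily contains a merging trivalent that is absent from (A), and a priori the degrees do not match ($\LL_{\un{y}, \fb}$ has degree $d(\eb) - 1$ while (A) has degree $d(\eb) + 1$). Reconciling this requires carefully tracking how the bottom dot on the new strand is absorbed via the Frobenius unit relation, possibly leaving behind a polynomial factor in the leftmost region that restores the correct degree while keeping the result within $X_{\un{y}}$. The U0 case is clean by isotopy, and the matching of the first $k$ and last $\ell(\zb) - k$ steps is routine; the D0 collapse is the only substantive computation.
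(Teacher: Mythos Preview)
Your U0 case is correct and matches the paper exactly: when $s$ is not in the right descent set of $w_k$, the inserted dot is literally the $\phi_{k+1}$ of a light leaves construction with the new entry decorated U0.

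Your D0 case, however, has a genuine gap. You try to exhibit $(\mathrm{A})$ as a \emph{single} light leaves map $\LL_{\un{y},\fb}$ with a D0 at position $k+1$, patched by a polynomial. This cannot work. The local piece of $(\mathrm{A})$ on the last two $s$-strands is the map $(\mathrm{id}\otimes\text{counit}):B_sB_s\to B_s$ sending $1\otimes 1\otimes 1\mapsto 1\otimes 1$, whereas the local piece of any D0 light leaves map is a merging trivalent (possibly with a polynomial in some region). The merge kills the $1$-tensor: $1\otimes 1\otimes 1\mapsto \partial_s(1)\otimes 1=0$, and inserting a polynomial $g$ in the middle region gives $1\otimes 1\otimes 1\mapsto \partial_s(g)\otimes 1$, which can never equal $1\otimes 1$ times a fixed polynomial independent of the input while also matching on $1\otimes\delta\otimes 1$. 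So no polynomial multiple of a single D0 map equals $(\mathrm{A})$; your degree mismatch is the first symptom of this. Your appeal to the Frobenius unit relation \eqref{unit} is also misapplied: that relation says merge composed with a \emph{unit} dot (the degree $+1$ map $R\to B_s$) is the identity, but $(\mathrm{A})$ contains a \emph{counit} dot ($B_s\to R$), and there is no merging trivalent present in $(\mathrm{A})$ to absorb anything.

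The paper's argument in this case is different in kind. It first exercises the ``of our choice'' freedom to pick $\LL_{\zb,\eb}$ so that the intermediate rex $\un{w}_k$ already ends in $s$. Then, locally on the rightmost two $s$-strands, it uses the polynomial forcing relation \eqref{dotforcegeneral} (with $\delta$ satisfying $\partial_s(\delta)=1$, or $\tfrac{\alpha_s}{2}$ when $2$ is invertible) to rewrite the configuration ``(vertical $s$-strand)(new $s$-strand ending in a dot)'' as a $\Bbbk$-linear combination of \emph{two} diagrams, each of which is visibly a light leaves map for $\un{y}$ with a polynomial in some region (hence in $X_{\un{y}}$). In other words, $(\mathrm{A})$ lies in the $\Bbbk$-span of $X_{\un{y}}$ but is not a single element of $X_{\un{y}}$; this is where Demazure Surjectivity is genuinely needed. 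You should replace your D0 collapse by this two-term local identity.
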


\begin{proof} Suppose that the new boundary dot is colored $s$. If $s$ is not in the right descent set of $w_k$, then the result is an $\LL$ map, with the new strand being U0. If $s$ is in
the right descent set, by choosing our $\LL_{\zb,\eb}$ appropriately, we can assume that $s$ occurs on the right of ${\un{w}}_k$. Now the statement follows from the equality below, because
both terms on the RHS are in $X_{\un{y}}$. We use the decomposition of Remark \ref{LLof111}, and relation \eqref{dotforcegeneral}. Remember that the use of $\frac{1}{2}$ is unnecessary, as any dual basis will do.
\[ \ige{1}{negdotbelowLL} \qedhere \]
\end{proof}

\begin{remark} While this is the only use in this chapter of the assumption of Demazure Surjectivity, it is significant. Without this assumption, double leaves will not form a
basis in the simplest counterexample: maps from $B_s \ot B_s \to B_s$. \end{remark}

Now we want to add more $2m$-valent vertices below this $\LL$ map. First we prove a useful inductive lemma.

\begin{claim} \label{splitintwain} Assume $(L_{M-1})$, and let $\ell({\un{x}})=M$. Suppose we have a map $B_{\un{x}} \to B_{\un{w}}$ which never returns to width $M$ after it leaves it. Suppose further that
the map splits as follows for some $0<k<M$.

\igc{1}{splitintwain}

Then the map is in the span of $X_{\un{x}}$, modulo $I_{\un{w}}$. \end{claim}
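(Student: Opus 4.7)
The plan is to reduce the claim to the inductive hypothesis $(L_{M-1})$ applied separately to each half of the horizontally-split diagram. Interpret the splitting as a horizontal tensor $\phi = \phi_1 \otimes \phi_2$ with $\phi_i \co B_{\un{x}_i} \to B_{\un{w}_i}$, where $\un{x} = \un{x}_1 \un{x}_2$, $\un{w} = \un{w}_1 \un{w}_2$, $|\un{x}_1|=k$, and $|\un{x}_2|=M-k$. Because $\un{w}$ is reduced and splits in this way, $w = w_1 w_2$ is a reduced factorization (with $w_i$ expressed by $\un{w}_i$), and each $\phi_i$ uses at most $M-1$ strands at every height, and targets a rex of length $\le \ell(\un{w}) < M$.

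Apply $(L_{M-1})$ separately to each factor and write $\phi_i \equiv \sum_j r_{ij} \LL_{\un{x}_i, \eb_{ij}} \pmod{I_{\un{w}_i}}$ with $r_{ij} \in R$. Distributing the tensor product, one has
\[
\phi \equiv \sum_{j_1, j_2} r_{1j_1} r_{2j_2} \bigl( \LL_{\un{x}_1, \eb_{1j_1}} \otimes \LL_{\un{x}_2, \eb_{2j_2}} \bigr) \pmod{I_{\un{w}}},
\]
since any lower term in $I_{\un{w}_1}$ or $I_{\un{w}_2}$ carries a top boundary dot in $\un{w}_1$ or $\un{w}_2$, hence in $\un{w}$, so the corresponding correction lies in $I_{\un{w}}$.

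The remaining step is to recognise each tensor product $\LL_{\un{x}_1, \eb_1} \otimes \LL_{\un{x}_2, \eb_2}$ as (a valid construction of) $\LL_{\un{x}, \eb_1 \eb_2}$, so that the entire sum lies in the $\Bbbk$-span of $X_{\un{x}}$ (the polynomial coefficients $r_{ij}$ are acceptable since $X_{\un{x}}$ permits polynomials in any region). To do this, run the $\LL$ construction for $(\un{x}, \eb_1\eb_2)$ by first executing $\LL_{\un{x}_1, \eb_1}$ verbatim, arriving at some rex $\un{w}_1'$ of $w_1$, and then processing each index of $\un{x}_2$ while always choosing the intermediate rex of the current element to be of the form $\un{w}_1' \cdot \un{v}_{k-1}$. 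Since $w_1 w_2$ is a reduced factorization, right multiplication of $w_1 v_{k-1}$ by a simple reflection $s$ lengthens if and only if $v_{k-1}s > v_{k-1}$, and $s$ lies in the right descent set of $w_1 v_{k-1}$ precisely when it lies in that of $v_{k-1}$. Hence every U/D label of $\eb_2$ matches the one computed for $\un{x}_2$ on its own, and every rex move needed to bring $s$ to the right end can be performed inside the $\un{v}_{k-1}$ factor, leaving $\un{w}_1'$ untouched. The main obstacle is purely the bookkeeping in this last verification, and it is resolved by the reducedness of the factorization $w = w_1 w_2$.
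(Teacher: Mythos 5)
You have misread the figure: the ``split'' in Claim~\ref{splitintwain} is a \emph{vertical} factorization, not a horizontal tensor decomposition. The picture (and the paper's own proof, which speaks of a ``lower box'', an ``upper box'', and ``the sequence $\vb$ with the dashed line through it'') shows the diagram factoring as $g \circ (f \otimes \1)$, where $f$ acts only on the first $k$ strands and targets some $\vb$ with $\ell(\vb) < k$, while $g$ is a map from the length-$(<M)$ sequence $\vb\cdot\un{x}_{>k}$ up to $\un{w}$. The paper's proof applies $(L_{M-1})$ to both boxes, reduces $\vb$ to a rex via Lemma~\ref{rexesonlyplz}, reassembles the two light-leaves decompositions using the recursion of Remark~\ref{LLof111}, and inducts on $\ell(\vb)$ to absorb the mismatched terms carrying a dot at the dashed level. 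So your argument addresses a different statement than the one asserted.

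Even granting the tensor reading, the crux of your argument fails. You assert that $\LL_{\un{x}_1,\eb_1}\otimes\LL_{\un{x}_2,\eb_2}$ is a valid construction of $\LL_{\un{x},\eb_1\eb_2}$, on the grounds that ``right multiplication of $w_1 v_{k-1}$ by $s$ lengthens iff $v_{k-1}s>v_{k-1}$.'' This is false: the intermediate stroll elements $v_{k-1}$ of $(\un{x}_2,\eb_2)$ need not satisfy $\ell(w_1 v_{k-1})=\ell(w_1)+\ell(v_{k-1})$, and then the U/D labels in the local and global Bruhat strolls disagree, so the resulting diagrams are different. Take $\un{x}_1=(s)$, $\eb_1=(1)$, $\un{x}_2=(s,s)$, $\eb_2=(1,1)$ in any $W$. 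Then $w_1=s$, $w_2=e$, $w_1w_2=s$ is reduced, and the local labels for $\eb_2$ are $(U1,D1)$, so $\LL_{\un{x}_2,\eb_2}$ is a cap. But for $\eb=(1,1,1)$ on $\un{x}=(s,s,s)$ the global labels are $(U1,D1,U1)$: the D1 sits at position $2$, and the construction \emph{forces} the cap onto strands $1,2$, giving $\LL_{\un{x},(1,1,1)}=\mathrm{cap}\otimes\1_s$, not $\1_s\otimes\mathrm{cap}$. These are genuinely different morphisms (on $1\otimes\delta\otimes 1\otimes 1\in B_{sss}$, with $\partial_s\delta=1$, the former gives $1\otimes 1$ and the latter gives $0$), so your tensor products are not light leaves maps and the proposed spanning argument does not go through.
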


\begin{proof} Both question mark boxes have width strictly less than $M$, so by induction, we can assume that these boxes contain $\LL$ maps of our choice, or morphisms with dots on top.
The rest of the proof is exactly the same as the proof in the previous section. We induct on the width of the sequence $\vb$ with the dashed line through it. By an application of Lemma
\ref{rexesonlyplz} we can assume without loss of generality that $\vb$ is actually a reduced expression for $v \in W$. We write the lower box as $f_v + f_l$, where $f_v$ is a linear
combination of light leaves and $f_l$ is in $I_\vb$, and we write the upper box as $g_{w,1} + g_{w,0} + g_l$ where $g_l$ is in $I_{\un{w}}$, $g_{w,1}$ is a linear combination of light leaves
beginning with all U1 for $\vb$, and $g_{w,0}$ is a linear combination of light leaves which do not begin with U1's, meaning that somewhere on $\vb$ is a U0. In the product, combining
$f_v$ with $g_{w,1}$ will give valid constructions for an element of $X_{\un{x}}$. Combining anything with $g_l$ will be in $I_{\un{w}}$. For a term with either $f_l$ or $g_{w,0}$, we draw a new
dashed line to avoid the dot; this decreases the width of the dashed line by one. Induction then finishes the proof. \end{proof}

\begin{claim} \label{add2m} Assume $(L_{M-1})$, and let $\ell({\un{x}})=M$. Placing a $2m$-valent vertex below a diagram in $X_{\un{w}}$ will result in a diagram in the span of $X_{\un{x}}$ modulo $I_{\un{w}}$.
\end{claim}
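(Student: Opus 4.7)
The plan is to locate a horizontal slice in the combined diagram $\LL_{\un{y},\eb}\circ V$ (where $V$ is the $2m$-valent vertex placed below, with $\un{x}$ and $\un{y}$ differing by a single braid relation) at which the width drops strictly below $M=\ell(\un{x})=\ell(\un{y})$, and then invoke Claim~\ref{splitintwain}.

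First suppose $\eb=(1,\ldots,1)$. Then $\un{y}$ is a reduced expression for $w$, and by Remark~\ref{LLof111} we may take $\LL_{\un{y},\eb}$ to be a rex move from $B_{\un{y}}$ to $B_{\un{w}}$ (together with polynomials in the leftmost region). Since $V$ is itself a one-step rex move, the composition is a rex move from $B_{\un{x}}$ to $B_{\un{w}}$, hence a valid construction of $\LL_{\un{x},(1,\ldots,1)}$ and in particular an element of $X_{\un{x}}$.

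Otherwise, let $k$ be the smallest index with $\eb_k\in\{U0,D0,D1\}$. For $i<k$ we have $\eb_i=$ U1, so $\phi_i$ is a width-preserving rex move; the step $\phi_k$ first applies a rex move $\beta$ (still width-preserving), then a negative elementary piece---a dot, trivalent merge, or cap---which strictly drops the width, followed possibly by a rex move $\alpha$. A short induction on $j$ shows that the width of $\LL_{\un{y},\eb}$ at the slice just above step $j$ equals $\ell(w_j)+(M-j)$, and that the difference $\ell(w_j)-j$ is non-increasing in $j$; since $\eb_k$ is not U1 we have $\ell(w_k)<k$, so every slice above the negative piece in $\phi_k$ has width at most $M-1$. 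Drawing a horizontal line immediately above that negative piece therefore splits the full diagram at a width $\le M-1<M$, with $M$ attained only throughout the interval from the bottom up to this line. Claim~\ref{splitintwain} now applies directly and yields that $\LL_{\un{y},\eb}\circ V$ lies in the span of $X_{\un{x}}$ modulo $I_{\un{w}}$. The main technical point is the width-tracking observation; a natural worry that $V$ at the bottom might interact badly with the negative piece in $\phi_k$ (for instance when $\phi_k$'s dot lands on a strand emerging from $V$, triggering relation~\eqref{dot2m}) is entirely sidestepped by Claim~\ref{splitintwain}, which is insensitive to the internal structure of the portions above and below the split.
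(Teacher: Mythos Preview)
Your argument has a genuine gap in the way you invoke Claim~\ref{splitintwain}. That claim does not say that any horizontal slice at width $<M$ suffices; it requires the diagram to factor as a box on the first $k'$ strands tensored with the identity on the remaining $M-k'$ strands, followed by an upper box, with \emph{both} boxes of maxwidth $<M$. Your horizontal line does not automatically produce this structure, because the $2m$-valent vertex $V$ sits at the very bottom and may touch strands outside the first $k$.

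Concretely, suppose $V$ occupies strands $l+1,\dots,l+m$ with $l+m=M$, and the first non-$U1$ index $k$ satisfies $k>l$. Below your horizontal line the diagram involves $V$ (touching strands $l+1,\dots,M$) together with the light-leaves steps $\phi_1,\dots,\phi_k$ (touching strands $1,\dots,k$); since $l<k$, these two pieces overlap and together hit all $M$ strands. There is then no $0<k'<M$ for which the lower portion is a width-$k'$ box tensored with identity, so Claim~\ref{splitintwain} simply does not apply. The smallest instance is already $m=3$, $l=0$, $\un{y}=sts$, $\eb=(U1,U1,U0)$: the composition is a $6$-valent vertex with a dot on the last strand, and one must actually invoke \eqref{dot2m} and do a two-colour computation to see that the result lies in $X_{tst}$ modulo $I_{\un{w}}$.

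This is precisely the case the paper isolates after exhausting all legitimate applications of Claim~\ref{splitintwain} (those with $k'\le l$ or $l+m\le k'<M$). The paper then handles it by writing $w_l=uv$ with $u$ a minimal coset representative for $W_{s,t}$, choosing the intermediate rex to have the form $\un{u}\,\un{v}$, and observing that all remaining steps can be performed within the two-colour part $\un{v}\,\un{y}_{>l}$; the problem thereby reduces to the dihedral case, which is covered by \cite{EDihedral}. Your argument is missing this reduction entirely. (A minor separate point: writing ``$\eb=(1,\dots,1)$'' for your first case is imprecise, since a sequence of $1$'s can contain $D1$'s; you mean that every $\eb_i$ is $U1$.)
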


\begin{proof} Any light leaves map with a neutral map below it is non-positive, so that it will never return to width $M$ after it departs. Suppose that we add the $2m$-valent vertex of
colors $s,t$ to strands ${\un{x}}_{l+1}$ through ${\un{x}}_{l+m}$ of a light leaves map $\LL_{{\un{x}},\eb}$. For any $0< k \le l$ or $l+m \le k < M$, we can split the diagram into $\LL_{\le k}$ and the
remainder as in Remark \ref{LLof111}, placing the new $2m$-valent vertex below whichever half is appropriate. Therefore, if any such $\LL_{\le k}$ maps to a shorter sequence, we can simply
use Claim \ref{splitintwain} to finish the proof. This allows us to reduce to the following special case: ${\un{x}}_{\le l}$ is a reduced expression and $\eb_{\le l}$ is all U1 (this includes
the possibility $l=0$); unless $l+m=M$, ${\un{x}}_{\le M-1}$ is a reduced expression and $\eb_{\le M-1}$ is all U1. Any time where $\eb_{l+1}$ through $\eb_{l+m}$ are all U1, the $2m$-valent
vertex can be viewed as part of the rex move in $\LL_{{\un{x}},\eb}$, so the result is clearly in $X_{\un{x}}$. Hence, we assume that $M=l+m$, and $({\un{x}},\eb)_{\le l}$ is a reduced expression for
$w_l \in W$.

Furthermore, we can alter any rex moves in $\LL_{{\un{x}},\eb}$ at will. The difference term will have an error. If this error occurs in $\phi_k$ for $k<M$ then we can use induction on the
error version of $\LL_{\le k}$, which has width $\le M-1$, to rewrite it in terms of $\LL$ maps and graphs with dots on top. These can be dealt with in the familiar manner. If the error occurs in $\phi_M$, it produces a dot on top, and is thus in $I_{\un{w}}$.

Hence, we can choose any desired reduced expression ${\un{w}}_l$ for $w_l$. Now $w_l = uv$ for $v \in W_{s,t}$ the dihedral group, and $u$ a minimal right coset representative of $W_{s,t}$. We
can choose a rex for $w_l$ of the form $\ub \vb$. Now the remaining terms of ${\un{x}}$ are all $s$ or $t$, and given any sequence of 1's and 0's, the assignment of U and D only depends on $v$
and not on $u$. Moreover, the rex moves required to send $s$ or $t$ to the right when a D appears (i.e. the rex moves of type $\b$ in Figure \ref{fourmaps}) can all be performed within
$\vb {\un{x}}_{>l}$, preserving $\ub$. Suppose that they are so in $\LL_{{\un{x}},\eb}$. Then the reduction to light leaves form does not depend on $\ub$ at all. We could assume without loss of
generality that $w_l=v \in W_{s,t}$, and the entire diagram only contains $2$ colors! Morphism spaces in $\DC$ for dihedral groups are already proven in \cite{EDihedral} to be the right
size, which implies the result; alternatively, one could do a case by case analysis. \end{proof}

\begin{remark} The case by case analysis can be quite interesting, although tedious. The following is a worthwhile exercise. Let $m=3$, and $\vb=sts$. Let ${\un{x}}=ststst$ and
$\eb=(U1,U1,U1,D0,D0,D0)$. First draw $\LL_{{\un{x}},\eb}$, and then place a $2m$-valent vertex below the D0's, and transform the diagram into light leaves format for $stssts$. This involves
repeated applications of \eqref{assoc2}. The exact same calculation works for general $m$. \end{remark}

We have now proven Claim \ref{staysup}. In order to prove Claim \ref{goesbackdown}, we need only put a positive generator below a map in $X_{\un{y}}$ for $\ell({\un{y}})=M$.

\begin{claim} \label{addpostri} Assume $(L_{M-1})$. Suppose that $\ell({\un{y}})=M$ and we add a positive (splitting) trivalent below a map in $X_{\un{y}}$ to obtain a map $B_{\un{x}} \to
B_{\un{w}}$. Then the result is in the span of $X_{\un{x}}$, modulo $I_{\un{w}}$. \end{claim}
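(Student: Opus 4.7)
The plan is to handle Claim \ref{addpostri} by a reduction to a local situation followed by case analysis, in the same spirit as Claims \ref{addnegtri} and \ref{add2m}. The splitting trivalent at position $l$ of $\un{x}$ creates two consecutive $s$-strands at positions $l$ and $l+1$ of $\un{y}$, upon which the light leaves map acts via the two consecutive steps $\phi_l$ and $\phi_{l+1}$. The goal is to show that (splitter $+$ $\phi_l + \phi_{l+1}$) can be rewritten as a single $\phi'_l$ step for a corresponding subexpression $\eb'$ of $\un{x}$.

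First I would carry out a reduction. Apply Claim \ref{splitintwain} at every index $k<l$ or $k>l+1$ where possible, using $(L_{M-1})$ to handle any portion of width $<M$. This lets us assume $({\un{y}},\eb)_{\le l-1}$ is a rex with all U1's, that ${\un{w}}_{l-1} = {\un{y}}_{\le l-1}$, and that no rex move in $\LL_{{\un{y}},\eb}$ acts on the two new $s$-strands between the splitter and the $\phi_l$, $\phi_{l+1}$ steps. When $\eb_l$ or $\eb_{l+1}$ is of type D$*$, any required rex move $\b$ can be absorbed into the $\LL$ data below or above these steps, with the resulting error terms handled by $(L_{M-1})$ exactly as in the proof of $(L_{M-1})+(X_M)\Rightarrow (L_M)$.

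Next I would perform case analysis on $(\eb_l,\eb_{l+1})$, with both strands colored $s$. Only eight pairs are compatible with the Bruhat walk: $(U0,U0)$, $(U0,U1)$, $(U1,D0)$, $(U1,D1)$, $(D0,D0)$, $(D0,D1)$, $(D1,U0)$, $(D1,U1)$. In each case one computes a net subexpression value $\eb'_l$ for the single $s$-strand in $\un{x}$. Using the one-color Frobenius relations \eqref{assoc1}, \eqref{unit}, \eqref{needle} (together with \eqref{dotforcegeneral} and \eqref{alphais} whenever an $\a_s$ factor needs to be absorbed), each local composition reduces to a single $\phi'_l$:
\begin{itemize}
\item In the four ``up'' cases the simplification is immediate from the unit relation (splitter composed with a dot on one prong equals the identity on the other), giving directly the expected dot or identity for $\phi'_l$.
\item In the four ``down'' cases the cap or merging trivalent appearing in $\phi_{l+1}$ (or the cap of $\phi_l$ in the $D1$ cases) joins two of the splitter's outputs. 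By the Frobenius biadjunction these can be straightened isotopically, or the needle/pitchfork relation evaluates the configuration to $0$ or to an $\a_s$-multiple of the relevant $\phi'_l$.
\end{itemize}
In each case the result is a scalar multiple (in $R$) of a single-step $\phi'_l$ for $\un{x}$, modulo graphs with top boundary dots which belong to $I_{\un{w}}$.

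The main obstacle is the bookkeeping of the four ``down'' cases, particularly $(D1,U1)$ and $(D0,D1)$, where both $\phi_l$ and $\phi_{l+1}$ invoke non-trivial rex moves $\b$ that could in principle entangle with the splitter's outputs. The key observation is that the freedom provided by $(L_{M-1})$ lets us choose the intermediate rexes ${\un{w}}_{l-1},{\un{w}}_l$ (as explained in the preceding $(L_{M-1})+(X_M)\Rightarrow(L_M)$ argument) so that these rex moves only touch strands of $\un{w}_{l-1}$ and not the outputs of the splitter; once this separation is achieved, the Frobenius biadjunction reduces the local picture to the standard splitter--cap or splitter--merging--dot configuration whose evaluation has already been tabulated in the ``up'' case analysis. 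Once every case is verified, the claim follows, completing $(L_{M-1})\Rightarrow (X_M)$ and the grand induction.
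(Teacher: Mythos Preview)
Your overall strategy—case analysis on $(\eb_l,\eb_{l+1})$ and local simplification via the Frobenius relations—is the same as the paper's. The easy cases ($\eb_l\in\{U0,D1\}$) do reduce directly to a single light leaves step for $\un{x}$, exactly as the paper states (``manifestly in light leaves format''). However, there are two genuine gaps.

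First, the reduction via Claim~\ref{splitintwain} is misapplied: that claim requires $\ell(\un{x})=M$, whereas here $\ell(\un{x})=M-1$. The paper does not perform any such reduction, and none is needed.

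Second, and more seriously, your treatment of the hard cases $\eb_l\in\{U1,D0\}$ has a real hole. You are right that if the relevant rex move (the ``asterisk box'' in the paper) does not touch the rightmost $s$-strand, the configuration vanishes by \eqref{assoc1} and \eqref{needle}. But your assertion that the error terms arising from changing to such a rex move are ``handled by $(L_{M-1})$ exactly as in the proof of $(L_{M-1})+(X_M)\Rightarrow(L_M)$'' does not go through: once composed with the splitter, these error terms still reach width~$M$ (at $\un{y}$), so $(L_{M-1})$ does not apply directly. The argument you cite works for light leaves on a length-$M$ source because the partial map $\LL_{\le M-1}$ has width $\le M-1$; here the splitter sits \emph{below} the error and reintroduces width~$M$. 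The paper's fix is essential: by Lemma~\ref{rexmovesmlt}, the difference of rex moves carries both a bottom and a top boundary dot. Each such dot lands either on the final $s$-strand (where it is absorbed into the adjacent trivalent vertex by \eqref{unit}) or on another strand (where it commutes past that trivalent by rectilinear isotopy). After pulling both dots through, the diagram never reaches width~$M$, and only then does $(L_{M-1})$ finish the job. Your proposal omits this dot-pulling step, which is precisely the content of the hard cases.
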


\begin{proof} Suppose that the trivalent vertex is added to ${\un{y}}_k$ and ${\un{y}}_{k+1}$. There are 8 consistent possibilities for $\eb_k$ and $\eb_{k+1}$: $\eb_k$ can be any of U0, U1, D0, D1,
and it determines whether $\eb_{k+1}$ is U or D. In fact, if $\eb_k$ is U0 or D1, the result is manifestly in light leaves format.

\igc{1}{splittingbelowLLU0D1}

On the other hand, if $\eb_k$ is U1 or D0, the result locally looks like the following.

\igc{1}{splittingbelowLLU1D0}

By ``D0 or D1" we mean that the topmost trivalent in these graphs is as pictured when $\eb_{k+1}$ is D0, and has a further dot on top when $\eb_{k+1}$ is D1; this extra dot will not affect
our discussion. The dashed line indicates where width $M$ is reached. The asterisk box in these graphs is some rex move between two rexes which both have $s$ on the far right.

In fact, we believe both these diagrams to be zero. Let us pause to state a computational conjecture.

\begin{conj} \label{conj:rextri} For any rex move $\b \co \un{w} \to \un{w'}$, where both $\un{w}$ and $\un{w'}$ have $s$ on the far right, one has \begin{equation} \label{eq:slidetrithrurex} 
	{
	\vspace*{.4 cm}
	\labellist
	\small\hair 2pt
	 \pinlabel {$\b$} [ ] at 20 20
	 \pinlabel {$\b$} [ ] at 100 44
	 \pinlabel {$\un{w'}$} [ ] at 20 75
	 \pinlabel {$\un{w'}$} [ ] at 100 75
	 \pinlabel {$\un{w}$} [ ] at 20 -10
	 \pinlabel {$\un{w}$} [ ] at 100 -10
	 \pinlabel {$s$} [ ] at 47 53
	 \pinlabel {$s$} [ ] at 128 29
	\endlabellist
	\centering
	\ig{1}{lemma0}
	}.
	\vspace*{.4 cm} \end{equation} 

This would imply, using \eqref{assoc1} and \eqref{needle}, that the following is zero. \begin{equation} \label{rexmovestriis0} \ig{1}{rexmovestriis0} \end{equation} \end{conj}

Let us continue the proof without assuming this conjecture.

Which rex move appears in the asterisk box? If the rex move does not involve the rightmost $s$-colored strand, then clearly the result is zero by \eqref{assoc1} and \eqref{needle}. Since
two rex moves are equal modulo lower terms, we may assume that the asterisk box actually contains a strictly negative map followed by a strictly positive map. Moreover, Lemma
\ref{rexmovesmlt} guarantees that both the top and bottom of the asterisk box has a boundary dot.

We will now use these boundary dots to reduce the maxwidth of the diagram below $M$. The bottom dot (resp. top dot) appears either in one of the initial $k-1$ strands, or on the final
$s$-colored strand. If the dot appears on the final $s$-colored strand, then it will pull into the nearby trivalent vertex by \eqref{unit}. Otherwise, using rectilinear isotopy, the dot
can be pulled past this trivalent vertex. After performing these operations to the top and bottom dots, the resulting diagram never factors through a sequence of width $M$, and therefore
$(L_{M-1})$ implies that it can be expressed in the span of light leaves. \end{proof}

\begin{claim} \label{addposdot} Assume $(L_{M-1})$. Suppose that $\ell({\un{y}})=M$ and we add a positive (top) dot below a map in $X_{\un{y}}$ to obtain a map $B_{\un{x}} \to B_{\un{w}}$. Then the result is in
the span of $X_{\un{x}}$, modulo $I_{\un{w}}$. \end{claim}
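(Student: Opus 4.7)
The plan is to mimic the case analysis used for Claim \ref{addpostri} (the positive trivalent case), splitting into the four possibilities U0, U1, D0, D1 for the label $\eb_k$ of the strand $\un{y}_k = s$ that is created by the new positive dot. In each case, the positive dot interacts immediately with the local part $\phi_k$ of the light leaves map $\LL_{\un{y},\eb}$ sitting above it, and the nature of this interaction determines the strategy. Of the four cases, two will be essentially trivial, one will reduce to $(L_{M-1})$ by an honest width reduction, and one (the U1 case) will require genuine effort analogous to the U1/D0 analysis in Claim \ref{addpostri}.

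The easy cases are U0 and D0. When $\eb_k=U0$, the step $\phi_k$ applies a negative dot to the fresh strand; combined with the positive dot just below, the relation \eqref{alphais} fuses them into the scalar $\alpha_s$ on the empty strand, leaving behind $\alpha_s\cdot \LL_{\un{x},\eb^{(k)}}$ (where $\eb^{(k)}$ is $\eb$ with its $k$-th entry removed), manifestly in $X_{\un{x}}$. When $\eb_k=D0$, the step $\phi_k$ applies a merging trivalent after an initial rex move $\b$; the composition of the positive dot with this merging trivalent is the identity on the other branch by the Frobenius unit relation \eqref{unit}. What remains is $\LL_{k-1}$ followed by the composite rex move $\a\circ\b$ on $\un{w}_{k-1}$ (a rex for $w_{k-1}=w_k$), yielding a valid $\LL_{\un{x},\eb^{(k)}}$ in $X_{\un{x}}$.

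The D1 case is where an honest simplification occurs. Here $\phi_k$ joins the new strand to the rightmost $s$-strand of $\b(\un{w}_{k-1})$ via a cap. By biadjunction (exactly as in the argument that produces \eqref{dashedcupscaps} from \eqref{newidemprel1}), a positive dot on one leg of a cap is equivalent to a negative dot placed on the other leg at the corresponding height — graphically, one straightens the cap around the dot. This eliminates the strand $\un{y}_k$ altogether and replaces the cap by a negative (bottom) dot on an interior strand of $\b(\un{w}_{k-1})$, reducing the width in that region. The resulting diagram now never reaches width $M$, so its maxwidth is at most $M-1$, and the inductive hypothesis $(L_{M-1})$ produces the desired $X_{\un{x}}$-expansion modulo $I_{\un{w}}$.

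The main obstacle is the U1 case, where the strand produced by the positive dot survives all of $\phi_k$ (a pure rex move $\a$) and continues into $\un{w}_k$ and potentially all the way to $\un{w}$. The strategy I would carry out is analogous to the analysis of U1 inside Claim \ref{addpostri}. Using Lemma \ref{rexmovesmlt}, replace $\a$ by any other rex move $\a'$ of our choosing, at the cost of a strictly negative-positive error term with both a bottom and a top boundary dot. The error-term part: the bottom dot of the error fuses with our positive dot via \eqref{alphais} into a polynomial, while the top dot kills a strand, and in both situations the resulting diagrams never attain width $M$ and are handled by $(L_{M-1})$. For the main term with $\a'$, one chooses $\a'$ so that the new $s$-strand lands in a convenient position (e.g.\ the rightmost) of $\un{w}_k$, and then applies the cyclic rotation of relation \eqref{dot2m} to push the positive dot through the terminal $2m_{st}$-valent vertex of $\a'$; every non-identity term of the resulting Jones-Wenzl expansion factors through an object of width $\le m_{st}-2$ (hence strictly less than $M$), while the identity-like term leaves the strand propagating only through rex moves on its way to $\un{w}$. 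In the latter situation one either iterates the reduction on the next $\phi_j$ the strand meets (reducing to a previously-treated D0, D1, or U0 event upstairs) or, if the strand ultimately reaches the top boundary of $\un{w}$ unmolested, the diagram is of the form that places it in $I_{\un{w}}$ by definition. The bookkeeping for the U1 case — managing rex-move choices, tracking which terms of a Jones-Wenzl expansion have which width, and confirming that the iterated reductions terminate — is the real technical work of the proof.
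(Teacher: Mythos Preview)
Your case analysis on $\eb_k$ and the handling of U0, D0, and D1 match the paper's proof exactly. The U1 strategy is also fundamentally the same as the paper's --- propagate the dot upward through the light leaves structure until it either reaches the top boundary (landing in $I_{\un{w}}$) or is absorbed by a later D-step --- but your execution of this case has some genuine wobbles worth flagging.

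First, the detour through Lemma~\ref{rexmovesmlt} is unnecessary and introduces bookkeeping you then mishandle. You claim ``the bottom dot of the error fuses with our positive dot via \eqref{alphais},'' but Lemma~\ref{rexmovesmlt} only guarantees \emph{some} bottom boundary dot on $\un{w}_{k-1}s$, not one on the $s$-strand specifically; when it lands elsewhere the two dots do not fuse, and you need an isotopy argument (swap the positive dot past the error's first negative generator) to force the maxwidth below $M$. This is fixable but not what you wrote. The paper avoids all of this by pushing the dot directly through the first $2m$-valent vertex it meets using \eqref{dot2m}, then inducting on the number of $2m$-valent vertices remaining above.

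Second, your description of the \eqref{dot2m} expansion is off. There is no ``identity-like term'' in which the strand passes through unscathed: relation \eqref{dot2m} replaces the dotted $2m$-valent vertex by the Jones--Wenzl morphism, and \emph{every} term of that morphism carries a boundary dot on the output side. The correct induction (as the paper does it) is: each such output dot sits above one fewer $2m$-valent vertex than the original, so by induction it either escapes to the top or meets a later D0/D1, and in each case the diagram drops to maxwidth $\le M-1$ where $(L_{M-1})$ applies. Your claim that non-identity terms have width ``$\le m_{st}-2$ (hence strictly less than $M$)'' conflates the local width inside the dihedral patch with the global width of the diagram; the inference ``hence $<M$'' does not follow as stated.

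In short: right skeleton, right easy cases, but the U1 case should be rewritten to follow the paper's direct induction on the number of $2m$-valent vertices, dropping the Lemma~\ref{rexmovesmlt} modification entirely.
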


\begin{proof} Suppose that the dot is attached to ${\un{y}}_l$ in $\LL_{{\un{y}},\eb}$. Let us draw a number of possibilities for what can happen to the dot.

\igc{1}{dotbelowLL}

In the first row, $\eb_l$ is U0 or D0 and is quickly taken care of. When $\eb_l$ is U1, what happens to the dot next can be a number of things. If the dot hits a rex move we end up on the
third row, which we will discuss shortly. Otherwise, we consult the second row. The dot either makes it all the way to the top (ending up in $I_{\un{w}}$) or it runs into a D0 or D1 and is
taken care of. Finally, if $\eb_l$ is D1, we end up in the first picture on the third row. The diagram on the right hand side clearly has width $\le M-1$, so it can be taken care of with
$(L_{M-1})$.

We have now proven this result except when the dot meets a rex move. We can induct on the number of $2m$-valent vertices in the rex move; the base case has just been done. Each term in the
Jones-Wenzl projector will have a dot on top. We can resolve that dot by induction to get an $\LL$ map with width $\le M-1$. After doing so, the remainder of the diagram has width $\le
M-1$, and we can apply $(L_{M-1})$ to finish the proof. \end{proof}

\begin{remark} There is an alternative proof. We can use Claim \ref{splitintwain} and the same style of argument as in Claim \ref{add2m} to reduce to the case where $({\un{y}},\eb)_{<
M-1}$ is a reduced expression with all U1's, and $\eb_M$ is D. Moreover, we can choose whatever rex moves we desire. Now the case by case analysis is easy, using some of the cases above.
\end{remark}

This concludes the proof of Claim \ref{goesbackdown}, and consequently of the fact that $(L_{M-1}) \implies (X_M)$. \end{proof}

\end{document}